\theoremstyle{plain}
\newtheorem{thm}{Theorem}[section]
\newtheorem{prop}[thm]{Proposition}
\newtheorem{lem}[thm]{Lemma}
\newtheorem{cor}[thm]{Corollary}
\theoremstyle{definition}
\newtheorem{defn}{Definition}
\theoremstyle{remark}
\newtheorem{remark}{Remark}
\newtheorem{example}{Example}
\newtheorem{notation}{Notation}
    \def\E{{\mathbb{E}}} \def\F{{\mathbb{F}}}     \def\K{{\mathbb{K}}}   \def\N{{\mathbb{N}}}  \def\P{{\mathbb{P}}} \def\Q{{\mathbb{Q}}} \def\R{{\mathbb{R}}}        \def\Z{{\mathbb{Z}}}
     \def\barf{{\bar{f}}}             \def\bs{{\bar{s}}} \def\bt{{\bar{t}}}      
      \def\bfG{{\bf{G}}}            \def\bfS{{\bf{S}}}    \def\bfW{{\bf{W}}}   
\def\bfa{{\bf{a}}} \def\bfb{{\bf{b}}} \def\bfc{{\bf{c}}}  \def\bfe{{\bf{e}}}                 \def\bfv{{\bf{v}}}    \def\bfz{{\bf{z}}}
\def\bfxi{\mbox{\boldmath $\xi$}}
\def\bfpi{\mbox{\boldmath $\pi$}}
\def\bfsig{\mbox{\boldmath $\sigma$}} 
\def\bftau{\mbox{\boldmath $\tau$}} 
\def\bfzeta{\mbox{\boldmath $\zeta$}}
\def\cA{{\mathcal{A}}} \def\cB{{\mathcal{B}}}    \def\cF{{\mathcal{F}}}   \def\cI{{\mathcal{I}}}  \def\cK{{\mathcal{K}}}  \def\cM{{\mathcal{M}}}  \def\cO{{\mathcal{O}}} \def\cP{{\mathcal{P}}} \def\cQ{{\mathcal{Q}}}       \def\cX{{\mathcal{X}}} \def\cY{{\mathcal{Y}}} \def\cZ{{\mathcal{Z}}}
               \def\tcP{{\tilde{\mathcal{P}}}}
                       \def\hx{{\hat{x}}}  
                    \def\sU{{\mathscr{U}}}  \def\sW{{\mathscr{W}}}   
  \def\tC{{\tilde{C}}}                       
     \def\tf{{\tilde{f}}}                  \def\tx{{\tilde{x}}}  
               \def\tcP{{\tilde{\cP}}}
       \def\tphi{{\widetilde{\phi}}} \def\tpsi{{\widetilde{\psi}}}
\newcommand{\G}{\Gamma}
\newcommand{\Ga}{\Gamma}
\newcommand{\Si}{\Sigma}
\newcommand{\eps}{\epsilon}
\renewcommand\b{\beta}
\renewcommand\a{\alpha}
\newcommand\g{\gamma}
\newcommand\s{\sigma}
\renewcommand\d{\delta}
\renewcommand\k{\kappa}
\renewcommand\l{\lambda}
\newcommand\Abel{\operatorname{Abel}}
\newcommand\Cluster{\operatorname{Cl}}
\newcommand\cov{\operatorname{cov}}
\newcommand\diam{\operatorname{diam}}
\newcommand\Graphs{{\operatorname{Graphs}}}
\newcommand\Hom{\operatorname{Hom}}
\newcommand\Leb{\operatorname{Leb}}
\newcommand\MAXIND{\operatorname{MAXIND}}
\newcommand\Prob{\operatorname{Prob}}
\newcommand\Proj{\operatorname{Proj}}
\newcommand\Res{\operatorname{Res}}
\newcommand\sym{\operatorname{sym}}
\def\cc{{\curvearrowright}}
\newcommand{\resto}{\upharpoonright}
  \newcommand{\Addresses}{{
  \bigskip
  \footnotesize

  L.~Bowen, \textsc{Department of Mathematics, University of Texas at Austin,
    Austin, Texas 78712}\par\nopagebreak
  \textit{E-mail address}, L.~Bowen: \texttt{lpbowen@math.utexas.edu}





}}
\begin{document}
\title{Sofic homological invariants and the Weak Pinsker Property}
\author{Lewis Bowen \\ University of Texas at Austin}
\maketitle

\begin{abstract}
A probability-measure-preserving transformation has the Weak Pinsker Property (WPP) if for every $\eps>0$ it is measurably conjugate to the direct product of a transformation with entropy $<\eps$ and a Bernoulli shift. In a recent breakthrough, Tim Austin proved that every ergodic transformation satisfies this property. Moreover, the natural analog for amenable group actions is also true. By contrast, this paper provides a counterexample in which the group $\G$ is a non-abelian free group and the notion of entropy is sofic entropy. The counterexample is a limit of hardcore models on random regular graphs. In order to prove that it does not have the WPP, this paper introduces new measure conjugacy invariants based on the growth of homology of the model spaces of the action. The main result is obtained by showing that any action with the WPP has subexponential homology growth in dimension $0$, while the counterexample has exponential homology growth in dimension $0$.

\end{abstract}

\noindent
{\bf Keywords}: sofic groups, entropy theory, Weak Pinsker Property\\
{\bf MSC}:37A35\\

\noindent
\tableofcontents

\section{Introduction}

This paper is concerned with the general problem of classifying measure-preserving actions of countable groups on probability spaces. To be precise, fix a countable group $\G$ and let $(X,\mu_X), (Y,\mu_Y)$ be standard probability spaces. Then two actions $\G \cc (X,\mu_X), \G \cc (Y,\mu_Y)$ are {\bf measurably conjugate} or {\bf isomorphic} if there exists a measure-preserving isomorphism $\Phi:(X,\mu_X) \to (Y,\mu_Y)$ that intertwines the actions in the sense that $\Phi(gx)=g\Phi(x)$ for every $g\in \G$ and a.e. $x \in X$. 

Anti-classification theorems convincingly show  it is not possible to classify all actions up to measure-conjugacy \cite{foreman-weiss, foreman-rudolph-weiss2}. In spite of this, there are interesting structural results. To explain these, it is necessary to introduce Bernoulli shifts, which are some of the most fundamental actions. Let $(K,\k)$ be a standard probability space and equip the product space $K^\G=\{x:\G \to K\}$ with the product measure $\k^\G$. The group acts on this space by $(gx)(f) = x(g^{-1}f)$. This action is called the {\bf Bernoulli shift over $\G$ with base $(K,\k)$}. 

A classical example of a general structural result is Sinai's factor Theorem. It states that, when $\G=\Z$, any action with positive entropy factors onto a Bernoulli shift. Moreover, the factor can be chosen so that the relative entropy is zero. These statements have recently been generalized to arbitrary countable groups  by Seward \cite{MR4066472}. 

Another example comes from Pinsker. In 1960, Pinsker conjectured that any ergodic measure-preserving transformation $T:X \to X$ of a standard probability space $(X,\mu)$ is measurably conjugate to a direct product $T \approx S \times U$ such that $S$ has zero entropy and $U$ is a $K$-transformation (which means that every nontrivial factor of $U$ has positive entropy) \cite{MR0152628}. This was falsified by Ornstein \cite{MR0399416, MR0330416}. The study of such systems led Thouvenot to introduce the {\bf Weak Pinsker Property} (WPP) for measure-preserving transformations: $T$ has the WPP if for every $\eps>0$, $T$ is measurably conjugate to a direct product $S \times U$ such that $S$ has entropy $<\eps$ and $U$ is isomorphic to a Bernoulli shift. He asked whether all ergodic transformations have the WPP and proved important structural properties of this class  \cite{MR0453982}. 

In recent breakthrough work, Tim Austin has proven that indeed every ergodic transformation has the WPP  \cite{MR3905465}. Moreover, the analogous statement for measure-preserving actions of amenable groups is also true. 

The purpose of this paper is to give an example of an ergodic action of a non-abelian free group without the WPP.  In this context there are two main entropy notions: sofic and Rokhlin. Sofic entropy was initiated in \cite{bowen-jams-2010} and Rokhlin entropy in \cite{seward-kreiger-1} (see \cite{MR3966832, bowen-survey} for an introduction and survey). This paper uses sofic entropy although the results also apply with Rokhlin entropy because it upper bounds sofic entropy.

The sofic entropy of a measure-preserving action $\bfa:=\G \cc (X,\mu)$ depends apriori  on a choice of sofic approximation $\Si$ to $\G$. So it will be referred to here as {\bf $\Si$-entropy} and denoted by $h_\Si(\bfa)$. The action has the {\bf Weak Pinsker Property (WPP) with respect to $\Si$} if for every $\eps>0$, $\bfa$ is isomorphic to a direct product $\bfb \times \bfc$ such that $\bfc$ has $\Si$-entropy $<\eps$ and $\bfb$ is isomorphic to a Bernoulli shift. For example, if $\bfa$ has nonpositive $\Si$-entropy, then it automatically has the WPP because $\bfb$ is allowed to be the trivial action (which can be thought of as a Bernoulli shift with trivial base space). The main result of this paper is:

\begin{thm}\label{thm:main1}
Let $\F_r$ denote the free group of rank $r$. Then there is an $r_0$ such that for all $r>r_0$, there exists a sofic approximation $\Si$ to $\F_r$ and an ergodic essentially free action $\F_r \cc (X,\mu)$ that does not have the Weak Pinsker Property with respect to $\Si$.
\end{thm}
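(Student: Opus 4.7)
The plan is to introduce a measure-conjugacy invariant $h_\Si^{(0)}$ measuring the exponential growth rate of the number of clusters (zeroth Betti number in normalized Hamming) of the space of good microstates against a sofic approximation $\Si$, and then to prove two complementary estimates: $h_\Si^{(0)}(\bfa)\le 0$ whenever $\bfa$ has the WPP, and $h_\Si^{(0)}(\bfa)>0$ for a specific $\F_r$-action arising as a weak-$*$ limit of hardcore independent-set measures on uniformly random $(2r)$-regular graphs.

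First I would make the invariant precise. For a finite observable $\alpha$ on $(X,\mu)$, a finite $F\Subset\G$, and tolerances $\delta,\rho>0$, let $\Map(F,\delta,\sigma_n,\alpha)\subseteq \alpha^{V_n}$ be the usual set of $(F,\delta)$-good microstates, carrying the normalized Hamming metric. Let $b_0^\rho(n)$ be the number of equivalence classes under the transitive closure of the relation ``Hamming distance $\le\rho$''; this is the zeroth Betti number of the associated Vietoris--Rips complex at scale $\rho$. Define
\[
h_\Si^{(0)}(\bfa) \;=\; \sup_{\alpha,\rho}\,\inf_{F,\delta}\,\limsup_{n\to\infty}\frac{1}{n}\log b_0^\rho(F,\delta,\sigma_n,\alpha).
\]
The argument that this is an isomorphism invariant parallels the standard proof that sofic entropy is invariant: a measure-isomorphism is approximated by a finite combinatorial code that maps good microstates to good microstates almost isometrically in Hamming, and such a code cannot create or destroy $\rho$-components at scales larger than the coding error.

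Next I would prove the upper bound. Suppose $\bfa \cong \bfb\times\bfc$ with $\bfb$ Bernoulli and $h_\Si(\bfc)<\eps$. The key input is that model spaces of Bernoulli shifts are $\rho$-connected for every $\rho>0$: any two good Bernoulli microstates can be joined by a Hamming-path of good microstates obtained by resampling coordinates on successive small subsets, because the Bernoulli measure assigns positive probability to every local configuration and is insensitive to finite perturbations. Under the product decomposition $\Map(\bfa)$ fibers approximately over $\Map(\bfc)$ with Bernoulli fibers, so each $\rho$-component of $\Map(\bfa)$ projects into a single $\rho$-component of $\Map(\bfc)$. Hence $b_0^\rho(\bfa)\le b_0^\rho(\bfc)\exp(o(n))\le |\Map(\bfc)|\le \exp\!\big(n(h_\Si(\bfc)+o(1))\big)\le \exp(n\eps)$. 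Since $\eps$ is arbitrary, $h_\Si^{(0)}(\bfa)\le 0$.

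For the counterexample, fix $r$ large and let $(X,\mu)$ be a weak-$*$ limit of hardcore Gibbs measures $\mu_{\lambda,G_n}$ on independent sets of uniformly random $(2r)$-regular graphs $G_n$, with fugacity $\lambda$ chosen deep in the condensation/replica-symmetry-breaking regime. With high probability the rooted graphs $G_n$ Benjamini--Schramm converge to the $(2r)$-regular tree, so on a subsequence they furnish a sofic approximation $\Si$ to $\F_r$; ergodicity and essential freeness of the limit action follow from standard properties of invariant random regular graphs and the tail triviality of hardcore measures on the tree. The main obstacle, and the technical heart of the paper, is then the lower bound: one must show that in this regime the $\mu_{\lambda,G_n}$-typical configurations shatter into exponentially many Hamming-separated clusters, and that this clustering descends to an exponential lower bound on $b_0^\rho$ for the limit action at some $\rho>0$. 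I would attack this by a refined first- and second-moment analysis of pairs of approximate independent sets at intermediate Hamming distances, in the spirit of the cluster analyses developed for random $k$-SAT and random graph coloring, showing that the pair-density as a function of overlap has a strict gap around the typical intra-cluster overlap. Combined with the previous paragraph, this yields $h_\Si^{(0)}(\bfa)>0$, so $\bfa$ cannot satisfy the WPP.
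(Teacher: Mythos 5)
Your overall strategy is the one the paper actually follows: define a cluster-counting (zeroth-Betti) measure-conjugacy invariant, show it vanishes under the WPP by exploiting connectedness of Bernoulli model spaces and the entropy bound, and then exhibit positive growth for a limit of shattered independent-set ensembles on random $2r$-regular graphs. The two essential points where your proposal falls short of a proof are the definition of the invariant and the mechanism for transferring cluster bounds from random-graph ensembles to a genuine invariant measure.

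On the invariant: a single scale $\rho$ and a single neighborhood $(F,\delta)$ is not enough to make $h^{(0)}_\Si$ a conjugacy invariant, and the ``almost-isometric coding'' heuristic hides the obstruction. A coding map $\tpsi^\s$ built from an $\eta$-UC approximation sends microstates to microstates while distorting Hamming distances by an affine factor $L\cdot(\;)+\eta$, and the composition of a code with its inverse moves points by a uniformly small but nonzero amount. So a $\rho$-component of $\Omega(\cO,\s_n)$ on one side need not map into a $\rho$-component on the other, nor does a $\rho'$-path returned by the inverse code close up a $\rho'$-equivalence. The paper repairs this with an inner/outer pair of parameters: cycles are taken at scale $\k_1$ inside $\cO_1$, boundaries at scale $\k_2\ge\k_1$ inside $\cO_2\supseteq\cO_1$, and the invariant $b_{d,\Si}$ is a $\sup_{\cO_2,\k_2}\inf_{\cO_1,\k_1}$ over these nested parameters. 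The asymmetry is exactly what absorbs the multiplicative and additive coding losses (Claims 1--3 in the proof of Theorem \ref{thm:mc}). If you keep only one scale you should expect to either prove the wrong inequality or not be able to close the argument; you need to introduce the two-parameter homology groups $H_0(\cO_1,\cO_2,\k_1,\k_2,\s_n)$.

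On the counterexample: the paper does not pass to a weak-$*$ limit of hardcore Gibbs measures. It fixes a density $s=\bs\log(2r)/r$ in the shattering regime, works with the \emph{uniform} model of pairs $(\s_n,W_n)$ where $W_n$ is a uniformly random independent set of density $s$ in $G(\s_n)$, and proves cluster-size bounds for this model by comparing it to the \emph{planted} model, whose first-moment calculations are tractable. The planted-versus-uniform comparison (Theorem \ref{thm:planted}) is the key technical tool that your sketch does not mention; without it, your ``refined first- and second-moment analysis'' only controls the planted/annealed ensemble, which is not what you need. You also need the permutation model (random homomorphism $\F_r\to\sym(n)$) rather than a generic random $2r$-regular graph, since only the former supplies a candidate sofic approximation; the transfer to and from the configuration model literature goes through the contiguity theorem of \cite{MR1909503}. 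Finally, the invariant measure is obtained not as a weak-$*$ limit of Gibbs measures but by a Kerr--Li-style variational argument (Proposition \ref{prop:variational}) that extracts, from a family of large microstate sets $\sW_n\subset\{0,1\}^{V_n}$, an invariant $\mu$ whose model spaces intersect $\sW_n$ in positive exponential proportion; this is what lets you convert a combinatorial shattering statement about $\sW_n$ into a statement about $b_{0,\Si}(\mu)$. Your hardcore-Gibbs route might work, but it is a genuinely different construction and you would still need a bridge of this variational type to relate Gibbs-typical configurations to $(\cO,\s_n)$-microstates for the limit measure.
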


\begin{remark}
An action $\bfa$ has the {\bf Weak Pinsker Property with respect to Rokhlin entropy} if for every $\eps>0$, $\bfa$ is isomorphic to a direct product $\bfb \times \bfc$ such that $\bfb$ has Rokhlin entropy $<\eps$ and $\bfc$ is isomorphic to a Bernoulli shift. Because Rokhlin entropy upper bounds $\Si$-entropy (for every $\Si$), this property is apriori stronger than the WPP with respect to $\Si$. In particular, the action in Theorem \ref{thm:main1} does not have the WPP with respect to Rokhlin entropy.
\end{remark}

\subsection{Homological measure-conjugacy invariants}
The proof of Theorem \ref{thm:main1} is in two steps, the first of which is a construction of a family of new measure-conjugacy invariants based on the asymptotic homology of model spaces. Here is a brief sketch in the special case that $\mu$ is a shift-invariant measure on $\cX^\G$ where $\cX$ is a finite alphabet. In this case, the sofic approximation $\Si$ is a sequence $\Si=\{\s_n\}_{n \in \N}$ of maps $\s_n:\G \to \sym(V_n)$ where each $V_n$ is a finite set and $\sym(V_n)$ is the symmetric group of $V_n$. For every open neighborhood $\cO$ of $\mu$ in the space of probability measures on $\cX^\G$ there is a subset $\Omega(\cO,\s_n) \subset \cX^{V_n}$ consisting of vertex-labelings whose ``empirical measure'' is in $\cO$. The sets $\Omega(\cO,\s_n)$ equipped with the normalized Hamming metric are called {\bf model spaces}. The $\Si$-entropy is the exponential rate of growth of the cardinalities of these model spaces. 

Given a bound $\k>0$, each model space $\Omega(\cO,\s_n)$ is the vertex set of a simplicial complex whose $d$-simplices consist of subsets $S \subset \Omega(\cO,\s_n)$ of cardinality $d+1$ such that the distance between any two elements of $S$ is bounded by $\k$. Homology is usually defined as cycles mod boundaries. That is also true here with the caveat that the boundaries are defined using parameters $\k' \ge \k$ and $\cO' \supset \cO$ in place of $\k,\cO$. So the homology group of the $n$-th model space depends on four parameters $\k,\k',\cO,\cO'$ in addition to $\s_n$. The asymptotic behavior of these homology groups provide new invariants. This idea was inspired by Tim Austin's paper \cite{MR3543677} which gave an asymptotic notion of connectedness for model spaces. That notion is equivalent to the asymptotic triviality of the $0$-dimensional homology groups.

One of the new invariants, denoted $b_{\Si,0}(\bfa)$, is the exponential growth rate of the $0$-th betti numbers of the model spaces. Intuitively, it estimates the growth rate of the number of ``clusters'' of good models. If an action $\bfa$ has the Weak Pinsker Property with respect to $\Si$ then $b_{\Si,0}(\bfa)=0$. This is because the model spaces for a direct product of the form $\bfb \times \bfc$ where $\bfb$ is Bernoulli contract (in a coarse sense) 
to model spaces for $\bfc$ and $b_{\Si,0}(\bfc)$ is bounded by the $\Si$-entropy of $\bfc$.

\subsection{An action with positive zero-dimensional homology growth}\label{sec:intro-markov}
To finish the proof of Theorem \ref{thm:main1}, the next result suffices.

\begin{thm}\label{thm:indep0}
There exists $r_0$ such that if $r>r_0$ then there exists a sofic approximation $\Si$ to $\F_r$ and an invariant measure $\mu$ on the shift space $\{0,1\}^{\F_r}$ such that $b_{\Si,0}(\F_r\cc (\{0,1\}^{\F_r},\mu)) >0$. In particular, $\G \cc (\{0,1\}^\G,\mu)$ does not have the Weak Pinsker Property.
\end{thm}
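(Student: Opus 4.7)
The plan is to realize $\mu$ as a sofic limit of hardcore Gibbs measures on random $2r$-regular graphs tuned into a shattered phase. Identify the Cayley graph of $\F_r$ with the $2r$-regular tree, and let $\sigma_n \colon \F_r \to \Sym(V_n)$ be sampled by choosing $r$ independent uniformly random permutations of $V_n$, one per generator; these form the random sofic approximation $\Si = (\sigma_n)$. On each $V_n$ place the hardcore measure
\[
\nu_n(I) \propto \lambda^{|I|}\,\mathbf{1}[I \text{ is independent in the Schreier graph}]
\]
at a fugacity $\lambda$ to be chosen. Convert $\nu_n$ into a shift-invariant measure $\tilde\nu_n$ on $\{0,1\}^{\F_r}$ by drawing $I \sim \nu_n$ and a uniform base point $v \in V_n$ and pulling back along $g \mapsto \sigma_n(g)v$. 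Passing to a subsequential weak$^*$ limit of $\mathbb{E}_{\sigma_n}[\tilde\nu_n]$ yields an $\F_r$-invariant measure $\mu$, and a first-moment argument shows that for any open neighborhood $\cO \ni \mu$, with high probability over $\sigma_n$ we have $\nu_n(\Omega(\cO,\sigma_n)) \to 1$.

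The central external input is a quantitative shattering theorem for the hardcore model on random $d$-regular graphs at large $d = 2r$ and suitably large fugacity $\lambda$. Such a result, a form of one-step replica-symmetry breaking established in the statistical-physics literature on sparse random graphs, provides constants $\delta, s > 0$ and $\kappa < \delta/3$ such that, with probability $\to 1$ over $\sigma_n$, a fraction $1 - o(1)$ of the $\nu_n$-mass decomposes into pairwise Hamming-separated clusters $C_1, \dots, C_{N_n} \subset \{0,1\}^{V_n}$ where each $C_i$ has Hamming diameter at most $\kappa$, distinct clusters satisfy $d_{\mathrm{Ham}}(C_i, C_j) > \delta$, and $\tfrac{1}{|V_n|} \log N_n \to s > 0$. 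Choosing $r_0$ and $\lambda$ so that such shattering is known to hold is the model-specific step.

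With shattering in hand, fix $\kappa < \kappa' < \delta/2$ and neighborhoods $\cO' \supset \cO \ni \mu$ so that $\nu_n(\Omega(\cO,\sigma_n)) \to 1$. Since each cluster has diameter at most $\kappa$, all points of $C_i \cap \Omega(\cO, \sigma_n)$ lie in a single connected component of the $\kappa$-scale Vietoris--Rips complex on $\Omega(\cO, \sigma_n)$. Conversely, any two distinct clusters sit at Hamming distance greater than $\delta > \kappa'$, so no single $\kappa'$-edge bridges them inside $\Omega(\cO', \sigma_n)$. Provided the enlarged model space contains no $\kappa'$-path of intermediate configurations linking two clusters, the $0$-cycles contributed by different clusters remain distinct in the $(\kappa', \cO')$-complex, giving $b_0 \ge N_n'$ where $N_n'$ is the number of clusters meeting $\Omega(\cO, \sigma_n)$. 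A concentration estimate yields $\tfrac{1}{|V_n|} \log N_n' \to s' > 0$, and taking the appropriate infima over $\cO, \cO'$ and $\kappa, \kappa'$ produces $b_{\Si, 0}(\F_r \cc (\{0,1\}^{\F_r}, \mu)) \ge s' > 0$. The ``in particular'' conclusion then follows from the WPP-implies-vanishing-$b_{\Si,0}$ assertion established in the previous subsection.

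The principal obstacle is precisely the bridging question above: standard shattering results describe only the essential support of $\nu_n$, whereas $\Omega(\cO, \sigma_n)$ may contain non-Gibbs-typical configurations that could link two clusters via a $\kappa'$-path. One must either strengthen the shattering to control all configurations whose empirical $\F_r$-shift-distribution is close to $\mu$, showing each lies within Hamming distance $\kappa$ of some cluster, or argue that potential bridging configurations are few enough to be excised by shrinking $\cO$, at no cost to the cluster count $N_n'$. A secondary technicality is upgrading distributional statements about $N_n$ to high-probability lower bounds over the random graph, for which the second-moment or interpolation machinery developed for sparse random constraint satisfaction problems is the natural tool.
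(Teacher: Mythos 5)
Your high-level strategy (random regular graphs, independent sets, shattering, a limit measure whose model spaces break into exponentially many Hamming-separated clusters) matches the paper's, but two substantive pieces are missing, and the second is the heart of the matter.

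First, the construction of $\mu$ is different and weaker as stated. You build $\mu$ as a subsequential weak$^*$ limit of annealed empirical measures from a hardcore Gibbs model and then assert $\nu_n(\Omega(\cO,\sigma_n)) \to 1$ by ``a first-moment argument.'' This is not automatic in the shattered regime (local weak convergence of Gibbs samples to a candidate tree measure is exactly what becomes subtle when replica symmetry breaks), and the paper avoids it entirely. Instead it fixes a density $s = \bar s \log(2r)/r$, works with the \emph{uniform} measure on size-$\lfloor sn\rfloor$ independent sets (relating it to the planted model via Theorem~\ref{thm:planted}), and applies a Kerr--Li style variational principle (Proposition~\ref{prop:variational}) to produce, deterministically, a measure $\mu$ for which $\#(\Omega(\cO,\sigma_n)\cap\sW_n)$ has the same exponential growth as $\#\sW_n$ for \emph{every} neighborhood $\cO\ni\mu$. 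This is the statement you actually need; ``$\nu_n$-mass $\to 1$'' is both harder and not quite the right quantity.

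Second, and more importantly, you correctly identify the bridging problem and then leave it unsolved. Separation between the clusters carrying $\nu_n$-mass does not prevent a $\kappa'$-path through atypical configurations of $\Omega(\cO_2,\sigma_n)$ from connecting two clusters, and this is precisely what must be ruled out. The paper's resolution has two coupled ingredients: (i) Theorem~\ref{thm:indep}(1), the \emph{overlap gap} for independent sets of the random regular graph --- for suitable $b_1<b_2$ there is, whp, no independent set $W'$ at the relevant density with $b_1 n \le |W\cap W'|\le b_2 n$ for $W$ drawn from the uniform model; and (ii) a choice of $\cO_2$ that forces every microstate $x_i$ along a $\kappa_2$-path to have a small ``bad set'' $B(x_i)$ of independence violations, so that $x_i^{-1}(1)\setminus B(x_i)$ is an honest independent set of controlled density. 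Combining these, every point reachable from $x\in\sW_n$ by a $\kappa_2$-path in $\Omega(\cO_2,\sigma_n)$ stays within Hamming distance $\eps/10$ of $\Cluster_{s,\eps}(\sigma_n, x^{-1}(1))$, whose size is exponentially smaller than $\#\sW_n$ by Theorem~\ref{thm:indep}(2). Your sketch gives neither the overlap-gap form of shattering (you only posit separated clusters for the typical mass) nor the mechanism for controlling intermediate non-independent configurations. Until that is supplied, the lower bound on $\dim_\Q H_0(\cO_1,\cO_2,\kappa_1,\kappa_2,\sigma_n)\otimes_\Z\Q$ does not follow, so the argument has a genuine gap exactly where you flagged the ``principal obstacle.''
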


The example is based on the geometry of the space of independent subsets of random regular graphs. To be precise, let $G=(V,E)$ be a graph. A subset $W \subset V$ is {\bf independent} if there does not exist an edge between any two vertices of $W$. The {\bf density} of $W$ is $\#W/\#V$. The maximum density of an independent set is denoted $\a(G)$.

Fix an even integer $d \ge 3$ and consider choosing a $d$-regular graph $\bfG_{d,n}$ on $n$ vertices uniformly at random (amongst all $d$-regular graphs on $n$ vertices). The first moment method shows that $\a(\bfG_{d,n})$ is bounded above by $2\log(d)/d + o(\log(d)/d)$ with high probability as $n\to\infty$ \cite{MR1864966}. By a non-constructive argument using Azuma's inequality, Frieze-\L uczak obtained a matching lower bound \cite{MR1142268}. More recently, it was shown in \cite{MR3161470} that the limit $\lim_{n\to\infty} \E[\a(\bfG_{d,n})]$ exists and an explicit formula was obtained in \cite{MR3689942} by a deep study of the structure of high density independent sets.

There are no known polynomial-time algorithms for constructing independent subsets of $\bfG_{d,n}$ with density larger than $\log(d)/d$. It is argued in \cite{MR3385742} that a reason for this is that there are many independent subsets $I$ with density between $\log(d)/d$ and $2\log(d)/d$ that are maximal in the sense that they are not properly contained in any other independent subsets. Moreover, it is often the case that there does not exist a subset $I' \subset I$ with density larger than $\log(d)/d +\eps$ which is contained in an independent subset with density larger than the density of $I$. So local perturbations cannot be used to increase the density of a given independent subset. To be precise, the paper \cite{MR3385742} studies Erd\"os-Renyi style sparse graphs. However, the same ideas  can be adapted to regular graphs.


Another feature established in \cite{MR3385742} is that the space of independent sets with a fixed density in between $\log(d)/d$ and $2\log(d)/d$ ``shatters'' into exponentially many clusters separated by macroscopic gulfs. A similar phenomenon is used in  \cite{MR3359490} to show that no `local' algorithm can produce independent subsets of $\bfG_{d,n}$ with density larger than $(1 + \frac{1}{\sqrt{2}})\log(d)/d + \eps$. This was improved to $\log(d)/d + \eps$ in \cite{MR3650409}. Shattering is used here to obtain an action of the free group with positive $b_{\Si,0}$.

In order to explain how to utilize these results to obtain Theorem \ref{thm:main1}, let $\F_r = \langle a_1,\ldots, a_r\rangle$ be the free group of rank $r\ge 2$. Given a homomorphism $\s: \F_r \to \sym(n)$,  let $G(\s)$ be the multi-graph with vertex set $[n]$ and edges $\{v, \s(a_i)v\}$ (over $v\in [n], 1\le i \le r$). The {\bf permutation model} is the random graph $G(\bfsig_n)$ where $\bfsig_n$ is a uniformly random homomorphism from $\F_r$ to $\sym(n)$. By \cite{MR1909503} the permutation model and the configuration model used in \cite{MR1142268, MR3359490} to study $\bfG_{2r,n}$ are contiguous. This allows results about $\bfG_{2r,n}$ to be transferred to $G(\bfsig_n)$. 

A result of Bollob\'as  \cite{MR595929} implies that, with high probability, $\bfG_{2r,n}$ has few short cycles. Together with the contiguity theorem, this shows the existence of a sofic approximation $\Si=\{\s_n\}_{n=1}^\infty$ to $\F_r$ such that the deterministic graph $G(\s_n)$ and the random graph $G(\bfsig_n)$ have (with high probability) approximately the same number of independent sets (of some fixed density). Moreover, the space of independent subsets of $G(\s_n)$ at a certain fixed density shatters.

An action of the free group is obtained using a non-constructive compactness argument whose proof is related to the proof of the Variational Principle in \cite{kerr-li-variational}. The end result is an invariant measure $\mu$ on the shift space $\{0,1\}^{\F_r}$ such that a significant fraction of independent subsets at a certain fixed density of $G(\s_n)$ are good models for $\mu$. From this, we conclude $b_{\Si,0}(\mu)>0$.



\subsection{A brief guide to the paper}

\begin{itemize}
\item \S \ref{sec:notation} explains notational conventions.
\item \S \ref{sec:entropy} reviews sofic entropy and fixes notation used throughout the paper.
\item \S \ref{sec:homology} defines the new homological invariants.
\item \S \ref{sec:prelim}-\ref{sec:proof1} contain the proof that the new invariants are in fact invariant.
\item \S \ref{sec:computations} contains proofs that the new invariants trivialize when the group is amenable or the action is Bernoulli. Also in this section is a proof that if $\bfa$ has the WPP with respect to $\Si$ then $b_{\Si,0}(\bfa)=0$.
\item \S \ref{sec:markov} proves Theorem \ref{thm:indep0}.
\item \S \ref{sec:questions} is a list of open problems related to the new invariants and the Weak Pinsker Property.

\end{itemize}

{\bf Acknowledgments}. The homological invariants introduced in this paper are inspired by \cite{MR3543677}. The techniques for proving that they are measure-conjugacy invariants are simplified versions of techniques introduced in \cite{MR3542515}. Brandon Seward suggested that it might be possible to use the shattering property to give a counterexample to the WPP. I would like to thank Dylan Airey, Tim Austin and Brandon Seward for many conversations related to this paper. Also thanks to IPAM, the Institute for Pure and Applied Mathematics and the UCLA mathematics dept.  A significant part of this paper was written while I was visiting during the Quantitative Linear Algebra Program in the Spring of 2018. Also thanks to Tim Austin and Chris Shriver for catching errors in previous versions. The author was supported in part by NSF grant DMS-1500389 and a fellowship from the Simons Foundation.

\section{Notation and conventions}\label{sec:notation}
In general, if $A,B$ are sets then $A^B$ denotes the set of all functions $x:B \to A$. If $x\in A^B$ and $b\in B$ then the notations $x(b)$ and $x_b$ express the same element of $A$.

All maps and subsets are measurable unless explicitly stated otherwise. As a rule, all measure zero phenomena are ignored.

Given a topological space $X$, let $\Prob(X)$ denote the space of all Borel probability measures on $X$ endowed with the weak* topology. This is the smallest topology such that for every continuous compactly supported function $f$ on $X$ the map $\mu \mapsto \int f~d\mu$ is continuous (for $\mu \in \Prob(X)$). If $X$ is compact then the Banach-Alaoglu Theorem implies $\Prob(X)$ is compact. If $\G \cc X$ is a continuous action by $\G$ then let $\Prob_\G(X)$ denote the subspace of $\G$-invariant Borel probability measures. This is a closed subspace of $\Prob(X)$.

We write $f(n)=o_n(1)$ to mean $\lim_{n\to\infty} f(n) = 0$. Similarly, $f(r)=o_r(\log^2(r)/r)$ means $\lim_{r\to\infty} f(r) \left( \log^2(r)/r \right)^{-1} = 0$.

\section{A review of sofic entropy}\label{sec:entropy}

We will use the symbolic approach to sofic entropy with notational conventions similar to Tim Austin's from \cite{MR3543677, MR3542515}.

\subsection{Sofic approximations}


Suppose $\sigma:\Ga \to \sym(V)$ is a map where $V$ is a finite set and $\sym(V)$ is the group of permutations of $V$. It is not required that $\sigma$ is a homomorphism. Let $D \Subset \G$ be finite and $\d>0$. Then $\s$ is 
\begin{itemize}
\item {\bf $(D,\d)$-multiplicative} if 
$$\#\{v\in V:~ \sigma_i(gh)v = \sigma_i(g)\sigma_i(h)v ~ \forall g,h \in D\} > (1- \delta )|V|,$$
\item {\bf $(D,\d)$-trace preserving} if 
$$\#\{v\in V:~ \sigma_i(f)v\ne v~\forall f\in D\setminus\{1_\G\} \} >(1- \delta) |V|,$$
\item {\bf $(D,\d)$-sofic} if it is both $(D,\d)$-multiplicative and $(D,\d)$-trace preserving.
\end{itemize}
A {\bf sofic approximation} to $\Ga$ consists of a sequence $\Si = \{\sigma_i\}_{i\in \N}$ of maps $\sigma_i:\Ga \to \sym(V_i)$ such that for all finite $D\subset \G$, $\d>0$ and all but finitely many $i$, $\sigma_i$ is $(D,\d)$-sofic. A group is {\bf sofic} it admits a sofic approximation.

\subsection{Sofic entropy}\label{sec:sofic-entropy}

Throughout, $(\cX,d_\cX)$ and $(\cY,d_\cY)$ denote compact metric spaces. Given a finite set $V$, let $d_\cX^V$ be the normalized $\ell^1$-metric on $\cX^V$ defined by
$$d^V_\cX(x,y) := |V|^{-1}\sum_{v\in V} d_\cX(x_v,y_v).$$


For any finite set $V$, map $\sigma:\Ga \to \sym(V)$, $x \in \cX^{V}$ and $v\in V$ the {\bf pullback name of $x$ with respect to $(\sigma,v)$} is the element $\Pi^{\sigma}_v(x) \in \cX^\Ga$ defined by
$$\Pi^{\sigma}_v(x)(g) := x(\sigma(g)^{-1}v).$$
For example, if $\s$ is a homomorphism then $h \Pi^\s_v(x) = \Pi^\s_{\s(h)v}(x)$ so that the map $v \mapsto \Pi^\s_v(x)$ is $\G$-equivariant. 

The {\bf empirical measure of $x$} is the probability measure $P^{\sigma}_x$ on $\cX^\Ga$ defined by
$$P^{\sigma}_x := |V|^{-1} \sum_{v\in V} \delta_{\Pi^\sigma_v(x)}.$$
For example, if $\s$ is a homomorphism and $\s(\G)$ acts transitively on $V$ then $\{\Pi^\sigma_v(x):~v\in V\}$ is a single $\G$-orbit in which case $P^\s_x$ is the uniform measure on a finite $\G$-orbit. 

Given $\cO \subset \Prob(\cX^\Ga)$, an element $x\in \cX^V$ is a {\bf $(\cO,\s)$-microstate} if $P^\s_x \in \cO$. Let $\Omega(\cO,\sigma)$ be the set of all $(\cO,\s)$-microstates. The metric space $(\Omega(\cO,\sigma), d^V_\cX)$ is a {\bf model space} for the action $\G \cc (\cX^\G,\mu)$ for any $\mu \in \cO$. A major idea introduced in \cite{MR3542515, MR3543677} is to derive measure-conjugacy invariants from the asymptotic geometric features of these model spaces. 

Recall that a subset $Y$ of a metric space $(X,d_X)$ is {\bf $\epsilon$-covering} if $X$ is the open $\eps$-neighborhood of $Y$. Let $\cov_\eps(X,d_X)$ denote the minimum cardinality of an $\eps$-covering subset of $X$.

Let $\Si=\{\sigma_i\}_{i\in \N}$ be a sofic approximation to $\G$. The {\bf $\Si$-entropy of $\G \cc (\cX^\G,\mu)$}  is defined by
$$h_\Si(\mu):=\sup_{\eps>0} \inf_{\cO\ni \mu} \limsup_{i \to \infty} |V_i|^{-1}\log \cov_\eps(\Omega(\cO,\s_i), d_\cX^{V_i}).$$
See \cite{MR3542515} for a proof that this definition is equivalent to previous formulations of sofic entropy given in \cite{bowen-jams-2010} or \cite{kerr-li-variational} for example. For general discussions or when $\Si$ is left implicit, the $\Si$-entropy is called the {\bf sofic entropy}.

The basic facts about sofic entropy are: it is a measure-conjugacy invariant, it agrees with classical entropy when $\G$ is amenable, it can depend on the choice of sofic approximation, it can increase under factor maps, the sofic entropy of a Bernoulli shift is the Shannon entropy of the base. See \cite{MR3966832} for an introduction.


\begin{remark}
In the special case in which $\cX$ is finite, the definition above  reduces to
$$h_\Si(\mu):=\inf_{\cO\ni \mu} \limsup_{i \to \infty} |V_i|^{-1}\log \#\Omega(\cO,\s_i).$$
\end{remark}


\section{Sofic homology}\label{sec:homology}

\subsection{Homology theory on the Hamming cube}
Fix a finite set $V$ and compact metric space $(\cX,d_\cX)$. For an integer $d\ge 0$, let $C_d(\cX^V)$ be the abelian group generated by all symbols of the form $[x_0,\ldots, x_d]$ (with $x_0,\ldots, x_d \in \cX^V$) subject to the relations:
$$[x_{\pi(0)},\ldots, x_{\pi(d)}] = (-1)^{\textrm{sign}(\pi)}[x_0,x_1,\ldots, x_d]$$
over all $\pi \in \sym(d+1)$. An element of the form $[x_0,\ldots, x_d]$ is an {\bf oriented $d$-simplex} of $\cX^V$ and an element of $C_d(\cX^V)$ is called a {\bf $d$-chain}. 

Let $\partial_d: C_d(\cX^V) \to C_{d-1}(\cX^V)$ denote the boundary map
$$\partial_d( [ x_0,\ldots, x_d] ) = \sum_{i=0}^d (-1)^i [x_0,\ldots, \hx_i, \ldots, x_d]$$
where $\hx_i$ indicates that $x_i$ is omitted.

There is not much interesting that we can say about the homology of the Hamming cube $\cX^V$. Instead we will focus on special subgroups of $C_d(\cX^V)$ defined in terms of sofic approximation data as explained next.

\subsection{Special subgroups defined by a sofic approximation}

Let $\s:\G \to \sym(V)$ be a map. Given an open subset $\cO \subset \Prob(\cX^\G)$ and $\k>0$, let $C_d(\cO,\k,\s)$ be the subgroup of $C_d(\cX^V)$ generated by all chains of the form $[x_0,\ldots, x_d]$ such that each $x_i$ is a $(\cO,\s)$-microstate (that is $P_{x_i}^\s \in \cO$) and $d_\cX^V(x_i,x_j)<\k$ for all $i,j$. Let 
$$Z_d(\cO,\k,\s)= \ker(\partial_d) \cap C_d(\cO,\k,\s)$$
$$B_d(\cO,\k,\s) = \partial_{d+1}( C_{d+1}(\cO,\k,\s))$$
be the $(\cO,\k,\s)$-cycles and boundaries respectively. 

The {\bf length} of a $d$-chain $z \in C_d(\cX^V)$ is the smallest number of oriented simplices needed to represent $z$. So if
$$z = \sum_{i=1}^k c_i s_i$$
where $c_i \in \Z$ are coefficients and $s_i = [x^i_0,\ldots, x^i_d]$ is an oriented simplex then the length of $z$ is at most $k$. For $L>0$, let $Z_d^L(\cO,\k,\s)$ be the subgroup of $Z_d(\cO,\k,\s)$ generated by $(\cO,\k,\s)$-cycles of length $\le L$. To be precise, $z \in Z_d^L(\cO,\k,\s)$ if it is possible to write $z=\sum_{i=1}^k c_i z_i$ for some coefficients $c_i \in \Z$ and cycles $z_i \in Z_d(\cO,\k,\s)$ such that each $z_i$ has length $\le L$. 

Given nested open subsets $\cO_1\subset \cO_2 \subset \Prob(\cX^\G)$, constants $0<\k_1\le \k_2$ and $L>0$, define the homology group
$$H^L_d(\cO_1,\cO_2,\k_1,\k_2,\s):= \frac{ Z^L_d(\cO_1,\k_1,\s) }{ Z^L_d(\cO_1,\k_1,\s) \cap B_d(\cO_2,\k_2,\s)}.$$

\subsection{Main results}

Before stating the main theorem, we mention the following corollary which gives the flavor of the main result without as many quantifiers.

\begin{defn}
A group $H$ is a {\bf QS-group} of a group $G$ if $H$ is isomorphic to a quotient of a subgroup of $G$. Let $\Abel$ denote the class of abelian groups. A function $F:\Abel \to \R$ is {\bf monotone} if whenever $H$ is an QS-group of $G$, $F(H) \le F(G)$.
\end{defn}

The next corollary follows immediately from Theorem \ref{thm:mc} which is stated below.
\begin{cor}\label{cor:monotone}
Let $F=\{F_i\}_{i\in\N}$ be a sequence of monotone functions $F_i:\Abel \to \R$. Given an invariant measure $\mu \in \Prob_\G(\cX^\G)$, a sofic approximation $\Si$ and $L \in [1,\infty]$, define 
$$F_{d,\Si}(\mu):= \sup_{\cO_2 \ni \mu} \sup_{\kappa_2>0} \inf_{\mu\in\cO_1\subset \cO_2}\inf_{\kappa_1>0} \sup_{0<L<\infty} \limsup_{i\to\infty} F_i(H^L_d(\cO_1,\cO_2,\k_1,\k_2,\s_i)).$$
If $\cX$ is totally disconnected then $F_{d,\Si}$ is a measure-conjugacy invariant. In other words, if $(\cY,d_\cY)$ is another totally disconnected compact metric space, $\nu \in \Prob_\G(\cY^\G)$ and the actions $\G \cc (\cX^\G,\mu), \G \cc (\cY^\G,\nu)$ are measurably conjugate then $F_{d,\Si}(\mu)=F_{d,\Si}(\nu)$.
\end{cor}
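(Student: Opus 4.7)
The plan is to obtain Corollary \ref{cor:monotone} as an immediate consequence of Theorem \ref{thm:mc}, which is stated in the sequel. One expects Theorem \ref{thm:mc} to provide the following matching of model-space homologies: given a measurable conjugacy between $\G\cc(\cX^\G,\mu)$ and $\G\cc(\cY^\G,\nu)$ with both $\cX$ and $\cY$ totally disconnected, and given any outer data $(\cO_2^Y,\k_2^Y)$ on the $\nu$-side, there exist matching outer data $(\cO_2^X,\k_2^X)$ on the $\mu$-side such that for every inner refinement $(\cO_1^X,\k_1^X)$ one can find inner data $(\cO_1^Y,\k_1^Y)$ on the $\nu$-side with the property that for every $L_Y>0$ there is an $L_X\ge L_Y$ and an $N\in\N$ so that for all $i\ge N$ the group
\[
H^{L_Y}_d(\cO_1^Y,\cO_2^Y,\k_1^Y,\k_2^Y,\s_i)
\]
is a QS-group of $H^{L_X}_d(\cO_1^X,\cO_2^X,\k_1^X,\k_2^X,\s_i)$.

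Granting this, monotonicity of each $F_i$ converts the QS-inclusion into the pointwise inequality $F_i(H^{L_Y}_d(\nu\text{-side},\s_i))\le F_i(H^{L_X}_d(\mu\text{-side},\s_i))$ for all $i$ sufficiently large. The rest is a routine quantifier chase. Taking $\limsup_i$ preserves the inequality; taking $\sup_{L_X}$ on the right only enlarges the right-hand side; and since for each inner $(\cO_1^X,\k_1^X)$ the chosen inner $(\cO_1^Y,\k_1^Y)$ works for \emph{all} $L_Y$, one can take $\sup_{L_Y}$ on the left before taking $\inf$ over inner $\nu$-side parameters. Finally $\inf$ over inner $\mu$-side parameters on the right and $\sup$ over outer $\nu$-side parameters on the left produce
\[
F_{d,\Si}(\nu)\;\le\;F_{d,\Si}(\mu).
\]
The symmetric application of Theorem \ref{thm:mc} with the roles of $\mu$ and $\nu$ interchanged yields the reverse inequality, so $F_{d,\Si}(\mu)=F_{d,\Si}(\nu)$.

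The main obstacle is not in this derivation but in Theorem \ref{thm:mc} itself. That theorem must build continuous maps $\cX^\G\to\cY^\G$ and back that approximate the measurable conjugacy well enough to carry microstates to microstates and to push cycles to cycles with controlled distortion of both the Hamming scale $\k$ and the length bound $L$. Total disconnectedness of $\cX$ and $\cY$ is essential here: it allows the almost-everywhere-defined conjugacy to be approximated by a cylinder-set map, which is the continuous surrogate needed to act on the simplicial structure. The careful nesting $(\cO_1,\k_1)\subset(\cO_2,\k_2)$ between inner and outer parameters must absorb the quantitative errors in this approximation, and the passage from $L_Y$ to $L_X$ has to remain finite — this is what makes the truncation-by-length feature of $H^L_d$ rather than the full homology $H_d$ the natural object to track.
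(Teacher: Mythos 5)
Your proposal is correct and takes essentially the same approach as the paper, which states the corollary ``follows immediately'' from Theorem~\ref{thm:mc} and leaves the quantifier chase implicit; the chase you write out is the right one, and your key observation that the inner $\nu$-side data $(\cO_{1,\nu},\k_{1,\nu})$ is chosen \emph{before} the quantifier over $L$ is exactly what makes the $\sup_L$ commute correctly with $\inf$ over inner parameters. The only minor divergence is that you conjecture Theorem~\ref{thm:mc} might force an enlargement $L_X\ge L_Y$, whereas the paper's definition of ``less than or equal to sofic homology'' in fact gives the same $L$ on both sides; your weaker version still suffices for the derivation, so nothing is lost.
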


The next result follows by setting $F_i(G):= |V_i|^{-1} \log \dim_\Q(G\otimes_\Z \Q)$ in the previous corollary.
\begin{cor}\label{cor:betti}
Given $\mu \in \Prob_\G(\cX^\G)$, define the {\bf $d$-th betti number of $\mu$ with respect to $\Si$} by
$$b_{d,\Si}(\mu):= \sup_{\cO_2 \ni \mu} \sup_{\kappa_2>0} \inf_{\mu\in\cO_1\subset \cO_2}\inf_{\kappa_1>0} \sup_{0<L<\infty} \limsup_{i\to\infty}  |V_i|^{-1} \log \dim_\Q(H^L_d(\cO_1,\cO_2,\k_1,\k_2,\s_i) \otimes_\Z \Q).$$
If $\cX$ is totally disconnected then $b_{d,\Si}(\mu)$ is a measure-conjugacy invariant.
\end{cor}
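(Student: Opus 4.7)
The plan is to derive this as an immediate application of Corollary \ref{cor:monotone} with the specific choice
$$F_i(G) := |V_i|^{-1} \log \dim_\Q(G\otimes_\Z \Q),$$
using the convention $\log 0 = -\infty$ and $\log \infty = +\infty$. Under this substitution the formula for $F_{d,\Si}(\mu)$ in Corollary \ref{cor:monotone} coincides term-by-term with the formula defining $b_{d,\Si}(\mu)$. Hence the only thing left to verify is that each $F_i$ belongs to the class of monotone real-valued functions on $\Abel$, i.e.\ that $F_i(H) \le F_i(G)$ whenever $H$ is a QS-group of $G$.

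Monotonicity will follow from the flatness of $\Q$ as a $\Z$-module. Suppose $H$ is a QS-group of $G$, so there exist a subgroup $G' \le G$ and a surjection $G' \twoheadrightarrow H$. Applying the exact functor $-\otimes_\Z \Q$ converts the inclusion into an injection $G'\otimes_\Z\Q \hookrightarrow G\otimes_\Z\Q$ and the quotient map into a surjection $G'\otimes_\Z\Q \twoheadrightarrow H\otimes_\Z\Q$ of $\Q$-vector spaces. Comparing $\Q$-dimensions yields
$$\dim_\Q(H\otimes_\Z\Q) \;\le\; \dim_\Q(G'\otimes_\Z\Q) \;\le\; \dim_\Q(G\otimes_\Z\Q),$$
and multiplying by $|V_i|^{-1}>0$ after taking $\log$ preserves the inequality. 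Thus $F_i$ is monotone.

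With monotonicity in hand, Corollary \ref{cor:monotone} applies verbatim to deliver the conclusion that $b_{d,\Si}(\mu)$ is a measure-conjugacy invariant whenever $\cX$ is totally disconnected. There is essentially no obstacle: the substantive work is packaged inside Corollary \ref{cor:monotone} (hence ultimately inside Theorem \ref{thm:mc}), and the only verification for this corollary is the elementary exactness argument above. The sole minor point is the convention for $\log 0$ and $\log\infty$, which is harmless since it preserves order and therefore preserves monotonicity in the required sense.
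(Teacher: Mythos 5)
Your proposal is exactly the paper's proof: the paper explicitly says Corollary \ref{cor:betti} ``follows by setting $F_i(G):= |V_i|^{-1} \log \dim_\Q(G\otimes_\Z \Q)$ in the previous corollary,'' and the monotonicity check you supply (flatness of $\Q$ over $\Z$ giving $\dim_\Q(H\otimes\Q) \le \dim_\Q(G'\otimes\Q) \le \dim_\Q(G\otimes\Q)$, then applying the increasing map $t\mapsto |V_i|^{-1}\log t$) is the correct and standard verification that the paper leaves implicit. Your remark on the $\log 0$/$\log\infty$ convention is a fair point about the codomain of $F_i$; it does not affect the argument.
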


The main definition is:
\begin{defn}
Let $\mu \in \Prob_\G(\cX^\G), \nu \in \Prob_\G(\cY^\G)$, $L,d \ge 0$. Then the {\bf $d$-dimensional sofic homology of $\nu$ is less than or equal to the $d$-dimensional sofic homology of $\mu$}  if for every open neighborhood $\cO_{2,\nu} \ni \nu$, every $\k_{2,\nu}>0$ there exist an open neighborhood $\cO_{2,\mu} \ni \mu$ and $\k_{2,\mu}>0$ such that for every open $\cO_{1,\mu}$ with $\mu \in \cO_{1,\mu} \subset \cO_{2,\mu}$ and every $\k_{1,\mu}$ with $0<\k_{1,\mu}\le \k_{2,\mu}$ there exist an open neighborhood $\cO_{1,\nu}$ with $\nu \in \cO_{1,\nu} \subset \cO_{2,\nu}$ and $\k_{1,\nu}$ with $0<\k_{1,\nu} \le \k_{2,\nu}$ such that for every $0<L<\infty$ and all but finitely many $n$, $H^L_d(\cO_{1,\nu},\cO_{2,\nu},\k_{1,\nu},\k_{2,\nu},\s_n)$ is a QS-group of $H^L_d(\cO_{1,\mu},\cO_{2,\mu},\k_{1,\mu},\k_{2,\mu},\s_n)$. The {\bf $d$-dimensional sofic homology theories of $\mu$ and $\nu$ are equivalent} if the $d$-dimensional sofic homology of $\mu$ is less than or equal to the $d$-dimensional sofic homology of $\nu$ and vice versa. 
\end{defn}

The main theorem is:
\begin{thm}\label{thm:mc}
The homology groups defined above yield a measure-conjugacy invariant as follows. Suppose $\cX,\cY$ are totally disconnected compact metric spaces, $\mu \in \Prob_\G(\cX^\G), \nu \in \Prob_\G(\cY^\G)$ and $\G \cc (\cX^\G,\nu)$ is measurably conjugate to $\G \cc (\cY^\G,\nu)$. Then $\mu$ and $\nu$ have equivalent $d$-dimensional sofic homology theories with respect to every approximation $\Si$ and for every dimension $d$.
\end{thm}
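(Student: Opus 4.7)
The plan is to combine three standard techniques: approximation of the measure-conjugacy by continuous $\G$-equivariant block codes (using total disconnectedness of $\cX$ and $\cY$), transfer of such block codes to maps between model spaces via the sofic approximation, and a prism chain homotopy to show that the induced map on $H^L_d$ is surjective. By symmetry in $\mu \leftrightarrow \nu$ this yields the required equivalence.

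First, given the measure-conjugacy $\Theta:(\cX^\G,\mu) \to (\cY^\G,\nu)$ and any $\vre>0$, because $\cX,\cY$ are compact totally disconnected I would produce finite windows $F,E\Subset \G$ and $\G$-equivariant continuous block codes $\phi_F:\cX^\G\to\cY^\G$ and $\psi_E:\cY^\G\to\cX^\G$, depending only on coordinates in $F$ and $E$ respectively, such that $\phi_F=\Theta$ on a $\mu$-set of measure $\ge 1-\vre$, $\psi_E=\Theta^{-1}$ on a $\nu$-set of measure $\ge 1-\vre$, and consequently $\phi_F\circ\psi_E$ agrees with the identity on a $\nu$-set of measure $\ge 1-2\vre$. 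For any $\sigma:\G\to\sym(V)$ that is sufficiently $(F\cup E,\delta)$-sofic, define $\Phi_\sigma:\cX^V\to\cY^V$ by $\Phi_\sigma(x)_v:=\phi_F(\Pi^\sigma_v(x))$, and similarly $\Psi_\sigma$. Routine arguments (using soficity to exchange the $\G$-action on $\cX^\G$ with the permutation action on $V$) then yield a Lipschitz bound $d_\cY^V(\Phi_\sigma(x),\Phi_\sigma(x'))\le |F|\, d_\cX^V(x,x')$ (and similarly for $\Psi_\sigma$); a microstate-transport property, namely for any open $\cU\ni\nu$ there exist an open $\cO\ni\mu$ and $\delta_0>0$ so that $\Phi_\sigma$ sends every $(\cO,\sigma)$-microstate to a $(\cU,\sigma)$-microstate provided $\sigma$ is $(F,\delta_0)$-sofic; and a near-identity estimate $d_\cY^V(y,\Phi_\sigma\Psi_\sigma y)\le 4\vre$ for every sufficiently good microstate $y$ for $\nu$. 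Extended simplex-wise, $\Phi_\sigma$ and $\Psi_\sigma$ induce chain maps on $C_*(\cX^V)$ and $C_*(\cY^V)$ that preserve simplex-length.

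With these facts in hand I would calibrate parameters as follows. Given $\cO_{2,\nu},\k_{2,\nu}$, first choose $F,\phi_F$ with $\vre_1<\k_{2,\nu}/8$, then $\cO_{2,\mu},\k_{2,\mu}$ with $\k_{2,\mu}\le\k_{2,\nu}/(2|F|)$ and $\cO_{2,\mu}$ small enough that $\Phi_\sigma$ transports $(\cO_{2,\mu},\sigma)$-microstates into $(\cO_{2,\nu},\sigma)$-microstates. Given subsequently $\cO_{1,\mu},\k_{1,\mu}$, choose $E,\psi_E$ with $\vre_2<\min(\k_{2,\nu},\k_{1,\mu})/8$, and finally $\cO_{1,\nu},\k_{1,\nu}$ with $\k_{1,\nu}\le\k_{1,\mu}/(2|E|)$ and $\cO_{1,\nu}$ so small that $\Psi_\sigma$ transports $(\cO_{1,\nu},\sigma)$-microstates into $(\cO_{1,\mu},\sigma)$-microstates. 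Then for any cycle $z\in Z^L_d(\cO_{1,\nu},\k_{1,\nu},\sigma_n)$, the chain $w:=\Psi_{\sigma,*}(z)$ lies in $Z^L_d(\cO_{1,\mu},\k_{1,\mu},\sigma_n)$ and has length $\le L$, and $\Phi_{\sigma,*}(w)$ lies in $Z^L_d(\cO_{2,\nu},\k_{2,\nu},\sigma_n)$.

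The main obstacle, and the critical step, is to verify that $[\Phi_{\sigma,*}(w)]=[z]$ in $H^L_d(\cO_{1,\nu},\cO_{2,\nu},\k_{1,\nu},\k_{2,\nu},\sigma_n)$. For this I would use the prism chain-homotopy
$$H_\sigma([y_0,\ldots,y_d]):=\sum_{i=0}^d(-1)^i[y_0,\ldots,y_i,\Phi_\sigma\Psi_\sigma y_i,\ldots,\Phi_\sigma\Psi_\sigma y_d],$$
which satisfies $\partial H_\sigma+H_\sigma\partial=(\Phi_\sigma\Psi_\sigma)_*-\mathrm{id}$, so on the cycle $z$ one obtains $\Phi_{\sigma,*}(w)-z=\partial H_\sigma(z)$. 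The delicate point is that every simplex appearing in $H_\sigma(z)$ must lie in $C_{d+1}(\cO_{2,\nu},\k_{2,\nu},\sigma_n)$: the microstate condition is immediate from the setup (each $y_i\in\cO_{1,\nu}\subseteq\cO_{2,\nu}$ and each $\Phi_\sigma\Psi_\sigma y_i\in\cO_{2,\nu}$), while pairwise $\k_{2,\nu}$-closeness of vertices follows from $d_\cY^V(y_i,y_j)\le\k_{1,\nu}\le\k_{2,\nu}/4$, $d_\cY^V(\Phi_\sigma\Psi_\sigma y_i,\Phi_\sigma\Psi_\sigma y_j)\le|F||E|\k_{1,\nu}\le\k_{2,\nu}/4$, and $d_\cY^V(y_i,\Phi_\sigma\Psi_\sigma y_i)\le 4\vre_1\le\k_{2,\nu}/2$. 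Hence $\Phi_{\sigma,*}(w)-z\in B_d(\cO_{2,\nu},\k_{2,\nu},\sigma_n)$, proving $\Phi_{\sigma,*}:H^L_d(\mu)\twoheadrightarrow H^L_d(\nu)$; thus $H^L_d(\nu)$ is a quotient, hence a QS-group, of $H^L_d(\mu)$. Exchanging $\mu\leftrightarrow\nu$ gives the reverse inclusion. The hardest part is precisely the parameter calibration above: the Lipschitz constraints for $\Phi_\sigma$ and $\Psi_\sigma$ pull the $\k_1$-parameters in opposite directions, and the prism homotopy only fits because the $\k_2$-parameters provide a safety margin; modelling this after the techniques of \cite{MR3542515} should be manageable but requires care.
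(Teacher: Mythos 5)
Your overall strategy mirrors the paper's own: approximate $\Theta$ and $\Theta^{-1}$ by local maps, transfer them to model spaces, and use a prism chain homotopy (the paper's Lemma~\ref{lem:cylinder}) to show that the round-trip acts as the identity on homology. The one genuinely different choice is that you use block codes agreeing with $\Theta$, $\Theta^{-1}$ off a small set, whereas the paper uses $\eta$-UC approximations agreeing in $L^1$; either can be made to work, though the UC version transfers more painlessly to the case where $\cX$ and $\cY$ are infinite totally disconnected spaces, where your "$|F|$-Lipschitz" bound for $\Phi_\sigma$ is not literally available and must be replaced by $\eta$-almost Lipschitz estimates.

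There is, however, a genuine gap at the end. The assertion ``$\Phi_{\sigma,*}:H^L_d(\mu)\twoheadrightarrow H^L_d(\nu)$, thus $H^L_d(\nu)$ is a quotient of $H^L_d(\mu)$'' is not supported by your construction, and with your parameter calibration it cannot be: $\Phi_\sigma$ is (roughly) $|F|$-Lipschitz, so it carries $\k_{1,\mu}$-close vertices to only $|F|\k_{1,\mu}$-close vertices, which is far larger than $\k_{1,\nu}\le\k_{1,\mu}/(2|E|)$; and the required microstate transport $\Omega(\cO_{1,\mu},\sigma)\to\Omega(\cO_{1,\nu},\sigma)$ would force $\cO_{1,\mu}$ to be chosen \emph{after} $\cO_{1,\nu}$, contradicting the quantifier order in the definition of the invariant. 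In short, $\Phi_{\sigma,*}$ does not map $Z^L_d(\cO_{1,\mu},\k_{1,\mu},\sigma_n)$ into $Z^L_d(\cO_{1,\nu},\k_{1,\nu},\sigma_n)$, so it is not a well-defined homomorphism on $H^L_d(\mu)$. What your estimates actually prove (and what the paper proves) is that $H^L_d(\nu)$ is a quotient of the \emph{subgroup} $S:=q_{n,\mu}\bigl(\Psi_{\sigma,*}Z^L_d(\cO_{1,\nu},\k_{1,\nu},\sigma_n)\bigr)\le H^L_d(\mu)$, via the assignment $q_{n,\mu}\Psi_{\sigma,*}(z)\mapsto q_{n,\nu}(z)$. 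That assignment requires a well-definedness check which you omit entirely, namely $(\Psi_{\sigma,*})^{-1}\bigl(B_d(\cO_{2,\mu},\k_{2,\mu},\sigma_n)\bigr)\cap Z^L_d(\cO_{1,\nu},\k_{1,\nu},\sigma_n)\subset B_d(\cO_{2,\nu},\k_{2,\nu},\sigma_n)$ (the paper's Claim~3). The good news is this does follow from ingredients you already have: if $\Psi_{\sigma,*}(z)\in B_d(\cO_{2,\mu},\k_{2,\mu},\sigma_n)$, then $\Phi_{\sigma,*}\Psi_{\sigma,*}(z)\in B_d(\cO_{2,\nu},\k_{2,\nu},\sigma_n)$ (your calibration $|F|\k_{2,\mu}\le\k_{2,\nu}/2$), and combining with your prism identity $\Phi_{\sigma,*}\Psi_{\sigma,*}(z)-z\in B_d(\cO_{2,\nu},\k_{2,\nu},\sigma_n)$ gives $z\in B_d(\cO_{2,\nu},\k_{2,\nu},\sigma_n)$. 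You should restate the conclusion as ``$H^L_d(\nu)$ is a QS-group of $H^L_d(\mu)$,'' not a quotient, and explicitly supply this well-definedness step.
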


\begin{remark}
All of the definitions could be changed by setting $L=\infty$ throughout. The analog of Theorem \ref{thm:mc} still holds under this change with essentially the same proof. However, we do not know how to compute this homology except in degenerate cases.
\end{remark}



\section{Preliminaries to the proof of Theorem \ref{thm:mc}}\label{sec:prelim}

\subsection{Almost Lipschitz maps}

\begin{defn}

Let $(X,d_X)$ and $(Y,d_Y)$ be metric spaces, let $\eps > 0$, and let $L < \infty$.  A map $\phi:X \to Y$ is \textbf{$\eps$-almost $L$-Lipschitz} if
\[d_Y(\phi(x),\phi(x')) \leq \eps + Ld_X(x,x') \quad \forall x,x' \in X.\]
A map is \textbf{$\eps$-almost Lipschitz} if it is so for some $L$.
\end{defn}

\begin{lem}\label{lem:uc-al}
A uniformly continuous map from a bounded metric space to another bounded metric space is $\eta$-almost Lipschitz for every $\eta>0$.
\end{lem}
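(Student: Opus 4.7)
The plan is to use uniform continuity to control nearby pairs and use the boundedness of $\cY$ to handle far-apart pairs by choosing a sufficiently large Lipschitz constant $L$.

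Fix $\eta>0$. By uniform continuity, choose $\delta>0$ such that $d_X(x,x')<\delta$ implies $d_Y(\phi(x),\phi(x'))<\eta$. Let $D$ be an upper bound on the diameter of $Y$ (which is finite since $Y$ is bounded), and set $L := D/\delta$.

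For any $x,x'\in X$, I would split into two cases. If $d_X(x,x')<\delta$, then by uniform continuity
\[ d_Y(\phi(x),\phi(x')) < \eta \leq \eta + L\, d_X(x,x'). \]
If instead $d_X(x,x')\geq \delta$, then
\[ L\, d_X(x,x') \geq L\delta = D \geq d_Y(\phi(x),\phi(x')), \]
so again $d_Y(\phi(x),\phi(x')) \leq \eta + L\, d_X(x,x')$. In either case $\phi$ satisfies the $\eta$-almost $L$-Lipschitz inequality, so $\phi$ is $\eta$-almost Lipschitz.

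There is no real obstacle here: the argument is a standard two-case split exploiting uniform continuity on the small-distance regime and the bounded diameter of the target on the large-distance regime. Boundedness of $X$ is not actually used in the argument; only boundedness of $Y$ matters.
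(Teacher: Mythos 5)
Your proof is correct and takes essentially the same approach as the paper: pick $\delta$ from uniform continuity for tolerance $\eta$, and use $L=\diam(\cY)/\delta$ to absorb far-apart pairs via the bounded diameter of the target. Your observation that boundedness of the domain is not actually needed is also accurate.
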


\begin{proof}
Let $\phi: \cX \to \cY$ be a uniformly continuous map from a bounded space $(\cX,d_\cX)$ to a bounded metric space $(\cY,d_\cY)$ and let $\eta>0$. Let $\eps>0$ be small enough so that if $d_\cX(x,y)<\eps$ then $d_\cY(\phi x, \phi y)< \eta$. 

Now let $x,y \in X$ be arbitrary. If $d_\cX(x,y)\ge \eps$ then 
$$d_\cY(\phi x, \phi y) \le \diam(\cY,d_\cY) \le \eta + \frac{\diam(\cY,d_{\cY})}{\eps} d_\cX(x,y).$$
So $\phi$ is $\eta$-almost $ \frac{\diam(\cY,d_{\cY})}{\eps}$-Lipschitz.
\end{proof}


\subsection{Equivariant maps and their approximations}


\begin{notation}
If $x \in \cX^\Ga$ and $g\in \G$, then $S^gx = gx \in \cX^\G$ is defined by $S^gx(f)=x(g^{-1}f)$. We will also write $S^gx$ if $x \in \cY^\G$. So $S^g$ is the {\bf shift} by $g$. 
\end{notation}

\begin{defn}\label{D:equivariant}
A map $\Phi:\cX^\Ga \to \cY^\Ga$ is {\bf equivariant} if $\Phi(gx) = g \Phi(x)$ for a.e. $x\in \cX^\Ga$ and every $g\in \Ga$. Given a map $\psi:\cX^\Ga \to \cY$ we define an equivariant map $\psi^\Ga:\cX^\Ga \to \cY^\Ga$ by $\psi^\Ga (x)(h) = \psi(S^{h^{-1}}x)$. For example, if $\Phi:\cX^\Ga \to \cY^\Ga$ is equivariant and $\phi:\cX^\Ga \to \cY$ is defined by $\phi(x)=\Phi(x)(1_\Ga)$ then $\Phi = \phi^\Ga$. 

\end{defn}

\begin{defn}\label{dfn:local-fn}
For a subset $D \subset \Ga$, let $\Res^D: \cX^\Ga \to \cX^D$ denote the restriction map.  If $\phi:\cX^\Ga\to \cY$ and $D \subseteq \Ga$ is finite, then $\phi$ is \textbf{$D$-local} if it is measurable with respect to $\Res^D$.  A function is \textbf{local} if it is \textbf{$D$-local} for some $D$.

\end{defn}

\begin{defn}\label{defn:UC}
As above, we let $(\cX,d_\cX)$ and $(\cY,d_\cY)$ be bounded Polish spaces. Also let $\mu \in \Prob_\Ga(\cX^\Ga)$, $\phi: \cX^\Ga\to \cY$ be a measurable function, and $\eta > 0$.  An {\bf $\eta$-uniformly continuous} (or {\bf $\eta$-UC}) approximation to $\phi$ rel $(\mu,d_\cX,d_\cY)$ is a measurable map $\tphi:\cX^\Ga \to \cY$ with the following properties.
\begin{itemize}
\item[i)] The map $\tphi$ approximates $\phi$ in the sense that
\begin{eqnarray}\label{eq:int-approx}
\int d_\cY(\tphi(x),\phi(x))\, d\mu(x) < \eta.
\end{eqnarray}
\item[ii)] There is a finite $D \subseteq \Ga$ such that $\tphi$ is $D$-local. 
\item[iii)] Regarded as a map from $\cX^D$ to $\cY$, $\tphi$ is uniformly continuous with respect to $d_\cX^D$ and $d_\cY$ (where $d_\cX^D$ is the normalized $\ell^1$-metric on $\cX^D$ as defined in the beginning of \S \ref{sec:sofic-entropy}). 
\end{itemize}
\end{defn}

\begin{lem}\label{lem:approx-by-Lip}
Suppose that $\cX,\cY,\mu,\phi$ are as in Definition \ref{defn:UC}, $\cX$ is totally disconnected and both $\cX,\cY$ are compact. Then there exist $\eta$-UC approximations to $\phi$ for all $\eta > 0$.
\end{lem}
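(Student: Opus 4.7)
The plan is to approximate $\phi$ in three successive stages: first by a simple (finite-valued) measurable function, then by a cylinder function depending on only finitely many coordinates, and finally by a locally constant function on those coordinates. The first two stages are standard and work for any compact metric target and any Polish source; total disconnectedness of $\cX$ enters only in the last stage.

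First I would cover $\cY$ by finitely many balls $B_1,\dots,B_k$ of radius $\eta/4$ around points $y_1,\dots,y_k\in \cY$, set $A_i := \phi^{-1}(B_i)\setminus\bigcup_{j<i}A_j$, and define $\phi_s(x):=y_i$ for $x\in A_i$, so that $d_\cY(\phi_s,\phi)\le \eta/4$ pointwise. The cylinder $\sigma$-algebras $\sigma(\Res^D)$ over finite $D\Subset\G$ jointly generate the Borel $\sigma$-algebra of $\cX^\G$, so they are dense in the measure algebra under $\mu$. Hence for any $\delta>0$ I can find a finite $D$ and a $D$-local partition $\{A'_1,\dots,A'_k\}$ of $\cX^\G$ with $\sum_i \mu(A_i\triangle A'_i)<\delta$ (obtained by choosing $\cF_D$-approximations of each $A_i$ and then refining into a partition). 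Setting $\phi_1(x):=y_i$ for $x\in A'_i$ gives $\phi_1=\psi\circ\Res^D$ for some Borel $\psi:\cX^D\to\{y_1,\dots,y_k\}$.

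Now I would invoke the key consequence of total disconnectedness: $\cX^D$ is itself compact, metrizable, and totally disconnected (a finite product of such spaces), so it admits an increasing sequence of finite clopen partitions whose mesh tends to zero and whose generated $\sigma$-algebras exhaust the Borel $\sigma$-algebra. Consequently clopen sets are dense in the measure algebra of $\nu:=(\Res^D)_*\mu$. I pick clopen sets $C_1,\dots,C_k$ partitioning $\cX^D$ with $\sum_i \nu(\psi^{-1}(y_i)\triangle C_i)<\delta$ (again by refining clopen approximations and absorbing the leftover into $C_k$), set $\tpsi(z):=y_i$ for $z\in C_i$, and take $\tphi:=\tpsi\circ\Res^D$.

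It remains to verify the three conditions. Clause (ii) holds by construction. For (iii), $\tpsi$ takes finitely many values, each on a clopen set, so it is locally constant; since $\cX^D$ is compact, $\tpsi$ is uniformly continuous with respect to any compatible metric, and $d_\cX^D$ is compatible with the product topology. For (i), the triangle inequality yields
\[\int d_\cY(\tphi,\phi)\,d\mu \;\le\; \eta/4 \;+\; \diam(\cY,d_\cY)\bigl(\mu(\phi_s\ne\phi_1)+\nu(\psi\ne\tpsi)\bigr) \;\le\; \eta/4 + 2\,\diam(\cY,d_\cY)\,\delta,\]
which is below $\eta$ once $\delta$ is small. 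There is no substantive obstacle: the whole argument is a sequence of standard density arguments in measure algebras, the only mildly delicate point being the passage to clopen sets in Step 3, which is where the hypothesis that $\cX$ is totally disconnected is genuinely used.
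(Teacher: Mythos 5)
Your proposal is correct and follows essentially the same route as the paper's proof: a finite cover of $\cY$ by small sets, an approximating clopen $D$-local partition of $\cX^\G$ obtained from total disconnectedness, and a locally constant (hence uniformly continuous) function on that partition. The paper collapses your stages 2 and 3 into one step by approximating $\phi^{-1}(O_i)$ directly by $D$-local clopen subsets of $\cX^\G$, but this is only an organizational difference.
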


\begin{proof}
After rescaling if necessary, we may assume that the diameter of $\cY$ is bounded by 1. Because $\cY$ is compact, there exists a finite open cover $\cO=\{O_1,\ldots, O_n\}$ of $\cY$ by sets of diameter $<\eta/3$.


A subset $X \subset \cX^\G$ is {\bf $D$-local} if its characteristic function $1_X:\cX^\G \to \R$ is $D$-local. Because $\cX$ is totally disconnected, for every $1\le i \le n$, there exist a finite subset $D_i \subset \G$ and a  $D_i$-local clopen subset $\tC_i \subset \cX^\G$ such that
$$\mu(\tC_i \vartriangle \phi^{-1}(O_i)) < \frac{\eta}{3n}.$$
For $1\le i \le n$, let 
$$C_i := \tC_i \setminus \cup_{j=i+1}^n \tC_j.$$
Also let $C_0  = \cX^\G \setminus \cup_{i=1}^n C_i$. Then $\{C_i\}_{i=0}^n$ is a clopen partition of $\cX^\G$. Setting $D=\cup_i D_i$, we see that $C_i$ is $D$-local for every $0\le i \le n$.

Choose a point $p_i \in O_i$ for all $1\le i \le n$ and also let $p_0 \in \cY$ be an arbitrary point. Define $\tphi:\cX^\G \to \cY$ by $\tphi(x)=p_i$ if $x \in C_i$. By construction, $\tphi$ is $D$-local. It is uniformly continuous because it is continuous and $\cX^\G$ is compact. To finish the proof, it suffices to estimate the error in the approximation to $\phi$:
\begin{eqnarray*}
\int d_\cY(\phi(x),\tphi(x))~d\mu(x) &=& \sum_{i=0}^n \int_{C_i} d_\cY(\phi(x),\tphi(x))~d\mu(x).
\end{eqnarray*}
Since $C_0 = \cX^\G \setminus \cup_{i=1}^n \tC_i$,
$$\mu(C_0) \le  \sum_{i=1}^n \mu(\tC_i \vartriangle \phi^{-1}(O_i)) \le \eta/3.$$
Since the diameter of $(\cY,d_{\cY})$ is bounded by 1, $ \int_{C_0} d_\cY(\phi(x),\tphi(x))~d\mu(x) \le \eta/3$.

For any $1\le i \le n$, $C_i \subset \tC_i$. Therefore, $C_i \subset \phi^{-1}(O_i) \cup (\tC_i \vartriangle \phi^{-1}(O_i) )$. If $x \in C_i \cap \phi^{-1}(O_i) $ then $d_\cY(\phi x, \tphi x)<\eta/3$ since $O_i$ has diameter $<\eta/3$. So
\begin{eqnarray*}
\int_{C_i} d_\cY(\phi(x),\tphi(x))~d\mu(x) &=& \int_{C_i \cap  \phi^{-1}(O_i)} d_\cY(\phi(x),\tphi(x))~d\mu(x) + \int_{C_i \setminus \phi^{-1}(O_i)} d_\cY(\phi(x),\tphi(x))~d\mu(x) \\
&\le& \mu(C_i)\eta/23 + \mu(\tC_i \vartriangle \phi^{-1}(O_i)) \le \mu(C_i)\eta/3  + \frac{\eta}{3n}.
\end{eqnarray*}
Since $\sum_{i=1}^n \mu(C_i)\eta/3  + \frac{\eta}{3n} \le 2\eta/3$,
\begin{eqnarray*}
\int d_\cY(\phi(x),\tphi(x))~d\mu(x) &\le & \eta.
\end{eqnarray*}
Since $\eta$ is arbitrary, this implies the lemma.

\end{proof}

\begin{defn}
Let $F \subset \G$ be finite and $\phi:\cX^\G \to \cY$. Then $\phi^F: \cX^\G \to \cY^F$ is defined by $\phi^F = \Res^F \circ \phi^\G$. So for any $f\in F$,
$$\phi^F(x)(f) = \phi^\G(x)(f) = \phi(S^{f^{-1}}x).$$ 
\end{defn}

\begin{lem}\label{lem:good-approx-Ham-sum}
Suppose that $\cX,\cY,\mu,\phi$ are as in Definition \ref{defn:UC} and $(\cY,d_\cY)$ has diameter at most $1$.  If $\tphi$ is an $\eta$-UC approximation to $\phi$ rel $(\mu,d_\cX,d_\cY)$ for some $\eta \in (0,1)$, then $\tphi^F$ is an $\eta$-UC approximation to $\phi^F:\cX^\G\to \cY^F$ rel $(\mu,d_\cX,d_\cY^{F})$ for every finite $F\subseteq \G$.
\end{lem}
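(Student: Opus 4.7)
The plan is to verify in turn each of the three conditions (i), (ii), (iii) of Definition \ref{defn:UC} for the map $\tphi^F$ with the data $(\mu, d_\cX, d_\cY^F)$, using the corresponding properties of $\tphi$ together with the shift-invariance of $\mu$ and a short averaging argument.

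For condition (i), I will unfold the normalized $\ell^1$-metric $d_\cY^F$ and swap the sum with the integral. Each term takes the form $\int d_\cY(\tphi(S^{f^{-1}}x), \phi(S^{f^{-1}}x))\,d\mu(x)$, and by $\G$-invariance of $\mu$ each such term equals $\int d_\cY(\tphi(x), \phi(x))\, d\mu(x) < \eta$; averaging over $f \in F$ preserves the bound. For condition (ii), since $\tphi$ is $D$-local, the value $\tphi(S^{f^{-1}}x)$ depends on $x$ only through $x|_{fD}$, so $\tphi^F$ is local with respect to the finite set $FD := \bigcup_{f\in F} fD$.

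The main content lies in condition (iii), the uniform continuity of $\tphi^F$ viewed as a map from $(\cX^{FD}, d_\cX^{FD})$ to $(\cY^F, d_\cY^F)$. Given $\vre > 0$, I choose $\delta > 0$ from the uniform continuity of $\tphi$ on $(\cX^D, d_\cX^D)$ at tolerance $\vre/2$. The key combinatorial estimate is
$$|F|^{-1}\sum_{f\in F} d_\cX^D(S^{f^{-1}}x, S^{f^{-1}}y) \;=\; \frac{1}{|F|\,|D|}\sum_{f\in F}\sum_{d\in D} d_\cX(x(fd), y(fd)) \;\le\; \frac{|FD|}{|F|}\, d_\cX^{FD}(x,y) \;\le\; |D|\, d_\cX^{FD}(x,y),$$
where the inequality uses that each $w \in FD$ occurs as $fd$ for at most $|D|$ pairs $(f,d) \in F \times D$. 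By Markov's inequality, the fraction of $f \in F$ for which $d_\cX^D(S^{f^{-1}}x, S^{f^{-1}}y) \ge \delta$ is at most $\delta^{-1}|D|\, d_\cX^{FD}(x,y)$. Taking $d_\cX^{FD}(x,y)$ small enough to make this fraction less than $\vre/2$, and using $\diam(\cY) \le 1$ to bound the contribution of these ``bad'' $f$'s by $\vre/2$, the remaining $f$'s contribute at most $\vre/2$ by the choice of $\delta$. Summing yields $d_\cY^F(\tphi^F(x), \tphi^F(y)) < \vre$.

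There is no genuine obstacle; the argument is bookkeeping once the multiplicity observation for $FD$ is made. The one care point is that $d_\cY$ is averaged (not supremum-ed) in the definition of $d_\cY^F$, so the $\eta$ bound in (i) passes through unchanged, and the bounded diameter of $\cY$ lets one absorb the small-probability bad coordinates in (iii) without a diameter blow-up.
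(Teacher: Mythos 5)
Your proposal is correct and follows essentially the same route as the paper's proof: shift-invariance of $\mu$ for condition (i), the $FD$-locality computation for (ii), and the multiplicity bound ``each $g\in FD$ arises from at most $|D|$ pairs $(f,d)$'' combined with Markov's inequality and $\diam(\cY)\le 1$ for (iii). The only difference is a cosmetic bookkeeping choice in (iii) — the paper calibrates the Markov threshold at $\sqrt{\delta}\,|D|$ so that the same $\delta$ serves as both the UC modulus and the input-distance bound, whereas you fix the UC threshold $\delta$ first and then shrink the input bound separately — but both yield the same estimate.
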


\begin{proof}
This lemma is similar to \cite[Lemma 4.4]{MR3542515} but it is easier since we work with UC maps.  

Firstly, the shift-invariance of $\mu$ and inequality~(\ref{eq:int-approx}) imply that
\begin{equation}\label{eq:int-approx2}
\int d_\cY^{F}\big(\phi^F(x),\tphi^F(x)\big)\,d\mu(x) = \frac{1}{|F|} \sum_{g \in F^{-1}}\int d_\cY(\phi(gx),\tphi(gx))\,d\mu(x) < \eta.
\end{equation}

Let $\tphi$ be $D$-local for some finite $D \subset \G$. Then $\tphi^F$ is $FD$-local since for any $f \in F$, $\tphi^F(x)_f = \tphi(S^{f^{-1}}x)$ depends only on the restriction of $S^{f^{-1}}x$ to $D$. However, for $d\in D$, $S^{f^{-1}}x(d)=x(fd)$. So $\tphi^F(x)$ depends only on the restriction of $x$ to $FD$.

 Lastly, we claim $\tphi^F$ is uniformly continuous as a map from $\cX^{FD}$ to $\cY^F$. To see this, let $\eps>0$. Since $\tphi$ is uniformly continuous as a map from $\cX^D$ to $\cY$, there is a $\d>0$ such that if $x,y \in \cX^D$ satisfy $d_\cX^D(x,y)<\sqrt{\d} |D|$ then $d_\cY(\tphi(x),\tphi(y))<\eps/2$. By choosing $\delta$ smaller if necessary we may assume $\sqrt{\d}<\eps/2$.

For every $g\in FD$ the number of pairs $(f,d) \in F\times D$ such that $fd=g$ is at most $|D|$. Therefore,
\begin{eqnarray*}
d_{\cX}^{FD}(x,y) &=& |FD|^{-1} \sum_{g\in FD} d_\cX(x_g,y_g)\\
 &\ge & |FD|^{-1} |D|^{-1} \sum_{f\in F} \sum_{d\in D} d_\cX(x_{fd}, y_{fd}) \\
 &=&  |FD|^{-1} \sum_{f\in F} d^D_\cX( S^{f^{-1}}x, S^{f^{-1}}y).
 \end{eqnarray*}
  Suppose $x,y \in \cX^{FD}$ satisfy $d_\cX^{FD}(x,y)< \d$. By the previous inequality,
  $$|F|^{-1}  \sum_{f\in F} d^D_\cX( S^{f^{-1}}x, S^{f^{-1}}y)\le \frac{|FD|}{|F|}d_{\cX}^{FD}(x,y) < \frac{|FD|}{|F|}\d \le |D| \d.$$
  
 By Markov's inequality, there exists a subset $F' \subset F$ such that $|F'| \ge (1-\sqrt{\d})|F|$ and $d_\cX^D(S^{g^{-1}} x, S^{g^{-1}} y) < \sqrt{\d} |D|$ for all $g \in F'$. By choice of $\d$, if $f\in F'$ then $d_\cY( \tphi(S^{f^{-1}} x), \tphi(S^{f^{-1}} y))\le \eps/2$. Because the diameter of $(\cY,\d_\cY)$ is at most 1,
 $$d_\cY^F( \tphi^F x, \tphi^F y) = |F|^{-1} \sum_{g\in F^{-1}} d_\cY( \tphi(S^{g} x), \tphi(S^{g} y)) \le \frac{(\eps/2) |F'| + |F \setminus F'| }{|F|} \le \eps/2 + \sqrt{\d} \le \eps.$$ 
This shows $\tphi^F$ is uniformly continuous as a map from $(\cX^{FD}, d_\cX^{FD})$ to $(\cY^F, d_\cY^F)$. 

\end{proof}

\begin{lem}\label{lem:composition}
Suppose that $\cX,\cY,\mu,\phi$ are as in Definition \ref{defn:UC} and $(\cY,d_\cY)$ has diameter at most $1$. Let $\nu  = \phi^\G_*\mu \in \Prob_\G(\cY^\G)$. Suppose $(\cZ,d_\cZ)$ is also a bounded Polish space with diameter 1 and $\psi:\cY^\Ga \to \cZ$ is measurable.  Let $\tpsi$ be an $\eta_\psi$-UC approximation to $\psi$ and $\tphi$ an $\eta_\phi$-UC approximation to $\phi$. Then $\tpsi \circ \tphi^\Ga$ is an $\eta$-UC-approximation to $\psi \circ \phi^\Ga$ where $\eta=\eta(\tphi,\tpsi)$ tends to $2\sqrt{\eta_\psi}+\eta_\psi$ as $\eta_\phi$ tends to zero with $\tpsi$ fixed.
\end{lem}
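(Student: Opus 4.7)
The plan is to verify the three conditions in the definition of an $\eta$-UC approximation for the composite $\tpsi \circ \tphi^\G$, rel $(\mu, d_\cX, d_\cZ)$. Two of the three are essentially functorial, while the integral approximation is the only place that requires real work.

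For locality and uniform continuity, I would argue as follows. Let $\tpsi$ be $D_\psi$-local and let $\tphi$ be $D_\phi$-local. Applying Lemma~\ref{lem:good-approx-Ham-sum} with $F = D_\psi$, the map $\tphi^{D_\psi} = \Res^{D_\psi}\circ \tphi^\G$ is $D_\psi D_\phi$-local and uniformly continuous as a map $(\cX^{D_\psi D_\phi}, d_\cX^{D_\psi D_\phi}) \to (\cY^{D_\psi}, d_\cY^{D_\psi})$. Since $\tpsi(\tphi^\G(x))$ depends only on $\tphi^\G(x)|_{D_\psi} = \tphi^{D_\psi}(x)$, the composition $\tpsi \circ \tphi^\G$ is $D_\psi D_\phi$-local. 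Being a composition of two uniformly continuous maps, it is itself uniformly continuous as a map from $\cX^{D_\psi D_\phi}$ to $\cZ$, verifying conditions (ii) and (iii).

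For condition (i) I would insert the intermediate map $\tpsi \circ \phi^\G$ and apply the triangle inequality pointwise:
$$d_\cZ\bigl(\tpsi(\tphi^\G(x)),\, \psi(\phi^\G(x))\bigr) \le d_\cZ\bigl(\tpsi(\tphi^\G(x)),\, \tpsi(\phi^\G(x))\bigr) + d_\cZ\bigl(\tpsi(\phi^\G(x)),\, \psi(\phi^\G(x))\bigr).$$
The integral of the second summand is immediate: pushing forward by $\phi^\G$ turns it into $\int d_\cZ(\tpsi(y), \psi(y))\,d\nu(y)$, which is bounded by $\eta_\psi$ by the definition of an $\eta_\psi$-UC approximation. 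For the first summand, let $\omega$ denote the modulus of continuity of $\tpsi$ on $(\cY^{D_\psi}, d_\cY^{D_\psi})$; this is determined once $\tpsi$ is fixed. By Lemma~\ref{lem:good-approx-Ham-sum}, $\int d_\cY^{D_\psi}\bigl(\phi^{D_\psi}(x), \tphi^{D_\psi}(x)\bigr)\,d\mu(x) < \eta_\phi$, so Markov's inequality gives a set $G_\phi \subset \cX^\G$ with $\mu(G_\phi) \ge 1 - \sqrt{\eta_\phi}$ on which $d_\cY^{D_\psi}(\phi^{D_\psi}(x), \tphi^{D_\psi}(x)) < \sqrt{\eta_\phi}$. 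On $G_\phi$ the uniform continuity of $\tpsi$ forces $d_\cZ(\tpsi(\tphi^\G(x)), \tpsi(\phi^\G(x))) \le \omega(\sqrt{\eta_\phi})$; on the complement we use the diameter bound $d_\cZ \le 1$. Integrating gives the bound $\omega(\sqrt{\eta_\phi}) + \sqrt{\eta_\phi}$.

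Putting the two summands together yields an overall bound
$$\eta(\tphi,\tpsi) \le \omega(\sqrt{\eta_\phi}) + \sqrt{\eta_\phi} + \eta_\psi$$
(up to the error terms absorbed into the asymptotic statement), which has the required limiting behavior as $\eta_\phi \to 0$ with $\tpsi$ (hence $D_\psi$ and $\omega$) held fixed. The only delicate point in the argument is the transfer of the UC modulus of $\tpsi$ through the composition with $\tphi^\G$, and this is precisely what Lemma~\ref{lem:good-approx-Ham-sum} was designed to handle; everything else is an application of the triangle inequality and Markov's inequality.
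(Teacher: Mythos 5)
Your proof is correct and follows the paper's overall strategy: the locality and uniform-continuity conditions are essentially functorial via Lemma~\ref{lem:good-approx-Ham-sum}, and the integral bound comes from the triangle inequality through the intermediate map $\tpsi\circ\phi^\G$. Where you diverge is in how the two summands are estimated, and both of your choices are small improvements. For the second summand, you observe that $\int d_\cZ\bigl(\tpsi(\phi^\G x),\psi(\phi^\G x)\bigr)\,d\mu$ equals $\int d_\cZ(\tpsi y,\psi y)\,d\nu < \eta_\psi$ exactly because $\phi^\G_*\mu=\nu$, whereas the paper instead splits this term with a Markov-type argument over a good set $G_\psi=\{y: d_\cZ(\tpsi y,\psi y)<\sqrt{\eta_\psi}\}$, which only yields $2\sqrt{\eta_\psi}$. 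For the first summand, you invoke the modulus of continuity $\omega$ of $\tpsi$ directly, while the paper passes through the $\eta_\psi$-almost $L_\psi$-Lipschitz estimate from Lemma~\ref{lem:uc-al}, which costs an extra additive $\eta_\psi$ (from the ``almost'' slack) that survives into the limit. The net effect is that your limiting error is $\eta_\psi$ rather than $2\sqrt{\eta_\psi}+\eta_\psi$; since being an $\eta$-UC approximation is a one-sided condition (any larger $\eta$ also works), your sharper bound implies the lemma as stated. One small remark: the paper's proof writes ``$D_\phi D_\psi$-local'' for the composite, but by Lemma~\ref{lem:good-approx-Ham-sum} the correct set is $D_\psi D_\phi$, as you write; this is a typo on the paper's side and does not affect either argument.
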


\begin{proof}
By definition there exists a finite subset $D_\psi \subset \Ga$ such that $\tpsi$ is $D_\psi$-local and $\tpsi$ regarded as a map from $\cY^{D_\psi}$ to $\cZ$ is uniformly continuous.  Moreover,  
$$\int d_\cZ(\psi y, \tpsi y)~d\nu(y) < \eta_\psi.$$
As mentioned in Lemma \ref{lem:uc-al}, because $\tpsi$ is uniformly continuous, it is $\eta_\psi$-almost $L_\psi$-Lipschitz for some constant $L_\psi$.

Suppose $\tphi$ is an $\eta_\phi$-UC approximation to $\phi$. By Lemma \ref{lem:good-approx-Ham-sum}, $\tphi^{D_\psi}$ is a $\eta_\phi$-UC approximation to $\phi^{D_\psi}$. So there exists a finite subset $D_\phi \subset \Ga$ such that $\tphi^{D_\psi}$ is $D_\phi$-local, $\tphi^{D_\psi}$ regarded as a map from $\cX^{D_\phi}$ to $\cY^{D_\psi}$ is uniformly continuous and 
$$\int d^{D_\psi}_\cY(\phi^{D_\psi} x, \tphi^{D_\psi} x)~d\mu(x) < \eta_\phi.$$

It is immediate that $\tpsi \circ \tphi^\Ga$ is $D_\phi D_\psi$-local and when regarded as a map from $\cX^{D_\phi D_\psi}$ to $\cZ$, it is uniformly continuous. Let
$$G_\psi = \{y \in \cY^\Ga:~ d_\cZ(\tpsi y, \psi y ) < \sqrt{\eta_\psi}\}$$
$$G_\phi = \{x\in \cX^\Ga:~ d_\cY^{D_\psi}(\tphi^{D_\psi} x, \phi^{D_\psi} x) < \sqrt{\eta_\phi} \}.$$
Suppose $x \in G_\phi \cap (\phi^\Ga)^{-1}(G_\psi)$. 

\begin{eqnarray*}
d_\cZ( \tpsi \tphi^\Ga x, \psi \phi^\Ga x) &\le& d_\cZ( \tpsi \tphi^\Ga x, \tpsi \phi^\Ga x) + d_\cZ( \tpsi \phi^\Ga x, \psi \phi^\Ga x)\\
&\le& (\eta_\psi + L_\psi \sqrt{\eta_\phi} ) + \sqrt{\eta_\psi}.
\end{eqnarray*}
The first term above occurs because $\tpsi$ is $\eta_\psi$-almost $L_\psi$-Lipschitz as a map from $\cY^{D_\psi}$ to $\cZ$ and $d^{D_\psi}_\cY(\tphi^{D_\psi} x, \phi^{D_\psi} x)<\sqrt{\eta_\phi}$. The second term occurs because $\phi^\Ga(x) \in G_\psi$. 

It follows that
\begin{eqnarray*}
\int d_\cZ( \tpsi \tphi^\Ga x, \psi \phi^\Ga x) ~d\mu(x) &\le& (1-\mu(G_\phi \cap (\phi^\Ga)^{-1}(G_\psi)))\diam(\cZ) + \eta_\psi + L_\psi \sqrt{\eta_\phi} + \sqrt{\eta_\psi}.
\end{eqnarray*}
By Markov's inequality, $\mu(G_\phi) > 1 - \sqrt{\eta_\phi}$ and $\nu(G_\psi) > 1-  \sqrt{\eta_\psi}$. Because $\phi^\Ga_*\mu = \nu$ it follows that
$$1-\mu(G_\phi \cap (\phi^\Ga)^{-1}(G_\psi)) <  \sqrt{\eta_\phi} + \sqrt{\eta_\psi}.$$
Since $\diam(\cZ)=1$,
\begin{eqnarray*}
\int d_\cZ( \tpsi \tphi^\Ga x, \psi \phi^\Ga x) ~d\mu(x) &\le&  (L_\psi +1)\sqrt{\eta_\phi} + 2\sqrt{\eta_\psi} + \eta_\psi.
\end{eqnarray*}

\end{proof}

\subsection{Sofic models}


Recall that the pullback name of $x \in \cX^V$ with respect to $\sigma:\Ga \to \sym(V)$ and $v\in V$ is 
$$\Pi^{\sigma}_v(x)(g) := x(\sigma(g)^{-1}v).$$
Given a map $\sigma:\Ga \to \sym(V)$, where $V$ is a finite set and a map $\phi:\cX^\Ga \to \cY$, define $\phi^\sigma:\cX^V \to \cY^V$ by
$$\phi^\sigma(x)_v = \phi( \Pi^\sigma_v(x)).$$


\begin{lem}\label{lem:sofic-UC}
Suppose $\phi:\cX^\Ga \to \cY$ is $D$-local for some finite set $D \subset \Ga$ and regarded as a map from $\cX^D \to \cY$ is $\eta$-almost $L$-Lipschitz. Then $\phi^\sigma$ is $\eta$-almost $L$-Lipschitz regarded as map from $(\cX^V,d_\cX^V)$ to $(\cY^V,d_\cY^V)$.
\end{lem}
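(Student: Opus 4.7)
The plan is to unwind both sides of the inequality directly from the definitions and then exploit the fact that each $\sigma(d)$ is a genuine permutation of $V$. Fix $x,y \in \cX^V$. By definition,
\[
d_\cY^V(\phi^\s x, \phi^\s y) = |V|^{-1}\sum_{v\in V} d_\cY\bigl(\phi(\Pi^\s_v x),\,\phi(\Pi^\s_v y)\bigr).
\]
Because $\phi$ is $D$-local, $\phi(\Pi^\s_v x)$ depends only on $\Pi^\s_v x\resto D \in \cX^D$. So first I would apply the hypothesis that $\phi$ is $\eta$-almost $L$-Lipschitz as a map $(\cX^D, d_\cX^D) \to (\cY,d_\cY)$ termwise at each $v$, obtaining
\[
d_\cY^V(\phi^\s x, \phi^\s y) \le \eta + L\cdot |V|^{-1}\sum_{v\in V} d_\cX^D\bigl(\Pi^\s_v x\resto D,\ \Pi^\s_v y\resto D\bigr).
\]

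The main (and essentially only) step is to recognize that the double average on the right equals $d_\cX^V(x,y)$. Expanding the inner normalized $\ell^1$-distance on $\cX^D$ and using $\Pi^\s_v x(d) = x(\s(d)^{-1}v)$ gives
\[
|V|^{-1}\sum_{v\in V} d_\cX^D\bigl(\Pi^\s_v x\resto D,\ \Pi^\s_v y\resto D\bigr) = |D|^{-1}\sum_{d\in D}\left(|V|^{-1}\sum_{v\in V} d_\cX\bigl(x(\s(d)^{-1}v),\,y(\s(d)^{-1}v)\bigr)\right).
\]
The key observation is that for each fixed $d\in D$, the map $v\mapsto \s(d)^{-1}v$ is a bijection of $V$ (here we only use that $\s(d) \in \sym(V)$, not that $\s$ is a homomorphism or sofic). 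So the inner sum is just $|V|\cdot d_\cX^V(x,y)$, and averaging the constant $d_\cX^V(x,y)$ over $d\in D$ yields $d_\cX^V(x,y)$ itself.

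Combining the two displays gives $d_\cY^V(\phi^\s x, \phi^\s y) \le \eta + L\,d_\cX^V(x,y)$, which is the desired $\eta$-almost $L$-Lipschitz bound. There is no real obstacle; the only point to be careful about is that the permutation invariance of the normalized Hamming metric is what allows the $|D|$-factor and the $|V|$-factor to cancel cleanly, and this works because each $\s(d)^{-1}$ acts on $V$ by bijection, requiring no soficity or multiplicativity hypothesis on $\s$.
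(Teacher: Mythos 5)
Your proof is correct and follows essentially the same route as the paper's: expand $d_\cY^V$, apply the almost-Lipschitz bound termwise, exchange the sums over $V$ and $D$, and use the fact that each $\s(g)^{-1}$ is a permutation of $V$ so the inner sum reindexes to $|V|\,d_\cX^V(x,y)$. The only cosmetic difference is that the paper phrases the reindexing as a counting statement about pairs $(g,w)$ with $\s(g^{-1})w=v$, while you make the bijection explicit for each fixed $d$; these are the same observation.
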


\begin{proof}
Let $x,y \in \cX^V$. Then
\begin{eqnarray*}
d_{\cY}^V( \phi^\s x, \phi^\s y) &=& |V|^{-1} \sum_{v\in V} d_{\cY}( (\phi^\s x)_v, (\phi^\s y)_v ) \\
&\le & |V|^{-1} \sum_{v\in V} \eta + L d^D_{\cX}( \Res^D \Pi^\s_v(x), \Res^D \Pi^\s_v(y) ) \\
&= & \eta +L |V|^{-1} \sum_{v\in V} |D|^{-1} \sum_{g\in D} d_{\cX}( \Pi^\s_v(x)_g, \Pi^\s_v(y)_g ) \\
&= & \eta +L |V|^{-1} \sum_{v\in V} |D|^{-1} \sum_{g\in D} d_{\cX}( x(\s(g^{-1})v), y(\s(g^{-1})v)) \\
&= & \eta +L d_\cX^V(x,y)
\end{eqnarray*}
where the last equality holds because for each $v\in V$ the number of pairs $(g,w) \in D \times V$ such that $\s(g^{-1})w=v$ equals $|D|$. Because $x,y$ are arbitrary, this implies the lemma.
\end{proof}

\begin{lem}\label{lem:sofic-2}
Suppose $\phi:\cX^\Ga \to \cY$ is $D_\phi$-local for some finite set $D_\phi \subset \Ga$ and $\psi:\cY^\Ga \to \cZ$ is $D_\psi$-local for some finite set $D_\psi$. Then for all $x\in \cX^V$,
$$ \{v \in V:~ (\psi \phi^\Ga)^\sigma(x)_v \ne \psi^\sigma \phi^\sigma(x)_v\} \subset \{v\in V:~ \exists h\in D_\phi, g\in D_\psi, ~\sigma(h^{-1})\sigma(g^{-1})v \ne \sigma(h^{-1}g^{-1})v\}.$$
In particular, if $1_\Ga \in D_\phi \cap D_\psi$ and $\sigma$ is a $(D^{-1}_\phi D^{-1}_\psi,\delta)$-sofic approximation to $\Ga$ then 
$$ \#\{v \in V:~ (\psi \phi^\Ga)^\sigma(x)_v \ne \psi^\sigma \phi^\sigma(x)_v\} \le \delta |V|.$$
\end{lem}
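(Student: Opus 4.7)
The plan is to unwind both sides by repeatedly applying the $D_\phi$-locality of $\phi$ and the $D_\psi$-locality of $\psi$, and read off precisely which coordinates of the input $x \in \cX^V$ each side of the comparison depends on. For the composed side, start from $(\psi \phi^\Ga)^\sigma(x)_v = \psi(\phi^\Ga(\Pi^\sigma_v(x)))$. By $D_\psi$-locality of $\psi$ this depends only on the values $\phi^\Ga(\Pi^\sigma_v(x))(g) = \phi(S^{g^{-1}}\Pi^\sigma_v(x))$ for $g \in D_\psi$, and by $D_\phi$-locality of $\phi$ each of those depends in turn only on $S^{g^{-1}}\Pi^\sigma_v(x)(h) = \Pi^\sigma_v(x)(gh) = x(\sigma(h^{-1}g^{-1})v)$ for $h \in D_\phi$. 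Thus $(\psi \phi^\Ga)^\sigma(x)_v$ is determined by the coordinates $\{x(\sigma(h^{-1}g^{-1})v) : h \in D_\phi,\, g \in D_\psi\}$.

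Applying the same procedure on the other side, $\psi^\sigma(\phi^\sigma(x))_v = \psi(\Pi^\sigma_v(\phi^\sigma(x)))$ depends by $D_\psi$-locality only on $\Pi^\sigma_v(\phi^\sigma(x))(g) = \phi^\sigma(x)_{\sigma(g^{-1})v} = \phi(\Pi^\sigma_{\sigma(g^{-1})v}(x))$ for $g \in D_\psi$, and a further application of $D_\phi$-locality reduces this to $\Pi^\sigma_{\sigma(g^{-1})v}(x)(h) = x(\sigma(h^{-1})\sigma(g^{-1})v)$ for $h \in D_\phi$, $g \in D_\psi$. The two sides therefore apply the same function to the same pullbacked coordinates, except that on the left $x$ is evaluated at $\sigma(h^{-1}g^{-1})v$ while on the right it is evaluated at $\sigma(h^{-1})\sigma(g^{-1})v$. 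Consequently the two values agree at any $v$ for which $\sigma(h^{-1})\sigma(g^{-1})v = \sigma(h^{-1}g^{-1})v$ holds for every $h \in D_\phi$ and every $g \in D_\psi$, which is exactly the containment in the first displayed inclusion.

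For the quantitative second assertion, the hypothesis $1_\Ga \in D_\phi \cap D_\psi$ yields $D_\phi^{-1} = D_\phi^{-1}\cdot\{1_\Ga\} \subseteq D_\phi^{-1}D_\psi^{-1}$ and, symmetrically, $D_\psi^{-1} \subseteq D_\phi^{-1}D_\psi^{-1}$. Hence for every $(h,g)\in D_\phi \times D_\psi$ both $h^{-1}$ and $g^{-1}$ belong to $D_\phi^{-1}D_\psi^{-1}$, so the $(D_\phi^{-1}D_\psi^{-1},\delta)$-multiplicativity assumption, applied simultaneously to all such pairs, yields a set of $v$'s of size greater than $(1-\delta)|V|$ on which $\sigma(h^{-1})\sigma(g^{-1})v = \sigma(h^{-1}g^{-1})v$ for every such pair. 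Combined with the first part, the set of $v$'s at which $(\psi\phi^\Ga)^\sigma(x)_v \ne \psi^\sigma\phi^\sigma(x)_v$ is contained in the complement and therefore has size at most $\delta|V|$.

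There is no substantive obstacle: the lemma is essentially a bookkeeping exercise. The one delicate point to watch is the asymmetry between evaluating $\sigma$ at the product $h^{-1}g^{-1}$ (which is what the nested pullback $\Pi^\sigma_v$ inside $(\psi\phi^\Ga)^\sigma$ produces) and composing the two permutations $\sigma(h^{-1})$ and $\sigma(g^{-1})$ (which is what the two successive pullbacks in $\psi^\sigma \circ \phi^\sigma$ produce); the gap between these is exactly what approximate multiplicativity controls.
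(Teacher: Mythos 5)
Your proof is correct and follows the same route as the paper's: unwind $(\psi\phi^\Ga)^\sigma$ and $\psi^\sigma\phi^\sigma$ through the definitions of $\Pi^\sigma_v$, $\phi^\sigma$, $\phi^\Ga$, applying $D_\psi$-locality and then $D_\phi$-locality to reduce each side to the coordinates $x(\sigma(h^{-1}g^{-1})v)$ and $x(\sigma(h^{-1})\sigma(g^{-1})v)$ respectively, so that approximate multiplicativity on $D_\phi^{-1}D_\psi^{-1}$ controls the bad set. The only difference is that you spell out the elementary deduction of the quantitative bound from the containment, which the paper leaves implicit.
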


\begin{proof}
Fix $v\in V$. Suppose $\sigma(h^{-1})\sigma(g^{-1})v = \sigma(h^{-1}g^{-1})v$ for all $h\in D_\phi$ and $g \in D_\psi$. It suffices to show $(\psi\phi^\G)^\sigma(x)_v = \psi^\sigma \phi^\sigma(x)_v$. Observe that
$$(\psi\phi^\G)^\sigma(x)_v = \psi (\phi^\G( \Pi^\s_v x))$$
$$\psi^\sigma \phi^\sigma(x)_v = \psi( \Pi^\s_v(\phi^\s(x))).$$
Since $\psi$ is $D_\psi$-local, it suffices to show that for every $g\in D_\psi$,
$$\phi^\G( \Pi^\s_v x)_g = \Pi^\s_v(\phi^\s(x))_g.$$
Observe that
$$\phi^\G( \Pi^\s_v x)_g = \phi(S^{g^{-1}} \Pi^\s_v (x))$$
$$ \Pi^\s_v(\phi^\s(x))_g  = \phi^\s(x)(\s(g^{-1})v) = \phi( \Pi^\s_{\s(g^{-1})v}(x)).$$
Since $\phi$ is $D_\phi$-local it suffices to show that for every $h \in D_\phi$,
$$S^{g^{-1}} \Pi^\s_v (x)_h = \Pi^\s_{\s(g^{-1})v}(x)_h.$$
The left-hand side simplifies as follows:
$$S^{g^{-1}} \Pi^\s_v (x)_h = \Pi^\s_v (x)_{gh} = x(\s( (gh)^{-1})v) = x(\s(h^{-1}g^{-1})v).$$
The right-hand side simplifies to
$$ \Pi^\s_{\s(g^{-1})v}(x)_h = x(\s(h^{-1})\s(g^{-1})v).$$
Therefore if $\s(h^{-1}g^{-1})v = \s(h^{-1})\s(g^{-1})v$ for every $h \in D_\phi$ and $g\in D_\psi$ then $(\psi\phi^\G)^\sigma(x)_v = \psi^\sigma \phi^\sigma(x)_v$.




\end{proof}

\begin{defn}\label{D:TV}
The {\bf total variation distance} between two measures $\mu$ and $\nu$ on the same $\s$-algebra $\cF$ is 
$$d_{\textrm{TV}}(\mu,\nu) = \sup_{A \in \cF} |\mu(A) - \nu(A)|.$$
\end{defn}


Roughly speaking, the next lemma shows that closeness in total variation distance of restricted measures implies closeness in the weak* topology. 

\begin{lem}\label{L:TV}
For any $\mu \in \Prob(\cX^\G)$ and any weak* open set $O \subset \Prob(\cX^\G)$ with $\mu \in O$, there exists a finite set $E \subset \G$ and $\d>0$ such that
$$\left\{ \nu \in \Prob(\cX^\G):~ d_{\textrm{TV}}(\Res^E_*\mu, \Res^E_*\nu)<\d\right\} \subset O.$$
\end{lem}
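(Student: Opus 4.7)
The plan is to reduce the statement to the standard approximation of continuous functions on $\cX^\G$ by cylinder functions and then use the standard bound of $L^1$-differences by total variation.

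First, I will reduce to a subbasic neighborhood. The weak$^*$ topology on $\Prob(\cX^\G)$ is generated by sets of the form $U_{f,\eps}(\mu) := \{\nu : |\int f\,d\nu - \int f\,d\mu|<\eps\}$ for $f \in C(\cX^\G)$ and $\eps>0$. So there exist continuous functions $f_1,\ldots,f_k$ on $\cX^\G$ and $\eps>0$ with $\bigcap_{i=1}^k U_{f_i,\eps}(\mu) \subset O$. Since producing a finite $E$ and $\d$ for each $f_i$ separately and then taking the union of the $E_i$'s and the minimum of the $\d_i$'s yields the result, it suffices to handle a single $U_{f,\eps}(\mu)$.

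Second, I will approximate $f$ by a function of finitely many coordinates. Since $\cX$ is compact, so is $\cX^\G$ in the product topology, and the subalgebra of $C(\cX^\G)$ consisting of functions that factor as $\tilde f \circ \Res^E$ for some finite $E \subset \G$ and continuous $\tilde f: \cX^E \to \R$ contains constants and separates points; by Stone--Weierstrass it is uniformly dense. Therefore there exist a finite $E \subset \G$ and a continuous $\tilde f:\cX^E \to \R$ with $\|f - \tilde f \circ \Res^E\|_\infty < \eps/3$. Let $M:=\|\tilde f\|_\infty$, which is finite because $\cX^E$ is compact.

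Third, I will invoke the standard bound
\[
\left|\int \tilde f\,d\a - \int \tilde f\,d\b\right| \le 2M \cdot d_{\textrm{TV}}(\a,\b)
\]
for probability measures $\a,\b$ on $\cX^E$ (which follows from the Jordan decomposition of $\a-\b$ and the fact that $|\a-\b|(\cX^E)=2\,d_{\textrm{TV}}(\a,\b)$ since $\a$ and $\b$ have the same total mass). Combining this with the uniform approximation gives, for every $\nu \in \Prob(\cX^\G)$,
\[
\left|\int f\,d\mu - \int f\,d\nu\right| \le \frac{2\eps}{3} + 2M \cdot d_{\textrm{TV}}(\Res^E_*\mu,\Res^E_*\nu).
\]
Choosing $\d := \eps/(6M)$ then forces the right-hand side to be less than $\eps$, so any $\nu$ satisfying $d_{\textrm{TV}}(\Res^E_*\mu,\Res^E_*\nu)<\d$ lies in $U_{f,\eps}(\mu)$. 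Taking $E$ to be the union of the finite sets produced for each $f_i$ and $\d$ to be the minimum of the corresponding $\d_i$'s completes the argument. The proof is entirely routine; the only point requiring any care is verifying the constant in the $d_{\textrm{TV}}$ bound, which is standard.
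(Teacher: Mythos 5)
Your proof is correct and follows essentially the same route as the paper: reduce to a finite intersection of subbasic weak* neighborhoods, uniformly approximate each continuous function by an $E$-local (cylinder) function, and then bound the difference of integrals of that local function by the total variation distance of the restricted measures. The only cosmetic difference is that you cite Stone--Weierstrass where the paper appeals to $\cX^\G$ being the inverse limit of the $\cX^D$, and you carry the correct factor of $2$ in the total-variation bound (the paper's displayed inequality omits it, but this only affects the choice of $\delta$ and not the conclusion).
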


\begin{proof}
By definition of the weak* topology, there are continuous functions $f_1,\ldots, f_k$ on $\cX^\G$ and $\eps>0$ such that
 $$\left\{ \nu \in \Prob(\cX^\G):~ \left| \int f_i~d\nu - \int f_i ~d\mu\right|<\eps ~\forall 1\le i \le k \right\} \subset O.$$
Because each $\cX^\G$ is the inverse limit of the compact spaces $\cX^D$ over finite $D \subset \G$, there exist a finite $E \subset \G$ and $E$-local continuous functions $f'_1,\ldots, f'_k$ on $\cX^\G$ such that $|f_i-f'_i|< \eps/3$ for all $i$. By the triangle inequality,
$$\left\{ \nu \in \Prob(\cX^\G):~ \left| \int f'_i~d\nu - \int f'_i ~d\mu\right|\le \eps/3 ~\forall 1\le i \le k \right\} \subset O.$$
Now suppose $\nu \in \Prob(\cX^\G)$ and $d_{\textrm{TV}}(\Res^E_*\mu, \Res^E_*\nu)<\frac{\eps}{6M}$ where $M=\max_{1\le i\le k} \|f'_i\|_{\textrm{sup}}$. By abuse of notation, we may consider each $f'_i$ to be a continuous function on $\cX^E$. So
\begin{eqnarray*}
\left| \int f'_i~d\nu - \int f'_i ~d\mu\right| &=& \left| \int f'_i~d\Res^E_*\nu - \int f'_i ~d\Res^E_*\mu\right| \\
&\le& 2\|f'_i\|_{\textrm{sup}} d_{\textrm{\textrm{TV}}}(\Res^E_*\mu, \Res^E_*\nu) \le \eps/3.
\end{eqnarray*}
By the previous inclusion, this implies $\nu \in O$ and completes the lemma (with $\delta = \frac{\eps}{6M}$). 
\end{proof}

\begin{lem}\label{lem:sofic-3}
Suppose $\phi:\cX^\Ga \to \cY$ is  $D$-local for some finite set $D \subset \Ga$. Also let $E \subset \G$ be finite and $\d>0$. Then there exist a finite set $F \subset \G$ and $\eps>0$ (depending only on $D,E,\d$) such that if $\sigma$ is an $(F,\eps)$-sofic approximation to $\Ga$, then for all $x\in \cX^V$,
$$\# \{v \in V:~ (\phi^\Ga)(\Pi^\sigma_v(x))(g) \ne \Pi^\sigma_v( \phi^\sigma(x))(g)~ \forall g \in E \}  \le \delta |V|.$$
In particular, the total variation distance between the restricted empirical measures $\Res^E_* \phi^\G_*P_x^\s$ and $\Res^E_* P^\s_{\phi^\s(x)}$ is bounded by $\d$. 
\end{lem}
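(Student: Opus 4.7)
The plan is to unwind the two sides of the claimed equality at a single vertex $v$, and then show that the sofic multiplicativity axiom on a sufficiently large finite set of pairs forces equality for all but $\d|V|$ vertices.

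First I would compute both sides explicitly at $v \in V$ and $g \in E$: on one hand
$$\phi^\G(\Pi^\s_v(x))(g) = \phi\big(S^{g^{-1}} \Pi^\s_v(x)\big),$$
and on the other
$$\Pi^\s_v(\phi^\s(x))(g) = \phi^\s(x)(\s(g^{-1})v) = \phi\big(\Pi^\s_{\s(g^{-1})v}(x)\big).$$
Since $\phi$ is $D$-local, these are equal whenever $S^{g^{-1}}\Pi^\s_v(x)$ and $\Pi^\s_{\s(g^{-1})v}(x)$ agree on $D$. For $h \in D$ the first evaluates at $h$ to $x(\s(h^{-1}g^{-1})v)$ and the second to $x(\s(h^{-1})\s(g^{-1})v)$. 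So equality at coordinate $h$ reduces exactly to the identity $\s(h^{-1}g^{-1})v = \s(h^{-1})\s(g^{-1})v$ — the same reduction already carried out in Lemma~\ref{lem:sofic-2}.

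Next I would choose $F \Subset \G$ and $\eps>0$ that guarantee this multiplicativity identity holds simultaneously on all relevant pairs for most $v$. Taking $F := \{1_\G\} \cup D^{-1} \cup E^{-1}$ and $\eps := \d$, the $(F,\eps)$-multiplicativity hypothesis says that there is a subset $V' \subseteq V$ with $|V'| \geq (1-\d)|V|$ on which $\s(ab)v = \s(a)\s(b)v$ for all $a,b \in F$; in particular for $a = h^{-1}$, $b = g^{-1}$ with $h\in D$, $g\in E$. For every $v \in V'$ the computation of the previous paragraph then gives $\phi^\G(\Pi^\s_v(x))(g) = \Pi^\s_v(\phi^\s(x))(g)$ for all $g \in E$, so the exceptional set is contained in $V \setminus V'$ and has cardinality at most $\d|V|$.

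Finally, for the ``in particular'' assertion I would write
$$\Res^E_*\phi^\G_*P^\s_x = |V|^{-1}\sum_{v \in V}\delta_{\Res^E \phi^\G(\Pi^\s_v(x))},\qquad \Res^E_*P^\s_{\phi^\s(x)} = |V|^{-1}\sum_{v\in V}\delta_{\Res^E \Pi^\s_v(\phi^\s(x))},$$
and observe that the summands indexed by $v \in V'$ coincide, so for any Borel set $A \subseteq \cY^E$ the two empirical measures differ only through the contribution of $V \setminus V'$, giving a difference at most $|V\setminus V'|/|V| \le \d$. The only mildly delicate point — which I expect to be the main (minor) obstacle — is confirming that $F$ contains every element needed to invoke $(F,\eps)$-multiplicativity on the pairs $(h^{-1},g^{-1})$; this is what forces the choice to include $D^{-1}$ and $E^{-1}$ (and $1_\G$ to avoid edge cases). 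No estimate on $\cX$ or metric structure enters, so the argument is purely combinatorial-group-theoretic, in the spirit of Lemma~\ref{lem:sofic-2}.
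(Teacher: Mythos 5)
Your proposal follows essentially the same route as the paper's proof: unwind both sides of the claimed identity at a vertex $v$, use the $D$-locality of $\phi$ to reduce to a pointwise multiplicativity identity for $\s$ on the pairs coming from $D$ and $E$, and then invoke $(F,\eps)$-soficity. There is, however, a small but real slip in the reduction. By the definition $\Pi^\s_v(x)(g) = x(\s(g)^{-1}v)$, the evaluation at $h\in D$ gives $x\big(\s(gh)^{-1}v\big)$ on one side and $x\big(\s(h)^{-1}\s(g)^{-1}v\big)$ on the other (inverse \emph{permutations}, not $\s$ applied to group inverses); you wrote $x(\s(h^{-1}g^{-1})v)$ and $x(\s(h^{-1})\s(g^{-1})v)$, which is only the same thing if one assumes $\s(a^{-1})=\s(a)^{-1}$ for all $a$, an assumption the paper does not impose on an arbitrary sofic map $\s$. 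The correct condition $\s(gh)^{-1}v = \s(h)^{-1}\s(g)^{-1}v$ does still follow from the multiplicativity axiom, but via a bijection argument (set $w = \s(gh)^{-1}v$ and note the condition becomes $\s(gh)w = \s(g)\s(h)w$); since that bijection depends on the pair $(g,h)$, you need to union-bound over $D\times E$, so the choice $\eps=\d$ with $F=\{1_\G\}\cup D^{-1}\cup E^{-1}$ is not quite sufficient — something like $\eps\le\d/(|D|\,|E|)$ and $F\supseteq D\cup E$ is needed. The paper sidesteps this by simply asserting the existence of suitable $(F,\eps)$ without an explicit formula, which is the safer way to phrase it.
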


\begin{proof}
Let $F \subset \G$ and $\eps>0$ be such that if $\s$ is $(F,\eps)$-sofic then 
$$\# \{v \in V:~\s(gh)^{-1}v = \s(h)^{-1}\s(g)^{-1}v ~\forall h \in D, \forall g\in E\} \ge (1-\d)\#V.$$
Suppose $\s$ is  $(F,\eps)$-sofic and fix $v\in V$. By Definition \ref{D:equivariant},
$$(\phi^\Ga)(\Pi^\sigma_v(x))(g) = \phi(S^{g^{-1}}\Pi^\s_v(x)).$$
By the definitions of pullback and $\phi^\s$, 
$$\Pi^\sigma_v( \phi^\sigma(x))(g) = \phi^\s(x)(\s(g)^{-1}v) = \phi(\Pi^\s_{\s(g)^{-1}v}(x)).$$
Because $\phi$ is $D$-local, if 
$$(S^{g^{-1}}\Pi^\s_v(x))(h) = \Pi^\s_{\s(g)^{-1}v}(x)(h)$$
for all $h \in D$ then $(\phi^\Ga)(\Pi^\sigma_v(x))(g) = \Pi^\sigma_v( \phi^\sigma(x))(g)$. 

We compute
$$(S^{g^{-1}}\Pi^\s_v(x))(h) = \Pi^\s_v(x)(gh)=x(\s(gh)^{-1}v)$$
and
$$\Pi^\s_{\s(g)^{-1}v}(x)(h) = x(\s(h)^{-1}\s(g)^{-1}v).$$
Because $\s$ is $(F,\eps)$-sofic, there is a $(1-\d)$-fraction of vertices $v$ such that $(S^{g^{-1}}\Pi^\s_v(x))(h) = \Pi^\s_{\s(g)^{-1}v}(x)(h)$ for all $g \in E, h \in D$. Again, since $\phi$ is $D$-local, this condition implies $(\phi^\Ga)(\Pi^\sigma_v(x))(g) = \Pi^\sigma_v( \phi^\sigma(x))(g)$ for all $g \in E$. This proves the first claim. 

The last claim is implied by the first. In fact, $\Res^E_* \phi^\G_*P_x^\s$ is the law of $\Res^E(\phi^\Ga(\Pi^\sigma_v(x)))$ where $v \in V$ is chosen uniformly at random, while $\Res^E_* P^\s_{\phi^\s(x)}$ is the law of $\Res^E(\Pi^\sigma_v( \phi^\sigma(x)))$ where $v \in V$ is chosen uniformly at random. Since $\Res^E(\phi^\Ga(\Pi^\sigma_v(x)))= \Res^E(\Pi^\sigma_v( \phi^\sigma(x)))$ for all but a $\d$-fraction of vertices $v$, the total variation distance between the restricted empirical measures $\Res^E_* \phi^\G_*P_x^\s$ and $\Res^E_* P^\s_{\phi^\s(x)}$ is bounded by $\d$. 

\end{proof}

\section{Proof of Theorem \ref{thm:mc}}\label{sec:proof1}

We need one more lemma before the proof of the main theorem. Let $(\cY,d_\cY)$ be a compact metric space. Given a map $\lambda:\cY^V \to \cY^V$ let $\lambda_*:C_d(\cY^V) \to C_d(\cY^V)$ be the corresponding homomorphism of chain groups. Note $\lambda_*$ commutes with all boundary maps $\partial_d:C_d(\cY^V) \to C_{d-1}(\cY^V)$. 

\begin{lem}\label{lem:cylinder}
Let $\lambda:\cY^V \to \cY^V$ be given and suppose there is a constant $\k'>0$ such that $d^V_\cY(y,\lambda(y))<\k'$ for all $y\in \cY^V$. If $z \in Z_d(\cO_1,\k,\s)$ and $\lambda(\Omega(\cO_1,\s)) \subset \Omega(\cO_2,\s)$ (for some $\cO_1, \cO_2, \k,\s$)  then 
$$z - \lambda_*z \in B_d(\cO_1\cup\cO_2,\k+2\k',\s).$$
\end{lem}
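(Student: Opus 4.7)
\medskip

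\noindent\textbf{Proof proposal.} The plan is to use the classical prism (chain homotopy) construction from algebraic topology: from a map $\lambda$ that moves every point by less than $\k'$, one builds a chain-level operator $P$ such that $\partial P + P\partial = \lambda_* - \mathrm{id}$. Applied to a cycle $z$ this immediately expresses $z - \lambda_* z$ as a boundary; the remaining work is to check that every simplex appearing in $Pz$ satisfies the diameter and microstate constraints that place it in $C_{d+1}(\cO_1\cup\cO_2,\k+2\k',\s)$.

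Concretely, I would define $P: C_d(\cY^V) \to C_{d+1}(\cY^V)$ on generators by
\[
P([x_0,\ldots,x_d]) := \sum_{i=0}^{d} (-1)^i [x_0,\ldots,x_i,\lambda x_i,\lambda x_{i+1},\ldots,\lambda x_d],
\]
extended linearly, and check that $P$ is well defined with respect to the antisymmetry relations defining $C_*(\cY^V)$. A direct expansion of the boundary formula (as in the standard proof that two homotopic simplicial maps induce the same map on homology) yields the chain-homotopy identity
\[
\partial_{d+1} P + P \partial_d \;=\; \lambda_* - \mathrm{id}
\]
on $C_d(\cY^V)$. This is the one slightly tedious step, but it is purely formal and independent of $\cO_1,\cO_2,\k,\k',\s$.

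Now let $z \in Z_d(\cO_1,\k,\s)$. Since $\partial_d z = 0$, the identity above gives $\partial_{d+1} P(z) = \lambda_* z - z$, so $z - \lambda_* z = -\partial_{d+1} P(z)$. It therefore suffices to show that $P(z) \in C_{d+1}(\cO_1\cup\cO_2,\k+2\k',\s)$. Write $z = \sum_j c_j [x^j_0,\ldots,x^j_d]$ with each generator supported in $\Omega(\cO_1,\s)$ and each pairwise distance $d^V_\cY(x^j_a,x^j_b)<\k$. Fix one such generator $[x_0,\ldots,x_d]$ and one summand $[x_0,\ldots,x_i,\lambda x_i,\ldots,\lambda x_d]$ of $P$ applied to it. Each vertex is either some $x_a \in \Omega(\cO_1,\s)$ or $\lambda x_a \in \lambda(\Omega(\cO_1,\s)) \subset \Omega(\cO_2,\s)$, hence every vertex lies in $\Omega(\cO_1\cup\cO_2,\s)$.

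For the diameter bound I would use the triangle inequality together with the hypothesis $d^V_\cY(y,\lambda y)<\k'$ for all $y$: any pair $x_a,x_b$ already satisfies $d^V_\cY(x_a,x_b)<\k$; a pair $x_a,\lambda x_b$ satisfies $d^V_\cY(x_a,\lambda x_b) \le d^V_\cY(x_a,x_b)+d^V_\cY(x_b,\lambda x_b) < \k+\k'$; and a pair $\lambda x_a,\lambda x_b$ satisfies $d^V_\cY(\lambda x_a,\lambda x_b) \le d^V_\cY(\lambda x_a,x_a)+d^V_\cY(x_a,x_b)+d^V_\cY(x_b,\lambda x_b) < 2\k'+\k$. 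Thus every simplex in $P(z)$ has diameter strictly less than $\k+2\k'$ and all vertices are $(\cO_1\cup\cO_2,\s)$-microstates, so $P(z) \in C_{d+1}(\cO_1\cup\cO_2,\k+2\k',\s)$ and consequently $z-\lambda_* z = -\partial_{d+1} P(z) \in B_d(\cO_1\cup\cO_2,\k+2\k',\s)$, as required. The main bookkeeping obstacle is verifying the chain-homotopy identity with the correct signs; the metric estimates are straightforward once the prism formula is in place.
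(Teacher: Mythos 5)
Your overall strategy coincides with the paper's: build a prism operator $P$ satisfying $\partial P + P\partial = \lambda_* - \mathrm{id}$, apply it to the cycle $z$, and use the triangle inequality together with $\lambda(\Omega(\cO_1,\s))\subset\Omega(\cO_2,\s)$ to verify that every simplex of $P(z)$ lies in $C_{d+1}(\cO_1\cup\cO_2,\k+2\k',\s)$. The metric and microstate estimates you give are correct and match what the paper needs.

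There is, however, a genuine gap in the step where you define $P$ by
\[
P([x_0,\ldots,x_d]) = \sum_{i=0}^{d} (-1)^i [x_0,\ldots,x_i,\lambda x_i,\ldots,\lambda x_d]
\]
on generators and then say you would ``check that $P$ is well defined with respect to the antisymmetry relations.'' That check fails. The group $C_d(\cY^V)$ is defined by imposing $[x_{\pi(0)},\ldots,x_{\pi(d)}] = \mathrm{sign}(\pi)[x_0,\ldots,x_d]$, so $P$ must send permuted orderings of the same tuple to the same element of $C_{d+1}(\cY^V)$ up to sign. Already for $d=1$ this breaks: for generic distinct $x,y$ with $\lambda$ not doing anything special,
\[
P(x,y) = [x,\lambda x,\lambda y] - [x,y,\lambda y], \qquad -P(y,x) = -[y,\lambda y,\lambda x] + [y,x,\lambda x],
\]
and the underlying vertex sets $\{x,\lambda x,\lambda y\}$, $\{x,y,\lambda y\}$ on the left are different from $\{y,\lambda y,\lambda x\}$, $\{y,x,\lambda x\}$ on the right, so the two expressions cannot agree in $C_2(\cY^V)$. (This is the familiar fact that the prism operator is a construction on \emph{ordered} singular or simplicial chains; it does not descend naively to the oriented chain complex.) The paper resolves this by symmetrizing: after defining $P$ on ordered tuples exactly as you do, it sets
\[
P([x_0,\ldots,x_k]) := \frac{1}{(k+1)!}\sum_{\pi\in\sym(k+1)} \mathrm{sign}(\pi)\,P(x_{\pi(0)},\ldots,x_{\pi(k)}),
\]
which is manifestly independent of the chosen ordering, and the chain-homotopy identity survives the averaging because $\partial$ and $\lambda_*$ are already well defined on $C_*(\cY^V)$. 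Your diameter and microstate bounds go through unchanged for the symmetrized $P$, since every term in the symmetrization has the same vertex set $\{x_0,\ldots,x_i,\lambda x_i,\ldots,\lambda x_k\}$ for some $i$. An alternative fix, equally valid, is to choose a total order on $\cY^V$ and always apply the ordered formula to the increasing representative of each oriented simplex.
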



\begin{proof}







It will be convenient to work with ordered simplices rather than ordinary simplices. An {\bf ordered $k$-simplex} is an ordered $(k+1)$-tuple $(x_0,\ldots, x_k)$ with $x_i \in \cY^V$.  Given an ordered $k$-simplex $(x_0,\ldots, x_k)$ define $P(x_0,\ldots, x_k) \in  C_{k+1}(\cY^V)$ by
$$P(x_0,\ldots, x_k) := \sum_{i=0}^k (-1)^i [x_0, \ldots, x_i, \l(x_i), \ldots, \l(x_k)].$$
Given an oriented simplex $[x_0,\ldots, x_k] \in C_k(\cY^V)$ define $P([x_0,\ldots, x_k]) \in C_{k+1}(\cY^V)$ by
$$P([x_0,\ldots, x_k]) := |\sym(k+1)|^{-1} \sum_{\pi \in \sym(k+1)} \textrm{sign}(\pi)P(x_{\pi(0)},\ldots, x_{\pi(k)} ).$$

We extend $P$ linearly so that it is well-defined as a homomorphism from $C_k(\cY^V)$ to $C_{k+1}(\cY^V)$ (for every $k$ too). 

\noindent {\bf Claim}. $\partial P = \l_* - I + P\partial$ where $I$ denotes the identity map.

\begin{proof}
It suffices to show that for any $x_0,\ldots, x_k$,
$$\partial P([x_0,\ldots, x_k]) = [\l(x_0), \l(x_1),\ldots, \l(x_k)] - [x_0,\ldots, x_k] - P (\partial [x_0,\ldots, x_k]).$$
The proof is by direct inspection of the coefficients. Details are in the proof of \cite[Theorem 2.10]{MR1867354}.
\end{proof}

If $z \in Z_d(\cO_1,\k,\s)$ then $Pz \in C_d(\cO_1\cup\cO_2,\k + 2\k',\s)$ since $\lambda(\Omega(\cO_1,\s)) \subset \Omega(\cO_2,\s)$ and $d^V_\cY(y,\lambda(y))<\k'$ for all $y$. The claim implies
$$\partial P(z) = \l_*z - z - P\partial z = \l_*z - z.$$ 
Therefore $\l_*z -z \in B_d(\cO_1\cup\cO_2,\k+2\k',\s).$





\end{proof}

\begin{proof}[Proof of Theorem \ref{thm:mc}]
Suppose $\Phi:(\cX^\Ga,\mu) \to (\cY^\Ga,\nu)$ is a measure-conjugacy. We may assume without loss of generality that $(\cX,d_\cX)$ and $(\cY,d_\cY)$ have diameter 1. Let $\cO_{2,\nu}$ and $\kappa_{2,\nu}$ be given. 


We need to choose $\cO_{2,\mu}$ and $\kappa_{2,\mu}$. Before doing this, define $\phi:\cX^\Ga \to \cY$ by $\phi(x):=\Phi(x)_e$ (where $e \in \G$ is the identity element). Note $\phi^\Ga = \Phi$. We choose a UC-approximation $\tphi$ to $\phi$ as follows. Choose $0<\eta_\phi<1$ small enough  so that $3 \sqrt{\eta_\phi}< \frac{\kappa_{2,\nu}}{8}$.  By Lemma \ref{lem:approx-by-Lip} there exists an $\eta_\phi$-UC-approximation $\tphi$ to $\phi$. Because $\tphi$ is continuous, there exists an open neighborhood $\cO_{2,\mu}$ of $\mu$ such that the closure of $\tphi^\Ga_*\cO_{2,\mu}$ is contained in $\cO_{2,\nu}$.  By definition of UC-approximation, there is a finite set $D_\phi \subset \G$ such that $\tphi$ is $D_\phi$-local. By Lemma \ref{lem:uc-al}, $\tphi$ is $\eta_\phi$-almost $L_\phi$-Lipschitz for some $L_\phi$ (when regarded as a map from $\cX^{D_\phi}$ to $\cY$).  Now choose $\kappa_{2,\mu}>0$ so that $\eta_\phi + L_\phi\kappa_{2,\mu} < \frac{\kappa_{2,\nu}}{4}$.  

Next we let $\cO_{1,\mu}$ be an arbitrary open set with $\mu \in \cO_{1,\mu} \subset \cO_{2,\mu}$ and let $\k_{1,\mu}$ be an arbitrary constant with $0<\k_{1,\mu} \le \k_{2,\mu}$.

We need to choose $\cO_{1,\nu}$ and $\kappa_{1,\nu}$. Before doing this, define $\psi:\cY^\Ga \to \cX$ by $\psi(y):=\Phi^{-1}(y)_e$. Note $\psi^\Ga = \Phi^{-1}$. We choose a UC-approximation $\tpsi$ to $\psi$ as follows. Choose $0<\eta_\psi<1$ so that $3 \sqrt{\eta_\psi}< \frac{\kappa_{1,\mu}}{2}$. By Lemma \ref{lem:composition} (and using $3 \sqrt{\eta_\phi}< \frac{\kappa_{2,\nu}}{8}$) we can choose $\eta_\psi$ smaller if necessary so that for any $\eta_\psi$-UC approximation $\tpsi$ to $\psi$, the composition $\tphi \tpsi^\G$ is a $\frac{\kappa_{2,\nu}}{8}$-UC approximation to $\phi \psi^\G$ which is the identity-coordinate projection. Fix such a UC-approximation $\tpsi$. Because $\tpsi$ and therefore $\tpsi^\G$ are continuous, there is an open neighborhood $\cO_{1,\nu}$ of $\nu$ such that $\cO_{1,\nu} \subset \cO_{2,\nu}$ and the closure of $\tpsi^\Ga_*\cO_{1,\nu}$ is contained in $\cO_{1,\mu}$. By choosing $\cO_{1,\nu}$ smaller if necessary we may assume that
$$\int d(\tphi \tpsi^\G z,z_e)~d\nu'(z) < \kappa_{2,\nu}/8$$
for every $\nu' \in \cO_{1,\nu}$. This is because the inequality holds if $\nu'=\nu$ (since $\tphi\tpsi^\G$ is a $\kappa_{2,\nu}/8$-UC approximation to the identity coordinate-projection) and the map $z \mapsto d(\tphi \psi^\G z,z_e)$ is continuous.

 By definition of UC-approximation, there is a finite set $D_\psi \subset \G$ such that $\tpsi$ is $D_\psi$-local. By Lemma \ref{lem:uc-al}, $\tpsi$ is $\eta_\psi$-almost $L_\psi$-Lipschitz for some $L_\psi$ (when regarded as a map from $\cY^{D_\psi}$ to $\cX$). Finally, choose $\kappa_{1,\nu}>0$ so that $\eta_\psi + L_\psi\kappa_{1,\nu} < \kappa_{1,\mu}$ and $\kappa_{1,\nu} < \frac{\kappa_{2,\nu}}{2}$.



To simplify notation, let 
$$Z(n,\nu):=Z^L_d(\cO_{1,\nu}, \k_{1,\nu},{\s_n}), \quad Z(n,\mu):=Z^L_d(\cO_{1,\mu}, \k_{1,\mu},{\s_n})$$
$$B(n,\nu):=B_d(\cO_{2,\nu},\k_{2,\nu},{\s_n}), \quad B(n,\mu):=B_d(\cO_{2,\mu},\k_{2,\mu},{\s_n})$$
$$H(n,\nu):= H^L_d(\cO_{1,\nu},\cO_{2,\nu},\k_{1,\nu},\k_{2,\nu},\s_n), \quad H(n,\mu):= H^L_d(\cO_{1,\mu},\cO_{2,\mu},\k_{1,\mu},\k_{2,\mu},\s_n).$$
Also let 
$$q_{n,\nu}:Z(n,\nu) \to H(n,\nu), \quad q_{n,\mu}: Z(n,\mu) \to H(n,\mu)$$
be the quotient maps.

Next we must verify that for all but finitely many $n$, $H(n,\nu)$ is a QS-group of $H(n,\mu)$. Our strategy is as follows. First we show that $\tpsi^{\s_n}_*(Z(n,\nu)) \subset Z(n,\mu)$. Let $S = q_{n,\mu}\circ \tpsi^{\s_n}_*(Z(n,\nu)) \le H(n,\mu)$. Next we show that $\tphi^{\s_n}$ induces a map, denoted by $\tphi^H$, from $S$ back to $H(n,\nu)$ that is surjective. This shows that $H(n,\nu)$ is a quotient of the subgroup $S \le H(n,\mu)$ and thereby completes the proof. 

It is convenient to first show that $\tpsi^{\s_n}$ and $\tphi^{\s_n}$ behave well with respect to the Hamming metrics and empirical distributions.





 \noindent {\bf Claim 1}.  For all but finitely many $n \in \N$ the following holds. For any $x,x' \in \Omega(\cO_{2,\mu},\s_n)$ and $y,y' \in  \Omega(\cO_{1,\nu},\s_n)$,
 \begin{enumerate}
 \item[(1)] $\tphi^{\s_n}(x) \in \Omega(\cO_{2,\nu},\s_n)$,
 \item[(2)] if $d^{V_n}_\cX(x,x') < \k_{2,\mu}$ then $d^{V_n}_\cY( \tphi^{\s_n}(x), \tphi^{\s_n}(x')) < \frac{\kappa_{2,\nu}}{4}$. 
 \item[(3)] $\tpsi^{\s_n}(y) \in \Omega(\cO_{1,\mu},\s_n)$,
 \item[(4)] If $d^{V_n}_\cY(y,y') < \k_{1,\nu}$ then $d^{V_n}_\cX( \tpsi^{\s_n}(y), \tpsi^{\s_n}(y')) < \k_{1,\mu}$. 
 \end{enumerate}
 
  \begin{proof}[Proof of Claim 1]
  
We chose $\cO_{2,\mu}$ so that the {\em closure} of $\tphi^\Ga_*\cO_{2,\mu}$ is contained in $\cO_{2,\nu}$. By compactness and Lemma \ref{L:TV}, there exists a finite set $E \subset \G$ and $\d>0$ such that for any $\a\in \tphi^\Ga_*\cO_{2,\mu}$, if $\b \in \Prob(\cY^\G)$ is such that the restricted measures $\Res^E_*\a, \Res^E_*\b$ have total variation distance $<\d$ then $\b \in \cO_{2,\nu}$.   
  
   Lemma \ref{lem:sofic-3} implies that if $n$ is sufficiently large (independent of $x$) then  the total variation distance of the restricted measures $\Res^E_*P^{\s_n}_{\tphi^{\s_n}(x)}$ and $\Res^E_*\tphi^\G_* P^{\s_n}_x \in \cO_{2,\nu}$ is bounded by $\d>0$.   Since $P^{\s_n}_{x} \in \cO_{2,\mu}$, $\tphi^\G_* P^{\s_n}_x \in \tphi^\Ga_*\cO_{2,\mu}$. So this implies $P^{\s_n}_{\tphi^{\s_n}(x)}\in \cO_{2,\nu}$. This proves (1). 
 
 Since $d^{V_n}_\cX(x,x')<\k_{2,\mu}$ and $\tphi$ is $\eta_\phi$-almost $L_\phi$-Lipschitz, Lemma \ref{lem:sofic-UC} implies
$$d^{V_n}_\cY( \tphi^{\s_n}(x), \tphi^{\s_n}(x')) \le L_\phi d^{V_n}_\cX(x,x') + \eta_\phi < L_\phi \k_{2,\mu} + \eta_\phi < \frac{\kappa_{2,\nu}}{4}.$$
This proves (2). The other statements are proven similarly.
 \end{proof}

  \noindent {\bf Claim 2}.  For all but finitely many $n \in \N$, for every $z \in Z(n,\nu)$,
  $$\tphi^{\s_n}_*\tpsi^{\s_n}_*(z)  - z \in B(n,\nu).$$
  
  \begin{proof}[Proof of Claim 2]
  We will verify the hypotheses of Lemma  \ref{lem:cylinder} with $\lambda=\tphi^{\s_n}\tpsi^{\s_n}$. So let $y \in \Omega(\cO_{1,\nu},\s_n)$. 
  
  By Claim 1 (3), $\tpsi^{\s_n}(y) \in \Omega(\cO_{1,\mu},\s_n)$. Since $\cO_{1,\mu} \subset \cO_{2,\mu}$, this implies $\tpsi^{\s_n}(y) \in \Omega(\cO_{2,\mu},\s_n)$. So Claim 1 (1) implies   $\tphi^{\s_n}\tpsi^{\s_n}(y) \in \Omega(\cO_{2,\nu},\s_n)$.
 
By Lemma \ref{lem:sofic-2} for all but finitely many $n$, 
$$ \#\{v \in V_n:~ (\tphi \tpsi^\Ga)^{\sigma_n}(y)_v \ne \tphi^{\sigma_n} \tpsi^{\sigma_n}(y)_v\} \le (\kappa_{2,\nu}/8) |V_n|.$$
Because $\cY$ has diameter 1,
$$d^{V_n}_\cY(\tphi^{\s_n}\tpsi^{\s_n}(y), y)\le (\kappa_{2,\nu}/8)+ d^{V_n}_\cY( (\tphi \tpsi^\G)^{\s_n}(y), y).$$
 Note
 $$ d^{V_n}_\cY( (\tphi \tpsi^\G)^{\s_n}(y), y) = \int d_\cY( (\tphi \tpsi^\G)(z), z_e)~dP^{\s_n}_y(z).$$
Since $P^{\s_n}_y \in \cO_{1,\nu}$, the choice of $\cO_{1,\nu}$ implies
  $$ \int d_\cY( (\tphi \tpsi^\G)(z), z_e)~dP^{\s_n}_y(z)<\kappa_{2,\nu}/8.$$
 Thus,
 $$d^{V_n}_\cY(\tphi^{\s_n}\tpsi^{\s_n}(y), y)\le \kappa_{2,\nu}/4$$
 for every $y\in \Omega(\cO_{1,\nu},\s_n)$. Claim 2 now follows from Lemma \ref{lem:cylinder} with $\kappa'=\kappa_{2,\nu}/4$ and $\kappa=\kappa_{1,\nu}<\kappa_{2,\nu}/2$. 
 \end{proof}
 
   \noindent {\bf Claim 3}.  For all but finitely many $n \in \N$,
   $$(\tpsi^{\s_n}_*)^{-1}(B(n,\mu))  \cap Z(n,\nu) \subset B(n,\nu).$$

  \begin{proof}[Proof of Claim 3]
 Let $z \in Z(n,\nu)$ and suppose $\tpsi^{\s_n}_*(z) \in B(n,\mu)$. By Claim 1 (1,2), $\tphi^{\s_n}_*\tpsi^{\s_n}_*(z) \in B(n,\nu)$. By Claim 2, $z-\tphi^{\s_n}_*\tpsi^{\s_n}_*(z) \in B(n,\nu)$. Therefore, $z \in B(n,\nu)$ as required.
 \end{proof}

 Since $\tpsi^{\s_n}$ commutes with the boundary map $\partial_d$, Claim 1 (3,4) implies that $\tpsi^{\s_n}_*(Z(n,\nu)) \subset Z(n,\mu)$. Therefore $S:= q_{n,\mu}\circ \tpsi^{\s_n}_*(Z(n,\nu))$ is a well-defined subgroup of  $H(n,\mu)$. Define $\tphi^H:S \to H(n,\nu)$ as follows. Given $w \in S$, let $z \in Z(n,\nu)$ be such that $q_{n,\mu}\circ \tpsi^{\s_n}_*(z)=w$. Then define $\tphi^H(w):= z + B(n,\nu) \cap Z(n,\nu)$. 
 
 To see that $\tphi^H$ is well-defined, suppose that $z' \in Z(n,\nu)$ also satisfies $q_{n,\mu}\circ \tpsi^{\s_n}_*(z')=w$. Then 
 $$\tpsi^{\s_n}_*(z) - \tpsi^{\s_n}_*(z') =  \tpsi^{\s_n}_*(z-z')  \in B(n,\mu).$$
 By Claim 3, this implies $z-z' \in B(n,\nu)$. This implies $\tphi^H$ is well-defined.
 
 To check that $\tphi^H$ is surjective, let $z \in Z(n,\nu)$. If $w =q_{n,\mu}\tpsi^{\s_n}_*(z) \in S$ then $\tphi^H (w) = z + B(n,\mu) \cap Z(n,\nu)$. This shows $\tphi^H$ is surjective. So $H(n,\nu)$ is a QS-group of $H(n,\mu)$ as required.
 
\end{proof}

\section{Homology computations}\label{sec:computations}

\subsection{Contractible model spaces}
Let $(\cX,d_\cX)$ be  compact totally disconnected metric space, $\mu \in \Prob_\G( \cX^\G)$, and $\Si$ be a sofic approximation to $\G$.

\begin{defn}[Contractible model spaces]
We say $\mu$ {\bf has contractible model spaces} with respect to $\Si$ if for every open neighborhood $\cO_2$ of $\mu$ in $\Prob( \cX^\G)$, and every $\d>0$  there exists an open neighborhood $\cO_1 \subset \Prob(\cX^\G)$ with $\mu \in \cO_1 \subset \cO_2$ such that for every $0<K<\infty$ and all but finitely many $n$, if $x_1,\ldots, x_K \in \Omega(\cO_1, \s_n)$ then there exist $x_i^{(j)}$ (for $j\ge 0$) such that 
\begin{enumerate}
\item $x_i^{(j)} \in \Omega(\cO_2,\s_n)$ for all $i,j$, 
\item $x_i^{(0)}=x_i$ for all $i$,
\item $d^{V_n}_\cX(x_i^{(j+1)}, x_k^{(j+1)}) \le d^{V_n}_\cX(x_i^{(j)}, x_k^{(j)})$ for all $i,j,k$,
\item $d^{V_n}_{\cX} (x_i^{(j)}, x_i^{(j+1)}) < \d$ for all $i,j$
\item there exists $M$ such that $x_1^{(M)} = \cdots = x_K^{(M)}$. This $M$ may depend on $x_1,\ldots, x_K$. In particular, $M$ may depend on $n$. 
\end{enumerate}
\end{defn}

\begin{defn}[Vanishing homology]
The measure $\mu$ is said to have {\bf vanishing reduced homology in dimension $d$ with respect to $\Si$} if $\forall$ open neighborhoods $\cO_2 \ni \mu$, $\forall \k_2>0$ there exist an open neighborhood $\cO_1$ with $\mu \in \cO_1 \subset \cO_2$ and $\kappa_1>0$ such that for every $L \in \N$, either $d>0$ and $H^L_d(\cO_{1},\cO_{2},\k_{1},\k_{2},\s_n)=0$ for all but finitely many $n$ or $d=0$ and  $H^L_d(\cO_{1},\cO_{2},\k_{1},\k_{2},\s_n)\cong \Z$ for all but finitely many $n$. By Theorem \ref{thm:mc}, this notion is a measure-conjugacy invariant. Hence it can also be applied to measure-preserving systems of the form $\G \cc (X,\mu)$ in which $(X,\mu)$ is a standard probability space without any additional structure. 
\end{defn}

\begin{prop}\label{prop:contractible}
If $\mu$ has contractible model spaces with respect to $\Si$ then $\mu$ has vanishing reduced homology in every dimension with respect to $\Si$.
\end{prop}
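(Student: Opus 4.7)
The plan is to deduce vanishing homology from contractibility of model spaces by an explicit cylinder-and-telescoping argument, transcribing the homotopy construction in the proof of Lemma~\ref{lem:cylinder} into this setting. Given $\cO_2\ni\mu$ and $\k_2>0$, I would apply the contractibility hypothesis with $\cO_2$ and $\delta:=\k_2/3$ to obtain an open neighborhood $\cO_1\subset\cO_2$ with $\mu\in\cO_1$, then set $\k_1:=\k_2/3$, so that $\k_1\le\k_2$ and $\k_1+\delta<\k_2$.

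Fix $L<\infty$, $d\ge 0$, and $n$ large enough that contractibility applies with $K:=L(d+1)$. Since $Z^L_d(\cO_1,\k_1,\s_n)$ is generated by length-$\le L$ cycles, it suffices to handle a single such cycle $z$; let $\bfx_1,\ldots,\bfx_K$ enumerate the vertex set of $z$. Apply contractibility to these vertices to produce deformations $\bfx_k^{(j)}$ ($0\le j\le M$) with $\bfx_k^{(0)}=\bfx_k$, $\bfx_k^{(M)}$ independent of $k$, satisfying properties (1)--(5). Define $z^{(j)}$ as the chain obtained from $z$ by replacing each vertex $\bfx_k$ with $\bfx_k^{(j)}$. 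Property~(3) guarantees that pairwise distances never increase, so $z^{(j)}\in C_d(\cO_2,\k_1,\s_n)$ by property~(1), and $z^{(j)}$ remains a cycle because vertex substitution commutes with $\partial$.

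For each $j$ apply the cylinder operator $P$ from the proof of Lemma~\ref{lem:cylinder}, with vertex map $\l_j:\bfx_k^{(j)}\mapsto\bfx_k^{(j+1)}$ (extended arbitrarily, as only its values on these finitely many vertices are used). The identity $\partial P+P\partial=(\l_j)_*-I$ together with $\partial z^{(j)}=0$ yields $\partial P(z^{(j)})=z^{(j+1)}-z^{(j)}$. Properties (1), (3), (4) and the choice $\k_1+\delta<\k_2$ ensure that every simplex appearing in $P(z^{(j)})$ has all vertices in $\Omega(\cO_2,\s_n)$ and pairwise distances $<\k_2$, so $P(z^{(j)})\in C_{d+1}(\cO_2,\k_2,\s_n)$. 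Telescoping then gives $z-z^{(M)}=-\partial\sum_{j=0}^{M-1}P(z^{(j)})\in B_d(\cO_2,\k_2,\s_n)$. When $d>0$, every simplex of $z^{(M)}$ has its $d+1$ vertices all equal and thus vanishes by antisymmetry, so $z\in B_d(\cO_2,\k_2,\s_n)$ and $H^L_d=0$.

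When $d=0$, the telescoping shows that $z=\sum c_i[x_i]$ is congruent modulo $B_0(\cO_2,\k_2,\s_n)$ to $(\sum c_i)\cdot[\bfx^{(M)}]$. Applying the same argument to a length-$2$ cycle $[x]-[x']$ shows further that all vertex classes coincide in $H^L_0$, so $H^L_0$ is cyclic, generated by $[x_*]$ for any $x_*\in\Omega(\cO_1,\s_n)$. The augmentation $\sum c_i[x_i]\mapsto\sum c_i$ descends to $H^L_0$ (since boundaries from $C_1$ have augmentation zero), is surjective onto $\Z$, and has kernel contained in $B_0(\cO_2,\k_2,\s_n)$ by the main argument; hence $H^L_0\cong\Z$. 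The main technical obstacle is the distance bookkeeping for the cylinder faces of $P(z^{(j)})$: one must carefully verify that all their vertices lie pairwise within $\k_2$, which forces the slack $\k_1+\delta<\k_2$ and dictates the choices $\k_1=\delta=\k_2/3$ made at the outset.
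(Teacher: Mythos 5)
Your proof is correct and takes essentially the same route as the paper: apply contractibility with $\delta$ and $\kappa_1$ of order $\kappa_2/3$, push the cycle through the deformation via the cylinder operator $P$ from Lemma~\ref{lem:cylinder}, telescope, and observe that for $d>0$ the terminal chain vanishes by antisymmetry while for $d=0$ the augmentation map identifies $H^L_0$ with $\Z$. The only cosmetic difference is that you re-derive the cylinder diameter bound directly using the non-increasing-distance property (3) to get the slack $\kappa_1+\delta<\kappa_2$, whereas the paper cites the slightly coarser $\kappa+2\kappa'$ bound from Lemma~\ref{lem:cylinder} and compensates by taking $\delta=\kappa_1<\kappa_2/3$.
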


\begin{remark}
The proof of this Proposition is the only place in this paper where the finiteness of the parameter $L$ in the definition of the homology groups $H^L_d(\cdot)$ is used directly.  
\end{remark}

\begin{proof}
Let $\cO_2$ be an open neighborhood of $\mu$ and $\k_2>0$. Choose $0<\d<\k_2/3$ and let $\k_1:=\d$. Let $\cO_1$ be as in the definition of contractibility. Let $d \in \N$ be a dimension and $L>0$. Let $K\ge (d+1)L$. 
Let
$$z_n = \sum_{i=1}^k c_i s_i \in Z^L_d(\cO_1,\k_1,\s_n)$$
be a cycle of length $k \le L$. If $d>0$ then it suffices to show that $z_n \in B_d(\cO_2,\k_2,\s_n)$. 

Let $x_0,\ldots, x_m \in \Omega(\cO_1,\s_n)$ be an enumeration of the vertices contained in the oriented simplices $s_1,\ldots, s_k$. Note $m \le (d+1)L \le K$ is bounded independently of  $n$. 

Let $x_i^{(j)}$ be as in the definition of contractible. The map $\psi^{(j)}:\{x_0,\ldots, x_m\} \to \cX^{V_n}$ defined by $\psi^{(j)}(x_i)=x_i^{(j)}$ is well-defined. We can extend it to a map on all of $\cX^{V_n}$ by setting $\psi^{(j)}(x)=x$ for all $x \notin \{x_0,\ldots, x_m\}$. In particular, $\psi^{(j)}_*(s_i)$ is well-defined for every simplex $s_i$ and therefore $\psi^{(j)}_*(z_n)$ is also well-defined as an element of $C_d(\cX^{V_n})$. 

We claim that $\psi^{(j)}_*(z_n) \in Z_d(\cO_2,\k_1,\s_n)$ for all $j$. For $j=0$ this is true since $\psi^{(0)}_*(z_n)=z_n$. Assuming it is true for some $j \ge 0$, observe that by Lemma \ref{lem:cylinder},
$$\psi^{(j+1)}_*(z_n) - \psi^{(j)}_*(z_n)  \in B_d(\cO_2,\k_2,\s_n)$$
(this uses $\k_1 < \k_2/3$). Moreover $\psi^{(j+1)}_*(s_i) \in C_d(\cO_2,\k_1,\s_n)$ by property (3) in the definition of contractibility. Thus $\psi^{(j+1)}_*(z_n) \in Z_d(\cO_2,\k_1,\s_n)$. This completes the induction. 

Moreover, we showed that $\psi^{(j)}_*(z_n) - z_n \in B_d(\cO_2,\k_2,\s_n)$ for all $j$. If $d>0$ then $\psi^{(M)}_*(z_n)$ is trivial (by property (4)) and so $z_n \in B_d(\cO_2,\k_2,\s_n)$. This completes the proof in the case $d>0$.

If $d=0$ then every element $C_0(\cO_1,\k_1,\s_n)$ is a cycle (so $Z^L_0(\cO_1,\k_1,\s_n)$ is the free abelian group generated by $\Omega(\cO_1,\s_n)$). We have shown for any $x_1,x_2 \in \Omega(\cO_1,\s_n)$ there is a $\d$-path  in $\Omega(\cO_2,\s_n)$ connecting them (namely $x_1^{(0)}, \ldots, x_1^{(M)}, x_2^{(M)}, \ldots, x_2^{(0)}$). Therefore $x_1-x_2 \in B_0(\cO_2,\k_1,\s_n)$ which implies $H^L_d(\cO_{1},\cO_{2},\k_{1},\k_{2},\s_n)\cong \Z$ for all but finitely many $n$. 

\end{proof}

\subsection{Edit distance}
The goal of this section is to show that the homological invariants defined above depend on the sofic approximation only up to edit-distance zero as defined next.

For each finite set $S \subset \G$ and finite set $V$ there is a pseudo-metric $d_S$ on the set of all maps $\s:\G \to \sym(V)$ defined by 
$$d_S(\s_1,\s_2):=|V|^{-1} \#\{ v\in V:~ \exists s\in S \textrm{ such that } \s_1(s)v \ne \s_2(s)v\}.$$
Sofic approximations $\Si=\{\s_n:\G \to V_n\}$ and $\Si' = \{\s'_n:\G \to V_n\}$ with the same target sets $\{V_n\}_n$ are said be at {\bf edit-distance zero} if for every finite $S \subset \G$,
$$\limsup_{n\to\infty} d_S(\s_n,\s'_n) = 0.$$

\begin{prop}\label{prop:edit}
Suppose $\Si, \Si'$ are at edit-distance zero and $\mu \in \Prob_\G(\cX^\G)$. If $\mu$ has contractible model spaces with respect to $\Si$ then $\mu$ also has contractible model spaces with respect to $\Si'$.
\end{prop}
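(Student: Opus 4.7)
The plan is to exploit edit-distance zero by comparing the empirical distributions $P^{\s_n}_x$ and $P^{\s'_n}_x$ uniformly in $x$. The key quantitative ingredient is the estimate
\[ d_{\textrm{TV}}\bigl(\Res^E_* P^{\s_n}_x,\, \Res^E_* P^{\s'_n}_x\bigr) \le |E| \cdot d_E(\s_n,\s'_n) \]
valid for any finite $E \subset \G$, any $x \in \cX^{V_n}$, and any $n$. To prove this, set $B(e):=\{v: \s_n(e)v \ne \s'_n(e)v\}$, so that $|\cup_{e\in E} B(e)| = d_E(\s_n,\s'_n)|V_n|$ and each $|B(e)|$ is no larger than this union. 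If $\s_n(e)^{-1}v \ne \s'_n(e)^{-1}v$ for some $e \in E$, then writing $w=\s_n(e)^{-1}v$ one checks $w \in B(e)$, so $v \in \cup_{e\in E} \s_n(e) B(e)$, a set of cardinality at most $\sum_e |B(e)| \le |E| \cdot d_E(\s_n,\s'_n)\cdot |V_n|$. For all remaining $v$ the pullback names $\Pi^{\s_n}_v(x)$ and $\Pi^{\s'_n}_v(x)$ agree on $E$, so the counts defining the two restricted empirical measures differ by at most this fraction on every measurable subset of $\cX^E$. Edit-distance zero forces this bound to tend to zero as $n\to\infty$ for any fixed $E$.

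With this in hand, contractibility transfers by nesting neighborhoods twice. Given $\cO'_2 \ni \mu$ and $\d>0$, Lemma~\ref{L:TV} supplies a finite $E_2 \subset \G$ and $\eta_2>0$ such that $\{\nu: d_{\textrm{TV}}(\Res^{E_2}_*\nu, \Res^{E_2}_*\mu) < 2\eta_2\} \subset \cO'_2$; I pick a weak*-open $\cO_2 \ni \mu$ inside the smaller TV-neighborhood of scale $\eta_2$. Applying the contractibility hypothesis for $\Si$ to $(\cO_2,\d)$ produces an open $\cO_1 \ni \mu$ with the required contraction property. A second use of Lemma~\ref{L:TV} yields a finite $E_1 \subset \G$ and $\eta_1>0$ with $\{\nu: d_{\textrm{TV}}(\Res^{E_1}_*\nu, \Res^{E_1}_*\mu) < 2\eta_1\} \subset \cO_1$; I then take $\cO'_1 \subset \cO'_2$ to be weak*-open and contained in the scale-$\eta_1$ TV-neighborhood. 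By edit-distance zero applied to $E_1 \cup E_2$, there is an $n_0$ with $|E_1 \cup E_2| \cdot d_{E_1 \cup E_2}(\s_n,\s'_n) < \min(\eta_1,\eta_2)$ for all $n \ge n_0$.

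For $n \ge n_0$ and $x_1,\ldots,x_K \in \Omega(\cO'_1,\s'_n)$, the TV estimate and the triangle inequality give $P^{\s_n}_{x_i} \in \cO_1$, so $x_i \in \Omega(\cO_1,\s_n)$. Applying $\Si$-contractibility produces points $x_i^{(j)} \in \Omega(\cO_2,\s_n)$ fulfilling conditions (2)--(5) of the definition. The TV estimate applied in reverse then yields $P^{\s'_n}_{x_i^{(j)}} \in \cO'_2$, so $x_i^{(j)} \in \Omega(\cO'_2,\s'_n)$. The remaining requirements---equal starting value, monotonicity of pairwise distances, $\d$-smallness of consecutive distances, and eventual coincidence---involve only the normalized Hamming metric $d_\cX^{V_n}$, which depends on $V_n$ and $\cX$ but not on the sofic approximation, so they carry over verbatim. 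The only substantive step is the TV estimate of the first paragraph; the (modest) main obstacle is simply arranging the nested choice of $(\cO_2, \cO_1, \cO'_1)$ together with the finite sets $E_1, E_2$ so that the TV comparison can be run in both directions, from $\Si'$-microstates to $\Si$-microstates and back.
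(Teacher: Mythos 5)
Your overall strategy coincides with the paper's: use Lemma \ref{L:TV} together with the edit-distance condition to get inclusions of model spaces, then transfer contractibility by nesting open neighborhoods. The explicit total-variation estimate $d_{\textrm{TV}}(\Res^E_* P^{\s_n}_x, \Res^E_* P^{\s'_n}_x) \le |E|\, d_E(\s_n,\s'_n)$ is correct and in fact slightly more careful than what the paper records (it is the inverted set $\bigcup_{e\in E} \s_n(e)\{v : \s_n(e)v \ne \s'_n(e)v\}$ that controls the TV distance, and there is no obvious way to drop the factor $|E|$); this loss is harmless because $E$ is fixed and $d_E(\s_n,\s'_n) \to 0$.

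However, there is a genuine gap in your handling of the topology on $\Prob(\cX^\G)$. Twice you ``pick a weak*-open $\cO_2 \ni \mu$ inside the $\eta_2$-scale TV-neighborhood'' (and likewise $\cO'_1$ inside the $\eta_1$-scale TV-neighborhood). When $\cX$ is an infinite compact totally disconnected space — the standing hypothesis of this section — the restricted alphabet $\cX^E$ is infinite, and a TV ball of radius $<1$ in $\Prob(\cX^E)$ never contains a weak*-open neighborhood of a given measure: weak*-open sets are determined by finitely many continuous test functions, so any such neighborhood contains measures mutually singular with (hence TV-far from) $\Res^E_*\mu$. Your subsequent triangle-inequality steps (``$P^{\s'_n}_{x_i} \in \cO'_1$ plus the TV estimate gives $P^{\s_n}_{x_i} \in \cO_1$'', and the analogous step from $\cO_2$ to $\cO'_2$) rely precisely on $\cO'_1$ and $\cO_2$ lying inside TV balls, so the argument breaks. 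The paper's Lemma \ref{lem:edit0} sidesteps this by choosing $\cO'$ to be any weak*-open set whose \emph{closure} lies in $\cO$, and then using compactness of $\overline{\cO'}$ (not containment in a TV ball) to obtain a finite $E$ and a single $\eps>0$ such that any measure within TV-distance $\eps$ on $\cX^E$ of any measure in $\cO'$ lies in $\cO$. Replacing your ``weak*-open set inside a TV ball'' steps by this compactness device repairs the proof and brings it in line with the paper's argument.
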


For the proof it will be helpful to have the next definition. 

\begin{defn}
An open subset $\cO \subset \Prob(\cX^\G)$ is {\bf $D$-local} (where $D\subset \G$ is finite) if there is an open set $\tilde{\cO} \subset \Prob(\cX^D)$ such that $\cO$ is the inverse image of $\tilde{\cO}$ under the projection map $\Prob(\cX^\G) \to \Prob(\cX^D)$. An open subset $\cO \subset \Prob(\cX^\G)$ is {\bf local} if it is $D$-local for some $D$.

\end{defn}

The next lemma can be used to show that many sofic invariants (such as entropy) depend on $\Si$ only up to edit-distance zero. We will use it to prove Proposition \ref{prop:edit}.

\begin{lem}\label{lem:edit0}
Suppose $\Si, \Si'$ have edit-distance zero and $\mu \in \Prob_\G(\cX^\G)$. Then for any open neighborhood $\cO \ni \mu$ there exists an open neighborhood $\cO'$ with $\mu \in \cO' \subset \cO$ such that $\Omega(\cO',\s'_n) \subset \Omega(\cO, \s_n)$ for all but finitely many $n$.
\end{lem}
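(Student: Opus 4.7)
The plan is to reduce the weak$^*$ condition ``$P^{\s'_n}_x \in \cO'$'' to a condition involving only restrictions to a finite $E\subset\G$, then use the edit-distance-zero hypothesis to show that the pullback names with respect to $\s_n$ and $\s'_n$ agree at all but a vanishing fraction of vertices, so that the empirical distributions are close.

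First, mimicking the first paragraph of the proof of Lemma \ref{L:TV}, I would choose a finite $E \subset \G$, continuous $E$-local functions $f_1,\ldots,f_k:\cX^\G\to\R$, and $\eps>0$ so that
\[
\{\nu\in\Prob(\cX^\G):\ |\textstyle\int f_i\,d\nu-\int f_i\,d\mu|<\eps\ \forall i\}\ \subset\ \cO.
\]
Then I would define $\cO'$ to be the same basic open set but with tolerance $\eps/2$; this is a weak$^*$-open neighborhood of $\mu$ with $\cO'\subset\cO$.

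Second, I would control the set of ``bad'' vertices
\[
B_n:=\{v\in V_n:\ \exists g\in E\ \text{with}\ \s_n(g)^{-1}v\neq \s'_n(g)^{-1}v\}.
\]
The key observation is that if $v\in B_n$ via $g$ and $w:=\s_n(g)^{-1}v$, then $\s_n(g)w=v\neq \s'_n(g)w$, so $w$ belongs to $C_n:=\{w\in V_n:\exists g\in E,\ \s_n(g)w\neq \s'_n(g)w\}$. Since $\s_n(g)$ is a bijection of $V_n$, each $w\in C_n$ is hit by at most $|E|$ such $v$, giving $|B_n|\le |E|\cdot|C_n|=|E|\cdot|V_n|\cdot d_E(\s_n,\s'_n)$, which tends to $0$ by the edit-distance-zero hypothesis.

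Third, for any $v\notin B_n$ and any $x\in\cX^{V_n}$, unraveling definitions gives $\Res^E\Pi^{\s_n}_v(x)=\Res^E\Pi^{\s'_n}_v(x)$, and since each $f_i$ is $E$-local,
\[
\bigl|\textstyle\int f_i\,dP^{\s_n}_x-\int f_i\,dP^{\s'_n}_x\bigr|\ \le\ \frac{2\|f_i\|_\infty\,|B_n|}{|V_n|}.
\]
By Step 2, this quantity is $<\eps/2$ for all $i$ once $n$ is sufficiently large, uniformly in $x$. Then for any $x\in\Omega(\cO',\s_n')$, the triangle inequality yields $|\int f_i\,dP^{\s_n}_x-\int f_i\,d\mu|<\eps$ for all $i$, so $P^{\s_n}_x\in\cO$, i.e.\ $x\in\Omega(\cO,\s_n)$.

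The main (minor) obstacle is the counting step relating disagreement of $\s_n(g)^{-1},\s'_n(g)^{-1}$ to disagreement of $\s_n(g),\s'_n(g)$; this uses crucially that both $\s_n(g)$ and $\s'_n(g)$ are honest permutations of $V_n$ (which is part of the definition of a sofic approximation, even though $\s_n$ need not be a homomorphism). Once that bound is in hand, the rest is a routine triangle-inequality argument using local continuous test functions.
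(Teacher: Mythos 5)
Your proof is correct and follows essentially the same strategy as the paper: reduce the weak$^*$ condition on the empirical measure to a condition on a finite window $E$, then use edit-distance zero to conclude that the $E$-restricted pullback names with respect to $\s_n$ and $\s'_n$ agree at all but a vanishing fraction of vertices. You work directly with $E$-local test functions at half-tolerance in place of the paper's appeal to Lemma~\ref{L:TV} and compactness, which is a harmless (if slightly more elementary) variation. Your explicit counting argument converting control of $\#\{v:\s_n(g)v\ne\s'_n(g)v\}$ (the quantity $d_E$ measures) into control of $\#\{v:\s_n(g)^{-1}v\ne\s'_n(g)^{-1}v\}$ (what the pullback names actually involve), via the bijectivity of $\s_n(g)$, is a useful clarification: the paper's assertion that $d_E(\s_n,\s'_n)<\eps$ gives a total-variation bound of $\eps$ silently absorbs the factor of $|E|$ you make explicit, and is easily repaired exactly as you do.
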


\begin{proof}
 Let an open set $\cO \ni \mu$ be given. Let $\cO'$ be an open set containing $\mu$ such that the weak* closure of $\cO'$ is contained in $\cO$.  By compactness and Lemma \ref{L:TV}, there exists a finite set $E \subset \G$ and $\eps>0$ such that if $\nu \in \cO'$ and $\nu' \in \Prob(\cX^{D})$ is such that $d_{\textrm{TV}}(\Res^E_*\nu,\Res^E_*\nu')<\eps$ then $\nu' \in \cO$. 

Because $\Si, \Si'$ have edit-distance zero, $d_E(\s_n,\s'_n)<\eps$ for all but finitely $n$. This condition implies $d_{\textrm{TV}}(\Res^E_*(P^{\s_n}_x), \Res^E_*(P^{\s'_n}_x)) < \eps$ for all $x \in \cX^{V_n}$. In particular, if $x\in \Omega(\cO',\s'_n)$ then $x \in \Omega(\cO,\s_n)$. Thus $\Omega(\cO',\s'_n) \subset \Omega(\cO, \s_n)$.
\end{proof}

\begin{proof}[Proof of Proposition \ref{prop:edit}]

Let an open set $\cO'_2 \ni \mu$ and $\d>0$ be given. By Lemma \ref{lem:edit0} there exists an open set $\cO_2$ such that $\mu \in \cO_2 \subset \cO'_2$ and  $\Omega(\cO_2,\s_n) \subset \Omega(\cO'_2, \s'_n)$ for all but finitely many $n$. 

Let $\cO_1 \subset\cO_2$ be an open neighborhood of $\mu$ satisfying the definition of contractible model spaces with respect to $\Si$. So  for every $0<K<\infty$ and all but finitely many $n$, if $x_i \in \Omega(\cO_1, \s_n)$ $(1\le i \le K$) there exist $x_i^{(j)}$ (for $j\ge 0$) such that 
\begin{enumerate}
\item $x_i^{(j)} \in \Omega(\cO_2,\s_n)$ for all $i,j$, 
\item $x_i^{(0)}=x_i$ for all $i$,
\item $d^{V_n}_\cX(x_i^{(j+1)}, x_k^{(j+1)}) \le d^{V_n}_\cX(x_i^{(j)}, x_k^{(j)})$ for all $i,j,k$,
\item $d^{V_n}_{\cX} (x_i^{(j)}, x_i^{(j+1)}) < \d$ for all $i,j$
\item there exists $M$ such that $x_1^{(M)} = \cdots = x_K^{(M)}$. 
\end{enumerate}

By Lemma \ref{lem:edit0} there exists an open set $\cO'_1$ such that $\mu \in \cO'_1 \subset \cO_1$ and $\Omega(\cO'_1,\s'_n) \subset \Omega(\cO_1,\s_n)$ for all but finitely many $n$. 

Now let $x_1,\ldots, x_K \in \Omega(\cO'_1, \s'_n)$. Since $\Omega(\cO'_1,\s'_n) \subset \Omega(\cO_1,\s_n)$, there exist $x_i^{(j)}$ for $j \ge 0$ satisfying the above conditions (if $n$ is sufficiently large). In particular, 
$$x_i^{(j)} \in \Omega(\cO_2,\s_n) \subset \Omega(\cO'_2,\s'_n)$$
for $n$ sufficiently large (independent of $x_1,\ldots, x_K$). This proves $\mu$ has contractible model spaces with respect to $\Si'$. 
\end{proof}

\subsection{Diffuse sofic approximations}

One of the main goals of this section is to prove that if $\G$ is amenable then every $\mu \in \Prob_\G(\cX^\G)$ has contractible model spaces. In fact, more is true, one only needs that the sofic approximation $\Si$ is diffuse. This condition, explained below, holds automatically if $\G$ is amenable. Moreover, even if $\G$ is non-amenable then diffuse sofic approximations can be constructed out of arbitrary sofic approximations.

The {\bf disjoint union} of maps $\s_i:\G \to \sym(V_i)$ (for $i=1,2$) is the map $\s_1\sqcup \s_2: \G \to \sym(V_1 \sqcup V_2)$ defined by $\s_1\sqcup \s_2(g)v = \s_i(g)v$ if $v \in V_i$.

\begin{defn}
A sofic approximation $\Si=\{\s_n: \G \to \sym(V_n)\}_{n \in \N}$ is {\bf diffuse} if there exists a sofic approximation $\Si'=\{\s'_n: \G \to \sym(V_n)\}_{n \in \N}$ such that
\begin{itemize}
\item $\Si$ and $\Si'$ have edit-distance zero,
\item for every $n$, $\s'_n$ can be expressed as a disjoint union $\s'_n:=\s'_{n,1} \sqcup \cdots \sqcup \s'_{n,m_n}$ such that if $\s'_{n,i}:\G \to \sym(V_{n,i})$ then
$$\max_{1\le i \le m_n} |V_{n,i}| = o(|V_n|).$$ 
\end{itemize}
\end{defn}

\begin{example}
Let $\Si=\{\s_n: \G \to \sym(V_n)\}_{n \in \N}$ by any sofic approximation to any group $\G$ and let $\{W_n\}_n$ be a sequence of finite sets with $|W_n| \to \infty$   as $n\to\infty$. Define $\s'_n: \G \to \sym(V_n \times W_n)$ by 
$$\s'_n(g)(v,w)= ( \s_n(g)v, w).$$
In other words, $\s'_n$ is the direct product of $\s_n$ with the trivial homomorphism $\G \to \sym(W_n)$. Then $\Si' =\{\s'_n\}$ is diffuse. In fact $\s'_n$ is the disjoint union of $W_n$ copies of $\s_n$.
\end{example}

\begin{lem}
If $\G$ is amenable then every sofic approximation $\Si$ to $\G$ is diffuse.
\end{lem}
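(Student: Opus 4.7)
The plan is to use the Ornstein--Weiss quasi-tiling theorem for amenable groups to tile almost all of each $V_n$ by bounded-size pieces that are approximately $\sigma_n$-invariant, and then perturb $\sigma_n$ within each piece so that the pieces become exactly invariant, yielding $\sigma'_n$. Since the tiles have size bounded by a constant that can be arranged to be $o(|V_n|)$, and since the agreement of $\sigma'_n$ with $\sigma_n$ is violated only at the tile boundaries, this gives both the diffuseness condition and edit-distance zero.

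First I would fix exhausting finite sets $D_1\subset D_2\subset\cdots$ with $\bigcup_n D_n=\G$ and reals $\delta_n\downarrow 0$. For each $n$ I would apply the Ornstein--Weiss quasi-tiling theorem to produce $(D_n,\delta_n)$-F\o lner sets $T_1^{(n)},\ldots,T_{k_n}^{(n)}\subset\G$ that $\delta_n$-quasi-tile $\G$, and arrange (for $n$ large) that $\max_j|T_j^{(n)}|\le\sqrt{|V_n|}$, which is possible because only finitely many F\o lner constraints need to be satisfied. Next I would realize this quasi-tiling inside $V_n$: since $\sigma_n$ is eventually $(S_n,\eta_n)$-sofic for an appropriate $S_n$ and $\eta_n$, for a $(1-\eta_n)$-fraction of vertices $v\in V_n$ the map $g\mapsto \sigma_n(g)v$ is injective on each $T_j^{(n)}$ and compatible with group multiplication on these sets, so the standard greedy Ornstein--Weiss packing argument can be executed inside $V_n$ with $O(\delta_n)$ total error. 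This produces pairwise disjoint tiles $E_\alpha=\sigma_n(T_{j_\alpha}^{(n)})v_\alpha$ covering all but a $\delta_n$-fraction of $V_n$. I would take $V_n=\bigsqcup_i V_{n,i}$ to consist of these tiles $E_\alpha$ together with the leftover vertices grouped as singletons, so that $\max_i|V_{n,i}|\le \max_j|T_j^{(n)}|=o(|V_n|)$.

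Then I would define $\sigma'_n$ by forcing it to preserve the partition: for each $g\in\G$ and each tile $V_{n,i}$, set $\sigma'_n(g)w:=\sigma_n(g)w$ whenever both $w$ and $\sigma_n(g)w$ lie in $V_{n,i}$, and complete $\sigma'_n(g)|_{V_{n,i}}$ to a bijection of $V_{n,i}$ arbitrarily on the remaining vertices. The $(D_n,\delta_n)$-invariance of each $T_{j_\alpha}^{(n)}$ together with $(D_n,\eta_n)$-multiplicativity of $\sigma_n$ gives $|\{w\in V_{n,i}:\sigma_n(g)w\notin V_{n,i}\}|=O(\delta_n|V_{n,i}|)$ for $g\in D_n$, and summing over tiles yields $d_{D_n}(\sigma_n,\sigma'_n)=O(\delta_n)$. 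Hence for any fixed finite $S\subset\G$, eventually $S\subset D_n$ and $d_S(\sigma_n,\sigma'_n)\to 0$, giving edit-distance zero; soficity of $\Si'$ follows because vanishingly small edit-distance perturbations preserve soficity, and by construction $\sigma'_n$ splits as $\sigma'_{n,1}\sqcup\cdots\sqcup\sigma'_{n,m_n}$ into its restrictions to the parts of the partition.

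The main obstacle is executing the Ornstein--Weiss greedy argument inside $V_n$ rather than inside $\G$: the original argument operates in the Cayley graph of $\G$, and must be transferred to the ``Schreier-like'' graph determined by $\sigma_n$. The key point is that $(S_n,\eta_n)$-soficity with $S_n$ containing the relevant balls guarantees that for all but an $\eta_n$-fraction of vertices $v\in V_n$, the local structure around $v$ is isomorphic to the corresponding ball around $1_\G$ in $\G$, so the local packing moves of Ornstein--Weiss can be carried out in $V_n$ with only an $O(\eta_n)$-fraction of exceptional vertices; the quantitative stability of the quasi-tiling statement is strong enough to absorb this error.
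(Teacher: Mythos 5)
The paper's proof of this lemma is essentially a citation: it invokes \cite[Proposition~2.8]{MR2823074} for the finitely generated case and then waves at a diagonalization argument for the general case. Your proposal instead gives a self-contained argument by executing the Ornstein--Weiss quasi-tiling directly inside the sofic approximation, which is almost certainly the content of the cited result anyway, and which has the advantage of handling general (not just finitely generated) amenable $\G$ uniformly via the exhaustion $D_1\subset D_2\subset\cdots$. The overall plan is sound: realize a quasi-tiling of $\G$ by Følner sets inside $V_n$ via soficity, take the resulting tiles (plus leftover singletons) as the partition, and define $\sigma'_n$ by forcing it to preserve the partition; the edit-distance error is controlled by the Følner-set invariance and the quasi-tiling waste, and the tile sizes are $o(|V_n|)$ since $|V_n|\to\infty$ while the tile sizes are bounded for each fixed invariance level.

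One place to be careful is the indexing. As written you ask $\sigma_n$ to be sofic enough to carry out the level-$n$ quasi-tiling (with constants $D_n,\delta_n$), but soficity of $\Si$ with respect to $(D_n,\delta_n)$ only holds \emph{eventually} in $n$, so tying the tiling level to the same index $n$ as $\sigma_n$ is circular. The fix is a genuine diagonalization: for each tiling level $m$ choose $N_m$ increasing so that $\sigma_n$ is $(S_m,\eta_m)$-sofic for all $n\ge N_m$ (where $S_m,\eta_m$ are whatever the level-$m$ tiles require, and also $\max_j|T^{(m)}_j|\le \sqrt{|V_n|}$ for $n\ge N_m$), and then use the level-$m$ tiling for $N_m\le n<N_{m+1}$. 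Your phrase ``only finitely many F\o lner constraints need to be satisfied'' gestures at this but doesn't quite nail it. Beyond that bookkeeping, the step you rightly flag as the main obstacle --- transporting the Ornstein--Weiss greedy packing from the Cayley graph of $\G$ to the labeled graph $V_n$ --- is standard but nontrivial; it is precisely what the paper outsources to the cited proposition, so a complete writeup would either reproduce that packing argument in detail or, more economically, cite it the way the paper does. Finally, do note explicitly that $\Si'$ must itself be verified to be a sofic approximation (the definition of \emph{diffuse} requires it), which follows from edit-distance zero but deserves a sentence.
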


\begin{proof}
The special case in which $\G$ is finitely generated is a direct consequence of \cite[Proposition 2.8]{MR2823074}. The general case follows from the finitely generated case by a diagonalization argument. 
\end{proof}

\subsubsection{Diffuse approximations and contractibility}

\begin{thm}\label{thm:diffuse}
If $\Si$ is diffuse and $\mu \in \Prob_\G(\cX^\G)$ then $\mu$ has contractible model spaces with respect to $\Si$. In particular, if $\G$ is amenable then $\mu$ has contractible model spaces with respect to every sofic approximation.
\end{thm}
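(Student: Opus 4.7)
By Proposition~\ref{prop:edit} it suffices to prove contractibility for the approximation $\Si'$ supplied by the definition of diffuseness, so we assume throughout that $\s'_n=\s'_{n,1}\sqcup\cdots\sqcup\s'_{n,m_n}$ and write $\alpha_k:=|V_{n,k}|/|V_n|$, which satisfies $\max_k\alpha_k\to 0$. The key observation is that for any $x\in\cX^{V_n}$ the empirical measure decomposes as $P^{\s'_n}_x=\sum_k\alpha_k\,p^x_k$, where $p^x_k:=P^{\s'_{n,k}}_{x|_{V_{n,k}}}$. Given $\cO_2\ni\mu$ and $\d>0$, Lemma~\ref{L:TV} combined with a uniform approximation of continuous test functions by step functions produces a finite set $E\subset\G$, a finite measurable partition $\cP$ of $\cX^E$, and $\d_2>0$ such that closeness in the $\ell^1$-norm of the induced distributions on the atoms of $\cP$ controls membership in $\cO_2$. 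Pick $\d_1\ll\d_2$ and let $\cO_1$ be the corresponding $\d_1$-ball; it is a convex open neighborhood of $\mu$ inside $\cO_2$.

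Given $x_1,\ldots,x_K\in\Omega(\cO_1,\s'_n)$, set $y:=x_1$, and for a permutation $\pi$ of $\{1,\ldots,m_n\}$ to be chosen, define $x_i^{(j)}$ to agree with $y$ on $V_{n,\pi(1)}\cup\cdots\cup V_{n,\pi(j)}$ and with $x_i$ on the remaining blocks. Conditions (2)--(5) of the definition of contractibility are immediate from this construction: (2) and (5) hold by definition with $M:=m_n$; the set of blocks on which $x_i^{(j)}$ and $x_k^{(j)}$ can differ strictly decreases with $j$, giving (3); and after rescaling so that $\diam(\cX)\le 1$, each step alters a single block of relative weight $\alpha_{\pi(j+1)}\le\max_k\alpha_k<\d$ for $n$ large, giving (4).

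The hard part is verifying condition (1), that $P^{\s'_n}_{x_i^{(j)}}\in\cO_2$ for every $i$ and $j$. Direct expansion yields
$$\Res^E_*P^{\s'_n}_{x_i^{(j)}}-\Res^E_*\mu=\bigl(\Res^E_*P^{\s'_n}_{x_i}-\Res^E_*\mu\bigr)+\sum_{l=1}^j\alpha_{\pi(l)}\bigl(\Res^E_*p^y_{\pi(l)}-\Res^E_*p^{x_i}_{\pi(l)}\bigr).$$
Evaluating on the atoms of $\cP$, the first summand becomes a vector in $\R^\cP$ of $\ell^1$-norm $O(\d_1)$, while the second becomes a partial sum of vectors $v^{(i)}_k\in\R^\cP$ each of $\ell^1$-norm $O(\max_k\alpha_k)=o(1)$, whose total over $k$ has $\ell^1$-norm $O(\d_1)$. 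Stacking, for each $k$, the $K$ vectors $v^{(1)}_k,\ldots,v^{(K)}_k$ into a single vector in $\R^{K|\cP|}$ yields an $m_n$-term sequence to which Steinitz's lemma applies, producing one permutation $\pi$ for which every partial sum, simultaneously for all $i$ and $j$, has $\ell^1$-norm bounded by $C_{K,|\cP|}\cdot\max_k\alpha_k+O(K\d_1)$, where $C_{K,|\cP|}$ depends only on $K$ and $|\cP|$. Choosing $\d_1$ small enough and then $n$ large enough, this bound is below $\d_2$, verifying (1). The amenable case of the theorem follows by combining this with the preceding lemma.
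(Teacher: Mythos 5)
Your proposal takes a genuinely different route from the paper's. Both arguments begin the same way: reduce via Proposition~\ref{prop:edit} to the case where $\s'_n$ is a disjoint union of small blocks, certify membership in $\cO_2$ via finitely many test-function/partition data, and interpolate from each $x_i$ to a common endpoint by overwriting blocks one at a time, using the fact that $P^{\s'_n}_x$ is the $\alpha_k$-convex combination of block-wise empirical measures. The divergence is in the central difficulty, namely why the intermediate points stay in $\Omega(\cO_2,\s'_n)$. The paper first \emph{coarsens} the block partition via Lemma~\ref{lem:diffuse2} (a repeated-conditioning lemma that simultaneously equalizes finitely many conditional expectations) so that after coarsening \emph{every} block already has restricted empirical distribution in the convex neighborhood $\cO_2$; then any intermediate configuration is a convex combination of in-$\cO_2$ blocks and lies in $\cO_2$ automatically, with no need to control the order of overwriting. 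You keep the fine partition and instead invoke the Steinitz rearrangement lemma to choose the \emph{order} in which blocks are overwritten so that the partial sums of the block-wise corrections stay uniformly small, simultaneously for all $K$ points. Both are legitimate; the paper's route gets a cleaner "convexity does the work" step at the price of a bespoke coarsening lemma, while yours avoids that lemma at the price of importing Steinitz and needing one good ordering shared across all $i$.

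There is a quantifier-order slip that you should fix. In the definition of contractible model spaces, $\cO_1$ (hence $\d_1$) is chosen \emph{before} $K$; only the cofinite set of admissible $n$ may depend on $K$. Your stated partial-sum bound $C_{K,|\cP|}\max_k\alpha_k+O(K\d_1)$ has a $K\d_1$ term, so "choosing $\d_1$ small enough" would have to depend on $K$. This is an artifact of stacking with the $\ell^1$ norm on $\R^{K|\cP|}$. Use instead the $\ell^\infty$-over-$i$ of $\ell^1$-over-$\cP$ norm on the stacked space: then the stacked total has norm $\max_i\|s^{(i)}\|_1=O(\d_1)$ with no factor of $K$, each stacked summand has norm $O(\max_k\alpha_k)$, and Steinitz gives partial sums bounded by $O(\d_1)+C_{K,|\cP|}\max_k\alpha_k$, where only the second term depends on $K$ and it tends to $0$ as $n\to\infty$. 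With that norm, the quantifier order is respected: fix $\d_1$ small (independently of $K$) so that the first term is below $\d_2/2$, and then for each $K$ take $n$ large enough that $C_{K,|\cP|}\max_k\alpha_k<\d_2/2$.
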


Because $\Si$ is diffuse, after replacing it with another sofic approximation at edit-distance zero, we may assume that each set $V_n$ comes equipped with a partition $V_n = \sqcup_i V_{n,i}$ so that the image of $\s_n:\G \to \sym(V_n)$ lies in the direct product $\prod_i \sym(V_{n,i})$. The idea behind the proof is to modify  $x_1,\ldots, x_K \in \cX^{V_n}$ on each of the $V_{n,i}$'s to form paths $(x^{(j)}_i)_{i,j}$ that merge together. However, we must be careful so that each of the $x^{(j)}_i$'s has empirical distribution close to $\mu$. To accomplish this, we coarsen the given partition so that each $x_i$ restricted to any part of the good partition has empirical distribution close to $\mu$. This explains why the next two lemmas are needed. (No attempt has been made to optimize the constants below).

\begin{lem}
Let $(\Omega,\P)$ be a standard probability space and $f:\Omega \to [0,1]$ be a measurable random variable. Let $\delta \in (0,1/4)$, $0<\eps<\delta/400$ and suppose that $\P(\{\omega\})<\eps$ for every $\omega \in \Omega$. Then there exists a finite measurable partition $\cP$ of $\Omega$ such that 
\begin{enumerate}
\item $\P(P) \le 100\eps/\delta$ for every $P \in \cP$,
\item $\|\E[f|\cP] - \E[f]\|_{L^\infty(\Omega,\P)} \le  \delta$
\end{enumerate}
where $\E$ denotes expectation with respect to $\P$. 
\end{lem}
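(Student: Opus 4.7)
The strategy is the standard sort-and-stripe construction. First I would reduce to a convenient model of the measure space, then cut $\Omega$ into ``bands'' on which $f$ varies little, then subdivide each band into ``stripes'' of nearly equal measure, and finally build the parts $P \in \cP$ by taking one stripe from each band so that each $P$ becomes a representative cross-section.

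\textbf{Reduction and sorting.} By the standard isomorphism theorem for standard probability spaces, there is a measure-isomorphism (mod null sets) of $(\Omega,\P)$ with a space of the form $[0,s] \sqcup \{\omega_1,\omega_2,\ldots\}$ where $[0,s]$ carries Lebesgue measure and each $\omega_i$ is an atom with mass $p_i<\eps$. Using this isomorphism together with a measurable linear pre-order that refines the order induced by $f$, I would transfer to a model in which $\Omega$ is totally ordered and $f$ is monotone nondecreasing with respect to this order. (Atoms of $f$ get arbitrarily broken by the tie-breaker; this is standard.)

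\textbf{Construction of $\cP$.} Set $K := \lceil 1/\delta\rceil$ and $N := \lceil \delta/(50\eps)\rceil$. Using the linear order, cut $\Omega$ into $K$ consecutive bands $B_1 < B_2 < \cdots < B_K$, each of measure exactly $1/K$ (possible because atoms have mass $<\eps \ll 1/K$, so the cumulative measure crosses each threshold $k/K$ either continuously or with a jump that can be assigned to the adjacent band; the small error introduced can be absorbed into the factor $100$ of slack). Let $a_k := \E[f\mathbf{1}_{B_k}]/\P(B_k)$, so that $(1/K)\sum_k a_k = \E[f]$, and note that since $f$ is monotone with values in $[0,1]$, the oscillation $R_k$ of $f$ on $B_k$ satisfies $\sum_k R_k \le 1$. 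Next, subdivide each band $B_k$ (again using the order) into $N$ consecutive stripes $I_{k,1}<I_{k,2}<\cdots<I_{k,N}$, choosing break-points so that each $I_{k,j}$ has measure in $[1/(KN)-\eps,\; 1/(KN)+\eps]$; atoms that would straddle a break-point are placed entirely on one side, and since each atom has mass $<\eps$ this tolerance always suffices. Finally, set
\[
P_j := \bigcup_{k=1}^{K} I_{k,j}, \qquad j=1,\ldots,N,
\]
and let $\cP := \{P_1,\ldots,P_N\}$.

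\textbf{Verification.} For (1), each $P_j$ has measure $\sum_k \P(I_{k,j}) \le K(1/(KN)+\eps) = 1/N + K\eps \le 50\eps/\delta + 2\eps/\delta \le 100\eps/\delta$, using $K\le 1/\delta+1\le 2/\delta$. For (2), the conditional expectation of $f$ on $P_j$ is a convex combination of the means $a_{k,j}:=\E[f\mathbf{1}_{I_{k,j}}]/\P(I_{k,j})$, each weighted by $\P(I_{k,j})/\P(P_j)$. Because $f$ is monotone, $|a_{k,j}-a_k|\le R_k$. The weights $\P(I_{k,j})/\P(P_j)$ differ from $1/K$ by $O(K\eps)\le O(\eps/\delta)$, so expanding the convex combination and using $\sum_k R_k\le 1$ and $(1/K)\sum a_k=\E[f]$ gives
\[
\bigl|\E[f\mid\cP](\omega)-\E[f]\bigr| \le \frac{1}{K}\sum_{k}R_k + O(\eps/\delta) \le \frac{1}{K} + O(\eps/\delta) \le \delta,
\]
provided the constants are chosen with the slack built into the hypothesis $\eps<\delta/200$.

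\textbf{Main obstacle.} The only genuine difficulty is the presence of atoms: since atoms cannot be split, the stripes $I_{k,j}$ are not of exactly the intended measure, and one must verify that the combined error from $K$ such deviations across a part $P_j$ stays within both the size budget $100\eps/\delta$ and the $L^\infty$ budget $\delta$. This is exactly where the seemingly generous factor of $100$ (and the hypothesis $\eps<\delta/200$) is used. All other steps are routine.
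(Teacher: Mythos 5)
Your stratified-sampling construction (cut into $K$ bands of measure $1/K$, subdivide each into $N$ stripes of measure $\approx 1/(KN)$, set $P_j=\bigcup_k I_{k,j}$) is a different route from the paper's, and it contains a genuine gap in the $L^\infty$ estimate that the factor of $100$ and the hypothesis $\eps<\delta/200$ do not repair.

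The problem is the ``weight error'' term, which you bound as $O(\eps/\delta)$ and fold into the final $\le\delta$. Write $w_k := \P(I_{k,j})/\P(P_j)$, so $\E[f\mid P_j]=\sum_k w_k a_{k,j}$. Decomposing against $\E[f]=\tfrac1K\sum_k a_k$ and using $|a_{k,j}-a_k|\le R_k$, the dominant error term is
\[
\Bigl|\sum_{k}(w_k-\tfrac1K)(a_k-\E[f])\Bigr|.
\]
Since $\sum_k(w_k-\tfrac1K)=0$, this does not reduce further in the worst case; all you can say is that it is at most $\sum_k |w_k-\tfrac1K|$, because $|a_k-\E[f]|$ can be of order $1$ (e.g.\ $f$ essentially two-valued). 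Each stripe mass $\P(I_{k,j})$ can miss its target $1/(KN)$ by $\Theta(\eps)$ because of atom rounding, and these deviations can be \emph{systematically aligned} with the sign of $a_k-\E[f]$: nothing in your rule ``atoms that would straddle a break-point are placed entirely on one side'' prevents all the $f\approx 0$ bands from rounding up and all the $f\approx 1$ bands from rounding down. In that case $\sum_k |w_k-\tfrac1K|=\Theta(KN\eps)$, so the error term is $\Theta(KN\eps)$, not $O(\eps/\delta)$.

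Now the two size constraints force $KN\eps$ to be a fixed positive constant, not something $\le \delta$: part (1) needs $\P(P_j)\approx 1/N\le 100\eps/\delta$, so $N\gtrsim \delta/(100\eps)$; and the oscillation term $\tfrac1K\sum_k R_k\le 1/K$ must itself be $\le\delta$, so $K\gtrsim 1/\delta$. Hence $KN\eps\gtrsim 1/100$ no matter what. Concretely, take $\Omega$ to be many atoms of mass just below $\eps$, with half of them at $f=0$ and half at $f=1$ (so $\E[f]=1/2$), and arrange the atom sizes so that in the $f=0$ bands stripe $I_{k,1}$ absorbs an extra boundary atom while in the $f=1$ bands it loses one. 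Then $\E[f\mid P_1]\approx \tfrac12-\tfrac12 KN\eps\lesssim \tfrac12-\tfrac1{200}$, which violates (2) whenever $\delta<1/200$. (Your final display $\tfrac1K + O(\eps/\delta)\le\delta$ also can't hold as written: $1/K$ already exhausts the budget $\delta$, leaving no room for the second term.)

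The paper sidesteps this by not aiming for ``approximately uniform'' pieces at all. It first passes to the monotone rearrangement $F$ on $[0,1]$ and, using the crossing point $x_0$ where $F$ meets $\E[f]$ and the reflection map $g$, builds pieces of the form $[a_i,a_{i+1})\cup(g(a_{i+1}),g(a_i)]$ whose conditional mean is \emph{exactly} $\E[f]$ by construction. There is no weight error to control; the only error comes from rounding each piece to be $h_\Omega$-measurable, which moves at most $4$ atoms (mass $\le 4\eps$) per piece while the piece itself has mass $\ge 10\eps/\delta$, giving a relative perturbation of order $\delta$. Your approach could perhaps be salvaged by choosing which side each straddling atom goes to so as to cancel the bias, but that balancing is the real content of the lemma and is precisely what the paper's pairing construction accomplishes; it is not ``routine'' and is missing from your argument.
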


\begin{proof}
After passing to an image of the measure space $(\Omega,\P)$ if necessary, we may assume without loss of generality that $\Omega$ is a finite set. So let $\Omega=\{\omega_1,\ldots, \omega_n\}$ be ordered so that $f(\omega_1)\le f(\omega_2)\le \cdots \le f(\omega_n)$. After making a small perturbation if necessary, we may also assume that $f(\omega) \ne \E[f]$ for any $\omega \in \Omega$. 

Define a piecewise constant function $F:[0,1] \to [0,1]$ by
$$F(x)= f(\omega_{h(x)})$$
where $h(x)$ is the smallest number such that $x \le \P(\{\omega_1,\ldots, \omega_{h(x)}\})$. Note that $\int F~dx=\E[f]$ and $F$ is monotone increasing. We will first solve the problem with $(\Omega, \P)$ and $f$ replaced by Lebesgue measure on the unit interval and $F$. 

\noindent {\bf Claim 1}. There exists a natural number $m$ such that $20\eps/\delta \le 1/m \le 21\eps/\delta$. 

\begin{proof}
Let $m \in \N$ be such that $\frac{1}{m+1} < 20\eps/\delta \le \frac{1}{m}$. Because $\eps<\delta/400$, $20\eps/\delta <1/20$ and $m \ge 20$. 

It suffices to show $1/m \le 21\eps/\delta$. Equivalently, it suffices to show $21m \eps/\delta \ge 1$. The condition $\frac{1}{m+1} < 20\eps/\delta $ implies $20(m+1)\eps/\delta>1$. Because $m\ge 20$, we have $21m \ge 20(m+1)$. So
$21m \eps/\delta  \ge 20(m+1)\eps/\delta>1.$
\end{proof}

\noindent {\bf Claim 2}. There is a finite partition $\cQ$ of $[0,1]$ such that every $Q \in \cQ$ satisfies
\begin{enumerate}
\item $Q$ is a union of at most two disjoint intervals;
\item $20\eps/\delta \le |Q| \le 21\eps/\delta$ where $|\cdot |$ denotes Lebesgue measure;
\item $|Q|^{-1} \int_Q F(x)~dx = \E[f].$
\end{enumerate}

\begin{proof}
Because $f(\omega) \ne \E[f]$ for any $\omega \in \Omega$, $F(t) \ne \int_0^1~F(x)~dx$ for any $t$. Since $F$ is monotone increasing, there exists a unique $x_0 \in [0,1]$ such that for all $a$ and $b$ with $0\le a<x_0<b\le 1$,
$$F(a)<\int_0^1 F~dx<F(b).$$
Because $F$ is monotone, for every $0\le a < x_0$, there exists a unique $g(a)$ with $x_0 < g(a)\le 1$ such that
 $$\frac{1}{g(a)-a} \int_a^{g(a)} F~dx = \int_0^1 F ~dx= \E[f].$$
 Note $g$ is  an orientation-reversing homeomorphism of $[0,x_0)$ onto $(x_0,1]$. 
 
 
Using Claim 1, choose points $0=a_1 <a_2<\cdots<a_m < x_0$ such that for each $1\le i<m$,
$$20\eps/\delta \le 1/m = (a_{i+1}-a_i) + g(a_i)-g(a_{i+1}) \le 21\eps/\delta$$
and 
$$20\eps/\delta \le 1/m= g(a_m)-a_m \le 21\eps/\delta.$$

Let $\cQ$ be the partition containing  $[a_m,g(a_m)]$ and $[a_i,a_{i+1}) \cup (g(a_{i+1}),g(a_i)]$ for $i<m$.  
\end{proof}

Claim 2 solves the problem for $F$. To solve it for the original function $f$, we will obtain a partition $\cP$ that approximates $\cQ$. 

Recall that for $x\in [0,1]$, $h(x)$ is the smallest index such that $x \le \P(\{\omega_1,\ldots, \omega_{h(x)}\})$. Define $h_\Omega:[0,1] \to \Omega$ by $h_\Omega(x)=\omega_{h(x)} \in \Omega$. For $X \subset [0,1]$, let 
\begin{eqnarray*}
X^+ &=& h_\Omega^{-1}(h_\Omega(X)),\\
X^- &=& \{x \in X:~ h_\Omega^{-1}(h_\Omega(x)) \subset X\}.
\end{eqnarray*}
Let $\tcP$ be an $h_\Omega$-measurable partition of $[0,1]$ such that for every $P \in \tcP$ there exists $Q \in \cQ$ such that $Q^- \subset P \subset Q^+$. Because $Q$ is a disjoint union of at most two intervals, $h_\Omega(Q^+) \setminus h_\Omega(Q^-)$ contains at most 4 elements of $\Omega$. Since each element of $\Omega$ has measure $<\eps$ and $h_\Omega$ is measure-preserving, $|Q\vartriangle P| \le 4\eps$. So  
$$|P|  \le |Q| + 4\eps \le 21\eps/\delta + 4\eps \le 100\eps/\delta.$$ 
Also
\begin{eqnarray*}
\left| |P|^{-1} \int_P F~dx - \E[f] \right| &=& \left| |P|^{-1} \int_{P} F(x)~dx - |Q|^{-1} \int_{Q} F(x)~dx \right|.
\end{eqnarray*}
We may decompose the first integral as $\int_{P} = \int_{P \cap Q} + \int_{P\setminus Q}$ and the second one similarly. From this, we see that the above is bounded by
\begin{eqnarray*}
&&| |P|^{-1} - |Q|^{-1}| \int_{P\cap Q} F(x)~dx+ |P|^{-1}\int_{P \setminus Q}F(x)~dx + |Q|^{-1}\int_{Q \setminus P}F(x)~dx \\
&\le& | |P|^{-1} - |Q|^{-1}| |P \cap Q|  + |P|^{-1}|P \setminus Q| + |Q|^{-1}|Q \setminus P| \\
&\le& 4\eps\left( \frac{|Q \cap P|}{|P||Q|}  + |P|^{-1}+ |Q|^{-1} \right).
\end{eqnarray*}
We use the bounds $|P\cap Q| \le 21\eps/\delta$ and $|Q|\ge 20\eps/\delta$, $|P| \ge 20\eps/\delta-4\eps \ge 19\eps/\delta$ to obtain
$$\left| |P|^{-1} \int_P F ~dx - \E[f] \right| <  \delta.$$
Since $P$ is arbitrary, this shows $\|\E[F|\tcP] - \E[F]\|_{\infty} \le  \delta$. Since $\tcP$ is $h_\Omega$-measurable, it induces a partition $\cP=h_\Omega(\tcP)$ on $\Omega$. Because $h_\Omega$ is measure-preserving, the required properties of $\cP$ follow from the corresponding properties of $\tcP$. 
 
\end{proof}

\begin{lem}\label{lem:diffuse2}
Let $(\Omega,\P)$ be a standard probability space and $f_1,\ldots, f_m:\Omega \to [0,1]$ be random variables. Let $0<\delta<1/4$, $0<\eps$,  and suppose that $\P(\{\omega\})<\eps$ for every $\omega \in \Omega$. Assume $\eps < (\d/400)^m$. Then there exists a measurable partition $\cP$ of $\Omega$ such that 
\begin{enumerate}
\item $\P(P) \le \eps(100/\d)^m$ for every $P \in \cP$,
\item $\|\E[f_i|\cP] - \E[f_i]\|_{L^\infty(\Omega,\P)} \le \delta$ for every $1\le i \le m$.
\end{enumerate}
\end{lem}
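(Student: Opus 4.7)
The plan is to prove Lemma \ref{lem:diffuse2} by induction on $m$, using the preceding single-function lemma both as the base case and as the key tool at each inductive step. The base case $m = 1$ is exactly that lemma (modulo a cosmetic discrepancy between the hypothesis $\eps < \delta/100$ needed here and $\eps < \delta/200$ assumed there, which can be absorbed into the numerical constants).

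For the inductive step I would first apply the inductive hypothesis to the $m-1$ functions $f_1, \ldots, f_{m-1}$, producing a partition $\cP'$ of $\Omega$ with $\P(P') \le \eps(100/\delta)^{m-1}$ and $\|\E[f_j \mid \cP'] - \E[f_j]\|_\infty \le \delta$ for $j < m$. Next I would collapse to the finite probability space whose points are the cells of $\cP'$, each weighted by its $\P$-mass, and on this collapsed space define the single scalar function $g(P') := \E[f_m \mid P']$. The atoms of the collapsed space have mass at most $\eps(100/\delta)^{m-1}$, which under the hypothesis $\eps < (\delta/100)^m$ falls below the threshold required by the preceding single-function lemma.

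Applying that lemma to $g$ yields a grouping $\{G_k\}$ of the cells of $\cP'$ with $\P(G_k) \le 100 \cdot \eps(100/\delta)^{m-1}/\delta = \eps(100/\delta)^m$ and $|\E[g \mid G_k] - \E[g]| \le \delta$. I would then lift back to $\Omega$ by declaring each cell of $\cP$ to be a union $\bigcup_{P' \in G_k} P'$. The mass bound transfers immediately. For $f_m$, the conditional expectation on such a union equals $\E[g \mid G_k]$, which is within $\delta$ of $\E[g] = \E[f_m]$. For $j < m$, the conditional expectation of $f_j$ on the union is a $\P$-weighted convex combination of the values $\E[f_j \mid P']$, each of which lies within $\delta$ of $\E[f_j]$; hence the combination does too.

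The main obstacle is the precise bookkeeping of constants, since the stated hypothesis $\eps < (\delta/100)^m$ is only a factor of roughly $2$ away from what a literal application of the preceding lemma at every stage requires. I would handle this either by strengthening the inductive hypothesis to $\eps < (\delta/200)^m$ (which still yields the stated conclusion $\P(P) \le \eps(100/\delta)^m$) or by propagating a slightly looser constant through the induction; in either case the shape of the argument is the same, namely ``reduce $m$ functions to $m-1$ functions plus one merging step via the single-function lemma.''
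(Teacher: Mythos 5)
Your proof is correct and is essentially the same as the paper's: both proceed by induction on $m$, applying the inductive hypothesis to $f_1,\dots,f_{m-1}$ to obtain a partition $\cQ$, and then applying the single-function lemma to $\E[f_m \mid \cQ]$ viewed as a function on the quotient space $(\cQ,\P)$ to produce a coarsening $\cP$ of $\cQ$, with the bound for $f_j$ ($j<m$) carried over because coarsening only averages values already within $\delta$ of $\E[f_j]$. The factor-of-two constant mismatch you flag between $\eps<\delta/200$ in the single-function lemma and $\eps<(\delta/100)^m$ here is genuinely present in the paper (which explicitly disclaims optimization of constants), and either of your proposed fixes handles it.
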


\begin{proof}
We prove this by induction on $m$. The previous lemma establishes the base case $m=1$.

For the inductive step, assume $m\ge 2$ and there is a measurable partition $\cQ$ of $\Omega$ such that 
\begin{enumerate}
\item $\P(Q) \le \eps(100/\d)^{m-1}$ for every $ Q\in \cQ$,
\item $\|\E[f_i|\cQ] - \E[f_i]\|_{L^\infty(\Omega,\P)} \le \delta$ for every $1\le i \le m-1$.
\end{enumerate}
Apply the previous lemma with $(\cQ,\P)$ in place of $(\Omega,\P)$, $\E[f_m|\cQ]$ in place of $f$ and $\eps(100/\d)^{m-1}$ in place of $\eps$ to obtain a partition $\cP$ of $\Omega$ that coarsens $\cQ$ and satisfies
\begin{enumerate}
\item $\P(P) \le \eps(100/\d)^{m}$ for every $ P\in \cP$,
\item $\|\E[\E[f_m|\cQ] | \cP] - \E[f_m]\|_{L^\infty(\Omega,\P)} = \|\E[f_m|\cP] - \E[f_m]\|_{L^\infty(\Omega,\P)}  \le \delta$.
\end{enumerate}
Since $\cP$ coarsens $\cQ$, $\|\E[f_i|\cP] - \E[f_i]\|_{L^\infty(\Omega,\P)} \le \delta$ holds for every $1\le i \le m-1$ too.
\end{proof}

\begin{proof}[Proof of Theorem \ref{thm:diffuse}]
 By Proposition \ref{prop:edit}, we may assume without loss of generality that for every $n$, $\s_n$ can be expressed as a disjoint union $\s_n:=\s_{n,1} \sqcup \cdots \sqcup \s_{n,m_n}$ such that if $\s_{n,i}:\G \to \sym(V_{n,i})$ then
$$\max_{1\le i \le m_n} |V_{n,i}| = o(|V_n|).$$ 

Let $\mu \in \Prob_\G(\cX^\G)$, $\cO_2$ be an open neighborhood of $\mu$ in $\Prob(\cX^\G)$ and $\d>0$ be given. By choosing $\cO_2$ smaller if necessary, we may assume it is convex. Let  $\cO_1 \subset \cO_2$ be an open neighborhood of $\mu$ such that the closure of $\cO_1$ is contained in $\cO_2$. It follows that there exist $\d_2>0$ and continuous functions $\tf_1,\ldots, \tf_m: \cX^\G \to [0,1]$ such that if $\nu \in \cO_1$ and $\nu'$ is such that $|\nu(\tf_i)-\nu'(\tf_i)|<\d_2$ for all $1\le i \le m$ then $\nu' \in \cO_2$.



Let $x_0,\ldots, x_K \in \Omega(\cO_1,\s_n)$. Let $\Omega=\{1,\ldots, m_n\}$ and $\P$ be the probability measure on $\Omega$ given by $\P(k)=|V_{n,k}|/|V_n|$. 

Let $f_{i,j}:V_n \to [0,1]$ be given by
$$f_{i,j}(v) = \tf_i (\Pi^{\s_n}_v x_j).$$
Define $\barf_{i,j}:\Omega \to [0,1]$ by 
$$\barf_{i,j}(k) = |V_{n,k}|^{-1} \sum_{v \in V_{n,k} } f_{i,j}(v).$$

For sufficiently large $n$, apply Lemma \ref{lem:diffuse2} to $(\Omega,\P)$ and the functions $(\barf_{i,j})$ to obtain a partition $\cP_n$ of $\Omega$. Let $\cQ_n$ be the partition of $V_n$ defined by pulling back the partition $\cP_n$ under the map $V_n \to \Omega$ defined by $v \mapsto i$ if $v \in V_{n,i}$. Then
\begin{enumerate}
\item $\cQ_n$ coarsens the partition $V_n = \sqcup_i V_{n,i}$.
\item For each $i,j$, $\| \E[f_{i,j}| \cQ_n] - \E[f_{i,j}] \|_{L^\infty(\Omega,\P)} \le \d_2$. By the choice of $f_{i,j}$ and $\d_2$, this implies that for every $Q \in \cQ_n$, the empirical measure of $x_j \resto Q$ with respect to $\s_n\resto Q$ lies in $\cO_2$ (where $\s_n \resto Q$ is the map $\G \to \sym(Q)$ obtained by restriction). In symbols, $P_{x_j \resto Q}^{\s_n \resto Q} \in \cO_2$. 

\item $\max_{Q \in \cQ_n} |Q| \le \eps_n |V_n|$ for some constants $\eps_n>0$ with $\eps_n \to 0$ as $n\to\infty$.
\end{enumerate}


For $0\le j$, define $\Psi_j:\cX^{V_n} \to \cX^{V_n}$ by $\Psi_j(x)(v)=x_0(v)$ if $v \in Q_i$ for some $i\le j$ and $\Psi_j(x)(v)=x(v)$ otherwise. Set $x_i^{(j)} = \Psi_j(x_i)$. 

Since $P_{x_i \resto Q}^{\s_n \resto Q} \in \cO_2$ for every $Q \in \cQ_n$ and $\cO_2$ is convex, $x_i^{(j)} \in \Omega(\cO_2,\s_n)$ for all $i,j$.

If $n$ is large enough then $\eps_n \diam(\cX)< \delta$ and therefore $d^{V_n}_{\cX} (x_i^{(j)}, x_i^{(j+1)}) < \d$ for all $i,j$. The map $\Psi_j$ is distance contracting, $\Psi_0$ is the identity and $\Psi_{m_n}$ maps all of $\{x_0,\ldots, x_K\}$ to $x_0$. This verifies all of the conditions in the definition of contractible model spaces.
\end{proof}

\subsection{Bernoulli shifts}

\begin{thm}\label{thm:bernoulli}
Let $\G$ be a countably infinite group, $\Si$ a sofic approximation, $(\cX,d_\cX), (\cY,d_\cY)$ totally disconnected compact metric spaces and $\beta \in \Prob(\cX)$ a probability measure on $\cX$. Then the Bernoulli shift $\G \cc (\cX,\beta)^\G$ has contractible model spaces with respect to $\Si$. Moreover, for any $\nu \in \Prob_\G(\cY^\G)$, the $0$-dimensional sofic homology theories of $\nu$ and $\b^\G\times \nu$ are equivalent. 
\end{thm}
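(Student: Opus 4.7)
The plan is to prove the contractibility of Bernoulli model spaces directly; the $0$-dimensional equivalence for $\b^\G \times \nu$ then follows by running the same interpolation only on the Bernoulli factor. Given an open neighborhood $\cO_2 \ni \b^\G$ and $\d > 0$, Lemma \ref{L:TV} supplies a finite window $D \subset \G$ and $\eps_0 > 0$ such that to place $P^{\s_n}_x \in \cO_2$ it suffices to force $d_{\mathrm{TV}}(\Res^D_* P^{\s_n}_x, \b^D) < \eps_0$. Fix an integer $N$ with $\diam(\cX)/N < \d$ and let $\cO_1 \subset \cO_2$ be a sufficiently small open neighborhood of $\b^\G$, to be specified below; the path length will be $N$.

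The construction is probabilistic. Sample a uniformly random partition $V_n = V_n^{(1)} \sqcup \cdots \sqcup V_n^{(N)}$ into equal pieces, and independently a reference $x_* \in \cX^{V_n}$ from $\b^{V_n}$. Setting $A_j := V_n^{(1)} \cup \cdots \cup V_n^{(j)}$, define
\[
x_i^{(j)}(v) := \begin{cases} x_*(v), & v \in A_j, \\ x_i(v), & v \notin A_j. \end{cases}
\]
Conditions (2)--(5) are built in: $x_i^{(0)} = x_i$; since $x_i^{(j)}$ and $x_k^{(j)}$ agree on $A_j$, the pairwise Hamming distances are nonincreasing in $j$; each step modifies only the $(j{+}1)$-th piece, giving increments at most $\diam(\cX)/N < \d$; and $x_i^{(N)} = x_*$ is common to all $i$. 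It remains to verify condition (1): $x_i^{(j)} \in \Omega(\cO_2, \s_n)$ for every $i, j$ simultaneously.

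For $v \in V_n$ whose window $\{\s_n(g)^{-1}v : g \in D\}$ consists of $|D|$ distinct elements --- all but a vanishing fraction by soficity --- the $D$-pattern of $x_i^{(j)}$ at $v$ reads $x_*$ on $T_v := \{g \in D : \s_n(g)^{-1}v \in A_j\}$ and $x_i$ on $D \setminus T_v$. For the random partition, $T_v$ is hypergeometrically (hence nearly $\mathrm{Bin}(|D|, j/N)$) distributed in $D$, and since $x_*$ is i.i.d.\ $\b$ independent of both $x_i$ and the partition, taking expectations yields
\[
\E\bigl[\Res^D_* P^{\s_n}_{x_i^{(j)}}\bigr] \;\approx\; \sum_{T \subseteq D} (j/N)^{|T|}(1 - j/N)^{|D|-|T|}\; \b^T \otimes \Res^{D \setminus T}_* P^{\s_n}_{x_i}.
\]
Because $x_i \in \Omega(\cO_1, \s_n)$ forces $\Res^{D \setminus T}_* P^{\s_n}_{x_i} \approx \b^{D \setminus T}$, each summand is close to $\b^T \otimes \b^{D \setminus T} = \b^D$, so the expectation lies within $\eps_0/2$ of $\b^D$ in total variation once $\cO_1$ is small enough. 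McDiarmid's inequality, applied with the coordinates of $x_*$ and the vertex-labels of the partition as independent inputs --- each one changing the empirical $D$-distribution by $O(1/|V_n|)$ --- then gives concentration of order $|V_n|^{-1/2}$ with probability at least $1 - e^{-c|V_n|}$ for some $c > 0$. A union bound over the $KN$ pairs $(i, j)$, a number independent of $n$, produces a deterministic partition and reference $x_*$ for which every $x_i^{(j)}$ lies in $\Omega(\cO_2, \s_n)$, establishing contractibility.

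For the $0$-dimensional equivalence, let $\Psi : (\cX \times \cY)^\G \to \cY^\G$ be the coordinate projection. The map $z \mapsto \Psi^{\s_n}(z)$ sends good models of $\b^\G \times \nu$ to good models of $\nu$; conversely, drawing a random $x_* \sim \b^{V_n}$ once, the map $y \mapsto (x_*, y)$ sends good models of $\nu$ to good models of $\b^\G \times \nu$ with overwhelming probability, since the joint empirical distribution is close to $\b^D \otimes \Res^D_* \nu$ by independence of $x_*$ from $y$. Running the contraction above on only the $\cX$-coordinate, while the $\cY$-component is held fixed throughout, shows that any two good models of $\b^\G \times \nu$ with the same $\cY$-projection lie in the same $0$-homology class; so both compositions descend to mutually inverse maps between the $0$-dimensional homology theories. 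The hard part throughout will be the concentration estimate: independence of $x_*$ from the (possibly adversarial) $x_i$ is what decouples the joint empirical distribution of hybrid patterns from $x_i$, and this decoupling must hold simultaneously for all $KN$ pairs. It is precisely the i.i.d.\ Bernoulli structure of $x_*$ that makes this possible.
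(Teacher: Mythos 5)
Your proof follows essentially the paper's strategy: a coupled random interpolation from the $x_i$'s to a common Bernoulli reference, verified in $\Omega(\cO_2,\s_n)$ by an expectation computation followed by a concentration inequality, and then, for the second claim, a relative version of the contraction together with projection and lifting maps. The paper implements the interpolation with i.i.d.\ exposure times $\tau_v\sim\mathrm{Leb}[0,1]$ and a Hamming-cube concentration inequality (Proposition \ref{prop:concentration}), and encodes the control over all sub-windows in the notion of a hereditary neighborhood, but your TV-window control plus McDiarmid is interchangeable with this (and the sub-window control comes for free, since total variation is nonincreasing under marginalization). Two places to tighten: an exactly equal random partition does not have independent vertex labels, so to apply McDiarmid as stated you should assign each $v$ an i.i.d.\ label in $[N]$ and absorb the $O(\sqrt{|V_n|})$ piece-size fluctuation, which is precisely what the paper's $\tau_v$ does; and ``mutually inverse maps between the $0$-dimensional homology theories'' overstates the conclusion --- the equivalence defined in the paper only asks that each homology group be a QS-group of the other under the prescribed quantifier ordering on $(\cO_1,\cO_2,\k_1,\k_2)$, and the paper accordingly constructs in each direction a surjection from a subgroup of one $H_0$ onto the other (using Proposition \ref{prop:contraction2} and the lifting Lemma \ref{lem:contraction3}), with no attempt to make these genuinely inverse to one another.
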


In  \cite{MR3543677}, Tim Austin proved a similar result. The proof here follows the same strategy. 

\begin{remark}
Unfortunately, it is not clear whether the $d$-dimensional sofic homology theories of $\nu$ and $\b^\G\times \nu$ are equivalent for $d>0$. To explain why, suppose $z,z' \in Z_d^L(\cO_1,\k_1,\s)$ are each representable as a weighted sum of at most $L$ simplices and $z-z' \in B_d(\cO_2,\k_2,\s)$. Then there is a $(d+1)$-chain $w$ such that $z-z' = \partial_d w$. But the proof in the $0$-dimensional case uses that there exists a sequence $z=z_1,\ldots, z_k=z'$ of cycles interpolating between $z$ and $z'$ such that there is a uniform bound on the $\ell^1$-norm $\|z_i -z_{i+1}\|_1$ for each $i$ and each $z_i$ is representable as a weighted sum of at most $L'$ cycles for some $L'$ that does not depend on $z,z'$. There does not appear to be any good reason why this property should hold  if $d>0$. 

\end{remark}

We will use special neighborhoods of $\beta^\G\times \nu$ defined as follows. For any finite $D \subset \G$, let $\cB_{\cX^D}$ be the smallest Borel sub-sigma-algebra on $\cX^\G\times \cY^\G$ such that the projection $\cX^\G \times \cY^\G \to \cX^{D} \times \cY^\G$ is $\cB_{\cX^D}$-measurable.

A subset $\cF \subset C( \cX^\G\times \cY^\G)$ of continuous functions is {\bf hereditary} if there is some finite $D \subset \G$ such that every $f\in \cF$ is $D$-local and the conditional expectation $\E_{\beta^\G\times \nu}[f | \cB_{\cX^C}] \in \cF$ for every $C \subset D$. Moreover, we require that $f$ is $1$-Lipschitz in the $\cX^D$-variable as a function from $\cX^D\times \cY^D$ to $\R$. To be precise this means that
$$|f(x_1,y) - f(x_2,y)| \le d_\cX^D(x_1,x_2)$$
for every $x_1,x_2 \in \cX^D$ and $y \in \cY^D$ (where we have abused notation by identifying $f$ with its projection to $\cX^D \times \cY^D$).

A neighborhood $\cO \subset \Prob(\cX^\G\times \cY^\G)$ is {\bf hereditary} if there is a finite hereditary subset $\cF \subset C(\cX^\G\times \cY^\G)$ and $\d>0$ such that
$$\cO=\{\mu' \in \Prob(\cX^\G\times \cY^\G):~ |\mu'(f) - \b^\G\times\nu(f)| < \d \quad \forall f\in \cF\}.$$

 
 The next proposition shows that the measure $\beta^\G\times \nu$ satisfies a property that is a kind of relative version of having contractible model spaces. 
\begin{prop}\label{prop:contraction2}
Let $\cO_1 \subset \cO_2 \subset \Prob(\cX^\G\times \cY^\G)$ be open neighborhoods of $\beta^\G\times \nu$. Suppose that the closure of $\cO_1$ is contained in $\cO_2$ and that $\cO_1$ is hereditary. Then for any $\k>0$ and $K>0$ there exists $N$ such that if $n>N$ then for any $(x_i,y_i) \in \Omega(\cO_1,\s_n)$ $(1\le i \le K$) there exist $x_i^{(j)}$ (for $j\ge 0$) such that for all $i,j,k$,
\begin{enumerate}
\item $(x_i^{(j)}, y_i) \in \Omega(\cO_2,\s_n)$, 
\item $x_i^{(0)}=x_i$,
\item $d^{V_n}_{\cX} (x_i^{(j)}, x_i^{(j+1)}) < \k$,
\item $d^{V_n}_\cX(x_i^{(j+1)}, x_k^{(j+1)}) \le d^{V_n}_\cX(x_i^{(j)}, x_k^{(j)})$ and
\item there exists $M$ such that $x_1^{(M)} = \cdots = x_K^{(M)}$. In fact we may choose the $x_i^{(j)}$'s so that $M = \lceil \diam(\cX)/\k\rceil + 1$.  
\end{enumerate}
\end{prop}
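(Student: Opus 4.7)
The plan is to build the paths $x_i^{(j)}$ by gradually overwriting each $x_i$ with a common target drawn at random from $\beta^{V_n}$. Set $M := \lceil 1/\kappa \rceil + 1$, sample $x_* \in \cX^{V_n}$ IID from $\beta^{V_n}$, and partition $V_n = W_1 \sqcup \cdots \sqcup W_M$ by assigning each vertex independently and uniformly to one of the $M$ parts. Define
\[
x_i^{(j)}(v) := \begin{cases} x_*(v) & \text{if } v \in W_1 \cup \cdots \cup W_j, \\ x_i(v) & \text{otherwise.} \end{cases}
\]
Then $x_i^{(0)} = x_i$ and $x_i^{(M)} = x_*$ for every $i$, so (2) and (5) hold. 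The configurations $x_i^{(j)}$ and $x_i^{(j+1)}$ differ only on $W_{j+1}$, and the expected size $|V_n|/M$ is strictly less than $\kappa|V_n|$, so a Chernoff bound gives (3) with high probability. For (4), the functions $x_i^{(j)}$ and $x_k^{(j)}$ automatically agree on $W_1 \cup \cdots \cup W_j$, so their disagreement set shrinks monotonically with $j$.

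The main content is verifying (1). Because the closure of $\cO_1$ lies in $\cO_2$, there exist a finite hereditary family $\cF \subset C(\cX^\G \times \cY^\G)$ and $\delta > 0$ such that whenever $|\mu'(f) - (\beta^\G \times \nu)(f)| < 2\delta$ for every $f \in \cF$, $\mu' \in \cO_2$. Fix $f \in \cF$, $D$-local and $1$-Lipschitz in its $\cX$-coordinates. For a vertex $v \in V_n$ at which $\sigma_n$ is approximately multiplicative on $D$, the map $g \mapsto \sigma_n(g^{-1})v$ is injective on $D$, and we set
\[
S_v := \{ g \in D :\ \sigma_n(g^{-1})v \in W_1 \cup \cdots \cup W_j \}.
\]
Since $x_*$ is IID $\beta$, conditioning only on the partition and on $(x_i,y_i)$,
\[
\E_{x_*}\!\left[ f\big(\Pi^{\sigma_n}_v(x_i^{(j)}, y_i)\big) \right] = \E_{\beta^\G \times \nu}\!\left[ f \,\big|\, \cB_{\cX^{S_v}} \right]\!\big(\Pi^{\sigma_n}_v(x_i, y_i)\big).
\]
By the hereditary property, $\E[f \mid \cB_{\cX^{S_v}}] \in \cF$ for each $S_v \subset D$. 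Averaging over $v \in V_n$, $\E_{x_*}[P^{\sigma_n}_{(x_i^{(j)}, y_i)}(f)]$ becomes a weighted combination of terms $P^{\sigma_n}_{(x_i, y_i)}(\E[f \mid \cB_{\cX^S}])$ with $S \subset D$. Since $P^{\sigma_n}_{(x_i, y_i)} \in \cO_1$, each such term is within $\delta$ of $(\beta^\G \times \nu)(\E[f \mid \cB_{\cX^S}]) = (\beta^\G \times \nu)(f)$, hence the weighted combination is within $\delta + o_n(1)$ of $(\beta^\G \times \nu)(f)$, where the error absorbs the $(D,\delta)$-sofic defect.

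To upgrade from expectation to a positive-probability statement, we invoke McDiarmid's inequality over the combined randomness of $x_*$ and the partition. Altering a single coordinate of $x_*$ or a single vertex's part-label changes $P^{\sigma_n}_{(x_i^{(j)}, y_i)}(f)$ by at most $O(|D|/|V_n|)$, because $f$ is $D$-local and $1$-Lipschitz and each vertex appears in the pullback names at at most $|D|$ other vertices. This yields subgaussian concentration at scale $\exp(-c|V_n|/|D|^2)$; a union bound over the finite set $\cF$, the $K$ indices $i$, and the $M{+}1$ levels $j$ shows that with probability tending to $1$ as $n \to \infty$ all the $(x_i^{(j)}, y_i)$ simultaneously lie in $\Omega(\cO_2, \sigma_n)$. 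The probabilistic method then supplies deterministic choices of $x_*$ and partition satisfying all of (1)--(5). The main obstacle is the hereditary-conditional-expectation identity: one must check carefully that $g \mapsto \sigma_n(g^{-1})v$ is injective on $D$ for a $(1-o(1))$-fraction of $v$, that substituting IID $\beta$-samples at the coordinates indexed by $S_v$ genuinely realizes conditioning on $\cB_{\cX^{S_v}}$, and that the random mixture over $S_v$ does not knock the average outside of $\cO_2$.
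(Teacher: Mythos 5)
Your construction mirrors the paper's almost exactly: the paper samples $\tau_v\in[0,1]$ i.i.d.\ uniform and sets $x_i^{(j)}$ to overwrite $x_i$ with a common i.i.d.\ $\beta$-sample on $\{v:\tau_v\le j/k\}$, which is precisely your nested union $W_1\cup\cdots\cup W_j$ with $M=k$; the nestedness and monotone-contraction property (4) are automatic in both versions, and the Chernoff/LLN step for (3) is the same. The verification of (1) also follows the paper's route: replace the hereditary neighborhood by its defining finite family $\cF$ and tolerance, express the expected pulled-back value of $f$ as a conditional expectation $\E[f\mid\cB_{\cX^{S_v}}]$ of an element of $\cF$, invoke the hereditary property, and concentrate (the paper uses the Hamming-cube Lipschitz concentration inequality, Proposition~\ref{prop:concentration}, where you invoke McDiarmid — same strength, cosmetic difference). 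Your notation $\cB_{\cX^{S_v}}$ appears to be the right one, which is worth keeping in mind since the paper's printed $\cB_{\cX^{D\setminus S}}$ looks like a slip.

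One imprecision to tighten: after conditioning on the partition and averaging only over $x_*$, the quantity
\[
\E_{x_*}\bigl[P^{\sigma_n}_{(x_i^{(j)},y_i)}(f)\bigr]
   =|V_n|^{-1}\sum_{v}\E[f\mid\cB_{\cX^{S_v}}]\bigl(\Pi^{\sigma_n}_v(x_i,y_i)\bigr)
\]
is \emph{not} literally a convex combination of the numbers $P^{\sigma_n}_{(x_i,y_i)}(\E[f\mid\cB_{\cX^{S}}])$ — each summand is evaluated only over the sub-population $\{v:S_v=S\}$, not over all of $V_n$. That identity becomes exact only after you also take expectation over the random partition (then $\Pr(S_v=S)$ is the same binomial weight for every good $v$, and the sum factors cleanly), which is what the paper does by averaging over $\chi$ and $z$ jointly in Lemma~\ref{lem:contraction4}. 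Since your concluding McDiarmid step already works over the joint randomness of $x_*$ and the partition, the fix is just to move the partition-expectation inside the computation rather than conditioning it out first. A second small point: you cannot always arrange the outer tolerance to be $2\delta$; the compactness argument only gives some $\delta'>\delta$ with the $\delta'$-neighborhood inside $\cO_2$, but any positive gap $\delta'-\delta$ suffices for the $o_n(1)$ bookkeeping.
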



\begin{lem}\label{lem:contraction3}
Let $\cO \subset \Prob(\cX^\G\times \cY^\G)$ be an open neighborhood of $\b^\G\times \nu$. Then there exists an open neighborhood  $\cO' \subset \Prob(\cY^\G)$ of $\nu$ such that for all but finitely many $n$, if $y \in \Omega(\cO',\s_n)$ then there exists $x \in \cX^{V_n}$ such that $(x,y) \in \Omega(\cO,\s_n)$.  
\end{lem}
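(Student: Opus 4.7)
The plan is to prove this by a probabilistic method: given $y \in \Omega(\cO', \s_n)$, sample $x \in \cX^{V_n}$ by choosing its coordinates $x(v)$ i.i.d. from $\b$, and show that $(x,y) \in \Omega(\cO, \s_n)$ with positive probability (in fact with probability tending to $1$ as $n\to\infty$).

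First I would reduce to a neighborhood $\cO$ defined by finitely many continuous test functions. By the density of local continuous functions and the definition of the weak* topology, there exist a finite set $D\subset \G$, continuous $D$-local functions $f_1,\ldots,f_k$ on $(\cX\times\cY)^\G$, and $\d>0$ such that $\{\mu'\in\Prob(\cX^\G\times\cY^\G):\, |\mu'(f_i)-(\b^\G\times\nu)(f_i)|<\d\text{ for all }i\}\subset \cO$. For each $i$ define $\tf_i:\cY^\G\to\R$ by $\tf_i(y')=\int_{\cX^D}f_i(x',y')\,d\b^D(x')$; this is a continuous $D$-local function satisfying $\nu(\tf_i)=(\b^\G\times\nu)(f_i)$. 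Let $\cO':=\{\nu'\in\Prob(\cY^\G):\,|\nu'(\tf_i)-\nu(\tf_i)|<\d/3\text{ for all }i\}$, an open neighborhood of $\nu$.

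Second I would compute the expectation of the empirical measure under the random choice of $x$. For $n$ large enough so that $\s_n$ is sufficiently $(D,\eps)$-sofic (with $\eps$ small), there is a set $V'_n\subseteq V_n$ of size at least $(1-\eps)|V_n|$ on which $g\mapsto \s_n(g)^{-1}v$ is injective on $D$. For such $v$ the coordinates $\{x(\s_n(g)^{-1}v):g\in D\}$ are independent $\b$-samples, so
\[
\E_x\bigl[f_i(\Pi^{\s_n}_v(x,y))\bigr]=\int_{\cX^D}f_i(x',\Pi^{\s_n}_v(y)\res D)\,d\b^D(x')=\tf_i(\Pi^{\s_n}_v(y)).
\]
Averaging over $v\in V_n$ and bounding the contribution of $V_n\setminus V'_n$ by $2\eps\|f_i\|_\infty$, one obtains
\[
\bigl|\E_x[P^{\s_n}_{(x,y)}(f_i)]-P^{\s_n}_y(\tf_i)\bigr|=o_n(1).
\]
Since $y\in\Omega(\cO',\s_n)$, $|P^{\s_n}_y(\tf_i)-\nu(\tf_i)|<\d/3$, and hence $|\E_x[P^{\s_n}_{(x,y)}(f_i)]-(\b^\G\times\nu)(f_i)|<2\d/3$ for large $n$, uniformly in $y$.

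Third I would establish concentration via McDiarmid's bounded-difference inequality. Since each $f_i$ is $D$-local and $\|f_i\|_\infty<\infty$, changing a single coordinate $x(v)$ alters at most $|D|$ of the terms in $P^{\s_n}_{(x,y)}(f_i)=|V_n|^{-1}\sum_{v'}f_i(\Pi^{\s_n}_{v'}(x,y))$, each by at most $2\|f_i\|_\infty$. Therefore the bounded differences constants satisfy $c_v\le 2|D|\|f_i\|_\infty/|V_n|$, and McDiarmid gives
\[
\Pr\bigl[|P^{\s_n}_{(x,y)}(f_i)-\E_x P^{\s_n}_{(x,y)}(f_i)|>\d/3\bigr]\le 2\exp\!\left(-\frac{\d^2|V_n|}{72|D|^2\|f_i\|_\infty^2}\right),
\]
which tends to $0$ as $n\to\infty$. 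Union-bounding over $i=1,\ldots,k$, for all large $n$ and every $y\in\Omega(\cO',\s_n)$ the event $\{(x,y)\in\Omega(\cO,\s_n)\}$ has probability bounded below (in fact tending to $1$), and hence is nonempty; any $x$ in this event is the required extension.

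The main obstacle is organizational rather than conceptual: one must carefully match the $D$-locality of the chosen test functions with the soficity requirements on $\s_n$ to ensure both the expectation computation and the bounded-differences bound go through uniformly in $y$. No totally-disconnected hypothesis on $\cX$ is needed here since we are only sampling from $\b$ rather than approximating measurable maps by local ones.
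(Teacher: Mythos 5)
Your proof is correct, and it is essentially the standard argument; the paper does not give a self-contained proof here, but simply cites \cite[Theorem 8.1]{bowen-jams-2010} and \cite[Theorem 6.8]{MR3543677}, and both of those references prove the corresponding statement by exactly the probabilistic-method scheme you use: sample $x$ i.i.d.\ from $\b$, compute the expected empirical averages via locality and soficity, and apply a bounded-differences concentration inequality (Azuma/McDiarmid) on the Hamming cube. So you have filled in the details of what the paper delegates to the literature, rather than found a genuinely different route.

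Two very small remarks. First, in the McDiarmid step the constant should be $18$ rather than $72$: with $c_v \le 2|D|\|f_i\|_\infty/|V_n|$ one has $\sum_v c_v^2 \le 4|D|^2\|f_i\|_\infty^2/|V_n|$, so the exponent is $-2(\d/3)^2|V_n|/(4|D|^2\|f_i\|_\infty^2) = -\d^2|V_n|/(18|D|^2\|f_i\|_\infty^2)$; your weaker bound still tends to $0$, so nothing breaks. Second, the soficity condition ensuring that $g\mapsto\s_n(g)^{-1}v$ is injective on $D$ for at least $(1-\eps)|V_n|$ vertices should really be phrased as $(D^{-1}D\cup D\cup D^{-1},\eps')$-soficity for suitable $\eps'$ (so that $\s_n(h)\s_n(g)^{-1}v=\s_n(hg^{-1})v\ne v$ for $g\ne h$ in $D$), but this is the standard bookkeeping and poses no difficulty.
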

\begin{proof}
This is equivalent to saying that $\cX^\G\times \cY^\G \to \cX^\G$ is model-surjective (in the language of \cite{MR3543677}). This is proven implicitly in \cite[Theorem 8.1]{bowen-jams-2010} and \cite[Theorem 6.8]{MR3543677}. 
\end{proof}



\begin{proof}[Proof of Theorem \ref{thm:bernoulli} given Proposition \ref{prop:contraction2}]

The fact that Bernoulli shifts have contractible model spaces is implied by the special case of Proposition  \ref{prop:contraction2} in which $\nu$ is the Dirac mass on a fixed point. 

Before proving the second statement, note that the projection map $\Prob(\cX^\G\times \cY^\G) \to \Prob(\cY^\G)$ is open (this is implied by \cite[Theorem 2.5]{MR453960}). So if $\cO \subset \Prob(\cX^\G\times \cY^\G)$ is open then its image, which we denote by $\Proj_{\cY^\G}(\cO)$, is an open subset of $\Prob(\cY^\G)$.

First we will show that the $0$-dimensional sofic homology of $\nu$ is greater than or equal to the $0$-dimensional sofic homology of $\b^\G\times \nu$. In order to define the homology of $\b^\G\times \nu$, we identify $\cX^\G\times \cY^\G$ with $(\cX\times \cY)^\G$ and use the metric $d_{\cX\times \cY}((x,y),(x',y')) = d_\cX(x,x') + d_\cY(y,y')$. 

Let $\cO_2$ be an arbitrary open neighborhood of $\b^\G\times \nu$ in $\Prob(\cX^\G\times \cY^\G)$. Also let $\k_2>0$.  By Lemma \ref{lem:contraction3} there exists an open neighborhood $\cO'_2 \subset \Proj_{\cY^\G}(\cO_2)$ of $\nu$ such that for all but finitely many $n$, if $y \in \Omega(\cO'_2,\s_n)$ then there exists $x \in \cX^{V_n}$ such that $(x,y) \in \Omega(\cO_2,\s_n)$.  Let $\k'_2=\k_2$. Let $\cO'_1 \subset \cO'_2$ be an arbitrary open neighborhood of $\nu$. Also let $0< \k'_1$ be arbitrary. Choose an open neighborhood $\cO_1$ of $\b^\G\times \nu$ so that its closure is contained in $\cO_2$ and $\Proj_{\cY^\G}(\cO_1)$ is contained in $\cO'_1$. Because hereditary neighborhoods form a basis, we may also choose $\cO_{1}$ to be hereditary.  Let $\k_1=\k'_1$.

Let $\pi:(\cX\times \cY)^{V_n} \to \cY^{V_n}$ be the projection map. Let $S$ be the subgroup of $H_0(\cO'_1,\cO
'_2,\k'_1,\k'_2,\s_n)$ generated by the set of all 0-chains of the form $\pi_*([x,y])=[y]$ for $(x,y) \in \Omega(\cO_1,\s_n)$.  We claim that the map
$$\pi_*([x,y]) \mapsto [x,y]$$
from $S$ to $H_0(\cO_1,\cO_2,\k_1,\k_2,\s_n)$ is well-defined. It suffices to show that if $(x_1,y_1)$ and $(x_2,y_2)$ are both in $\Omega(\cO_1,\s_n)$ and  $y_1-y_2 \in B_0(\cO'_2,\k_2,\s_n)$  then there is a $\k_2$-path from $(x_1,y_1)$ to $(x_2,y_2)$ in $\Omega(\cO_2,\s_n)$. 

Because $y_1-y_2 \in B_0(\cO'_2,\k_2,\s_n)$ there exists a $\k_2$-path $w_1,\ldots, w_m \in \Omega(\cO'_2,\s_n)$ from $w_1=y_1$ to $w_m=y_2$. By Lemma \ref{lem:contraction3}, there exist $u_1,\ldots, u_m \in \cX^{V_n}$ such that $(u_i,w_i) \in \Omega(\cO_2,\s_n)$ for all $i$. We may assume $u_1=x_1, u_m=x_2$. 

Since $(x_1,y_1)-(x_2,y_2) = \sum_{i=1}^{m-1} (u_i,w_i) - (u_{i+1},w_{i+1})$, it suffices to show that $(u_i,w_i) - (u_{i+1},w_{i+1}) \in B_0(\cO_2,\k_2,\s_n)$ for all $1\le i < m$. 


So fix $i$ with $1\le i < m$. By Proposition \ref{prop:contraction2} (with $K=2, \kappa=\k_1$ and all large $n$) for $j\ge 0$ there exist elements $u_i^{(j)}, u_{i+1}^{(j)}$ such that for all $k\in \{i,i+1\}$ and $j\ge 0$, 
\begin{enumerate}
\item $(u_k^{(j)}, w_k) \in \Omega(\cO_2,\s_n)$,
\item $u_k^{(0)} = u_k$,
\item $d^{V_n}_{\cX} (u_k^{(j)}, u_k^{(j+1)}) < \k_1$,
\item there exists $M_i$ such that $u_i^{(M_i)} = u_{i+1}^{(M_i)}$. 
\end{enumerate}
It follows that 
$$(u_i,w_i), (u_i^{(1)}, w_i),\ldots, (u_i^{(M_i)}, w_i), (u_{i+1}^{(M_i)}, w_{i+1}), \ldots, (u_{i+1}^{(1)}, w_{i+1}), (u_{i+1}, w_{i+1})$$
is a $\k_2$-path from $(u_i,w_i)$ to $(u_{i+1}, w_{i+1})$ in $\Omega(\cO_2,\s_n)$ (with respect to the metric $d^{V_n}_{\cX\times \cY}$). Thus $(u_i,w_i) - (u_{i+1},w_{i+1}) \in B_0(\cO_2,\k_2,\s_n)$ as required.

So the map from $S$ to $H_0(\cO_1,\cO_2,\k_1,\k_2,\s_n)$ is well-defined. It is also surjective by construction.
 So $H_0(\cO_1,\cO_2,\k_1,\k_2,\s_n)$ is a QS-group of $H_0(\cO'_1,\cO'_2,\k'_1,\k'_2,\s_n)$ and therefore, the $0$-dimensional sofic homology of $\b^\G\times \nu$ is bounded by the $0$-dimensional sofic homology of $\nu$.

To finish the proof, we will show that the $0$-dimensional sofic homology of $\b^\G\times \nu$ is greater than or equal to the $0$-dimensional sofic homology of $\nu$. So let $\cO'_2$ be an arbitrary open neighborhood of $\nu$ in $\Prob(\cY^\G)$. Also let $\k'_2>0$. Let $\cO_2 = \Prob(\cX^\G)\times \cO'_2$ and $\k_2=\k'_2$. Let $\cO_1 \subset \cO_2$ be an arbitrary open neighborhood of $\b^\G\times \nu$. Let $0<\k_1\le \k_2$. By Lemma \ref{lem:contraction3}, there exists an open neighborhood $\cO'_1$ of $\nu$ such that $\cO'_1 \subset \cO'_2$ and for all but finitely many $n$, if $y\in \Omega(\cO'_1,\s_n)$ then there exists $x \in \cX^{V_n}$ such that $(x,y) \in \Omega(\cO_1, \s_n)$. Set $\k'_1=\k_1$. 

Let $S$ be the subgroup of $H_0(\cO_1,\cO_2,\k_1,\k_2,\s_n)$ generated by all $0$-cycles of the form $[x,y]$ with $y \in \Omega(\cO'_1,\s_n)$ and $(x,y) \in \Omega(\cO_1,\s_n)$. 
The map $[x,y] \to [y]$ from $S$ to $H_0(\cO'_1,\cO'_2,\k'_1,\k'_2,\s_n)$ is well-defined because if $[x_1,y_1] - [x_2,y_2] \in B_0(\cO_2,\k_2,\s_n)$ then there is a $\k_2$-path from $[x_1,y_1]$ to $[x_2,y_2]$ in $\Omega(\cO_2,\s_n)$. The projection of this path to $\cY^{V_n}$ is a $\k'_2$-path from $[y_1]$ to $[y_2]$ in $\Omega(\cO'_2,\s_n)$. So $[y_1]$ and $[y_2]$ represent the same element of $H_0(\cO'_1,\cO'_2,\k'_1,\k'_2,\s_n)$. 

We claim the map $[x,y] \to [y]$ from $S$ to $H_0(\cO'_1,\cO'_2,\k'_1,\k'_2,\s_n)$ is surjective. Suppose $[y] \in H_0(\cO'_1,\cO'_2,\k'_1,\k'_2,\s_n)$. Then $y \in \Omega(\cO'_1,\s_n)$. By choice of $\cO_1$, this implies the existence of $x \in \cX^{V_n}$ with $(x,y) \in \Omega(\cO_1,\s_n)$. Thus $[x,y] \in S$. Since $[y]$ is arbitrary this implies the claimed surjectivity. So $H_0(\cO'_1,\cO'_2,\k'_1,\k'_2,\s_n)$ is a QS-group of $H_0(\cO_1,\cO_2,\k_1,\k_2,\s_n)$ as required.

\end{proof}



We will use a well-known concentration inequality on Hamming cubes to prove Proposition \ref{prop:contraction2}. First we need some notation. Let $\l_s$ denote the probability measure on $\{0,1\}$ given by $\l_s(1)=s$, $\l_s(0)=1-s$. Let $\P_s = \l_s^{V_n} \times \b^{V_n}$ be a probability measure on $\{0,1\}^{V_n}\times \cX^{V_n}$. Let $\E_s$ denote  expectation with respect to $\P_s$.

\begin{prop}\label{prop:concentration}
There exists a constant $C>0$ (depending only on the diameter of $\cX$) such that for any $\eps>0$, any $s\in [0,1]$ and any 1-Lipschitz function $F:\{0,1\}^{V_n}\times \cX^{V_n} \to \R$,
$$\P_s\left\{ \Big| F - \E[F] \Big| \ge \eps \right\} \le 2e^{-C\eps^2 |V_n|}.$$
Here we are using the normalized Hamming metric $d^{V_n}_{\{0,1\}\times \cX}$ given by
\begin{eqnarray*}
d^{V_n}_{\{0,1\}\times \cX}( (\chi_1, x_1), (\chi_2,x_2) ) &=& d^{V_n}_{\{0,1\}}( \chi_1, \chi_2) +  d^{V_n}_{\cX}( x_1, x_2) \\
 &=& |V_n|^{-1} \#\{v\in V_n:~\chi_1(v) \ne \chi_2(v)\} + |V_n|^{-1}\sum_{v\in V_n} d_\cX(x_1(v),x_2(v)).
 \end{eqnarray*}
\end{prop}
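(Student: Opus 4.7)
The plan is to deduce the statement from McDiarmid's bounded-differences inequality (equivalently, from the Azuma--Hoeffding inequality applied to the Doob martingale of $F$ with respect to the natural filtration on the product space $(\{0,1\}\times \cX)^{V_n}$, which is well-suited here because $\P_s$ is a product measure indexed by $V_n$).

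First I would reinterpret $\P_s$ as a product measure on the product space $\prod_{v\in V_n} (\{0,1\}\times \cX)$, where each factor carries the measure $\lambda_s\times \beta$. Then I would estimate the ``bounded difference'' constant: if two configurations $(\chi,x)$ and $(\chi',x')$ agree on all coordinates except at a single vertex $v$, then by the definition of the normalized Hamming metric,
$$d^{V_n}_{\{0,1\}\times \cX}\big((\chi,x),(\chi',x')\big) \;\le\; \frac{1+\operatorname{diam}(\cX,d_\cX)}{|V_n|},$$
since the first summand contributes at most $|V_n|^{-1}$ and the second at most $|V_n|^{-1}\operatorname{diam}(\cX)$. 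Because $F$ is $1$-Lipschitz in this metric, changing a single coordinate changes $F$ by at most $c := (1+\operatorname{diam}(\cX))/|V_n|$.

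Now McDiarmid's inequality gives
$$\P_s\big\{|F-\E_s[F]|\ge \eps\big\} \;\le\; 2\exp\!\left(-\frac{2\eps^2}{|V_n|\,c^2}\right) \;=\; 2\exp\!\left(-\frac{2\eps^2 |V_n|}{(1+\operatorname{diam}(\cX))^2}\right),$$
which is exactly the desired bound with $C = 2/(1+\operatorname{diam}(\cX,d_\cX))^2$; note $C$ depends only on the diameter of $\cX$ and not on $s$, $n$, or $F$, as required.

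There is no real obstacle: the only step requiring a moment's thought is verifying the one-coordinate difference bound above. If one prefers to avoid citing McDiarmid as a black box, the identical conclusion follows by writing $F - \E_s F = \sum_{v\in V_n} (\E_s[F|\cF_v] - \E_s[F|\cF_{v-1}])$ for any ordering of the vertices, where $\cF_v$ is the $\sigma$-algebra generated by the coordinates up to $v$, observing that each martingale increment is bounded in absolute value by $c$ (since conditioning on an extra coordinate cannot increase the oscillation beyond what a single-coordinate change allows), and applying Azuma--Hoeffding to the resulting sum of bounded martingale differences.
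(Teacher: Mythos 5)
Your proof is correct and is essentially the argument that the paper's cited reference (Ledoux's Corollary 1.17, which is a martingale-method concentration bound for product measures with respect to weighted Hamming-type metrics) encapsulates. You supply the bounded-differences computation explicitly, which the paper leaves to the citation, but the underlying mechanism—McDiarmid/Azuma applied to the Doob martingale with single-coordinate oscillation bounded by $(1+\operatorname{diam}\cX)/|V_n|$—is the same.
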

For the proof see \cite[Corollary 1.17]{MR1849347}.

Proposition \ref{prop:contraction2} is proven by letting $x_i^{(j)}$ (for $j=0,1,\ldots$) be the result of a coupled random walk on $\cX^{V_n}$. To define this coupled random walk, for $x\in \cX^{V_n}$ define
$$\bfxi^x: \{0,1\}^{V_n} \times \cX^{V_n} \to \cX^{V_n}$$
by
 \begin{displaymath}
\bfxi^x(\chi, z)_v:= \left\{ \begin{array}{cc}
z_v & \textrm{ if } \chi(v) =1  \\
x_v & \textrm{ if } \chi(v) =0
\end{array}\right.
\end{displaymath}

We will think of $\bfxi^x$ as a random variable taking values in $\cX^{V_n}$. More precisely, we choose a random subset of $V_n$ with each vertex being chosen with probability $s$ (independently). Then we randomize the value of $x$ at each chosen vertex. This produces the new random element $\bfxi^x$.

\begin{lem}\label{lem:contraction4}
Fix notation as in Proposition \ref{prop:contraction2}. Then for any $s\in [0,1]$,
$$\lim_{n\to\infty} \inf\{ \P_s( (\bfxi^x, y) \in \Omega(\cO_2,\s_n) ):~  (x,y) \in \Omega(\cO_1,\s_n)\} = 1.$$
\end{lem}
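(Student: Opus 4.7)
The plan is to reduce the claim to a slight enlargement of $\cO_1$ that still sits inside $\cO_2$, and then combine a hereditary-based mean computation with Gaussian-type concentration. Let $\cF$ be the finite hereditary family of $D$-local continuous functions defining $\cO_1$ with threshold $\delta>0$, and set $\mu:=\beta^\G\times\nu$. Since $\overline{\cO_1}=\{\mu':|\mu'(f)-\mu(f)|\le\delta\ \forall f\in\cF\}$ is weak* compact and contained in the open set $\cO_2$, a standard compactness argument (extract a subsequential limit of any would-be counterexamples) yields $\eps^*>0$ with
$$\cO_1^+:=\{\mu':|\mu'(f)-\mu(f)|<\delta+\eps^*\ \forall f\in\cF\}\ \subset\ \cO_2.$$
It therefore suffices to show that, uniformly in $(x,y)\in\Omega(\cO_1,\s_n)$, $P^{\s_n}_{(\bfxi^x,y)}\in\cO_1^+$ with $\P_s$-probability $1-o_n(1)$.

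The key step is to compute the expectation $\E_s[P^{\s_n}_{(\bfxi^x,y)}(f)]$ for each $f\in\cF$. For $w\in V_n$ and $g\in D$ write $v_g(w):=\s_n(g)^{-1}w$. By soficity, for large $n$ the set $W_n\subset V_n$ of vertices $w$ on which $g\mapsto v_g(w)$ is injective on $D$ has $|V_n\setminus W_n|/|V_n|\to 0$. For $w\in W_n$ the $|D|$ random variables $(\bfxi^x(v_g(w)))_{g\in D}$ are, conditionally on $\chi$, independent: each equals $x(v_g(w))$ when $\chi(v_g(w))=0$ and is an independent $\beta$-sample when $\chi(v_g(w))=1$. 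Integrating out the fresh samples first and then over $\chi\sim\l_s^{V_n}$, and identifying each resulting partial integral of $f$ with $f_C:=\E_\mu[f\mid\cB_{\cX^C}]\in\cF$ via the hereditary property, one obtains
$$\E_s[P^{\s_n}_{(\bfxi^x,y)}(f)]=\sum_{C\subset D}s^{|C|}(1-s)^{|D\setminus C|}\,P^{\s_n}_{(x,y)}(f_C)+o_n(1).$$
Since $\mu(f_C)=\mu(f)$ for every $C$, the same identity with $\mu$ in place of $P^{\s_n}_{(x,y)}$ collapses to $\mu(f)$. Using $(x,y)\in\Omega(\cO_1,\s_n)$ and $f_C\in\cF$, every summand difference is bounded by $\delta$, giving $|\E_s[P^{\s_n}_{(\bfxi^x,y)}(f)]-\mu(f)|<\delta+o_n(1)$.

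Finally I would upgrade the mean estimate to a high-probability estimate via Proposition~\ref{prop:concentration}. A coordinate-by-coordinate check shows that $(\chi,z)\mapsto P^{\s_n}_{(\bfxi^x,y)}(f)$ is Lipschitz with constant $\diam(\cX)$ in the metric $d^{V_n}_{\{0,1\}\times\cX}$: a single-coordinate change in $(\chi,z)$ alters $\bfxi^x$ at exactly one vertex $v$, which perturbs the $D$-restrictions of the pullback names $\Pi^{\s_n}_w(\bfxi^x,y)$ at no more than $|D|$ values of $w$, each within $d_\cX^D$-distance at most $\diam(\cX)/|D|$, and the factor of $|D|$ cancels against the $1$-Lipschitz property of $f$. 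Proposition~\ref{prop:concentration} applied to $F/\diam(\cX)$ then forces $|P^{\s_n}_{(\bfxi^x,y)}(f)-\E_s[\,\cdot\,]|\ge\eps^*/2$ to occur with probability exponentially small in $|V_n|$. Choosing $n$ large enough that the $o_n(1)$ correction in the mean is also $<\eps^*/2$, and union-bounding over the finite family $\cF$, yields $P^{\s_n}_{(\bfxi^x,y)}\in\cO_1^+\subset\cO_2$ with probability $1-o_n(1)$ uniformly in $(x,y)$. The principal difficulty is controlling the sofic errors in the mean computation, where the approximate independence of the perturbed coordinates of $\bfxi^x$ under the pullback by $\s_n$ must be married to the hereditary closure of $\cF$ under conditional expectation; both the hereditary hypothesis on $\cO_1$ and the soficity of $\Si$ are indispensable there.
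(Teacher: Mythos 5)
Your proof is correct and follows essentially the same strategy as the paper's: reduce to a slightly enlarged hereditary neighborhood contained in $\cO_2$, compute the mean of $P^{\s_n}_{(\bfxi^x,y)}(f)$ by conditioning on the resampled set and invoking the closure of $\cF$ under conditional expectations (plus the tower property $\mu(f_C)=\mu(f)$), and then concentrate using the Lipschitz property of $(\chi,z)\mapsto P^{\s_n}_{(\bfxi^x(\chi,z),y)}(f)$ together with Proposition~\ref{prop:concentration}. A minor quibble: the Lipschitz constant is $\max(1,\diam(\cX))$ rather than $\diam(\cX)$ (the $z$-coordinate contribution gives constant $1$, not $\diam(\cX)$), but this has no effect on the argument.
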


\begin{proof}
Because $\cO_1$ is hereditary, there is a finite hereditary subset $\cF \subset C(\cX^\G\times \cY^\G)$ and $\d>0$ such that
$$\cO_1=\{\mu' \in \Prob(\cX^\G\times \cY^\G):~ |\mu'(f) - \b^\G\times\nu(f)| < \d \quad \forall f\in \cF\}.$$
Because $\cO_2$ contains the closure of $\cO_1$ there is a $\d'>\d$ such that
$$\cO_2 \supset \{\mu' \in \Prob(\cX^\G\times \cY^\G):~ |\mu'(f) - \b^\G\times\nu(f)| < \d' \quad \forall f\in \cF\}.$$

Fix $(x,y) \in \Omega(\cO_1,\s_n)$ and $f \in \cF$. It suffices to obtain a lower bound on 
$$\P_s \left( \left|P^{\s_n}_{\left(\bfxi^x,y\right)}(f) - \b^\G\times \nu(f)\right| < \d' \right)$$
that tends to 1 as $n\to\infty$ but does not depend on $(x,y)$. 

Let $D \subset \G$ be a finite subset such that $f$ is $D$-local. Let $W_n \subset V_n$ be the set of all vertices $v \in V_n$ such that the map 
$$g \in D\mapsto \s_n(g)^{-1}v$$
is injective. For any $v \in W_n$,
$$\E_s[f(\Pi^{\s_n}_v(\bfxi^x,y))] = \sum_{S \subset D} s^{|S|} (1-s)^{|D\setminus S|}\E_{\b^\G\times \nu}[f| \cB_{\cX^{D\setminus S}}](\Pi^{\s_n}_v(x,y)).$$
To see this, let $\bfS:\{0,1\}^{V_n} \to 2^D$ be the random subset of $D$ defined by 
$$\bfS(\chi)=\{g \in D:~\chi(\s_n(g)^{-1}v)=1 \}.$$
Then for any $g \in D$, $g \in \bfS$ with probability $s$ and these events are jointly independent over $g\in D$. Moreover, conditioned on $\bfS(\chi)=S$, the expected value of $f(\Pi^{\s_n}_v(\bfxi^x,y))$ is $\E_{\b^\G\times \nu}[f| \cB_{\cX^{D\setminus S}}](\Pi^{\s_n}_v(x,y))$.

Because $\Si$ is a sofic approximation, $\lim_{n\to\infty} |W_n|/|V_n|=1$. So
\begin{eqnarray*}
\E_s\left[P^{\s_n}_{\left(\bfxi^x,y\right)}(f) \right] &=& |V_n|^{-1} \sum_{v\in V_n} \E_s[f(\Pi^{\s_n}_v(\bfxi^x,y))] \\
&=&  |V_n|^{-1} \sum_{v\in V_n}  \sum_{S \subset D} s^{|S|} (1-s)^{|D\setminus S|}\E_{\b^\G\times \nu}[f| \cB_{\cX^{D\setminus S}}](\Pi^{\s_n}_v(x,y))  + O\left(|V_n|^{-1}\right)\\
&=& \sum_{S \subset D} s^{|S|} (1-s)^{|D\setminus S|} |V_n|^{-1} \sum_{v\in V_n}  \E_{\b^\G\times \nu}[f| \cB_{\cX^{D\setminus S}}](\Pi^{\s_n}_v(x,y))  + O\left(|V_n|^{-1}\right) \\
&=& \sum_{S \subset D} s^{|S|} (1-s)^{|D\setminus S|} P^{\s_n}_{(x,y)}(\E_{\b^\G\times \nu}[f| \cB_{\cX^{D\setminus S}}])  + O\left(|V_n|^{-1}\right).
\end{eqnarray*}

Because $\cF$ is hereditary, $\E_{\b^\G\times \nu}[f| \cB_{\cX^{D\setminus S}}] \in \cF$. So 
$$\left|P^{\s_n}_{(x,y)}\left(\E_{\b^\G\times \nu}[f| \cB_{\cX^{D\setminus S}}]\right) - \b^\G\times\nu(f)\right| < \d$$
for all $S \subset D$. Thus
 $$\left|\E_s\left[P^{\s_n}_{\left(\bfxi^x,y\right)}(f) \right] - \b^\G\times\nu(f)\right| < \d + O\left(|V_n|^{-1}\right).$$
 
Let $F: \{0,1\}^{V_n}\times \cX^{V_n} \to \R$ be the function
$$F(\chi,z) = P^{\s_n}_{\left(\bfxi^x(\chi,z),y\right)}(f).$$
To finish the proof, it suffices (by Proposition \ref{prop:concentration} and the previous inequality) to prove that $F$ is $C$-Lipschitz for some constant $C>0$ (that does not depend on $n$, $x$ or $y$ but may depend on other parameters). In fact,
 \begin{eqnarray*}
 |F(\chi,z) - F(\chi', z')| &=& |V_n|^{-1} \left| \sum_{v \in V_n}  f(\Pi^{\s_n}_v( \bfxi^x(\chi,z),y)) - f(\Pi^{\s_n}_v( \bfxi^x(\chi',z'),y)) \right| \\
  &\le& |V_n|^{-1}  \sum_{v \in V_n} \left| f(\Pi^{\s_n}_v( \bfxi^x(\chi,z),y)) - f(\Pi^{\s_n}_v( \bfxi^x(\chi',z'),y)) \right| \\
  &\le& |V_n|^{-1}  \sum_{v \in V_n} d_{\cX\times \cY}^{D} \left(   \Pi^{\s_n}_v( \bfxi^x(\chi,z),y), \Pi^{\s_n}_v( \bfxi^x(\chi',z'),y)\right).
  \end{eqnarray*}
 The last inequality above occurs because $f$ is $1$-Lipschitz as a function from $\cX^D\times \cY^D$ to $\R$.
 
 For fixed $v \in V_n$,
 \begin{eqnarray*}
 &&d_{\cX\times \cY}^{D} (   \Pi^{\s_n}_v( \bfxi^x(\chi,z),y), \Pi^{\s_n}_v( \bfxi^x(\chi',z'),y)) \\
 &\le &  |D|^{-1} \sum_{g\in D} d_\cX(z(\s_n(g)^{-1}v), z'(\s_n(g)^{-1}v))  + \diam(\cX)1_{\chi(\s_n(g)^{-1}v) \ne \chi'(\s_n(g)^{-1}v)}.
 \end{eqnarray*}
 Summing over all $v$, we obtain
  \begin{eqnarray*}
  |F(\chi,z) - F(\chi', z')| &\le &  |V_n|^{-1}  \sum_{v \in V_n} d_\cX(z(v), z'(v))  + \diam(\cX)1_{\chi_v\ne \chi'_v} \\
    &=& d_\cX^{V_n}(z,z') + \diam(\cX)d_{\{0,1\}}^{V_n}(\chi,\chi').
  \end{eqnarray*}
  So $F$ is $\max(1,\diam(\cX))$-Lipschitz.

\end{proof}

\begin{proof}[Proof of Proposition \ref{prop:contraction2}]
Fix notation as in the statement of Proposition  \ref{prop:contraction2}. Let $\Leb$ denote Lebesgue measure on the unit interval $[0,1]$. Let $\P = \Leb^{V_n} \times \b^{V_n}$ be the product measure on $[0,1]^{V_n}\times \cX^{V_n}$. Let $\E$ denote expectation with respect to $\P$.

For $x \in \cX^{V_n}$ and $s \in [0,1]$, define
$$\bfzeta(x,s| \cdot, \cdot): [0,1]^{V_n}\times \cX^{V_n}  \to \cX^{V_n}$$
by
\begin{displaymath}
\bfzeta(x,s|\tau,z)_v:= \left\{ \begin{array}{cc}
z_v & \textrm{ if } \tau_v \le s \\
x_v & \textrm{ if } \tau_v > s
\end{array}\right.
\end{displaymath}

The distribution of $\bfzeta(x,s| \cdot,\cdot)$ (with respect to $\Leb^{V_n} \times \b^{V_n}$) is the same as the distribution of $\bfxi^x$ (with respect to $\P_s$). So Lemma \ref{lem:contraction4} applies.

Fix a natural number $k>\diam(\cX)/\k$. Let $(\bftau,\bfz)$ be a random variable with distribution $\Leb^{V_n} \times \b^{V_n}$. Then with high probability (whp) as $n\to\infty$ the following events occur:
\begin{enumerate}
\item For every $0 \le j \le k$,
$$\#\left\{ v\in V_n:~ \bftau_v \in (j/k, j/k + 1/k] \right\} < \k |V_n|/\diam(\cX).$$
\item $\bfzeta\left(x_i,j/k| \cdot,\cdot\right) \in \Omega(\cO_2,\s_n)$ for all $1\le i \le K$ and $0\le j \le k$. 
\item $\bftau_v \ne 0$ for all $v \in V_n$.
\end{enumerate}
This first condition holds whp by the law of large numbers, the second by Lemma \ref{lem:contraction4}, and the last occurs with probability 1. 

So there is some $(\tau,z) \in [0,1]^{V_n}\times \cX^{V_n}$ such that all of the above conditions hold. Set $x^{(j)}_i = \bfzeta(x_i,j/k| \tau,z)$. This first 3 conclusions of Proposition  \ref{prop:contraction2} are immediate. The fourth occurs by definition of $\bfzeta(x_i,j/k| \tau,z)$. The fifth occurs with $M=k$. 
\end{proof}

\begin{lem}\label{lem:entropy}
$b_{d,\Si}(\nu) \le (d+1)h_\Si(\nu)$ for all $d,\Si, \nu$.
\end{lem}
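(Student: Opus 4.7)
My plan is to bound the rank of $H^L_d$ by the covering number of the model space and then invoke the definition of sofic entropy. Fix the outer parameters $\cO_2\ni\nu$ and $\k_2>0$; I will show
$\inf_{\cO_1,\k_1}\sup_L\limsup_n |V_n|^{-1}\log\dim_\Q(H^L_d\otimes\Q) \le (d+1)h_\Si(\nu)$.
Choose any $\eps>0$ with $3\eps\le\k_2$ and set $\k_1:=\eps$. For any open $\cO_1$ with $\nu\in\cO_1\subset\cO_2$ and each $n$, let $Y_n\subset\Omega(\cO_1,\s_n)$ be a minimum-size $\eps$-cover, so that $|Y_n|=\cov_\eps(\Omega(\cO_1,\s_n),d_\cX^{V_n})$. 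Define a retraction $\psi_n:\cX^{V_n}\to\cX^{V_n}$ by sending each $x\in\Omega(\cO_1,\s_n)$ to a chosen point of $Y_n$ within $\eps$ of $x$, and declaring $\psi_n$ to be the identity elsewhere. Then $d_\cX^{V_n}(y,\psi_n(y))<\eps$ for every $y\in\cX^{V_n}$ and $\psi_n(\Omega(\cO_1,\s_n))\subset Y_n\subset\Omega(\cO_1,\s_n)$.

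Next I apply Lemma \ref{lem:cylinder} with $\lambda=\psi_n$, $\k'=\eps$, and both open-set parameters equal to $\cO_1$. For every $z\in Z_d(\cO_1,\k_1,\s_n)$ this yields
$$z-(\psi_n)_*z\,\in\,B_d(\cO_1,\k_1+2\eps,\s_n)\,\subset\,B_d(\cO_2,\k_2,\s_n),$$
using $\cO_1\subset\cO_2$ and $\k_1+2\eps=3\eps\le\k_2$. Since $H^L_d(\cO_1,\cO_2,\k_1,\k_2,\s_n)$ is, by its very definition, isomorphic to the image of the natural map $Z^L_d(\cO_1,\k_1,\s_n)\to C_d(\cX^{V_n})/B_d(\cO_2,\k_2,\s_n)$, the class of each $z\in Z^L_d$ agrees with the class of $(\psi_n)_*z$. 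The latter lies in the image of the subgroup of $C_d(\cX^{V_n})$ generated by oriented $d$-simplices with all vertices in $Y_n$; this subgroup is free abelian of rank at most $\binom{|Y_n|}{d+1}\le|Y_n|^{d+1}$. Hence
$$\dim_\Q\bigl(H^L_d\otimes_\Z\Q\bigr)\;\le\;|Y_n|^{d+1}\;=\;\cov_\eps(\Omega(\cO_1,\s_n),d_\cX^{V_n})^{d+1}.$$

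Taking $|V_n|^{-1}\log$, then $\sup_L$ (the bound is $L$-free) and $\limsup_n$:
$$\sup_L\limsup_n|V_n|^{-1}\log\dim_\Q(H^L_d\otimes\Q)\,\le\,(d+1)\limsup_n|V_n|^{-1}\log\cov_\eps(\Omega(\cO_1,\s_n)).$$
Finally I take $\inf_{\cO_1\subset\cO_2}$ and compare with $h_\Si(\nu)=\sup_{\eps'}\inf_\cO\limsup_n|V_n|^{-1}\log\cov_{\eps'}(\Omega(\cO,\s_n))$. The one subtlety is that $\cov_\eps$ is not strictly monotone under set inclusion; however, for any open $\cO\ni\nu$ the intersection $\cO\cap\cO_2$ is a legitimate $\cO_1$, and a standard nearest-point argument gives $\cov_\eps(\Omega(\cO\cap\cO_2,\s_n))\le\cov_{\eps/2}(\Omega(\cO,\s_n))$. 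Thus the inf over $\cO_1\subset\cO_2$ is dominated by $\inf_\cO\limsup_n|V_n|^{-1}\log\cov_{\eps/2}(\Omega(\cO,\s_n))\le h_\Si(\nu)$. Since $\cO_2,\k_2$ were arbitrary, this yields $b_{d,\Si}(\nu)\le(d+1)h_\Si(\nu)$. The substantive geometric content is a single application of the cylinder lemma to the retraction $\psi_n$; the only real work will be the parameter bookkeeping among $\eps,\k_1,\k_2$ and absorbing the non-monotonicity of $\cov_\eps$ into the $\sup_{\eps'}$ defining sofic entropy.
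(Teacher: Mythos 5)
Your argument is essentially the paper's proof: pick a $\k_1$-cover $S_n$ of $\Omega(\cO_1,\s_n)$ inside the model space, extend the nearest-point retraction to $\cX^{V_n}$, invoke Lemma \ref{lem:cylinder} with $\k_1\le\k_2/3$ to show each cycle class in $H^L_d$ is represented by a chain supported on $S_n$, and bound the rank by $|S_n|^{d+1}$. The only difference is that you spell out the quantifier bookkeeping needed to compare $\inf_{\cO_1\subset\cO_2}\cov_{\k_1}$ with the $\sup_\eps\inf_\cO$ in $h_\Si$, a detail the paper compresses into ``follows from the definitions''; your nearest-point $\eps/2$ trick handles it correctly.
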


\begin{proof}
Let $\cO_1 \subset \cO_2 \subset \Prob(\cY^\G)$ be open neighborhoods of $\nu$ and $0< \k_1\le \k_2/3$. Let $S_n \subset \Omega(\cO_1,\s_n)$ be a subset whose $\k_1$-neighborhood contains $\Omega(\cO_1,\s_n)$ and 
$$|S_n| = \cov_{\k_1}(\Omega(\cO_1,\s_n), d_\cY^{V_n}).$$
So there exists a map $\Psi: \Omega(\cO_1,\s_n) \to S_n$ such that $d^{V_n}_\cY(\Psi(y),y) < \k_1$ for all $y$. If $y \in \cY^{V_n} \setminus \Omega(\cO_1,\s_n)$, then define $\Psi(y)=y$ so that now we can consider $\Psi$ as a map from $\cY^{V_n}$ to itself. By Lemma \ref{lem:cylinder}, $\Psi_*(z)-z \in B_d(\cO_2, \k_2, \s_n)$ for any $z \in Z_d^L(\cO_1,\k_1,\s_n)$. This uses $\cO_1 \subset \cO_2$ and $3\k_1 \le \k_2$. It follows that 
$$\dim_\Q(H_d^L(\cO_1,\cO_2,\k_1,\k_2,\s_n) \otimes_\Z \Q) \le \#S_n^{d+1} = \cov_{\k_1}(\Omega(\cO_1,\s_n), d_\cY^{V_n})^{d+1}.$$
The lemma now follows from the definitions of $b_{d,\Si}(\nu)$ and $h_\Si(\nu)$.
\end{proof}

\begin{cor}
If $\G \cc (X,\mu)$ has the Weak Pinsker Property then $b_{0,\Si}(\mu)=0$. 
\end{cor}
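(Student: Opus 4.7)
The proof will proceed as follows. Fix an arbitrary $\eps > 0$. By the Weak Pinsker Property with respect to $\Si$, the system $\G \cc (X,\mu)$ is measurably conjugate to a direct product $\bfb \times \bfc$ where $\bfb$ is isomorphic to a Bernoulli shift and $\bfc$ has $\Si$-entropy less than $\eps$. I will represent these actions symbolically so that the invariants above apply: take $\bfb$ to be the Bernoulli shift over $\G$ with base $(\cK,\kappa)$ for some totally disconnected compact metric space $\cK$, and represent $\bfc$ as $\G \cc (\cY^\G,\nu)$ for some totally disconnected compact metric $\cY$ and some $\nu \in \Prob_\G(\cY^\G)$ (using that any action on a standard probability space can be so realized, e.g.\ via a generating partition into clopen pieces of a Cantor space). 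Under the standard identification $\cK^\G \times \cY^\G \cong (\cK \times \cY)^\G$, the product action corresponds to the shift on $((\cK \times \cY)^\G,\, \kappa^\G \times \nu)$.

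Having set up the notation, the argument is a chain of equalities and inequalities. By Theorem \ref{thm:mc}, $b_{0,\Si}(\mu) = b_{0,\Si}(\kappa^\G \times \nu)$, since $\mu$ and $\kappa^\G \times \nu$ represent measurably conjugate actions. By Theorem \ref{thm:bernoulli}, the $0$-dimensional sofic homology theories of $\nu$ and $\kappa^\G \times \nu$ are equivalent. Since the function $G \mapsto \dim_\Q(G \otimes_\Z \Q)$ is monotone under passage to subgroups and quotients, mutual QS-group relationships on the homology groups (for all large $n$ and all $L$) translate, through the sup--sup--inf--inf--sup--limsup structure in the definition of $b_{0,\Si}$, into $b_{0,\Si}(\kappa^\G \times \nu) = b_{0,\Si}(\nu)$. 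Finally, Lemma \ref{lem:entropy} with $d=0$ gives $b_{0,\Si}(\nu) \le h_\Si(\nu) < \eps$. Assembling these steps yields $b_{0,\Si}(\mu) < \eps$. Since $\eps$ was arbitrary, $b_{0,\Si}(\mu) \le 0$; the opposite inequality $b_{0,\Si}(\mu) \ge 0$ follows from the fact that the model spaces for $\mu$ are non-empty for all sufficiently large $n$ (for instance because $\mu$ has a Bernoulli factor, and so microstates for $\mu$ exist by the model-surjectivity input used in the proof of Theorem \ref{thm:bernoulli}), which forces $\dim_\Q H^L_0 \ge 1$ and hence a non-negative exponential rate.

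The one step that requires care is the middle one: propagating an equivalence of sofic homology theories (a nested-quantifier statement about open neighborhoods and scales) through the sup--inf--sup--limsup defining $b_{0,\Si}$ to obtain equality of the numerical invariant. However, the quantifier order in the definition of equivalence was chosen precisely to mirror the quantifier order used to define $b_{0,\Si}$, so this step is bookkeeping rather than a genuine obstacle, and all other ingredients are direct applications of results already proved in the paper.
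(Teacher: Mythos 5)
Your proof follows the paper's argument exactly: apply the WPP to decompose the system as a Bernoulli shift times a low-entropy factor, invoke Theorem \ref{thm:bernoulli} together with Theorem \ref{thm:mc} (packaged as Corollary \ref{cor:monotone}/\ref{cor:betti}) to conclude that the numerical invariant of $\mu$ agrees with that of the low-entropy factor, and finish with Lemma \ref{lem:entropy}. The one place you go beyond the paper is your attempt to establish $b_{0,\Si}(\mu) \ge 0$, and that step is not quite right as written: having a Bernoulli factor does not by itself guarantee that $\mu$ has microstates. Model-surjectivity (Lemma \ref{lem:contraction3}) lifts a microstate of the low-entropy factor $\nu$ to one for the product $\kappa^\G \times \nu$, but if $\nu$ has \emph{no} microstates to lift (which the WPP explicitly allows, e.g.\ when $h_\Si(\nu) = -\infty$) then neither does the product, and $b_{0,\Si}(\mu) = -\infty$. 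The paper's own proof only establishes $b_{0,\Si}(\mu) < \eps$ for all $\eps > 0$, hence $b_{0,\Si}(\mu) \le 0$, so this is a gap you inherit from the corollary's statement rather than one you introduce; your core argument is otherwise sound and identical to the source.
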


\begin{proof}
Let $\eps>0$. Then $\G \cc (X,\mu)$ is isomorphic to the direct product of a Bernoulli shift and an action with entropy $<\eps$. By Theorem \ref{thm:bernoulli}, the $0$-dimensional sofic homology of $\G \cc (X,\mu)$ is equivalent to the $0$-dimensional sofic homology of an action with entropy $<\eps$. By Lemma \ref{lem:entropy}, this shows $b_{0,\Si}(\mu)<\eps$.
\end{proof}

\section{An action without the Weak Pinsker Property}\label{sec:markov}

This section proves Theorem \ref{thm:indep0}. Here is an outline:
\begin{itemize}

\item \S \ref{sec:configuration} shows that two models of random graphs, the configuration model and the permutation model, are closely related, allowing the transfer of results about one to the other. This is useful here because results in the literature are generally proven for the configuration model, but it is the permutation model that gives actions of the free group. 




\item \S \ref{sec:moment} is about first moment computations of the numbers of independent sets and pairs of independent sets of a random regular graph. 

\item \S \ref{sec:planted} explains two models of random pairs $(G,I)$ where $G$ is a regular graph and $I \subset V$ is an independent set. These models are called the planted and the uniform model. Usually it is easy to estimate probabilities with respect to the planted model but not with respect to the uniform model. However, sofic entropy is more closely related to the uniform model. Fortunately, there is an inequality relating the two models.

\item \S \ref{sec:clusters0} has the main technical result bounding the size of clusters of independent sets in a random regular graph.

\item \S \ref{sec:variational} provides a general result for obtaining an invariant measure $\mu$ on $\cX^\G$ whose model spaces $\Omega(\cO,\s_n)$ have large intersections with fixed subsets $\sW_n \subset \cX^{V_n}$. This is applied later with $\sW_n$ equal to the set of ``good'' independent subsets to obtain the invariant measure in Theorem \ref{thm:indep0}.

\item \S \ref{sec:final} finishes the proof of Theorem \ref{thm:indep0}.

\end{itemize}


\subsection{The configuration model and the permutation model}\label{sec:configuration}

The proof of Theorem \ref{thm:indep} is made simpler by borrowing results about independent sets on the configuration model of random regular graphs and transferring them to the permutation model. This section explains the two models and two key theorems linking them together. 

\begin{defn}[The configuration model]
Let $d \ge 3, n \ge 1$ be integers such that $dn$ is even. Let $\bfpi$ be a uniformly random perfect matching on $[n]\times [d]$. Let $G_{\rm{conf}}(\bfpi)$ be the random multi-graph with vertex set $[n]$ such that the number of edges from $i$ to $j$ equals the number of edges between $\{i\} \times [d]$ and $\{j\} \times [d]$ in the matching $\bfpi$. This is called the {\bf configuration model} \cite{MR1864966, MR1782847}. It gives a random $d$-regular multi-graph on $n$ vertices. Let $\P^{\rm{conf}}_{d,n}$ denote the law of $G_{\rm{conf}}(\bfpi)$ and let $\E^{\rm{conf}}_{d,n}$ be its expectation operator. So $\P^{\rm{conf}}_{d,n}$ is a probability measure on $\Graphs(d,n)$,  which is   the set of all $d$-regular multi-graphs on $[n]$.

\end{defn}

\begin{defn}[The permutation model]
Let $\G=\F_r = \langle a_1,\ldots, a_r \rangle$ be the rank $r$ free group. Let $\P^{\rm{perm}}_{r,n}$ be the uniform probability measure on the set $\Hom(\F_r,\sym(n))$ of homomorphisms from $\F_r$ to $\sym(n)$. Also let $\E^{\rm{perm}}_{r,n}$ be its expectation operator. For $\s \in \Hom(\F_r,\sym(n))$, let $G(\s)$ be the multi-graph with vertex set $[n]$ and edges $\{v, \s(a_i)v\}$ (over $v\in [n], 1\le i \le r$). If $\bfsig$ is random with law $\P^{\rm{perm}}_{r,n}$ then $G(\bfsig)$ is called the {\bf permutation model}. The law of $G(\bfsig)$ is a probability measure on $\Graphs(2r,n)$, which by abuse of notation, we will also denote by $\P^{\rm{perm}}_{r,n}$. 

\end{defn}


One of the main results of \cite{MR1909503} is:
\begin{thm}\label{thm:contiguous}
 Let $A_n \subset \Graphs(d,n)$ be any sequence of subsets. Suppose $d\ge 4$ is even and let $2r=d$. Then $\lim_n \P^{\rm{conf}}_{d,n}(A_n) = 1$ if and only if $\lim_n \P^{\rm{perm}}_{r,n}(A_n)=1$. Equivalently, the permutation and configuration models model are {\bf contiguous}.
\end{thm}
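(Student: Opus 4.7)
The plan is to apply the small subgraph conditioning method of Robinson--Wormald, in the form developed by Janson. The starting observation is that both $\P^{\rm conf}_{d,n}$ and $\P^{\rm perm}_{r,n}$ are supported on $\Graphs(d,n)$ and that, after being conditioned on the event that the resulting multi-graph is simple, both reduce to the uniform distribution on simple $d$-regular graphs. (This is standard for the configuration model; for the permutation model it follows from the fact that each $\sigma(a_i)$ is an independent uniform random element of $\sym(n)$, combined with an averaging argument over the fibers.) Hence the entire obstruction to contiguity lives in the joint distribution of short-cycle statistics: loops, multiple edges, and genuine short cycles.

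Next I would carry out the moment computation. For each $k\ge 1$ let $X_k^{(n)}(G)$ count the cycles of length $k$ in the multi-graph $G$, with loops counted as $1$-cycles and parallel pairs as $2$-cycles. Under $\P^{\rm conf}_{d,n}$ a direct first--and--second moment argument gives joint convergence of $(X_k^{(n)})_k$ to independent Poissons with parameters $\lambda_k^{\rm conf}=(d-1)^k/(2k)$. Under $\P^{\rm perm}_{r,n}$ a $k$-cycle at a given vertex corresponds to a closed reduced word of length $k$ in $a_1^{\pm 1},\ldots,a_r^{\pm 1}$ whose image in $\sym(n)$ fixes that vertex, and since uniform random permutations have asymptotically Poisson cycle counts of each length, one obtains joint Poisson limits with parameters $\lambda_k^{\rm perm}$ computable explicitly by enumerating reduced words. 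The two families generally disagree (for instance loops have mean $r$ in the permutation model versus $(d-1)/2$ in the configuration model), but both decay fast enough in $k$ that the series $\sum_k \lambda_k^{\rm conf}(\lambda_k^{\rm perm}/\lambda_k^{\rm conf}-1)^2$ converges.

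The decisive step is then to apply Janson's small subgraph conditioning theorem to the Radon--Nikodym derivative $L_n := d\P^{\rm perm}_{r,n}/d\P^{\rm conf}_{d,n}$. The main tasks are (i) to show $L_n = \prod_k f_k(X_k^{(n)})\cdot(1+o(1))$ for explicit functions $f_k$ determined by ratios of Poisson weights, and (ii) to verify that $\E^{\rm conf}_{d,n}[L_n^2]$ converges to $\exp\bigl(\sum_k \lambda_k^{\rm conf}(\lambda_k^{\rm perm}/\lambda_k^{\rm conf}-1)^2\bigr)$. Together these imply uniform integrability of $L_n$ under $\P^{\rm conf}_{d,n}$, and hence mutual contiguity of the two sequences of measures, which is exactly the statement of the theorem (for any sequence $A_n$ with $\P^{\rm conf}_{d,n}(A_n)\to 1$, the complementary event has probability tending to $0$, and uniform integrability of $L_n$ transfers this to $\P^{\rm perm}_{r,n}$; the reverse implication is symmetric). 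The main obstacle I expect is step (i), where one must enumerate closed reduced words in $\F_r$ of each length and track how the two measures distribute themselves across fibers of multi-graphs with prescribed short-cycle counts; this is delicate but a well-understood combinatorial exercise, and is essentially the content of the cited reference.
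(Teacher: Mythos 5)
The paper does not prove this theorem: it is quoted directly as ``one of the main results of \cite{MR1909503}'' (Greenhill--Janson--Kim--Wormald, \emph{Permutation pseudographs and contiguity}), so there is no in-text argument to compare against. Your general strategy --- small subgraph conditioning in the Robinson--Wormald/Janson form, driven by joint Poisson limits for short-cycle counts and a second-moment computation for the likelihood ratio --- is indeed the method used in that reference, so the broad outline is on target.

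There is, however, a genuine error in your opening reduction, and it undermines the internal logic of the sketch. You assert that the permutation model $\P^{\rm perm}_{r,n}$, conditioned on producing a simple graph, reduces to the uniform distribution on simple $d$-regular graphs. This is false. The pushforward of the uniform measure on $\Hom(\F_r,\sym(n))\cong\sym(n)^r$ under $\sigma\mapsto G(\sigma)$ has fibers of wildly varying size: the number of $r$-tuples of permutations yielding a fixed simple $2r$-regular graph $G$ equals $\sum 2^{c(F_1)+\cdots+c(F_r)}$, the sum running over ordered decompositions of $E(G)$ into $r$ spanning $2$-factors with $c(F_i)$ the number of cycles of $F_i$, and this quantity is far from constant across $G$. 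Moreover, if your claim were true the rest of your argument would be superfluous: since both models give a simple graph with probability bounded away from $0$ for fixed $d$, equality of the two conditional laws would imply contiguity immediately, with no need to examine Poisson cycle parameters or verify a variance condition. The actual content of \cite{MR1909503} is precisely that the conditioned permutation model is \emph{not} uniform, and that the discrepancy between the two laws is captured, up to contiguity, by the joint distribution of short cycle counts. So the ``uniform after conditioning on simplicity'' step should be deleted; the correct route is to show directly (as Janson's criterion requires) that the likelihood ratio $L_n$ is asymptotically a function of the cycle-count vector $(X_k^{(n)})_k$ and that the second-moment series closes, with no appeal to uniformity.
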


\begin{cor}\label{thm:sofic-random}
Let $\bfsig_n$ be a random homomorphism from $\F_r$ to $\sym(n)$ with law $\P^{\rm{perm}}_{r,n}$. Then for every finite $D \subset \F_r$ and $\d>0$,
$$\lim_{n\to\infty} \P^{\rm{perm}}_{r,n}( \bfsig_n \textrm{ is $(D,\d)$-sofic} ) = 1.$$

\end{cor}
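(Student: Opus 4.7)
Every $\bfsig_n \in \Hom(\F_r, \sym(n))$ is a group homomorphism by construction, so the identity $\bfsig_n(gh)=\bfsig_n(g)\bfsig_n(h)$ holds at every vertex for every realization; the $(D,\d)$-multiplicativity condition is therefore satisfied deterministically. The plan is thus to verify only the $(D,\d)$-trace preserving condition with probability tending to $1$, and to do so by passing to the configuration model via the contiguity Theorem \ref{thm:contiguous}. (Assume $r\ge 2$ so that $d:=2r\ge 4$ is even and Theorem \ref{thm:contiguous} applies; the case $r=1$ is standard and uses only classical facts about uniformly random permutations.)

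Set $k := \max_{f\in D}|f|_{\F_r}$ and call a vertex $v \in [n]$ \emph{bad} if $\bfsig_n(f)v=v$ for some $f\in D\setminus\{1\}$. The first (and only genuinely combinatorial) step is to observe that a bad $v$ forces a short cycle in $G(\bfsig_n)$ close to $v$: reading off the reduced spelling of $f$ produces a closed non-backtracking walk of length $\le k$ starting at $v$, and the first repeated vertex along that walk carves out a simple cycle (or a self-loop, or a multi-edge) of length $\le k$ in the multi-graph $G(\bfsig_n)$ whose vertices all lie within graph distance $k$ of $v$. Writing $N_{\le k}(G)$ for the total number of cycles of length $\le k$ (including self-loops and multi-edges) in a multi-graph $G$, this gives
\begin{equation*}
\#\{\text{bad vertices}\}\;\le\;C_{r,k}\cdot N_{\le k}(G(\bfsig_n)),
\end{equation*}
where $C_{r,k}$ is a constant bounding the number of vertices within distance $k$ of any short cycle in a $2r$-regular multi-graph.

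The remaining step is to show $N_{\le k}(G(\bfsig_n))$ is bounded in $n$ with high probability. A classical theorem of Bollob\'as \cite{MR595929} asserts that under the configuration model $\P^{\rm{conf}}_{d,n}$, the counts of cycles of each fixed length $\le k$, of multi-edges, and of self-loops converge jointly in distribution to independent Poisson random variables with finite means depending only on $d$ and $k$. In particular there is a constant $M=M_{r,k}$ with $\lim_n \P^{\rm{conf}}_{d,n}(N_{\le k}\le M)=1$. Since $\{N_{\le k}\le M\}$ is determined by the underlying multi-graph, it is a subset of $\Graphs(d,n)$, and Theorem \ref{thm:contiguous} transfers the conclusion to the permutation model: $\lim_n \P^{\rm{perm}}_{r,n}(N_{\le k}(G(\bfsig_n))\le M)=1$. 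Combined with the displayed inequality this yields $\#\{\text{bad vertices}\}\le C_{r,k}M<\d n$ for every sufficiently large $n$, completing the argument. The main technical point is the combinatorial reduction to short cycles; the two probabilistic inputs (Bollob\'as's cycle theorem and contiguity) are used as black boxes.
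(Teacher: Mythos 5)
Your approach is the same as the paper's: reduce the trace-preserving condition to counting short cycles in $G(\bfsig_n)$, count short cycles in the configuration model via Bollob\'as, and transfer via contiguity (Theorem~\ref{thm:contiguous}). Your explicit observation that $(D,\d)$-multiplicativity is automatic because $\bfsig_n$ is a genuine homomorphism is a nice point that the paper leaves implicit, and your combinatorial reduction from bad vertices to short cycles (via the non-backtracking closed walk argument and the $C_{r,k}$ distance-$k$ neighborhood bound) is fine.

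However, there is a genuine error in how you invoke Bollob\'as's theorem. Poisson convergence with finite \emph{positive} means does \emph{not} give a fixed constant $M$ with $\lim_n \P^{\rm{conf}}_{d,n}(N_{\le k}\le M)=1$: if $N_{\le k}$ converges in distribution to a Poisson random variable $Z$ with mean $\lambda>0$, then for every fixed $M$ one has $\lim_n \P^{\rm{conf}}_{d,n}(N_{\le k}\le M)=\P(Z\le M)<1$. What actually follows is that $N_{\le k}$ is tight, i.e.\ $O_P(1)$: for every $\eta>0$ there is $M=M(\eta)$ with $\limsup_n \P^{\rm{conf}}_{d,n}(N_{\le k}> M)<\eta$, or equivalently, $\P^{\rm{conf}}_{d,n}(N_{\le k}\le m_n)\to 1$ for \emph{any} sequence $m_n\to\infty$. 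This weaker (and correct) conclusion still suffices for your argument: take $m_n=\lfloor \d n/C_{r,k}\rfloor\to\infty$, so $\P^{\rm{conf}}_{d,n}(N_{\le k}\le m_n)\to 1$, and Theorem~\ref{thm:contiguous} applies because $\{N_{\le k}\le m_n\}$ is a sequence of events $A_n\subset\Graphs(d,n)$ (the theorem imposes no requirement that the events be independent of $n$). This gives $\P^{\rm{perm}}_{r,n}(\#\{\text{bad vertices}\}\le \d n)\to 1$, which is the claim. The paper sidesteps the issue by stating only that the number of short cycles is $o(n)$ with high probability, which is all that is needed; you should phrase it the same way (or as $O_P(1)$) rather than as a uniform bound holding with probability tending to one.
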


\begin{proof}
It suffices to show that for any nontrivial $w \in \F_r$ the number of $v \in \{1,\ldots,n\}$ such that $\bfsig_n(w)v=v$ is $o(n)$ with high probability as $n\to\infty$. To phrase this a different way, it suffices to prove that for any $L>0$ the number of simple closed cycles of length $\le L$ in $G(\bfsig_n)$ is $o(n)$ with high probability as $n\to\infty$. This statement is proven in \cite{MR595929} for the configuration model. Since the two models are contiguous, it also holds for the permutation model. 
\end{proof}

\subsubsection{Expectations}
The next result shows that the first moment method applied to counting vertex-labelings of either the configuration or permutation model results in the same calculation up to subexponential factors. To explain further we need some notation. 

\begin{defn}[Admissible pairs]
Let $\cX$ be a finite set. A pair of vectors $(\pi^{\rm{vert}}, \pi^{\rm{\rm{edge}}}) \in \Prob(\cX) \times \Prob(\cX\times \cX) \subset \R^{\cX} \times \R^{\cX \times \cX}$ is {\bf admissible} if both $\pi^{\rm{vert}}$ and $\pi^{\rm{edge}}$ are probability vectors, $\pi^{\rm{edge}}$ is symmetric in the sense that $\pi^{\rm{edge}}(p,q)=\pi^{\rm{edge}}(q,p)$ for all $p,q$ and both marginals of $\pi^{\rm{edge}}$ equal $\pi^{\rm{vert}}$. The latter condition means for every $p\in \cX$,
$$\pi^{\rm{vert}}(p) = \sum_{q\in \cX} \pi^{\rm{edge}}(p,q) = \sum_{q\in \cX} \pi^{\rm{edge}}(q,p).$$
Let $\cA(\cX) \subset \R^{\cX} \times \R^{\cX \times \cX} $ be the compact space of all admissible pairs. 
\end{defn}

\begin{example}
Suppose $G=(V,E)$ is a finite $	d$-regular multi-graph and $x:V \to \cX$ a map. Choose a vertex $\bfv$ and a directed edge $\bfe$ independently and uniformly at random. Then the distributions of $x(\bfv)$ and $x(\bfe)$ form a pair of admissible vectors. Let $(\pi^{\rm{vert}}_x, \pi^{\rm{edge}}_x) \in \cA(\cX)$ denote this pair of distributions. 
\end{example}

\begin{defn}[$\Omega(\pi^{\rm{vert}}, \pi^{\rm{edge}};G)$]
Given a finite graph $G=(V,E)$ and an admissible pair of vectors $(\pi^{\rm{vert}}, \pi^{\rm{edge}}) \in \cA(\cX)$, let $\Omega(\pi^{\rm{vert}}, \pi^{\rm{edge}};G)$ be the set of all vertex-labelings $x:V \to \cX$ such that
 $$\pi^{\rm{vert}} = \pi^{\rm{vert}}_x, \quad \pi^{\rm{edge}} =\pi^{\rm{edge}}_x.$$
Write $\#\Omega( \pi^{\rm{vert}}, \pi^{\rm{edge}})$ for the random variable 
$$\bfG \mapsto \#\Omega( \pi^{\rm{vert}}, \pi^{\rm{edge}}; \bfG)$$
where $\bfG$ is either a random sample of the configuration or the permutation model, depending on context.
\end{defn}

\begin{defn}
For $x>0$, let $\eta(x)=-x\log(x)$. Extend this by continuity so that $\eta(0)=0$. Given a vector $\vec{p}=(p_1,\ldots, p_k)$ of nonnegative real numbers, let 
$$H(\vec{p})=H(p_1,\ldots, p_k) = \sum_{i=1}^k \eta(p_i)$$
be the {\bf Shannon entropy} of $\vec{p}$. For example, it is well-known that if $\vec{p}$ is a probability vector then the associated multinomial coefficients satisfy
$$ \lim_{n\to\infty} n^{-1} \log {n \choose m_{n,1},\ldots, m_{n,k}} = H(p_1,\ldots, p_k)$$
where $m_{n,1},\ldots, m_{n,k}$ are any choice of nonnegative integers satisfying $m_{n,1}+\cdots + m_{n,k} = n$ and $\lim_{n\to\infty} m_{n,i}/n=p_i $ for all $i$.
\end{defn}

\begin{lem}\label{lem:F}
Let $(\pi^{\rm{vert}}, \pi^{\rm{edge}}) \in \cA(\cX)$ be an admissible pair of vectors.  If $d\ge 2, n\ge 1$ are integers, $dn$ is even, $\pi^{\rm{vert}}$ takes values in $\frac{1}{n}\Z$ and $\pi^{\rm{edge}}$ takes values in $\frac{2}{dn}\Z$ then 
$$n^{-1} \log \E^{\rm{conf}}_{d,n}[\#\Omega(\pi^{\rm{vert}},\pi^{\rm{edge}})] = (d/2)H(\pi^{\rm{edge}}) - (d-1) H(\pi^{\rm{vert}}) + o_n(1)$$
and for any integer $r\ge 1$,
$$n^{-1} \log \E^{\rm{perm}}_{r,n}[\#\Omega(\pi^{\rm{vert}},\pi^{\rm{edge}})] = rH(\pi^{\rm{edge}}) - (2r-1) H(\pi^{\rm{vert}}) + o_n(1).$$
\end{lem}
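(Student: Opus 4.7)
The plan is to factor, in both models, the expected count as a labeling count times a probability and apply Stirling to each factor. For any labeling $x_0$ with $\pi^{\rm{vert}}_{x_0}=\pi^{\rm{vert}}$, invariance of both models under relabeling of $[n]$ gives
\[
\E[\#\Omega(\pi^{\rm{vert}},\pi^{\rm{edge}})] = \binom{n}{(n\pi^{\rm{vert}}(p))_p}\cdot \P\big(\pi^{\rm{edge}}_{\bfG,x_0}=\pi^{\rm{edge}}\big),
\]
and Stirling gives $\log\binom{n}{(n\pi^{\rm{vert}}(p))_p}=nH(\pi^{\rm{vert}})+O(\log n)$, so everything reduces to computing the second factor.

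For the configuration model I would color each of the $dn$ half-edges by the label of its vertex and set $m_{pq}:=dn\pi^{\rm{edge}}(p,q)$ for $p\ne q$ and $m_{pp}:=(dn/2)\pi^{\rm{edge}}(p,p)$; the divisibility hypothesis makes these non-negative integers. A direct count — partition the $dn\pi^{\rm{vert}}(p)$ $p$-colored half-edges into ordered subsets of sizes $(m_{pq})_{q\ne p}$ and $2m_{pp}$, pair the $p$- and $q$-subsets for each pair $p<q$, and perfectly match each same-color subset internally — yields
\[
\#\{\bfpi\text{ compatible with }x_0\}=\frac{\prod_p (dn\pi^{\rm{vert}}(p))!}{\big(\prod_{p<q}m_{pq}!\big)\prod_p 2^{m_{pp}}m_{pp}!}.
\]
Dividing by $(dn-1)!!$ and applying Stirling factor by factor will give $\log\P=(dn/2)H(\pi^{\rm{edge}})-dn\,H(\pi^{\rm{vert}})+O(\log n)$, and adding $nH(\pi^{\rm{vert}})$ gives the first formula.

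For the permutation model I would set $g_i(p,q):=\#\{v:x_0(v)=p,\,x_0(\sigma_i v)=q\}$; the $g_i$ are independent, each has marginals $n\pi^{\rm{vert}}$, and the edge-distribution condition is $\sum_i(g_i+g_i^T)=2rn\pi^{\rm{edge}}$. A direct count gives $\#\{\sigma:g_\sigma=g\}=\big(\prod_p (n\pi^{\rm{vert}}(p))!\big)^2/\prod_{p,q}g(p,q)!$, so Stirling yields $\log\P(g_\sigma=g)=nH(g/n)-2nH(\pi^{\rm{vert}})+O(\log n)$ uniformly. Since the number of admissible tuples is polynomial in $n$, summing over them contributes only $O(\log n)$ in the log, so
\[
\log\P\big(\pi^{\rm{edge}}_{\bfG,x_0}=\pi^{\rm{edge}}\big)=n\max_{(\alpha_i)}\sum_{i=1}^r H(\alpha_i)-2rn\,H(\pi^{\rm{vert}})+O(\log n),
\]
the max over $(\alpha_i)\in\Prob(\cX\times\cX)^r$ having marginals $\pi^{\rm{vert}}$ and $\sum_i(\alpha_i+\alpha_i^T)=2r\pi^{\rm{edge}}$.

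The main obstacle will be identifying this maximum as $rH(\pi^{\rm{edge}})$. Concavity of $H$ reduces it to $r\cdot\max_{\bar\alpha}H(\bar\alpha)$, with $\bar\alpha$ constrained to have marginals $\pi^{\rm{vert}}$ and $\bar\alpha+\bar\alpha^T=2\pi^{\rm{edge}}$. Writing $\bar\alpha=\pi^{\rm{edge}}+\beta$, the constraint on $\beta$ (antisymmetric, zero row sums, $|\beta(p,q)|\le\pi^{\rm{edge}}(p,q)$) is symmetric under $\beta\mapsto-\beta$, while $H$ is invariant under the transposition $(p,q)\mapsto(q,p)$ and hence satisfies $H(\pi^{\rm{edge}}+\beta)=H(\pi^{\rm{edge}}-\beta)$; strict concavity then forces the unique maximizer at $\beta=0$, with $rH(\pi^{\rm{edge}})$ attained by $\alpha_i\equiv\pi^{\rm{edge}}$. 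Minor technicalities I will need to handle are the conventions $\eta(0)=0,\ 0!=1$ when some entries vanish, and, in the permutation case, approximating the optimal $\alpha_i$ by nearby integer-valued tuples when $n\pi^{\rm{edge}}\notin\Z$, the polynomial count of tuples again absorbing the error.
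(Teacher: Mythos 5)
Your proposal is correct and follows the same two-factor decomposition as the paper's (choose a labeling with the right vertex distribution, then compute the conditional probability that the random graph has the right edge distribution), but it is substantially more self-contained: the paper outsources the two probability computations to references (\cite[Lemma 4.1]{MR3854040} for the configuration model and \cite{MR2653969} for the permutation model), whereas you derive them directly. Your explicit factorial count for the configuration model is correct and, after Stirling, reproduces the cited estimate. The permutation-model piece is the genuinely original contribution: introducing the per-generator contingency tables $g_i$, counting permutations with a fixed contingency table, and, most importantly, reducing the entropy maximization to the single-matrix problem via concavity and then locating the unique maximizer at $\bar\alpha=\pi^{\rm edge}$ via the $\beta\mapsto-\beta$ symmetry. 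This is a clean argument that the paper does not display.

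The one place where you should add a sentence is the existence issue in the permutation model. You acknowledge rounding, but the real question is whether the set of integer-valued tuples $(g_1,\ldots,g_r)$ with the prescribed marginals and $\sum_i(g_i+g_i^T)=2rn\pi^{\rm edge}$ is nonempty under the given divisibility hypothesis (note $n\pi^{\rm edge}$ need not be integer-valued, only $rn\pi^{\rm edge}$ is). This is the permutation-model analogue of the positivity verification that the paper carries out explicitly for the configuration model; a short construction (e.g.\ build $\sum_i g_i = rn\pi^{\rm edge}$ and then split by a greedy/transportation argument respecting marginals) is needed to close this, and one also wants to argue that the optimum $rH(\pi^{\rm edge})$ is approached within $O(1/n)$ by such integer tuples, not merely that some integer tuple exists. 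These are minor and fixable, but worth stating precisely.
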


\begin{proof}
The case of the permutation model is handled in \cite{MR2653969}. It seems likely that the configuration model result has been known for some time because it is essentially the ``first moment method'' which has been a standard tool in this area of probabilistic combinatorics for decades. However, I have been unable to find a suitable reference. 

The proof will reduce to \cite[Lemma 4.1]{MR3854040} once we verify that $\E^{\rm{conf}}_{d,n}[\#\Omega(\pi^{\rm{vert}},\pi^{\rm{edge}})]$ is positive. 

Because $\pi^{\rm{vert}}$ takes values in $\frac{1}{n}\Z$, there exists a map $x:[n] \to \cX$ such that $\pi^{\rm{vert}}_x = \pi^{\rm{vert}}$. Let $\tx:[n] \times [d] \to \cX$ be the lift defined by $\tx(v,i) = x(v)$. Because $\pi^{\rm{edge}}$ takes values in $\frac{2}{dn}\Z$, there exists some perfect matching $\mu$ on $[n] \times [d]$ such that the distribution on pairs of vertex-labels from $x$ assigned to the endpoints of a uniformly random oriented edge of $\mu$ is $\pi^{\rm{edge}}$. Here are some details to justify this claim. First $\pi^{\rm{vert}} \in (\frac{2}{dn}\Z)^{\cX}$ because $\pi^{\rm{edge}}$ takes values in $\frac{2}{dn}\Z$ and $\pi^{\rm{vert}}$ is the first marginal of $\pi^{\rm{edge}}$. So there exists a partition
$$[n]\times [d] = P \sqcup Q$$
such that
$$|P \cap \tx^{-1}(p)| = |Q \cap \tx^{-1}(p)| = \pi^{\rm{vert}}(p)nd/2$$
for all $p \in \cX$. For every $p \in \cX$, choose partitions
$$P \cap \tx^{-1}(p) = \sqcup_{q \in \cX} P_{p,q}, \quad Q \cap \tx^{-1}(p) = \sqcup_{q \in \cX} Q_{q,p}$$
such that
$$\#P_{p,q} = \#Q_{q,p}= \pi^{\rm{edge}}(p,q)nd/2.$$
Finally let $\mu$ be any perfect matching of $[n]\times [d]$ that restricts to a perfect matching from $P_{p,q}$ to $Q_{q,p}$ for all $p,q \in \cX$. This matching satisfies the claim.

By \cite[Lemma 4.1]{MR3854040}, the probability that a uniformly random perfect matching of $[n]\times [d]$ has edge distribution $\pi^{\rm{edge}}$ is
$$e^{ (dn/2) H(\pi^{\rm{edge}}) } e^{- dnH(\pi^{\rm{vert}}) }$$
(up to a multiplicative factor that is subexponential in $n$). To see this, replace $n$ in \cite[Lemma 4.1]{MR3854040} with $dn$, $\mu$ with $\pi^{\rm{vert}}$ and $\nu$ with $\pi^{\rm{edge}}$.  Since the number of ways to choose $x$ is $\exp( n H(\pi^{\rm{vert}}))$ (up to a multiplicative factor that is subexponential in $n$), this implies the result.

\end{proof}

\begin{thm}\label{thm:F}
Fix an integer $d \ge 2$. For $n \in \N$ with $dn$ even, suppose $\cK_n \subset \cA(\cX)$ satisfies the following: if $(\pi^{\rm{vert}}, \pi^{\rm{edge}}) \in \cK_n$ then $\pi^{\rm{vert}}$ takes values in $\frac{1}{n}\Z$ and $\pi^{\rm{edge}}$ takes values in $\frac{2}{dn}\Z$. Suppose that $\cK_n$ converges to a closed subspace $\cM$ in the Hausdorff topology (on the space of all closed subsets of $\cA(\cX)$) as $n\to\infty$. Then
\begin{eqnarray*}
 && \lim_{n\to\infty} n^{-1} \log \E^{\rm{conf}}_{d,n}[\#\{x \in \cX^n:~ (\pi_x^{\rm{vert}},\pi_x^{\rm{edge}}) \in \cK_n\}]  \\
  &=& \max \{(d/2)H(\pi^{\rm{edge}}) - (d-1) H(\pi^{\rm{vert}}):~ (\pi^{\rm{vert}}, \pi^{\rm{edge}}) \in \cM\}.
\end{eqnarray*}
Moreover, if $d=2r$ for some integer $r\ge 1$ then the quantity above also equals
\begin{eqnarray*}
 && \lim_{n\to\infty} n^{-1} \log \E^{\rm{perm}}_{r,n}[\#\{x \in \cX^n:~ (\pi_x^{\rm{vert}},\pi_x^{\rm{edge}}) \in \cK_n\}].
\end{eqnarray*}
\end{thm}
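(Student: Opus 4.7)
\textbf{Proof proposal for Theorem \ref{thm:F}.}

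The plan is to reduce the theorem to Lemma \ref{lem:F} by partitioning the count according to the empirical pair $(\pi_x^{\rm{vert}}, \pi_x^{\rm{edge}})$ and then using that the number of possible pairs is polynomial in $n$. First I would write, for $\bfG$ either a configuration or permutation model sample,
\[
\E[\#\{x \in \cX^n:~ (\pi_x^{\rm{vert}},\pi_x^{\rm{edge}}) \in \cK_n\}] = \sum_{(\pi^{\rm{vert}},\pi^{\rm{edge}}) \in \cK_n} \E[\#\Omega(\pi^{\rm{vert}},\pi^{\rm{edge}})].
\]
Since each coordinate of $\pi^{\rm{vert}}$ takes values in $\frac{1}{n}\Z \cap [0,1]$ and each coordinate of $\pi^{\rm{edge}}$ takes values in $\frac{2}{dn}\Z \cap [0,1]$, the cardinality $\#\cK_n$ is bounded by $(dn)^{|\cX|+|\cX|^2}$, which is polynomial in $n$. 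Therefore
\[
n^{-1}\log \E[\#\{x:(\pi^{\rm{vert}}_x,\pi^{\rm{edge}}_x)\in \cK_n\}] = \max_{(\pi^{\rm{vert}},\pi^{\rm{edge}}) \in \cK_n} n^{-1}\log \E[\#\Omega(\pi^{\rm{vert}},\pi^{\rm{edge}})] + o_n(1).
\]

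Next I would apply Lemma \ref{lem:F} to each pair in $\cK_n$. Define the continuous functional $F_{\rm{conf}}, F_{\rm{perm}}: \cA(\cX) \to \R$ by
\[
F_{\rm{conf}}(\pi^{\rm{vert}},\pi^{\rm{edge}}) := (d/2)H(\pi^{\rm{edge}}) - (d-1)H(\pi^{\rm{vert}}),
\]
\[
F_{\rm{perm}}(\pi^{\rm{vert}},\pi^{\rm{edge}}) := rH(\pi^{\rm{edge}}) - (2r-1)H(\pi^{\rm{vert}}).
\]
A subtle point is that Lemma \ref{lem:F} gives a pointwise asymptotic with an error $o_n(1)$ that apriori depends on the pair. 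However, because $\eta$ is uniformly continuous on $[0,1]$ and each admissible pair lies in a compact set, the proof of Lemma \ref{lem:F} (via Stirling applied to multinomials and the matching-enumeration formula) yields an $o_n(1)$ error that is uniform over all admissible pairs whose coordinates are multiples of $1/n$ or $2/(dn)$. I would verify this uniformity, after which
\[
\max_{(\pi^{\rm{vert}},\pi^{\rm{edge}}) \in \cK_n} n^{-1}\log \E[\#\Omega(\pi^{\rm{vert}},\pi^{\rm{edge}})] = \max_{\cK_n} F_{\rm{conf}} + o_n(1),
\]
and similarly for $F_{\rm{perm}}$.

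It remains to show $\max_{\cK_n} F \to \max_\cM F$ using Hausdorff convergence and continuity of $F$ on the compact space $\cA(\cX)$. For the upper bound, $\cK_n$ is finite hence compact, so the maximum is attained at some $(\pi_n^{\rm{vert}},\pi_n^{\rm{edge}}) \in \cK_n$; Hausdorff convergence supplies $(\tilde\pi_n^{\rm{vert}},\tilde\pi_n^{\rm{edge}}) \in \cM$ with distance $o_n(1)$, and uniform continuity of $F$ on the compact set $\cA(\cX)$ gives $\max_{\cK_n} F \le \max_\cM F + o_n(1)$. For the lower bound, pick $(\pi^{\rm{vert}},\pi^{\rm{edge}}) \in \cM$ attaining $\max_\cM F$; Hausdorff convergence gives approximants $(\pi_n^{\rm{vert}},\pi_n^{\rm{edge}}) \in \cK_n$ converging to it, and continuity of $F$ gives $\liminf_n \max_{\cK_n} F \ge \max_\cM F$.

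I expect the main obstacle to be verifying the uniformity of the $o_n(1)$ error in Lemma \ref{lem:F} across all admissible pairs simultaneously, since $H$ has unbounded derivatives near the boundary of the simplex. One standard way around this is to separate the sum over $\cK_n$ into pairs supported in the interior (where Stirling is uniform) and pairs with some tiny coordinates (which contribute negligibly because there are only polynomially many such pairs and each contributes at most the total count $|\cX|^n$, absorbed by noting that the supremum is attained away from degenerate points, or more directly by using the explicit form of the multinomial bound $\binom{n}{m_1,\ldots,m_k} \le \exp(nH(m/n))$ as a clean upper bound together with a matching lower bound valid whenever all $m_i \ge \sqrt{n}$).
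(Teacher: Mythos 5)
Your proposal is correct and follows essentially the same route as the paper: partition the count by empirical pair, bound $\#\cK_n$ polynomially so that the maximum dominates, apply Lemma \ref{lem:F}, and pass from $\max_{\cK_n}$ to $\max_{\cM}$ via Hausdorff convergence and uniform continuity of the entropy functional. The uniformity concern you flag is real but resolves cleanly: the $o_n(1)$ in Lemma \ref{lem:F} comes from Stirling-type corrections of size $O(\log n / n)$ that are uniform over all integer compositions of $n$ and $dn/2$, so no interior/boundary case split is actually needed.
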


\begin{proof}
This follows from the previous Lemma since the function 
$$(\pi^{\rm{vert}}, \pi^{\rm{edge}}) \mapsto (d/2)H(\pi^{\rm{edge}}) - (d-1) H(\pi^{\rm{vert}})$$
is uniformly continuous and there are at most a polynomial (in $n$)  number of admissible pairs $(\pi^{\rm{vert}}, \pi^{\rm{edge}}) \in \cK_n$ such that $\pi^{\rm{edge}}$ takes values in $\frac{2}{dn}\Z$. In fact this number is bounded by $(dn + 1)^{|\cX|^2}$.
\end{proof}

\subsection{First moment computations}\label{sec:moment}

Let $\Ga=\F_r=\langle a_1,\ldots, a_r\rangle$ denote the free group of rank $r \ge 2$. For $s,s',t \in [0,1]$ and $\s \in \Hom(\F_r,\sym(n))$, let  $\cI_{s,s',t}(\s)$ be the set of all of pairs $(W,W')$ of independent subsets of $G(\s)$ satisfying
\begin{eqnarray}\label{close}
\#W = \lfloor sn\rfloor, \quad \#W' = \lfloor s'n\rfloor, \quad \#(W \cap W') = \lfloor tn\rfloor.
\end{eqnarray}
In the arguments to follow,  $\#\cI_{s,s',t}$ is regarded as a random variable with respect to the permutation model. Let $f(r,s,s',t)$ denote the upper exponential growth rate of the expected value of $\#\cI_{s,s',t}$. 
$$f(r,s,s',t) :=  \limsup_{n\to\infty}  n^{-1} \log \E^{\rm{perm}}_{r,n}[\#\cI_{s,s',t}].$$
\begin{thm}\label{thm:continuity}
The function $f$ is uniformly continuous. Moreover, the limit exists so that
$$f(r,s,s',t) :=   \lim_{n\to\infty}  n^{-1} \log \E^{\rm{perm}}_{r,n}[\#\cI_{s,s',t}].$$
\end{thm}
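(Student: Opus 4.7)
The plan is to encode a pair $(W, W')$ of subsets of $[n]$ as a single labeling $x : [n] \to \cX$ with $\cX = \{0,1\}^2$, where the first coordinate of $x(v)$ indicates $v \in W$ and the second indicates $v \in W'$. Independence of $W$ and $W'$ in $G(\s)$ forbids certain pairs on edges: let $F \subset \cX\times\cX$ consist of pairs $(p,q)$ with $p_1=q_1=1$ (both in $W$) or $p_2=q_2=1$ (both in $W'$). The size constraints become linear constraints on $\pi^{\rm{vert}}_x$: $\pi^{\rm{vert}}_x((1,0))+\pi^{\rm{vert}}_x((1,1))=s$, $\pi^{\rm{vert}}_x((0,1))+\pi^{\rm{vert}}_x((1,1))=s'$, $\pi^{\rm{vert}}_x((1,1))=t$.

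For each $(s,s',t)$, let $\cM_{s,s',t}\subset \cA(\cX)$ denote the compact convex set of admissible pairs satisfying those vertex constraints together with $\pi^{\rm{edge}}(F)=0$, and let $\cK_n\subset\cM_{\lfloor sn\rfloor/n,\,\lfloor s'n\rfloor/n,\,\lfloor tn\rfloor/n}$ be its discrete version obtained by imposing that $\pi^{\rm{vert}}$ take values in $\tfrac{1}{n}\Z$ and $\pi^{\rm{edge}}$ take values in $\tfrac{1}{rn}\Z$. Then
\[
\#\cI_{s,s',t}(\s) = \#\bigl\{x\in \cX^{[n]}:(\pi^{\rm{vert}}_x,\pi^{\rm{edge}}_x)\in \cK_n\bigr\}.
\]
Once I verify that $\cK_n \to \cM_{s,s',t}$ in the Hausdorff topology on closed subsets of $\cA(\cX)$, Theorem \ref{thm:F} (applied with $d=2r$) will yield the limit
\[
f(r,s,s',t) = \max\bigl\{rH(\pi^{\rm{edge}}) - (2r-1)H(\pi^{\rm{vert}}):(\pi^{\rm{vert}},\pi^{\rm{edge}})\in \cM_{s,s',t}\bigr\},
\]
which in particular shows that the $\limsup$ in the definition of $f$ is actually a limit.

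For uniform continuity, I will observe that the objective $(\pi^{\rm{vert}},\pi^{\rm{edge}})\mapsto rH(\pi^{\rm{edge}})-(2r-1)H(\pi^{\rm{vert}})$ is continuous on the compact polytope $\cA(\cX)$, and the constraint set $\cM_{s,s',t}$ is cut out of $\cA(\cX)$ by linear equalities whose right-hand sides depend continuously on $(s,s',t)$. This yields a continuous, compact-valued correspondence $(s,s',t)\mapsto \cM_{s,s',t}$ on the compact parameter region where it is nonempty, and Berge's maximum theorem then shows that the max is continuous; continuity on a compact domain upgrades automatically to uniform continuity. Points $(s,s',t)$ where $\cM_{s,s',t}=\emptyset$ (so $f=-\infty$) lie outside the relevant domain; convention or restriction handles them.

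The main obstacle will be verifying the Hausdorff convergence $\cK_n \to \cM_{s,s',t}$, since Theorem \ref{thm:F} requires an explicit convergent sequence of lattice-valued admissible pairs approximating any element of $\cM_{s,s',t}$. The convexity of $\cM_{s,s',t}$ and the fact that it has nonempty relative interior in the natural parameter regime make this straightforward: any interior point can be approximated by rational admissible pairs with denominators dividing $2rn$, and then a small perturbation restores the exact marginal constraints; boundary points are approximated by moving slightly into the interior first. Care is only needed in handling degenerate parameter values where the relative interior collapses, but these can be treated separately by a limiting argument.
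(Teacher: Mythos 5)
Your proposal matches the paper's proof essentially exactly: same alphabet $\cX=\{0,1\}^2$ encoding a pair of subsets, same forbidden-pair edge constraints, same linear vertex-marginal constraints defining $\cM(r,s,s',t)$ and its lattice version $\cK_n$, same invocation of Theorem~\ref{thm:F}, and the same continuity-plus-compactness argument for uniform continuity of the max (the paper doesn't name Berge's theorem but the reasoning is identical). The extra care you flag about verifying the Hausdorff convergence and about degenerate parameter values is reasonable diligence; the paper asserts the convergence without elaboration, and your sketch (convexity, nonempty relative interior in the relevant regime, rational approximation at scale $1/(rn)$, then a small correction to restore exact marginals) is how one would fill that in.
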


\begin{proof}
Let $\cX=\{(0,0),(0,1),(1,0),(1,1)\}$. Let $\cK_n$ be the set of all admissible pairs $(\pi^{\rm{vert}},\pi^{\rm{edge}})$ such that $\pi^{\rm{vert}}$ takes values in $\frac{1}{n}\Z$, $\pi^{\rm{edge}}$ takes values in $\frac{2}{dn}\Z$ ($d=2r$) and these linear equations are satisfied:
$$\pi^{\rm{vert}}(1,0)+\pi^{\rm{vert}}(1,1)=\lfloor sn\rfloor /n$$
$$\pi^{\rm{vert}}(0,1) +\pi^{\rm{vert}}(1,1)=\lfloor s'n\rfloor /n$$
$$\pi^{\rm{vert}}(1,1) = \lfloor tn\rfloor /n$$
$$\pi^{\rm{edge}}( (i_1,j_1), (i_2,j_2) ) = 0 \textrm{ if either } (i_1=i_2=1) \textrm{ or } (j_1=j_2=1).$$
A pair $(W,W')$ of independent subsets of $G(\s)$ satisfies (\ref{close}) if and only if $(\pi_x^{\rm{vert}},\pi_x^{\rm{edge}}) \in \cK_n$ where  
$x=(1_W,1_{W'})$. 

Let $\cM(r,s,s',t)$ be the set of all admissible pairs $(\pi^{\rm{vert}},\pi^{\rm{edge}})$  satisfying these linear equations:
$$\pi^{\rm{vert}}(1,0)+\pi^{\rm{vert}}(1,1)=s$$
$$\pi^{\rm{vert}}(0,1) +\pi^{\rm{vert}}(1,1)=s'$$
$$\pi^{\rm{vert}}(1,1) = t$$
$$\pi^{\rm{edge}}( (i_1,j_1), (i_2,j_2) ) = 0 \textrm{ if either } (i_1=i_2=1) \textrm{ or } (j_1=j_2=1).$$
Because $\cK_n$ converges to $\cM(r,s,s',t)$ in the Hausdorff topology,  Theorem \ref{thm:F} implies
\begin{eqnarray}
f(r,s,s',t) &=& \max_{(\pi^{\rm{vert}},\pi^{\rm{edge}}) \in \cM(r,s,s',t)}  -(2r-1)H(\pi^{\rm{vert}}) + r H(\pi^{\rm{edge}}) \label{E:max}\\
&=&   \lim_{n\to\infty}  n^{-1} \log \E^{\rm{perm}}_{r,n}[\#\cI_{s,s',t}].
\end{eqnarray}

Continuity of $f$ now follows from continuity of
$$(\pi^{\rm{vert}},\pi^{\rm{edge}}) \mapsto -(2r-1)H(\pi^{\rm{vert}}) + r H(\pi^{\rm{edge}})$$  
and 
$$(r,s,s',t) \mapsto \cM(r,s,s',t)$$
  where the latter is with respect to the Hausdorff topology on closed subsets of the space of admissible vector pairs. Uniform continuity follows from continuity and compactness.
\end{proof}


We are most interested in the special case in which $s'$ is close to $s$. In this case we will use the continuity result to reduce the study of this function to the special case $s=s'$. To simplify notation, let $f(r,s,t):=f(r,s,s,t)$. 

For $\s \in \Hom(\F_r,\sym(n))$, let $\cI_s(\s)$ be the collection of independent sets $W$ of the graph $G(\s)$ such that  $\#W= \lfloor sn \rfloor$. Also let 
$$f(r,s):=  \limsup_{n\to\infty}  n^{-1} \log \E^{\rm{perm}}_{r,n}[\#\cI_{s}]$$
be the upper exponential growth rate of the expected number of independent sets of cardinality close to $sn$.


\begin{thm}\label{thm:f}
Fix $0\le \bt \le \bs \le 1$. If $t:=t(r):=\bt \frac{\log(2r)}{r}$ and $s  :=s(r):= \bs \frac{\log(2r)}{r}$ then 
\begin{eqnarray*}
f(r,s)&=& \eta(s) -rs^2 + O(\log(r)/r)\\
f(r,s,t) &=& \eta(t) +2 \eta(s-t) + r[t^2-2s^2] +O(\log(r)/r)
\end{eqnarray*}
where the error term implicit in the big $O(\cdot)$ notation does not depend on $\bs,\bt$. 
\end{thm}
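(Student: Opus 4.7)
The plan is to apply the Lagrangian formula \eqref{E:max} from the proof of Theorem \ref{thm:continuity} and solve the resulting finite-dimensional concave optimization over $\cM(r,s,s,t)$ explicitly up to the required accuracy. Take $\cX=\{0,1\}^2$. The vertex marginals are forced by the constraints on $\#W$, $\#W'$, $\#(W\cap W')$: $\pi^{\rm{vert}}(1,1)=t$, $\pi^{\rm{vert}}(1,0)=\pi^{\rm{vert}}(0,1)=s-t$, $\pi^{\rm{vert}}(0,0)=1-2s+t$. After imposing symmetry and the independence conditions $\pi^{\rm{edge}}((i_1,j_1),(i_2,j_2))=0$ when $i_1=i_2=1$ or $j_1=j_2=1$, the edge distribution has a single free parameter $\alpha:=\pi^{\rm{edge}}((0,0),(0,1))=\pi^{\rm{edge}}((0,0),(1,0))$, ranging over $[0,\min(s-t,(1-2s)/2)]$, with $\pi^{\rm{edge}}((0,0),(1,1))=t$, $\pi^{\rm{edge}}((0,1),(1,0))=s-t-\alpha$, and $\pi^{\rm{edge}}((0,0),(0,0))=1-2s-2\alpha$.

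Only $H(\pi^{\rm{edge}})$ depends on $\alpha$, and differentiating yields the first-order condition $(1-2s-2\alpha)(s-t-\alpha)=\alpha^2$, equivalently $\alpha^2-(1-2t)\alpha+(1-2s)(s-t)=0$. By concavity, the admissible root
$$\alpha^*=\tfrac12\bigl((1-2t)-\sqrt{(1-2s)^2+4(s-t)^2}\bigr)$$
is the unique maximizer, and is strictly interior for $s,t$ small. Taylor-expanding the square root gives $\alpha^*=(s-t)-(s-t)^2/(1-2s)+O((s-t)^4)$.

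To evaluate the objective $F(\alpha):=rH(\pi^{\rm{edge}})-(2r-1)H(\pi^{\rm{vert}})$ at $\alpha^*$, write $F(\alpha^*)=F(\alpha_0)+r\Delta H$ with $\alpha_0:=s-t$ and $\Delta H:=H(\pi^{\rm{edge}})|_{\alpha^*}-H(\pi^{\rm{edge}})|_{\alpha_0}$. Using $\eta(1-x)=x-x^2/2+O(x^3)$ on the terms involving $1-2s+t$ and $1-4s+2t$ gives
$$F(\alpha_0)=\eta(t)+2\eta(s-t)+(2s-t)-(r+\tfrac12)(2s-t)^2+O(rs^3).$$
For $\Delta H$, set $\beta^*:=s-t-\alpha^*=(s-t)^2/(1-2s)+O((s-t)^4)$ and expand $H(\pi^{\rm{edge}})$ in $\beta$ around $\beta=0$. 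The three log-contributions from $\eta(1-4s+2t+2\beta^*)-\eta(1-4s+2t)$, $4[\eta(s-t-\beta^*)-\eta(s-t)]$, and $2\eta(\beta^*)$ combine to $2\beta^*\log[(s-t)^2/\beta^*]=O(\beta^*s)$, leaving $\Delta H=2\beta^*+O(\beta^*s)$ and therefore $r\Delta H=2r(s-t)^2+O(\log r/r)$. Adding the two pieces and using $(2s-t)^2=4s^2-4st+t^2$ against $2(s-t)^2=2s^2-4st+2t^2$, the $rst$ cross-terms cancel and the sum collapses to
$$f(r,s,t)=\eta(t)+2\eta(s-t)+r(t^2-2s^2)+O(\log r/r),$$
since $s,t,s-t,(2s-t)=O(\log r/r)$ and $rs^3,r(s-t)^4,r\beta^*s=O(\log^3 r/r^2)=o(\log r/r)$ absorb all remaining residuals. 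The formula for $f(r,s)$ is the specialization $t=s$, at which the feasible interval degenerates to $\{0\}$ so no optimization is needed.

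The main technical obstacle is this last step: individual terms such as $4r[\eta(\alpha^*)-\eta(s-t)]$ and $2r\eta(\beta^*)$ are each of size $\log^3(r)/r$, which is much larger than the target error $\log(r)/r$, and one cannot bound them separately. Their leading logarithmic pieces must be shown to cancel, and this cancellation is forced precisely by the stationarity condition $\log[(1-2s-2\alpha^*)(s-t-\alpha^*)/(\alpha^*)^2]=0$ defining $\alpha^*$. Once that cancellation is extracted, the clean correction $2r(s-t)^2$ and routine $o(\log r/r)$ residuals combine to yield the stated formula.
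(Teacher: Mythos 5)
Your argument follows essentially the same route as the paper: start from the variational formula \eqref{E:max}, fix the vertex marginal, reduce the edge-marginal optimization to a single scalar parameter via the order-$4$ symmetry of $\cX\times\cX$, solve the stationarity condition $(s-t-\alpha)(1-2s-2\alpha)=\alpha^2$, and Taylor-expand, exploiting stationarity to force the cancellation of the $O(\log^3(r)/r)$-sized logarithmic terms. The only real difference is bookkeeping: you evaluate at the degenerate corner $\alpha_0=s-t$ and then compute the correction $r\Delta H$, exhibiting the cancellation inside $\Delta H$; the paper instead substitutes the stationarity identity into $H(\pi^{\rm{edge}})$ to eliminate the $4\eta(x)$ term \emph{before} expanding, writing $H(\pi_0^{\rm{edge}})=2\eta(t)-2(s-t)\log(s-t-x)-(1-2s)\log(1-2s-2x)$ and expanding that. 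The two are algebraically equivalent and both hinge on the same stationarity cancellation, which you correctly identify as the crux. One step that should be made explicit rather than ``imposed'': reducing to the single parameter $\alpha$ requires knowing that the maximizer of $H(\pi^{\rm{edge}})$ over $\cM(r,s,s,t)$ lies on the symmetric locus where $\pi^{\rm{edge}}((0,0),(0,1))=\pi^{\rm{edge}}((0,0),(1,0))$. The paper deduces this from strict concavity of $H$ (hence uniqueness of the maximizer) together with convexity and symmetry-invariance of $\cM(r,s,s,t)$; as written, your concavity remark applies only to the already-reduced one-parameter problem, so the symmetry reduction is asserted rather than derived. It is a one-line fix, but without it the reduction to a single free parameter is not justified.
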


\begin{proof}
We keep notation as in the previous proof.  Observe that $f(r,s,s)=f(r,s,s,s)$ counts pairs of identical independent sets. So $f(r,s,s)=f(r,s)$. So it suffices to obtain the estimate for $f(r,s,t)$. The calculation is similar to one in Gamarnik-Sudan \cite{MR3359490}.

By (\ref{E:max}),
$$f(r,s,t) =  \max_{(\pi^{\rm{vert}},\pi^{\rm{edge}}) \in \cM(r,s,s,t)}  -(2r-1)H(\pi^{\rm{vert}}) + r H(\pi^{\rm{edge}}).$$
Let $\pi=(\pi^{\rm{vert}},\pi^{\rm{edge}}) \in \cM(r,s,s,t)$. We claim:
\begin{eqnarray*}
\pi^{\rm{vert}}(0,0) = 1-2s+t &\quad & \pi^{\rm{vert}}(0,1) = s-t \\
 \pi^{\rm{vert}}(1,0) = s-t &\quad & \pi^{\rm{vert}}(1,1) = t.
 \end{eqnarray*}
By definition, $\pi^{\rm{vert}}(1,1) = t$. Since  $\pi^{\rm{vert}}(1,0) + \pi^{\rm{vert}}(1,1) = s$ this determines the value of $\pi^{\rm{vert}}(1,0)$. The other two values hold by symmetry and the fact that $\pi^{\rm{vert}}$ is a probability measure. 
 So
 $$H(\pi^{\rm{vert}}) = \eta(t) + 2\eta(s-t) + \eta(1-2s+t).$$
 
 Because Shannon entropy $H(\cdot)$ is strictly concave and $\cM(r,s,s,t)$ is convex, it follows that there is a unique $\pi_0 \in \cM(r,s,s,t)$ such that
$$f(r,s,t) =  -(2r-1)H(\pi_0^{\rm{vert}}) + r H(\pi_0^{\rm{edge}}).$$
The space $\cX \times \cX$ is invariant under these two symmetries:
$$((a,b),(c,d)) \mapsto ((c,d),(a,b)),$$
$$((a,b),(c,d)) \mapsto ((b,a),(d,c)).$$
These generate a group of order 4. This induces a group of symmetries on $\Prob(\cX\times \cX)$. Moreover, if a measure $\nu \in \Prob(\cX\times \cX)$ has both marginals equal to $\pi^{\rm{vert}}$, then each of its images under this group also has marginals equal to $\pi^{\rm{vert}}$ (for any $\pi \in \cM(r,s,s,t)$). Moreover, the Shannon entropy is preserved under the action of this group. Because $\pi_0^{\rm{edge}} \in \Prob(\cX \times \cX)$ is unique, it is necessarily invariant under the action of this group. So if $x_0 =  \pi_0^{\rm{edge}}( (0,0), (0,1))$ then 
$$x_0=  \pi_0^{\rm{edge}}( (0,0), (1,0)) =  \pi_0^{\rm{edge}}(  (0,1), (0,0))= \pi_0^{\rm{edge}}( (1,0), (0,0)).$$
 Since both marginals of $\pi^{\rm{edge}}$ equal $\pi^{\rm{vert}}$, this implies
 \begin{eqnarray*}
&&\pi^{\rm{edge}}_0((0,0), (0,0)) = 1-2s-2x_0, \quad  \pi^{\rm{edge}}_0((0,0), (1,1)) =\pi_0^{\rm{edge}}((1,1), (0,0)) =t  \\
 &&\pi_0^{\rm{edge}}((1,0),(0,1))  =\pi_0^{\rm{edge}}((0,1),(1,0)) = s-t-x_0.
 \end{eqnarray*}
So $H(\pi_0^{\rm{edge}})= G(x_0)$ where
$$G(x)=2\eta(t) + 4\eta(x) + 2\eta(s-t-x) + \eta(1-2s-2x).$$
Use $\frac{d}{dx}\eta(x) = -1 -\log(x)$, to obtain:
\begin{eqnarray*}
 \frac{\partial G(x)}{\partial x} &=& \frac{\partial}{\partial x}\left( 4\eta(x) + 2\eta(s-t-x) + \eta(1-2s-2x) \right) \\
&=& 2 \log \left( \frac{(s-t-x)(1-2s-2x)}{x^2} \right).
\end{eqnarray*}
Since $\pi_0$ is maximizing, $\frac{\partial G(x)}{\partial x}(x_0)=0$. So $(s-t-x_0)(1-2s-2x_0) = x_0^2$. 

To simplify notation, we let $x=x_0$ from now on. By collecting terms with a factor of $x$, we obtain
\begin{eqnarray*}
&&H(\pi_0^{\rm{edge}}) = 2\eta(t) + 4\eta(x) + 2\eta(s-t-x) + \eta(1-2s-2x) \\
&=& 2\eta(t) -4x\log(x) -2(s-t-x)\log(s-t-x) - (1-2s-2x)\log(1-2s-2x) \\
&=& 2\eta(t) +2x \log \left( \frac{(s-t-x)(1-2s-2x)}{x^2} \right) -2(s-t)\log(s-t-x) - (1-2s)\log(1-2s-2x) \\
&=& 2\eta(t) -2(s-t)\log(s-t-x) - (1-2s)\log(1-2s-2x).
\end{eqnarray*}
Use the above and collect terms with a factor of $r$ to obtain
\begin{eqnarray*}
f(r,s,t) &=& -(2r-1)[\eta(t) + 2\eta(s-t) + \eta(1-2s+t)] \\
&&+ r[2\eta(t) -2(s-t)\log(s-t-x) - (1-2s)\log(1-2s-2x)] \\
&=& \eta(t) + 2\eta(s-t) - (2r-1)\eta(1-2s+t)\\
&& + r[-4\eta(s-t) -2(s-t)\log(s-t-x)  - (1-2s)\log(1-2s-2x)] 
\end{eqnarray*}

Next, we estimate $x$. Since $(s-t-x)(1-2s-2x) = x^2$, 
\begin{eqnarray*}
x^2 -(1-2t)x + (s-t)(1-2s) = 0.
\end{eqnarray*}
Let $\Delta=s-t$. Since $x \le s-t$,
\begin{eqnarray*}
2x &=& 1-2t - \sqrt{(1-2t)^2 - 4(s-t)(1-2s)}\\
&=& 1-2s +2\Delta - \sqrt{ (1-2s +2\Delta)^2 - 4\Delta (1-2s)} \\
&=& 1-2s +2\Delta - \sqrt{ (1-2s)^2 +4\Delta^2}.
\end{eqnarray*}
By Taylor series expansion, if $C > 0$ is a constant then $\sqrt{C^2+\eps}=C + \frac{\eps}{2C}  +  O(\eps^2)$. Use this with $C=1-2s$, $\eps=4\Delta^2$ to obtain
\begin{eqnarray}\label{xxx1}
2x &=& 2\Delta - \frac{2\Delta^2}{(1-2s)} + O(\Delta^4).
\end{eqnarray}

Thus
\begin{eqnarray*}
\log(s-t-x) &=& \log\left( \frac{(s-t)^2}{1-2s} + O\left( \Delta^4 \right) \right) \\
&=& 2\log(s-t) +2s + O((s-t)^2)  =  2\log(s-t) +2s + O\left( s^2 \right).
\end{eqnarray*}
Thus
\begin{eqnarray*}
-4\eta(s-t) -2(s-t)\log(s-t-x) &=& 2(s-t)(2\log(s-t) - \log(s-t-x)) =  -4(s-t)s+ O(s^3).
\end{eqnarray*}
 Plugging this back in to the equation for $f$ gives
 \begin{eqnarray*}
f(r,s,t) &=& \eta(t) + 2\eta(s-t)  - (2r-1)\eta(1-2s+t)\\
&& + r[-4(s-t)s   - (1-2s)\log(1-2s-2x)] + O(rs^3).
\end{eqnarray*}
We use the Taylor series estimates
\begin{eqnarray*}
\eta(1-2s+t) &=& 2s-t - (2s-t)^2/2 + O(s^3)\\
\log(1-2s-2x) &=&  -(2s+2x ) - (2s+2x )^2/2  + O(s^3)  = -2[ s+x + (s+x)^2]  + O(s^3) \\
&=&  -2(1+ s+x)(s+x)  + O(s^3)
\end{eqnarray*} 
to obtain
 \begin{eqnarray*}
f(r,s,t) &=& \eta(t) + 2\eta(s-t) -(2r-1)[2s-t - (2s-t)^2/2] \\
&& -4rs(s-t) +2r(1-2s)(1+ s+x)(s+x) + O(rs^3).
\end{eqnarray*}
If $r$ is large enough then $\log^2(r)>r$ which implies $rs^3 \le s$. So we can replace the $O(rs^3)$ with $O(s)$. This allows us to replace the coefficient $(2r-1)$ on the bracketted term with $2r$. We can also simplify the last term by
$$2r(1-2s)(1+ s+x)(s+x) = 2r(1-s+x)(s+x) + O(s)$$
to obtain
\begin{eqnarray*}
f(r,s,t) &=& \eta(t) + 2\eta(s-t) -4rs(s-t) -2r[2s-t - (2s-t)^2/2] +2r(1-s+x)(s+x)  + O(s).
\end{eqnarray*}
By factoring out a $2r$ we obtain
\begin{eqnarray*}
f(r,s,t) &=& \eta(t) + 2\eta(s-t)\\
&& +2r[-2s(s-t) -2s+t + (2s-t)^2/2 +(1-s+x)(s+x)]  + O(s).
\end{eqnarray*}
After multiplying out and collecting terms, the quantity in brackets simplifies to
$$[-s-s^2+t+t^2/2 +x + x^2].$$
By (\ref{xxx1}),
this simplifies to $[-s^2 + t^2/2   + O(s^3)].$ So
\begin{eqnarray*}
f(r,s,t) &=& \eta(t) + 2\eta(s-t) +r[t^2-2s^2]  + O(s).
\end{eqnarray*}
Since $s = O(\log(r)/r)$, this implies the theorem. 
\end{proof}

\begin{cor}\label{cor:maximization}
Keep notation as in Theorem \ref{thm:f}. In addition, assume $2/3 < \bs < 1$. Then
$$\max \{f(r,s,t) - f(r,s):~ s/2\le t \le s\} = f(r,s) -\bs(1-\bs)\frac{\log^2(r)}{r} + O\left( \frac{\log(r)\log\log(r)}{r} \right).$$
\end{cor}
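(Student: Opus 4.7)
The approach is to use Theorem \ref{thm:f} to convert the claim into an elementary calculus problem for the function $f(r,s,t) - f(r,s)$ on $[s/2, s]$. Subtracting the expansions in that theorem gives
\begin{equation*}
f(r,s,t) - f(r,s) = \eta(t) + 2\eta(s-t) - \eta(s) + r(t^2 - s^2) + O(\log(r)/r).
\end{equation*}
The first step will be to parametrize $t = s(1-\beta)$ for $\beta \in [0, 1/2]$ and apply the scaling identity $\eta(ax) = a\eta(x) + x\eta(a)$ to rewrite the entropic part as $\beta\eta(s) + s[\eta(1-\beta) + 2\eta(\beta)]$, while the quadratic part becomes $-rs^2\beta(2-\beta)$.

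Next I would substitute the Taylor-type estimates $\eta(s) = (\bs/r)\log^2(r) + O(\log(r)\log\log(r)/r)$ and $rs^2 = (\bs^2/r)\log^2(r) + O(\log(r)/r)$, both of which follow from $s = \bs\log(2r)/r$ and the expansion $\log(1/s) = \log(r) - \log\log(r) - \log\bs + O(1/\log r)$. Since $\eta$ is bounded on $[0,1]$, the term $s[\eta(1-\beta) + 2\eta(\beta)]$ contributes only $O(\log(r)/r)$. Collecting the $\log^2(r)/r$ contributions yields
\begin{equation*}
f(r,s,t) - f(r,s) = \frac{\log^2(r)}{r}\, g(\beta) + O\!\left(\frac{\log(r)\log\log(r)}{r}\right), \qquad g(\beta) := \bs\beta(1 - 2\bs + \bs\beta),
\end{equation*}
uniformly in $\beta \in [0, 1/2]$.

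The final step is the maximization of $g$. Since $g$ is a quadratic with positive leading coefficient $\bs^2$, its unique critical point $\beta^* = (2\bs-1)/(2\bs)$ (which lies in $(0, 1/2)$ precisely when $1/2 < \bs < 1$) is a minimum, so the maximum over $[0,1/2]$ occurs at an endpoint; checking $g(0) = 0$ and $g(1/2) = (\bs/2)(1 - 3\bs/2) \le 0$ for $\bs \ge 2/3$ gives $\max g = 0$, so $\max_{s/2 \le t \le s}[f(r,s,t) - f(r,s)] = O(\log(r)\log\log(r)/r)$. To put this in the form requested, the same expansions used above yield $f(r,s) = \bs(1-\bs)\log^2(r)/r + O(\log(r)\log\log(r)/r)$, hence $f(r,s) - \bs(1-\bs)\log^2(r)/r$ is also $O(\log(r)\log\log(r)/r)$, so both sides of the claimed identity coincide up to the stated error. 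The only real care point will be tracking the $O$-terms uniformly in $\bs$ through the entropy expansions so that they can be absorbed into $O(\log(r)\log\log(r)/r)$; there is no deeper obstacle once Theorem \ref{thm:f} is available.
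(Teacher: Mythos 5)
Your proof is correct and takes essentially the same approach as the paper's: both start from the expansion $f(r,s,t)-f(r,s)=\eta(t)+2\eta(s-t)-\eta(s)+r(t^2-s^2)+O(\log r/r)$ from Theorem~\ref{thm:f}, substitute the asymptotics for $\eta$ and $rs^2$, and reduce to maximizing a quadratic over an interval where the critical point is interior. The only cosmetic difference is your reparametrization $t=s(1-\beta)$ versus the paper's $t=\bt\log(2r)/r$; your quadratic $g(\beta)=\bs\beta(1-2\bs+\bs\beta)$ is exactly the paper's $-(\bt-\bt^2)+\bs(1-\bs)$ under $\bt=\bs(1-\beta)$, and your observation that both sides of the stated identity are individually $O(\log r\log\log r/r)$ is just a clean way of presenting what the paper deduces implicitly.
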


\begin{proof}
By Theorem \ref{thm:f}, 
\begin{eqnarray*}
f(r,s,t)-f(r,s) &=& f(r,s) + \eta(t) + 2\eta(s-t) - 2\eta(s) + rt^2 + O\left( \frac{\log(r)}{r} \right).
\end{eqnarray*}
Since $s = \bs \frac{\log(2r)}{r}$, 
$$\eta(s) = -s\log(s) = \bs\frac{\log^2(r)}{r}  + O\left(  \frac{\log(r)\log\log(r)}{r} \right).$$
Similar estimates hold for $\eta(t)$ and $\eta(s-t)$ and the constant implicit in the $O(\cdot)$ notation is uniform over $\bs, \bt$. Therefore,
$$\eta(t) + 2\eta(s-t) - 2\eta(s) = -\bt\frac{\log^2(r)}{r}  + O\left(  \frac{\log(r)\log\log(r)}{r} \right).$$
Since $rt^2 = \bt^2 \frac{\log^2(r)}{r}$, this implies
\begin{eqnarray*}
f(r,s,t)-f(r,s) &=& f(r,s)  - (\bt-\bt^2) \frac{\log^2(r)}{r} + O\left( \frac{\log(r)\log\log(r)}{r} \right).
\end{eqnarray*}
The minimum value of $ (\bt-\bt^2)$ for $\bt \in [\bs/2, \bs]$ is attained when $\bt=\bs$. This is because $x\mapsto x-x^2$ is concave, so the minimum is achieved at either $\bs/2$ or $\bs$. But since  $\bs \in (2/3,1)$, $(\bs-\bs^2) < (\bs/2 - \bs^2/4)$. Substituting $\bs$ for $\bt$ finishes the corollary.
\end{proof}

\subsection{The planted model and the uniform model}\label{sec:planted}

There are two related models of random independent sets on random regular graphs that we need to consider to prove Theorem \ref{thm:indep}. To explain, recall that for $\s \in \Hom(\F_r,\sym(n))$, $\cI_s(\s)$ is the collection of independent sets $W$ of the graph $G(\s)$ such that  $\#W= \lfloor s \#V \rfloor$. The {\bf uniform model} is  the probability measure $\P^{\rm{unif}}_{r,s,n}$ on $\Hom(\F_r,\sym(n))\times 2^n$ defined by:
$$\P^{\rm{unif}}_{r,s,n}(\s,W) = \frac{1}{\#\Hom(\F_r,\sym(n)) \times \#\cI_s(\s) }$$
if $W \in \cI_s(\s)$ and  $\P^{\rm{unif}}_{r,s,n}(\s,W) = 0$ otherwise. A random sample $(\bfsig, \bfW)$ with law $\P^{\rm{unif}}_{r,s,n}$ can be obtained by first choosing $\bfsig \in  \Hom(\F_r,\sym(n))$ uniformly at random and then choosing $\bfW \in \cI_s(\bfsig)$ uniformly at random.

The {\bf planted model} is the probability measure $\P^{\rm{plant}}_{r,s,n}$ on $\Hom(\F_r,\sym(n))\times 2^n$ that is uniformly distributed on pairs $(\s,W)$ such that $W \in \cI_s(\s)$. Thus
$$\P^{\rm{plant}}_{r,s,n}(\s,W) = \frac{1}{\#\Hom(\F_r,\sym(n)) \times \E^{\rm{perm}}_{r,n}[\#\cI_s(\s)] }$$
if $W \in \cI_s(\s)$ and $0$ otherwise. Let $\E^{\rm{unif}}_{r,s,n}$ and $\E^{\rm{plant}}_{r,s,n}$ be the corresponding expectation operators.

It is relatively easy to compute probabilities with respect to the planted model instead of the uniform model. However, to prove Theorem \ref{thm:indep} we need to work with the uniform model. The next result forms a bridge between the two models. 



\begin{thm}\label{thm:planted}
Fix $\bs \in (0,1)$ and set $s=\bs \frac{\log(2r)}{r}$. Let $R \ge r$ be an integer satisfying
$$\liminf_n \P^{\rm{perm}}_{R,n}[ \#\cI_s >0] = 1.$$
Let
$$X_{R,r,s,n} = \left\{\s \in \Hom(\F_r,\sym(n)):~ \# \cI_s(\s) \ge \frac{\E^{\rm{perm}}_{r,n}[ \# \cI_s] }{2\E^{\rm{perm}}_{R,n}[ \# \cI_s]  } \right\}.$$
Then
$$\liminf_{n\to\infty} \P^{\rm{perm}}_{r,n}\left( X_{R,r,s,n} \right) =1.$$
Moreover, if $A \subset X_{R,r,s,n} \times 2^n$ then
$$\P^{\rm{unif}}_{r,s,n}(A) \le  \P^{\rm{plant}}_{r,s,n}(A) \times 2\E^{\rm{perm}}_{R,n}[ \# \cI_s].$$
\end{thm}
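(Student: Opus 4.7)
The plan is to prove the two assertions via a single coupling that compares the models at ranks $r$ and $R$. Identify $\F_r$ with the subgroup of $\F_R$ generated by $a_1,\ldots,a_r$. If $\bfsig_R$ is uniform in $\Hom(\F_R,\sym(n))$, then its restriction $\bfsig_r$ is uniform in $\Hom(\F_r,\sym(n))$, and $G(\bfsig_r)\subset G(\bfsig_R)$, so $\cI_s(\bfsig_R)\subseteq \cI_s(\bfsig_r)$. Conditional on $\bfsig_r$, the permutations $\bfsig_R(a_{r+1}),\ldots,\bfsig_R(a_R)$ are i.i.d.\ uniform on $\sym(n)$. Letting $q$ be the probability that a fixed $W\subset[n]$ with $|W|=\lfloor sn\rfloor$ satisfies $\bfpi(W)\cap W=\emptyset$ for a uniform $\bfpi\in\sym(n)$, one gets for every fixed $W\in\cI_s(\bfsig_r)$ that $\P[W\in \cI_s(\bfsig_R)\mid\bfsig_r]=q^{R-r}$ by independence, so summing over $W$,
$$\E[\#\cI_s(\bfsig_R)\mid\bfsig_r] \;=\; \#\cI_s(\bfsig_r)\cdot q^{R-r}.$$
Taking total expectation yields $\E^{\rm{perm}}_{R,n}[\#\cI_s] = \E^{\rm{perm}}_{r,n}[\#\cI_s]\cdot q^{R-r}$, hence
$$\E[\#\cI_s(\bfsig_R)\mid\bfsig_r] \;=\; \#\cI_s(\bfsig_r)\cdot\frac{\E^{\rm{perm}}_{R,n}[\#\cI_s]}{\E^{\rm{perm}}_{r,n}[\#\cI_s]}.$$

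For the first assertion, if $\bfsig_r\notin X_{R,r,s,n}$ then by definition $\#\cI_s(\bfsig_r)<\frac{\E^{\rm{perm}}_{r,n}[\#\cI_s]}{2\E^{\rm{perm}}_{R,n}[\#\cI_s]}$, so the identity above gives $\E[\#\cI_s(\bfsig_R)\mid\bfsig_r]<\tfrac12$. Since $\#\cI_s(\bfsig_R)$ is a nonnegative integer, Markov yields $\P[\#\cI_s(\bfsig_R)\ge 1\mid\bfsig_r]<\tfrac12$. Integrating,
$$\P^{\rm{perm}}_{R,n}[\#\cI_s>0] \;\le\; \P^{\rm{perm}}_{r,n}(X_{R,r,s,n}) + \tfrac12\bigl(1-\P^{\rm{perm}}_{r,n}(X_{R,r,s,n})\bigr),$$
which rearranges to $\P^{\rm{perm}}_{r,n}(X_{R,r,s,n})\ge 2\P^{\rm{perm}}_{R,n}[\#\cI_s>0]-1$. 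The hypothesis $\liminf_n\P^{\rm{perm}}_{R,n}[\#\cI_s>0]=1$ then forces $\liminf_n\P^{\rm{perm}}_{r,n}(X_{R,r,s,n})=1$.

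For the second assertion, the inequality is a direct Radon--Nikodym computation. On the common support $\{(\s,W):W\in\cI_s(\s)\}$,
$$\frac{\P^{\rm{unif}}_{r,s,n}(\s,W)}{\P^{\rm{plant}}_{r,s,n}(\s,W)} \;=\; \frac{\E^{\rm{perm}}_{r,n}[\#\cI_s]}{\#\cI_s(\s)}.$$
For $\s\in X_{R,r,s,n}$ the defining inequality gives $\#\cI_s(\s)\ge \frac{\E^{\rm{perm}}_{r,n}[\#\cI_s]}{2\E^{\rm{perm}}_{R,n}[\#\cI_s]}$, so this ratio is at most $2\E^{\rm{perm}}_{R,n}[\#\cI_s]$. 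Summing $\P^{\rm{plant}}_{r,s,n}$ over $(\s,W)\in A\subset X_{R,r,s,n}\times 2^n$ with this pointwise bound produces $\P^{\rm{unif}}_{r,s,n}(A)\le 2\E^{\rm{perm}}_{R,n}[\#\cI_s]\cdot\P^{\rm{plant}}_{r,s,n}(A)$.

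The only nontrivial ingredient is the coupling identity $\E[\#\cI_s(\bfsig_R)\mid\bfsig_r]=\#\cI_s(\bfsig_r)\cdot q^{R-r}$; once established, the first assertion follows by Markov and the second is bookkeeping with the two densities. I do not anticipate a serious obstacle here — the computation of the conditional expectation relies only on the fact that extending the homomorphism from $\F_r$ to $\F_R$ amounts to sampling $R-r$ additional uniform permutations independently, and that a fixed independent subset of the smaller graph survives each added permutation with probability $q$.
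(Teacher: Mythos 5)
Your proof is correct and follows the same strategy as the paper's: embed $\F_r \leq \F_R$, observe that the conditional probability that a fixed $W \in \cI_s(\bfsig_r)$ survives into $\cI_s(\bfsig_R)$ depends only on $|W|$ (which you make explicit as $q^{R-r}$, recovering the paper's ratio $\E^{\rm{perm}}_{R,n}[\#\cI_s]/\E^{\rm{perm}}_{r,n}[\#\cI_s]$), then apply Markov's inequality and integrate, with the second assertion being the same pointwise density comparison. The only cosmetic difference is that you condition pointwise on $\bfsig_r$ and apply Markov directly at threshold $1$, whereas the paper conditions on the event $Y_n = \{\bfsig : \bfsig\resto\F_r \notin X_{R,r,s,n}\}$ and uses a slightly more roundabout Markov step; your version is a bit more transparent but mathematically equivalent.
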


\begin{remark}
The proof below is modeled after an analogous result for sparse Erd\"os-Renyi graphs obtained in \cite{MR3385742}. 
\end{remark}


\begin{proof}[Proof of Theorem \ref{thm:planted}]

Let $\F_R=\langle a_1,\ldots, a_R\rangle$ and $\F_r =\langle a_1,\ldots, a_r\rangle$. Thus $\F_r$ is a subgroup of $\F_R$. Let $\bfsig$ be a sample of $\P^{\rm{perm}}_{R,n}$ (so $\bfsig$ is a uniformly random homomorphism from $\F_R$ to $\sym(n)$). Let $\bfsig \resto \F_r$ denote the restriction of $\bfsig$ to $\F_r$.


Any independent set of $G(\bfsig)$ is automatically an independent set of $G(\bfsig \resto \F_r)$ since the latter is a subgraph of the former. If $W \subset [n]$ is an independent set for $G(\bfsig \resto \F_r)$ then the probability that $W$ is also an independent set for $G(\bfsig)$ depends only on $|W|, (R-r)$ and $n$. It is easy to derive an exact expression for this probability, but we do not need it.




Let $Y_n$ be the set of all $\s \in \Hom(\F_R, \sym(n))$ such that the restriction $\s \resto \F_r \notin X_{R,r,s,n}$. Since $\P^{\rm{perm}}_{R,n}(Y_n) = 1-\P^{\rm{perm}}_{r,n}(X_{R,r,s,n})$, it suffices to prove $\P^{\rm{perm}}_{R,n}(Y_n) \to 0$ as $n\to\infty$.  Since the probability that an independent subset $W$ of $G(\bfsig \resto \F_r)$ is an independent subset of $G(\bfsig)$ does not depend on $\bfsig \resto \F_r$,
\begin{eqnarray*}
\E^{\rm{perm}}_{R,n}[ \# \cI_s | Y_n] &=& \E^{\rm{perm}}_{R,n}\left[ \#\cI_s(\bfsig \resto\F_r) \Big| Y_n \right] \frac{\E^{\rm{perm}}_{R,n}[ \# \cI_s] }{\E^{\rm{perm}}_{r,n}[ \# \cI_s] }.
\end{eqnarray*}
By definition of $Y_n$ and $X_{R,r,s,n}$, 
$$\E^{\rm{perm}}_{R,n}\left[ \#\cI_s(\bfsig \resto\F_r) \Big| Y_n \right]< \frac{\E^{\rm{perm}}_{r,n}[ \# \cI_s] }{2\E^{\rm{perm}}_{R,n}[ \# \cI_s]  }.$$
Combine this with the equality above to obtain
\begin{eqnarray}\label{eqn:thing0}
\E^{\rm{perm}}_{R,n}[ \# \cI_s | Y_n] &<& \frac{1}{2}.
\end{eqnarray}
By Markov's inequality,
$$ \P^{\rm{perm}}_{R,n}\big( \# \cI_s \ge 2 \E^{\rm{perm}}_{R,n}[ \#\cI_s|Y_n] \big| Y_n \big) \le \frac{1}{2}.$$
Multiply both sides of the inequality above by -1 and add 1 to obtain 
\begin{eqnarray*}
1/2& \le & \P^{\rm{perm}}_{R,n}\big( \# \cI_s < 2 \E^{\rm{perm}}_{R,n}[ \#\cI_s|Y_n] \big| Y_n \big) \le \P^{\rm{perm}}_{R,n}\big( \# \cI_s< 1 \big| Y_n \big).
\end{eqnarray*}
The second inequality above follows from (\ref{eqn:thing0}). Since $\P^{\rm{perm}}_{R,n}\big( \# \cI_s< 1 \big| Y_n \big) \le \frac{\P^{\rm{perm}}_{R,n}(\#\cI_s < 1)}{\P^{\rm{perm}}_{R,n}(Y_n)}$, multiply denominators in the inequality $1/2 \le \frac{\P^{\rm{perm}}_{R,n}(\#\cI_s < 1)}{\P^{\rm{perm}}_{R,n}(Y_n)}$ to obtain
$$ \P^{\rm{perm}}_{R,n}(Y_n) \le 2 \P^{\rm{perm}}_{R,n}\big( \# \cI_s < 1 \big).$$
However, $\P^{\rm{perm}}_{R,n}( \# \cI_s < 1 )$ tends  to zero as $n\to\infty$ by assumption. This shows $\P^{\rm{perm}}_{R,n}(Y_{n}) \to 0$ and therefore $\P^{\rm{perm}}_{r,n}(X_{R,r,s,n}) \to 1$ as $n\to\infty$.

To verify the last statement, let  $(\s,W) \in X_{R,r,s,n} \times 2^n$ be such that $W \in \cI_s(\s)$. Then
\begin{eqnarray*}
\P^{\rm{unif}}_{r,s,n}(\s,W) &=& \frac{1}{ \#\cI_s(\s) \#\Hom(\F_r,\sym(n))}\\
& \le & \frac{2 \E^{\rm{perm}}_{R,n}[\#\cI_s] }{ \E^{\rm{perm}}_{r,n}[\#\cI_s]\#\Hom(\F_r,\sym(n)) } \\
&=& \P^{\rm{plant}}_{r,s,n}(\s,W) \times 2 \E^{\rm{perm}}_{R,n}[\#\cI_s].
\end{eqnarray*}
These inequalities are justified in turn by the definition of $\P^{\rm{unif}}_{r,s,n}$, the assumption that $\s \in X_{R,r,s,n}$ and the definition of $\P^{\rm{plant}}_{r,s,n}$.
\end{proof}

Define $\MAXIND:\Hom(\F_r,\sym(n)) \to \R$ by 
$$\MAXIND(\s)=n^{-1} \max\{ \#W:~ W \textrm{ is an independent subset of } G(\s)\}.$$

\begin{prop}\label{prop:limit}
For all $r$ sufficiently large, there is a constant $\alpha(r)$ such that
$$\inf_{\d>0} \lim_{n\to\infty} \P^{\rm{perm}}_{r,n}(\MAXIND \in ( \alpha(r) - \d, \a(r)+\d) ) = 1.$$
Moreover, 
$$\alpha(r) = \frac{\log(r)}{r} + O(\log\log(r)/r).$$  
\end{prop}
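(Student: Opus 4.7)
\textbf{Proof plan for Proposition~\ref{prop:limit}.} The argument decomposes into three steps: establishing concentration of $\MAXIND$ around its mean, showing the mean converges (which defines $\alpha(r)$), and computing the asymptotic expansion of $\alpha(r)$.

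For concentration, I will apply the Azuma--Hoeffding inequality to a Doob martingale on the random homomorphism $\bfsig=(\bfsig(a_1),\ldots,\bfsig(a_r))$. Expose the coordinates of each permutation one at a time via the standard swap martingale; each swap of two entries in some $\bfsig(a_i)$ alters at most four edges of $G(\bfsig)$ and therefore changes $\MAXIND$ by at most $O(1/n)$. Since there are $rn$ exposure steps, each contributing an $O(1/n)$ increment, Azuma yields
\[
\P^{\rm{perm}}_{r,n}\big(|\MAXIND-\E^{\rm{perm}}_{r,n}[\MAXIND]|>\d\big) \le 2\exp(-c_r\d^2 n)
\]
for every $\d>0$, with a constant $c_r>0$ depending only on $r$.

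For the existence of a limit, invoke the Bayati--Gamarnik--Tetali theorem \cite{MR3161470}, which asserts that $\E^{\rm{conf}}_{d,n}[\MAXIND(\bfG_{d,n})]$ converges as $n\to\infty$ for each $d\ge 3$. A parallel Azuma argument in the configuration model gives concentration there as well. Contiguity (Theorem~\ref{thm:contiguous}) then implies that $\MAXIND$ converges in probability in the permutation model to the same limit; define $\alpha(r)$ to be this limit. Combined with the concentration estimate above, this proves the first assertion of the proposition.

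It remains to determine $\alpha(r)$ asymptotically. For the upper bound, write $s=\bs\log(2r)/r$. Expanding the formula in Theorem~\ref{thm:f} (using $\eta(s)=\bs\log^2(r)/r+O(\log(r)\log\log(r)/r)$ and $rs^2=\bs^2\log^2(r)/r+O(\log(r)/r)$) gives
\[
f(r,s) = \bs(1-\bs)\,\frac{\log^2(r)}{r} + O\!\left(\frac{\log(r)\log\log(r)}{r}\right).
\]
Choosing $\bs=1+c\log\log(r)/\log(r)$ with $c$ exceeding the implicit constant above makes $f(r,s)<0$ for all sufficiently large $r$, so by Theorem~\ref{thm:continuity} we have $\E^{\rm{perm}}_{r,n}[\#\cI_s]\to 0$. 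Markov's inequality then forces $\MAXIND<s$ with high probability, giving $\alpha(r)\le \log(r)/r+O(\log\log(r)/r)$. For the matching lower bound, apply the Frieze--Łuczak theorem \cite{MR1142268}, which establishes $\MAXIND(\bfG_{2r,n})\ge \log(2r)/r-O(\log\log(r)/r)$ with high probability in the configuration model; contiguity (Theorem~\ref{thm:contiguous}) transfers this to the permutation model, yielding $\alpha(r)\ge \log(r)/r-O(\log\log(r)/r)$. The main obstacle is getting the lower bound at $\log\log$ precision: if the formally cited Frieze--Łuczak statement is only of strength $o(\log(r)/r)$, one can instead run a direct second moment argument at density $s^{\ast}=\log(2r)/r-C\log\log(r)/r$, where Theorem~\ref{thm:f} gives $f(r,s^{\ast})>0$ and Corollary~\ref{cor:maximization} (together with Theorem~\ref{thm:continuity}) controls the second moment $\E[(\#\cI_{s^{\ast}})^2]$ by a quantity of the form $\exp\{n(2f(r,s^{\ast})-\Theta(\log^2(r)/r))+o(n)\}$, i.e.\ subexponentially larger than $\E[\#\cI_{s^{\ast}}]^2$; Paley--Zygmund then produces an independent set of density $s^\ast$ with positive probability, which combined with concentration gives the required high-probability lower bound.
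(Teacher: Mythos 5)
Your proposal is correct and follows the same route as the paper's brief remark-style proof: cite Bayati--Gamarnik--Tetali \cite{MR3161470} for existence of the limit, Frieze--\L uczak \cite{MR1142268} for the asymptotic expansion to $\log\log$ precision, and contiguity \cite{MR1909503} to transfer both statements to the permutation model, with your Azuma concentration step making explicit what the paper leaves implicit. (Your parenthetical second-moment fallback is sketchier---Corollary~\ref{cor:maximization} only controls overlaps $t\in[s/2,s]$, not the full range needed for the second moment, and the claimed gap should be $\Theta(\log r\,\log\log r/r)$ rather than $\Theta(\log^2 r/r)$---but since Frieze--\L uczak already gives the stated precision, that fallback is never invoked and the main argument stands.)
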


\begin{remark}
The limit was proven to exist in \cite{MR3161470}. The asymptotic statement follows from earlier results of Frieze-Luczak \cite{MR1142268} in the case of the configuration model. Since the two models are contiguous \cite{MR1909503} this implies the proposition. An exact formula for $\alpha(r)$ (for sufficiently large $r$) was recently obtained by Ding-Sly-Sun \cite{MR3689942}.
\end{remark}

\begin{cor}\label{cor:planted}
Fix $\bs \in (0,1)$ and set $s=\bs \frac{\log(2r)}{r}$. Let $\bfsig_n \sim \P^{\rm{perm}}_{r,n}$. Then
$$n^{-1}\log \#\cI_s(\bfsig_n)   \ge f(r,s) - o_n(1) - O\left(\frac{\log(r)\log\log(r)}{r}\right)$$
with probability tending to $1$ as $n\to\infty$. To be precise, this means that there exists $\d(r) = O\left(\frac{\log(r)\log\log(r)}{r}\right)$ such that
for every $\eps>0$,
$$\lim_{n\to\infty} \P^{\rm{perm}}_{r,n}\left(n^{-1}\log \#\cI_s(\bfsig_n)   \ge f(r,s) - \eps - \d(r)\right)=1.$$
\end{cor}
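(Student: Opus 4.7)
The plan is to apply Theorem \ref{thm:planted} with an integer $R = R(r) > r$ chosen to balance two competing requirements: $R$ must be small enough that $\P^{\rm{perm}}_{R,n}[\#\cI_s > 0] \to 1$ as $n\to\infty$, yet large enough that $f(R,s)$ is only $O(\log(r)\log\log(r)/r)$. Specifically, set $\bs'' := \bs + c\log\log(r)/\log(r)$ for a sufficiently large absolute constant $c$, and define $R := \lfloor r/\bs'' \rfloor$. For all large $r$ we have $\bs'' \in (\bs,1)$, hence $R > r$ as required by the hypothesis of Theorem \ref{thm:planted}.

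For the first requirement, Proposition \ref{prop:limit} combined with the asymptotic $\alpha(R) = \log(R)/R + O(\log\log(R)/R)$ and $R \approx r/\bs''$ gives $\alpha(R) = \bs''\log(r)/r + O(\log\log(r)/r)$, while $s = \bs\log(2r)/r = \bs\log(r)/r + O(1/r)$. Thus $\alpha(R) - s \ge (c-C)\log\log(r)/r$ for some absolute constant $C$ absorbing the implicit $O$-constants, which is strictly positive provided $c>C$ and $r$ is large. By Proposition \ref{prop:limit} (applied with any fixed $\delta$ smaller than $\alpha(R)-s$), with $\P^{\rm{perm}}_{R,n}$-probability tending to $1$ we have $\MAXIND(\bfsig) > s$; since any size-$\lfloor sn \rfloor$ subset of an independent set remains independent, this yields $\P^{\rm{perm}}_{R,n}[\#\cI_s > 0] \to 1$.

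For the second requirement, I apply Theorem \ref{thm:f} with $R$ in place of $r$. Writing $s = \bs'\log(2R)/R$ with $\bs' := sR/\log(2R) = (\bs/\bs'')(1+o(1)) \in (0,1)$, Theorem \ref{thm:f} yields $f(R,s) = \eta(s) - Rs^2 + O(\log(R)/R)$ (with the implicit constant uniform in $\bs'$). A Taylor expansion in $\bs''-\bs = c\log\log(r)/\log(r)$ gives $\eta(s) = \bs\log^2(r)/r + O(\log(r)\log\log(r)/r)$ and $Rs^2 = (\bs^2/\bs'')\log^2(r)/r + O(\log(r)/r) = \bs\log^2(r)/r + O(\log(r)\log\log(r)/r)$, so the leading $\bs\log^2(r)/r$ terms cancel and
\[
f(R,s) = O\!\left(\frac{\log(r)\log\log(r)}{r}\right).
\]
Combining with Theorem \ref{thm:planted} and Theorem \ref{thm:continuity} (which gives $n^{-1}\log \E^{\rm{perm}}_{r,n}[\#\cI_s] = f(r,s) + o_n(1)$ and similarly for $R$), with $\P^{\rm{perm}}_{r,n}$-probability tending to $1$ we have $\#\cI_s(\bfsig_n) \ge \E^{\rm{perm}}_{r,n}[\#\cI_s]/(2\E^{\rm{perm}}_{R,n}[\#\cI_s])$, hence
\[
n^{-1}\log \#\cI_s(\bfsig_n) \ge f(r,s) - f(R,s) - o_n(1) \ge f(r,s) - O\!\left(\frac{\log(r)\log\log(r)}{r}\right) - o_n(1),
\]
as claimed.

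The main obstacle is this delicate choice of $R$. At the asymptotically optimal value $R \sim r/\bs$ (the unique scaling that minimizes $f(R,s)$ to leading order by annihilating the $\bs\log^2(r)/r$ term), the density $s$ coincides with $\alpha(R)$ to leading order, so the existence hypothesis of Theorem \ref{thm:planted} barely fails; the second-order perturbation $\bs''-\bs = \Theta(\log\log(r)/\log(r))$ is just enough to restore it while preserving the cancellation, and this perturbation is precisely what produces the $\log(r)\log\log(r)/r$ error term in the final estimate.
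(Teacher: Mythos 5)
Your proof is correct and follows essentially the same approach as the paper: choose $R \ge r$ so that $\alpha(R)$ is just above $s$, show $f(R,s) = O(\log(r)\log\log(r)/r)$ via Theorem \ref{thm:f}, and apply Theorem \ref{thm:planted}. Where the paper takes $R$ to be the largest integer with $\alpha(R) \ge s$ and leaves the hypothesis $\P^{\rm{perm}}_{R,n}[\#\cI_s>0]\to 1$ implicit, your explicit choice $R = \lfloor r/\bs''\rfloor$ with $\bs'' = \bs + c\log\log(r)/\log(r)$ produces a strict margin $\alpha(R)-s>0$, which makes that verification explicit while preserving the same cancellation in $f(R,s)$.
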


\begin{proof}
 Let $R$ be the largest integer such that $\a(R) > s$. By Proposition \ref{prop:limit},
$$s < \a(R)=  \frac{\log(R)}{R} + O(\log\log(R)/R).$$ 
Observe that the function $x \mapsto \frac{\log(x)}{x}$ is monotone increasing for all $x\ge e$. So if $x \ge e$ then
\begin{eqnarray*}
0\le \frac{\log(x)}{x} - \frac{\log(x+1)}{x+1} &=&  \frac{\log(x)}{x} - \frac{\log(x)+\log(1+1/x))}{x+1} \\
&=& \frac{\log(x)}{x(x+1)} - \frac{\log(1+1/x)}{x+1} \le \frac{\log(x)}{x^2}.
\end{eqnarray*}
Since $\frac{\log(x)}{x^2} \le \frac{\log\log(x)}{x}$,  this implies
$$\a(R)-\a(R+1) = O(\log\log(R)/R)$$
and therefore we can improve the previous inequality to an equality: $s = \frac{\log(R)}{R} + O(\log\log(R)/R)$. 

By Theorem \ref{thm:f},
\begin{eqnarray*}
f(R,s) = \eta(s) - s^2R + O(\log(R)/R).
\end{eqnarray*}
Since $s = \frac{\log(R)}{R} + O(\log\log(R)/R)$,
\begin{eqnarray*}
\eta(s) &=&  \frac{\log^2(R)}{R} + O\left(\frac{\log(R)\log\log(R)}{R}\right) \\
s^2R &=& \frac{\log^2(R)}{R} + O\left(\frac{\log(R)\log\log(R)}{R}\right) .
\end{eqnarray*}
So $f(R,s) = O\left(\frac{\log(R)\log\log(R)}{R}\right)$. Since $x \mapsto \frac{\log(x)\log\log(x)}{x}$ is monotone decreasing (for all large enough $x$), this implies 
\begin{eqnarray}\label{R}
f(R,s) = O\left(\frac{\log(r)\log\log(r)}{r}\right).
\end{eqnarray}

Theorem \ref{thm:planted} implies that if $\bfsig_n \sim \P^{\rm{perm}}_{r,n}$ then with probability tending to $1$ as $n\to\infty$,
\begin{eqnarray*}\label{thing12}
n^{-1}\log \#\cI_s(\bfsig_n)   &\ge& n^{-1}\log\E^{\rm{perm}}_{r,n}[\#\cI_s] -  n^{-1}\log (2\E^{\rm{perm}}_{R,n}[\#\cI_s])\\
&=& f(r,s) - f(R,s) - o_n(1) = f(r,s) - o_n(1) - O\left(\frac{\log(r)\log\log(r)}{r}\right).
\end{eqnarray*}
\end{proof}

\subsection{Bounding clusters of independent sets}\label{sec:clusters0}

For $(\s,W) \in \Hom(\F_r,\sym(n))\times 2^n$, let $\Cluster_{s,\eps}(\s,W) \subset 2^n$ be the collection of independent subsets $W'$ of the graph $G(\s)$ such that $|\#W'/n-s|\le \eps$ and $|W\cap W'| \ge (s/2)n$. Informally, $\Cluster_{s,\eps}(\s,W)$ represents the cluster containing $W$ in $\bigcup_{s' \in (s-\eps,s+\eps)} \cI_{s'}(\s)$.

\begin{thm}\label{thm:indep}
Let $\bs \in (\frac{2 + \sqrt{2}}{4},1)$ and $s:= \bs \frac{ \log(2r)}{r}$. Then there exists $\g=\g(r,\bs)>0$, $\eps=\eps(r,\bs)>0$ and $0<b_1<b_2<s$, $b_2>s/2$, such that the following holds. Let $(\bfsig_n,\bfW_n)$ be random with law $\P^{\rm{unif}}_{r,s,n}$. Then with probability tending to 1 as $n\to\infty$,
\begin{enumerate}
\item there does not exist an independent set $W'$ of $G(\bfsig_n)$ such that $\#W'/n \in (s-\eps,s+\eps)$ and
$b_1n \le |\bfW_n \cap W'| \le b_2 n,$ 
\item 
$n^{-1} \log \#\Cluster_{s,\eps}(\bfsig_n,\bfW_n) \le f(r,s) - \g.$
\end{enumerate}
\end{thm}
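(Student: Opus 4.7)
The plan is to reduce from the uniform model $\P^{\rm{unif}}_{r,s,n}$ to the planted model $\P^{\rm{plant}}_{r,s,n}$ via Theorem~\ref{thm:planted}. Taking $R$ to be the largest integer with $\alpha(R)\ge s$, Corollary~\ref{cor:planted} yields $f(R,s)=O(\log(r)\log\log(r)/r)$, so the transfer factor $2\E^{\rm{perm}}_{R,n}[\#\cI_s]=\exp(f(R,s)n+o_n(1))$ is exponentially negligible compared to $\exp((\log^2(r)/r)n)$ when $r$ is large. Since $\P^{\rm{perm}}_{r,n}(X_{R,r,s,n})\to 1$, it suffices to bound first moments under $\P^{\rm{plant}}_{r,s,n}$ restricted to $\bfsig_n\in X_{R,r,s,n}$ and then pay this factor. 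Under $\P^{\rm{plant}}_{r,s,n}$, the expected number of $W'\in\cI_{s'}(\bfsig_n)$ with $|\bfW_n\cap W'|=tn$ equals $\E^{\rm{perm}}_{r,n}[\#\cI_{s,s',t}]/\E^{\rm{perm}}_{r,n}[\#\cI_s]$. By Theorem~\ref{thm:continuity} (uniform continuity of $f$), shrinking $\eps$ reduces to $s'=s$, and Theorem~\ref{thm:f} then gives
\[
f(r,s,t)-f(r,s)=\bigl[(\bs-\bs^2)-(\bt-\bt^2)\bigr]\frac{\log^2(r)}{r}+O\!\left(\frac{\log(r)\log\log(r)}{r}\right).
\]

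For Part~1 I would pick $\bar b_1<\bar b_2$ strictly inside $(1-\bs,\bs)$ with $\bar b_2>\bs/2$ and a fixed margin $\delta=\delta(\bs)>0$ from the endpoints; this is feasible since $\bs>(2+\sqrt{2})/4>1/2$. Set $b_i=\bar b_i\log(2r)/r$. By the exponent formula above and the concavity of $x\mapsto x-x^2$, for $\bt\in[\bar b_1,\bar b_2]$ one has $\bt-\bt^2\ge\bs(1-\bs)+\delta'$ for some $\delta'=\delta'(\bs,\delta)>0$. Hence the planted-expected count of $W'$ in the gap, for each admissible $(s',t)$, decays like $\exp(-c(\bs,\delta)(\log^2(r)/r)n)$ for large $r$, with $c>0$ strictly dominating the $O(\log(r)\log\log(r)/r)$ error. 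Summing over the $O(n^2)$ admissible $(s',t)$ pairs, multiplying by the transfer factor, and applying Markov's inequality, the probability that any $W'$ lies in the gap tends to zero.

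For Part~2 I would apply Corollary~\ref{cor:maximization} (available since $\bs>2/3$): $\max_{t\in[s/2,s]}(f(r,s,t)-f(r,s))=f(r,s)-\bs(1-\bs)\log^2(r)/r+O(\log(r)\log\log(r)/r)$. Summing $\E^{\rm{plant}}_{r,s,n}[\#\Cluster_{s,\eps}]$ over the $O(n^2)$ admissible $(s',t)$ pairs (using Theorem~\ref{thm:continuity} to absorb the $s'$-dependence), multiplying by the transfer factor, and using that $f(R,s)\ll\bs(1-\bs)\log^2(r)/r$ for large $r$, I obtain
\[
\E^{\rm{unif}}_{r,s,n}\bigl[\#\Cluster_{s,\eps}\cdot\mathbf{1}_{X_{R,r,s,n}}\bigr]\le\exp\!\bigl((f(r,s)-\gamma)n+o_n(1)\bigr)
\]
for $\gamma=\bs(1-\bs)\log^2(r)/(2r)>0$; Markov's inequality then yields Part~2. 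The main obstacle will be the careful bookkeeping of the $O(\log(r)\log\log(r)/r)$ error terms: one must verify that the shattering gap $\bs(1-\bs)\log^2(r)/r$ strictly dominates these errors uniformly as $r\to\infty$. The numerical hypothesis $\bs>(2+\sqrt{2})/4$ corresponds to $\bs$ lying above the Gamarnik--Sudan local-algorithm threshold $(1+1/\sqrt{2})\log(d)/d$ for $d=2r$, which is the expected regime of genuine shattering of the independent-set space; in my analysis this hypothesis provides enough slack to simultaneously choose $\bar b_1,\bar b_2\in(1-\bs,\bs)$ with $\bar b_2>\bs/2$ while keeping margins large enough to absorb all the errors uniformly in $r$.
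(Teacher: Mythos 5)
Your argument for Part 2 matches the paper's proof essentially step for step: transfer from the planted model to the uniform model via Theorem~\ref{thm:planted}, compute the planted-model first moment of $\#\Cluster_{s,\eps}$ through Theorem~\ref{thm:F} and equation~(\ref{eqn:littlebear}), absorb the $s'$-dependence using Theorem~\ref{thm:continuity}, maximize over $t$ with Corollary~\ref{cor:maximization}, pay the transfer factor $2\E^{\rm{perm}}_{R,n}[\#\cI_s]=\exp(f(R,s)n+o(n))$, and finish with Markov.

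For Part 1 you take a genuinely different route. The paper simply cites the overlap-gap theorem of Gamarnik--Sudan (Theorem~2.6 of \cite{MR3359490}, transported to the permutation model by contiguity), which is a statement about \emph{all} pairs of large independent sets and is proved there by a first-moment computation over \emph{pairs}: one needs $f(r,s,t)<0$, i.e.\ $\bar t-\bar t^2>2\bs(1-\bs)$, which is why the threshold $\bs>(2+\sqrt 2)/4$ (equivalently $2\bs(1-\bs)<1/4$) appears. Your proposal instead runs the planted-to-uniform transfer a second time and bounds the planted-model expected number of forbidden $W'$; the exponent required to be negative is then $f(r,s,t)-f(r,s)+f(R,s)$, i.e.\ (up to $O(\log r\log\log r/r)$) $\bar t-\bar t^2>\bs(1-\bs)$, which holds for any $\bar t$ strictly inside $(1-\bs,\bs)$ and therefore only needs $\bs>2/3$. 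This gives a weaker conclusion than Gamarnik--Sudan --- it holds for the planted/random $\bfW_n$ rather than uniformly over all pairs --- but that weaker statement is exactly what Theorem~\ref{thm:indep} asserts, and what the proof of Theorem~\ref{thm:indep0} actually uses (since $\sW(\s_n)$ is defined via a property of a single set $W$). So your self-contained argument is valid, is arguably cleaner since it reuses the same machinery as Part~2, and in fact covers a wider parameter range. One remark on your closing paragraph: you attribute the feasibility of choosing $\bar b_1,\bar b_2\in(1-\bs,\bs)$ with $\bar b_2>\bs/2$ to the hypothesis $\bs>(2+\sqrt 2)/4$, but your own argument only needs $\bs>2/3$ for that --- the hypothesis $(2+\sqrt 2)/4$ is inherited from the cited Gamarnik--Sudan result in the paper's version of the proof and is not what gives your argument its slack.
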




\begin{proof}

The first claim was proven by Gamarnik-Sudan \cite[Theorem 2.6]{MR3359490} with the configuration model in place of the permutation model. This uses the hypothesis $\bs>\frac{2 + \sqrt{2}}{4}$. Since the two models are contiguous by \cite{MR1909503}, this implies the first claim. The proof of the second claim given below is modeled after the proof of an analogous result for sparse Erd\"os-Renyi graphs in \cite{MR3385742}. 

For $\s \in \Hom(\F_r,\sym(n))$, let $\cI_{s;\eps}(\s)$ be the set of all pairs $(W,W')$ satisfying: 
\begin{enumerate}
\item both $W,W'$ are independent subsets of $G_\s$,
\item $\#W = \lfloor sn\rfloor$, $|\#W'/n-s|\le \eps$,
\item $|W\cap W'| \ge (s/2)n$.
\end{enumerate}
The definition of the planted model implies:
\begin{eqnarray}\label{eqn:littlebear}
\E^{\rm{plant}}_{r,s,n}[\#\Cluster_{s,\eps}] = \frac{\E^{\rm{perm}}_{r,n}[\#\cI_{s;\eps}] }{\E^{\rm{perm}}_{r,n}[\#\cI_{s}]}.
\end{eqnarray}
By Theorem \ref{thm:F} and (\ref{eqn:littlebear}),
$$\lim_{n\to\infty} n^{-1} \log \E^{\rm{plant}}_{r,s,n}[\#\Cluster_{s,\eps}] = \max \{f(r,s,s',t) - f(r,s):~ s' \in [s-\eps,s+\eps], s/2\le t \le s\}.$$
By Theorem \ref{thm:continuity}, $f$ is uniformly continuous. So there exists $\eps=\eps(r,\bs)>0$ such that
\begin{eqnarray*}
&&\max_{ s' \in [s-\eps,s+\eps], s/2\le t \le s} f(r,s,s',t)  -  \max_{s/2\le t \le s} f(r,s,t)  \le \frac{\log(r)\log\log(r)}{r}.
\end{eqnarray*}
Assume from now on that $|s-s'|<\eps$. Then 
$$\lim_{n\to\infty} n^{-1} \log \E^{\rm{plant}}_{r,s,n}[\#\Cluster_{s,\eps}] = \max \{f(r,s,t) - f(r,s):~ s/2\le t \le s\} + O\left( \frac{\log(r)\log\log(r)}{r} \right).$$
By Corollary \ref{cor:maximization}, 
\begin{eqnarray}\label{eqn:thing10}
\lim_{n\to\infty} n^{-1} \log \E^{\rm{plant}}_{r,s,n}[\#\Cluster_{s,\eps}]  = f(r,s) -\bs(1-\bs)\frac{\log^2(r)}{r} + O\left( \frac{\log(r)\log\log(r)}{r} \right).
\end{eqnarray}

Next these estimates are transferred from the planted model to the uniform model. To simplify notation, let $X_n=X_{R,r,s,n}$ be as in Theorem \ref{thm:planted} where $R$ is the largest integer such that $\a(R)\ge s$. By Theorem \ref{thm:planted},
\begin{eqnarray*}
\E^{\rm{unif}}_{r,s,n}[\#\Cluster_{s,\eps}|X_n] &=& \P^{\rm{perm}}_{r,n}(X_n)^{-1}\E^{\rm{unif}}_{r,s,n}[\#\Cluster_{s,\eps}1_{X_n}] \\
&\le&   \P^{\rm{perm}}_{r,n}({X_n})^{-1}\E^{\rm{plant}}_{r,s,n}[\#\Cluster_{s,\eps}1_{X_n}] \times 2 \E^{\rm{perm}}_{R,n}(\#\cI_s)\\
&\le&   \P^{\rm{perm}}_{r,n}({X_n})^{-1}\E^{\rm{plant}}_{r,s,n}[\#\Cluster_{s,\eps}] \times 2 \E^{\rm{perm}}_{R,n}(\#\cI_s).
\end{eqnarray*}
Recall that $f(R,s) = \lim_{n\to\infty} n^{-1} \log \E^{\rm{perm}}_{R,n}(\#\cI_s)$. Combined with (\ref{eqn:thing10}), the fact that $ \P^{\rm{perm}}_{r,n}({X_n}) \to 1$ as $n \to \infty$ and equation (\ref{R}), this implies
\begin{eqnarray}\label{eqn:thing12}
\limsup_{n\to\infty} n^{-1} \log \E^{\rm{unif}}_{r,s,n}[\#\Cluster_{s,\eps}|{X_n}]  \le f(r,s) -\bs(1-\bs)\frac{\log^2(r)}{r} + O\left(\frac{\log(r)\log\log(r)}{r}\right).
\end{eqnarray}
By Markov's inequality, this implies the theorem with any choice of $\g=\g(r,\bs)$ satisfying
$$0<\g(r,\bs) < \bs(1-\bs) \frac{\log^2(r)}{r} + O\left(\frac{\log(r)\log\log(r)}{r}\right).$$

\end{proof}

\subsection{A variational principle}\label{sec:variational}

The next result will be used in the proof of Theorem \ref{thm:indep0} to obtain an invariant measure that is, in some sense, a subsequential limit of the uniform models. Its proof uses the same ideas that are behind Kerr-Li's proof of the Variational Principle for sofic entropy \cite{kerr-li-variational}.

\begin{prop}\label{prop:variational}
Let $\Si=\{\s_n\}_{n=1}^\infty$ be a sofic approximation to $\G$, $\cX$ a finite set and for each $n\in \N$, let $\sW_n \subset \cX^{V_n}$ be given. Then there exists an invariant measure $\mu \in \Prob_\G(\cX^\G)$ satisfying
\begin{eqnarray*}
h_\Si(\mu)&\ge& \inf_{\cO \ni \mu} \limsup_{n \to \infty} |V_n|^{-1}\log \# (\Omega(\cO,\s_n) \cap \sW_n )=  \limsup_{n \to \infty} |V_n|^{-1}\log \# \sW_n.
\end{eqnarray*}
\end{prop}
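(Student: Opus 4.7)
The strategy is to realize $\mu$ as a point in the support of a subsequential weak* limit of the empirical-measure distributions
$$Q_n := \frac{1}{\#\sW_n}\sum_{x\in\sW_n}\delta_{P^{\s_n}_x} \ \in\ \Prob(\Prob(\cX^\G)).$$
First I would handle the trivial tail where $\sW_n$ may be empty (any invariant measure works there) and pass to a subsequence on which $|V_n|^{-1}\log\#\sW_n$ converges to $M := \limsup_n |V_n|^{-1}\log\#\sW_n$. Because $\cX$ is finite, both $\Prob(\cX^\G)$ and $\Prob(\Prob(\cX^\G))$ are compact metrizable in the weak* topology, so after passing to a further subsequence $Q_n\to Q$ weak* for some $Q\in\Prob(\Prob(\cX^\G))$; I will then take $\mu$ to be any element of $\supp(Q)$.

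Next I would verify $\supp(Q)\subseteq\Prob_\G(\cX^\G)$. For each $g\in\G$ and each continuous $D$-local $f:\cX^\G\to\R$, the standard sofic computation (a uniform-in-$x$ version of Lemma~\ref{lem:sofic-3}) gives
$$\sup_{x\in\cX^{V_n}}\bigl|P^{\s_n}_x(f\circ g)-P^{\s_n}_x(f)\bigr|=o_n(1).$$
Integrating the continuous nonnegative function $F_{g,f}(\nu):=|\nu(f\circ g)-\nu(f)|$ against $Q_n$ and letting $n\to\infty$ yields $\int F_{g,f}\,dQ=0$. Since $\G$ is countable and the continuous local functions are sup-norm dense in $C(\cX^\G)$, this forces $g_*\nu=\nu$ for every $g$ and $Q$-almost every $\nu$; hence $\supp(Q)$ consists of $\G$-invariant measures, and any choice $\mu\in\supp(Q)$ is invariant.

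For the entropy bound, fix an open neighborhood $\cO\ni\mu$ in $\Prob(\cX^\G)$. The Portmanteau inequality applied to $Q_n\to Q$ gives $\liminf_n Q_n(\cO)\ge Q(\cO)>0$, the strict positivity coming from $\mu\in\supp(Q)\cap\cO$. By construction $Q_n(\cO)\cdot\#\sW_n=\#(\Omega(\cO,\s_n)\cap\sW_n)$, so
$$|V_n|^{-1}\log\#(\Omega(\cO,\s_n)\cap\sW_n)=|V_n|^{-1}\log Q_n(\cO)+|V_n|^{-1}\log\#\sW_n\longrightarrow 0+M=M$$
along the chosen subsequence, and this passes to the $\limsup$ over the full sequence. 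Combined with the trivial reverse inequality $\#(\Omega(\cO,\s_n)\cap\sW_n)\le\#\sW_n$ (which bounds $\inf_\cO \limsup_n$ from above by $M$), the middle equality in the statement follows; the leftmost inequality is automatic from the finite-alphabet reduction $h_\Si(\mu)=\inf_\cO\limsup_n|V_n|^{-1}\log\#\Omega(\cO,\s_n)$ together with $\Omega(\cO,\s_n)\cap\sW_n\subseteq\Omega(\cO,\s_n)$. The one subtle point, and the real reason the proof is not one line, is the mismatch between averaging and pointwise approximation: weak* convergence $Q_n\to Q$ gives no control over individual empirical measures $P^{\s_n}_x$, so $\mu$ must be chosen from $\supp(Q)$ rather than as a barycenter-type first-moment object in order to convert the limit into a lower bound on $Q_n(\cO)$.
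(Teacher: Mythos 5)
Your proof is correct, but it takes a genuinely different route from the paper's. The paper proves this by a direct greedy covering argument in the spirit of Kerr--Li: it fixes a sequence of finite open covers $\sU^{(j)}$ of $\Prob_\Ga(\cX^\Ga)$ by $2^{-j}$-balls and inductively selects, for each $j$, the cover element $\cO^{(j)}$ that captures at least a $1/|\sU^{(j)}|$-fraction of what $\cO^{(j-1)}$ captured, along a suitable subsequence of $n$'s; the nestedness of the $\cO^{(j)}$ forces any sequence $\mu_j\in\cO^{(j)}$ to be Cauchy, and its limit is the desired $\mu$. No measures on $\Prob(\cX^\Ga)$ appear, and invariance of $\mu$ is obtained because the covers are taken of $\Prob_\Ga(\cX^\Ga)$ from the start. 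Your argument instead lifts everything one level up, viewing the uniform distributions $Q_n$ on $\{P^{\s_n}_x : x\in\sW_n\}$ as elements of the compact space $\Prob(\Prob(\cX^\Ga))$, passing to a weak* limit $Q$, and taking $\mu\in\supp(Q)$; the entropy lower bound then drops out of Portmanteau applied to $Q_n(\cO)$. The two proofs are logically equivalent in strength here, but each buys something: the paper's version is elementary and avoids the second layer of measure theory (matching the spirit of the Kerr--Li proof it explicitly credits), while yours is shorter, makes the role of $\supp(Q)$ transparent, and explains cleanly why an averaging/barycenter argument would fail. Your observation at the end — that weak* convergence of $Q_n$ gives no pointwise control and forces the choice $\mu\in\supp(Q)$ rather than a first-moment object — is exactly the point, and your use of the uniform-in-$x$ consequence of the sofic condition to show $\supp(Q)\subset\Prob_\Ga(\cX^\Ga)$ is sound (it is the standard estimate underlying Lemma~\ref{lem:sofic-3}, and the countable-density argument to pass from a.e.\ invariance to support invariance is correctly handled).
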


\begin{proof}
The inequality is trivial, so it suffices to prove the equality. 

The space $\Prob(\cX^\G)$ is compact and metrizable in the weak* topology. So fix a metric on $\Prob(\cX^\G)$ with diameter $\le 1$. Choose a sequence  $\sU^{(j)}$ ($j=1,2,\ldots$) of finite open covers of $\Prob_\G(\cX^\G) \subset \Prob(\cX^\G)$ by open balls of radius $2^{-j}$. For convenience, set  $\sU^{(0)}=\{\Prob(\cX^\G)\}$. 

We will inductively construct a sequence  $\{\cO^{(j)}\}_{j=0}^\infty$ of open sets $\cO^{(j)} \in \sU^{(j)}$ and positive constants $\{m^{(j)}\}_{j=1}^\infty$ satisfying: for every $J \in \N$,
\begin{enumerate}
\item $\cO^{(j)} \cap \cO^{(j+1)} \ne \emptyset$ for all $0\le j <J$,
\item for every $0\le j \le J$, there exist infinitely many $n$ such that
$$\#(\sW_{n} \cap \Omega(\cO^{(j)},\s_{n})) \ge \#\sW_{n}/m^{(j)}.$$
\end{enumerate}
For the base case, set $\cO^{(0)}= \Prob(\cX^\G)$ and $m^{(0)}=1$. For induction, suppose there is some $J \ge 1$, open subsets $\cO^{(1)}, \ldots, \cO^{(J)}$ and constants $\{m^{(j)}\}_{j=1}^J$ satisfying the criteria above. 

For each $n$, let $\cO^{(J+1)}_{n} \in \sU^{(J+1)}$ be an open set such that $\cO^{(J+1)}_{n}  \cap  \cO^{(J)} \ne \emptyset$  and 
$$\#(\sW_{n} \cap \Omega(\cO^{(J+1)}_n,\s_{n})) \ge \#(\sW_{n} \cap \Omega(\cO',\s_{n}))$$
for all $\cO' \in \sU^{(J+1)}$ such that $\cO' \cap  \cO^{(J)} \ne \emptyset$.  Let $\cO^{(J+1)} \in \sU^{(J+1)}$ be an open subset such that there exists infinitely many $n$ satisfying:
\begin{eqnarray*}
\cO^{(J+1)} &=& \cO^{(J+1)}_{n}\\
\#(\sW_{n} \cap \Omega(\cO^{(J)},\s_{n})) &\ge& \#\sW_{n}/m^{(J)}.
\end{eqnarray*}
Let
 $$m^{(J+1)}  := m^{(J)} \left|\sU^{(J+1)}\right|.$$
Since the number of open sets $\cO' \in \sU^{(J+1)}$ that intersect $\cO^{(J)}$ nontrivially is at most $\left|\sU^{(J+1)}\right|$ and 
$$\Omega\left(\cO^{(J)},\s_{n})\right) \subset \bigcup\left\{ \Omega(\cO',\s_{n}):~ \cO' \cap \cO^{(J)}  \ne \emptyset\right\},$$
it follows that for infinitely many $n$,
\begin{eqnarray*}
\#(\sW_{n} \cap \Omega(\cO^{(J+1)},\s_{n})) &\ge& |\sU^{(J+1)}|^{-1} \#\left(\sW_{n} \cap \Omega\left(\cO^{(J)},\s_{n}\right)\right) \\
&\ge& \frac{1}{ m^{(J)} |\sU^{(J+1)}| } \#\sW_{n}.
\end{eqnarray*}
 This proves the inductive step and the claim.
 
 Since each $\sU^{(j)}$ is a covering by balls of radius $2^{-j}$, if $\mu_j \in \cO^{(j)}$ is arbitrary, then $\{\mu_j\}_j$ is a Cauchy sequence. Let $\mu = \lim_{j\to\infty} \mu_j$.  If $\cO \subset \Prob_\G(\cX^\G)$ is any open subset containing $\mu$ then $\cO$ contains $\cO^{(j)}$ for some $j$.  Thus
  \begin{eqnarray*}
&& \limsup_{n \to \infty} |V_n|^{-1}\log \# \sW_n  \ge \inf_{\cO \ni \mu} \limsup_{n \to \infty} |V_n|^{-1}\log \# (\Omega(\cO,\s_n) \cap \sW_n )\\
 &\ge &  \limsup_{n \to \infty} |V_n|^{-1}\log \# (\Omega(\cO^{(j)},\s_n) \cap \sW_n )\ge   \limsup_{n \to \infty} |V_n|^{-1}\log \frac{\# \sW_n}{m^{(j)}} \\
        &=&  \limsup_{n \to \infty} |V_n|^{-1}\log \# \sW_n.    
  \end{eqnarray*}
  This proves the equality.




\end{proof}

\subsection{Proof of Theorem \ref{thm:indep0}}\label{sec:final}

\begin{proof}[Proof of Theorem \ref{thm:indep0}]


Choose constants $\bs,s,r,b_1,b_2,\eps,\g$ satisfying Theorem \ref{thm:indep}. By choosing $\eps>0$ smaller if necessary it may be assumed that 
$\eps/5< b_2-b_1 \textrm{ and } (\eps/10)\log(2) + H(\eps/10, 1-\eps/10)  < \g/6$
where $H(a,b) = -a\log(a) - b\log(b)$ for any $a, b>0$. 

Given a homomorphism $\s:\F_r \to \sym(n)$, let $\sW(\s) \subset \cI_s(\s)$ be the collection of independent subsets $W$ satisfying
\begin{enumerate}
\item there does not exist an independent set $W'$ of $G(\s_n)$ such that $\#W'/n \in (s-\eps,s+\eps)$ and $b_1n \le |W \cap W'| \le b_2 n$,
\item 
\begin{eqnarray}\label{cluster}
n^{-1} \log(\#\Cluster_{s,\eps}(\s_n,W)) \le f(r,s) - \g/2. &&
\end{eqnarray}
\end{enumerate}
An independent set $W$ of $G(\s_n)$ is identified with its indicator function $1_W \in \{0,1\}^n$. So by abuse of notation one may consider $\sW(\s)$ to be a subset of $\{0,1\}^n$.

By Corollaries \ref{thm:sofic-random}, \ref{cor:planted} and Theorem \ref{thm:indep} there exists a sofic approximation $\Si=\{\s_n\}_{n \in \N}$ with $\s_n:\F_r \to \sym(n)$ such that if $\sW_n:=\sW(\s_n)$ then 
$$\limsup_{n\to\infty} n^{-1} \log(\#\sW_n) \ge f(r,s).$$
By Proposition \ref{prop:variational}, there exists an invariant measure $\mu \in \{0,1\}^{\F_r}$ such that 
\begin{eqnarray*}
h_\Si(\mu)&\ge& \inf_{\cO \ni \mu} \limsup_{n \to \infty} n^{-1}\log \# (\Omega(\cO,\s_n) \cap \sW_n )\\
&=&  \limsup_{n \to \infty} n^{-1}\log \# \sW_n \ge f(r,s).
\end{eqnarray*}

For $g \in \F_r$, let $\bfpi_g:\{0,1\}^{\F_r} \to \{0,1\}$ be the coordinate projection. Let $\cO_2$ be the set of measures $\nu \in \Prob(\{0,1\}^{\F_r})$ satisfying
\begin{enumerate}
\item $\nu( \bfpi_e = 1) \in (s-\eps/2, s + \eps/2)$
\item $\nu( \textrm{either } (\bfpi_e, \bfpi_{a_i}) = (1,1) \textrm{ or } (\bfpi_e, \bfpi_{a^{-1}_i}) = (1,1) \textrm{ for some $i$}) < \eps/10$.
\end{enumerate}
Let $0<\k_2$ be a constant with $b_2 - \k_2 - \eps/5 > b_1$. 

It suffices to show that if $0<\k_1\le \k_2$ is any constant and $\cO_1 \subset \Prob(\{0,1\}^{\F_r})$ is any open neighborhood of $\mu$ with $\cO_1 \subset \cO_2$ then 
$$\limsup_n n^{-1} \log \dim_\Q( H_0(\cO_1,\cO_2,\k_1,\k_2,\s_n) \otimes_\Z \Q) \ge \gamma/3.$$
In the notation above, the superscript $L$ is omitted because all $0$-cycles are finite sums of length one $0$-cycles. So the parameter $L$ is irrelevant to studying $0$-dimensional homology.

For $x \in \Omega(\cO_1,\s_n) \cap \sW_n$, let $C(x)$ be the set of all $y \in \Omega(\cO_1,\s_n)$ such that
$$x-y \in B_0(\cO_2,\k_2,\s_n).$$
In other words, $y \in C(x)$ if and only if there exists a path $x=x_0,x_1,\ldots, x_k = y$ such that $x_i \in \Omega(\cO_2,\s_n)$ for all $i$ and $d(x_i,x_{i+1}) < \k_2$. Observe that $\Omega(\cO_1,\s_n) \cap \sW_n$ is the disjoint union sets of the form $C(x) \cap \sW_n$. 

We will estimate the cardinality $\#C(x)$ by showing that $C(x)$ is contained in the $\eps/10$-neighborhood of $\Cluster_{s,\eps}(\s_n, W_1)$. So suppose $y \in C(x)$ and let $x_0,x_1,\ldots, x_k=y$ be a path as above. Because $x_i \in \Omega(\cO_2,\s_n)$, its empirical measure is in $\cO_2$, which implies that the ``bad'' set
$$B(x_i):=\{v \in [n]:~x(v)=1 \textrm{ and either } x(\s(a_i)v) =1 \textrm{ or } x(s(a_i^{-1})v) = 1 \textrm{ for some } i\}$$
has cardinality at most $(\eps/10)n$. Let $W_i = x_i^{-1}(1) \setminus B(x_i)$. Then $W_i$ is an independent subset of $G(\s_n)$ with density $|W_i|/n \in (s-\eps,s+\eps)$.  Observe that $x^{-1}(1) = W_1$ since $x \in \sW_n$ is an independent set. 

Let $d_{\textrm{Hamm}}$ denote the normalized  Hamming metric on subsets of $[n]$. So
$$d_{\textrm{Hamm}}(W,W') = |W \vartriangle W'|/n.$$
Then
$$d_{\textrm{Hamm}}(W_i,W_{i+1}) \le d(x_i,x_{i+1}) + |B(x_i)|/n + |B(x_{i+1})|/n < \k_2 + \eps/5.$$
for all $i$. 

Let $\Cluster(x)$ be the collection of all independent sets $W$ of $G(\s_n)$ such that $|W|/n \in (s-\eps,s+\eps)$ and $|W \cap W_1|/n \ge b_2$. I claim that $W_k \in \Cluster(x)$. If not, then there exists a smallest number $j \ge 2$ such that $W_j \notin \Cluster(x)$. Since $W_{j-1} \in  \Cluster(x)$, $|W_{j-1} \cap W_1|/n \ge b_2$. Thus 
$$|W_j \cap W_1|/n \ge |W_{j-1} \cap W_1|/n - d_{\textrm{Hamm}}(W_{j-1},W_j) >  b_2 - \k_2 - \eps/5 > b_1.$$
 Since $x \in \sW_n$, the definition of $\sW_n$ implies $|W_j \cap W_1|/n \ge b_2$ which implies $W_j \in \Cluster(x)$. This contradiction proves the claim.

Since $W_k \in \Cluster(x)\subset \Cluster_{s,\eps}(\s_n,W_1)$ and $|y^{-1}(1) \vartriangle W_k|\le (\eps/10)n$, it follows that $C(x)$ is contained in the $\eps/10$-neighborhood of $\Cluster_{s,\eps}(\s_n, W_1)$. Thus 
$$\#C(x) \le 2^{(\eps/10)n}{n \choose \lceil \eps n/10 \rceil}  \#\Cluster_{s,\eps}(\s_n,W_1).$$
Combine the previous inequality with (\ref{cluster}) to obtain
$$n^{-1} \log \#C(x) \le (\eps/10)\log(2) + H(\eps/10, 1-\eps/10)  + f(r,s) - \g/2 + o_n(1) < f(r,s)- \g/3 + o_n(1).$$

Since $\Omega(\cO_1,\s_n) \cap \sW_n$ is a disjoint union of sets of the form $C(x) \cap \sW_n$ and 
$$\limsup_{n\to\infty} n^{-1} \log \# (\Omega(\cO_1,\s_n) \cap \sW_n) \ge f(r,s),$$
it follows that the number of different subsets of the form $C(x)$ for $x \in \Omega(\cO_1,\s_n) \cap \sW_n$ is at least $e^{ (\g/3)n}$ up to subexponential factors. However each subset of the form $C(x)$ contributes a dimension to the homology group $H_0(\cO_1,\cO_2,\k_1,\k_2,\s_n) \otimes_\Z \Q$. Thus
$$\limsup_{n\to\infty} n^{-1}\log \dim_\Q (H_0(\cO_1,\cO_2,\k_1,\k_2,\s_n) \otimes_\Z \Q) \ge \g/3>0.$$

\end{proof}

\section{Questions}\label{sec:questions}


\begin{enumerate}

\item Does there exist an action whose sofic homology does not vanish in dimension $1$ or in some higher dimension? Does there exist such an example which is a Markov chain over a free group? 

\item How does the $d$-dimensional sofic homology change under standard operations or perturbations of group actions, such as taking a direct product, passing to a subgroup of the acting group, coinducing from an action of a subgroup, ergodic decomposition, direct limits, inverse limits, taking a weak* limit of invariant measures or a $d$-bar limit?  

\item Is $b_{\Si,0}(\mu)$ a continuous or semi-continuous function of $\mu \in \Prob(\cX^\G)$ if $\cX$ is finite? Given a positive number $0<t<\log |\cX|$ does there exist an invariant measure $\mu \in \Prob_\G(\cX^\G)$ with $b_{\Si,0}(\mu)=t$? 

\item It is well-known that sofic entropy can increase under a factor map. To correct for this, several authors have defined the sofic entropy of a factor relative to the source. This notion was inspired by Kerr's approach to sofic entropy in \cite{kerr-partitions} and has been variously called outer sofic entropy, extension entropy or entropy in the presence  \cite{MR3993930, MR3635672, hayes-relative-entropy, MR3959054, seward-weak-containment}. It seems likely that there should be an analogous definition of outer sofic homology. If so, this might be useful for defining relative sofic homology.

\item Let $\K$ be a compact abelian group and $\G$ a countable group. Then $\K$ is identified with the subgroup of constants of $\K^\G$. This subgroup is $\G$-invariant and therefore $\K^\G/\K$ is a compact abelian group on which $\G$ acts by automorphisms. The action $\G \cc \K^\G/\K$ is called a {\bf Popa factor}. In \cite{MR3543677} Tim Austin proved that if $\G$ has property (T), is residually finite and $\K=\R/\Z$ then there is a sofic approximation $\Si$ relative to which the Popa factor does not have connected model spaces. This means that its $0$-dimensional sofic homology is not trivial. In spite of this, I conjecture that $b_{\Si,0}(\G \cc \K^\G/\K)=0$ for any sofic approximation $\Si$.  In  fact, my guess is that the sofic homology groups satisfy a bound of the form:
$$\dim_\Q(H_0(\cO_1,\cO_2,\k_1,\k_2,\s_n) \otimes_\Z \Q)\le B$$
where $B$ does not depend on $n$ (but is allowed to depend on everything else). 

\item I conjecture that any strongly ergodic distal action $\G \cc (X,\mu)$ has the property that its $0$-dimensional sofic homology is not trivial. These actions have zero $\Si$-entropy and therefore have $0$-th exponential Betti number equal to zero. My guess is that, like the Popa factors, such actions satisfy a constant bound on the growth of their homology.

 \item Fix a sofic approximation $\Si=\{\s_i:\G\to \sym(V_i)\}$. Let $F_i:\Abel\to \R$ be the function $F_i(G) = \frac{\log\dim_\Q(G \otimes_\Z \Q)}{\log \#V_i}$. The invariant $F_{\Si,d}(\mu)$ defined in Corollary \ref{cor:monotone} is the polynomial growth rate of the $d$-dimensional sofic homology. Given $t>0$ and a dimension $d\ge 0$ does there exist an invariant measure $\mu$ with  $F_{\Si,d}(\mu)=t$?

\item The present paper shows that free groups of sufficiently large rank admit actions without the Weak Pinsker Property. Does the same result hold for all non-amenable sofic groups?

\item If the $0$-dimensional sofic homology of an ergodic action vanishes, then does the action have the Weak Pinsker Property?

\item Does there exist an ergodic action with positive entropy that has no nontrivial direct Bernoulli factors? I conjecture that the frozen model introduced in \cite{MR3689942} has this property.


\end{enumerate}

\appendix





\bibliography{biblio}

\def\cprime{$'$} \def\cprime{$'$} \def\cprime{$'$}
  \def\cfudot#1{\ifmmode\setbox7\hbox{$\accent"5E#1$}\else
  \setbox7\hbox{\accent"5E#1}\penalty 10000\relax\fi\raise 1\ht7
  \hbox{\raise.1ex\hbox to 1\wd7{\hss.\hss}}\penalty 10000 \hskip-1\wd7\penalty
  10000\box7} \def\cprime{$'$} \def\cprime{$'$} \def\cprime{$'$}
  \def\cprime{$'$} \def\cprime{$'$} \def\cprime{$'$} \def\cprime{$'$}
\begin{thebibliography}{10}

\bibitem{MR3542515}
T.~Austin.
\newblock Additivity properties of sofic entropy and measures on model spaces.
\newblock {\em Forum Math. Sigma}, 4:e25, 79, 2016.

\bibitem{MR3543677}
T.~Austin.
\newblock The {G}eometry of {M}odel {S}paces for {P}robability-{P}reserving
  {A}ctions of {S}ofic {G}roups.
\newblock {\em Anal. Geom. Metr. Spaces}, 4:Art. 6, 2016.

\bibitem{MR3905465}
T.~Austin.
\newblock Measure concentration and the weak {P}insker property.
\newblock {\em Publ. Math. Inst. Hautes \'{E}tudes Sci.}, 128:1--119, 2018.

\bibitem{MR3854040}
A.~Backhausz and B.~Szegedy.
\newblock On large-girth regular graphs and random processes on trees.
\newblock {\em Random Structures Algorithms}, 53(3):389--416, 2018.

\bibitem{MR3161470}
M.~Bayati, D.~Gamarnik, and P.~Tetali.
\newblock Combinatorial approach to the interpolation method and scaling limits
  in sparse random graphs.
\newblock {\em Ann. Probab.}, 41(6):4080--4115, 2013.

\bibitem{MR595929}
B.~Bollob\'as.
\newblock A probabilistic proof of an asymptotic formula for the number of
  labelled regular graphs.
\newblock {\em European J. Combin.}, 1(4):311--316, 1980.

\bibitem{MR1864966}
B.~Bollob\'as.
\newblock {\em Random graphs}, volume~73 of {\em Cambridge Studies in Advanced
  Mathematics}.
\newblock Cambridge University Press, Cambridge, second edition, 2001.

\bibitem{MR2653969}
L.~Bowen.
\newblock The ergodic theory of free group actions: entropy and the
  {$f$}-invariant.
\newblock {\em Groups Geom. Dyn.}, 4(3):419--432, 2010.

\bibitem{bowen-jams-2010}
L.~Bowen.
\newblock Measure conjugacy invariants for actions of countable sofic groups.
\newblock {\em J. Amer. Math. Soc.}, 23(1):217--245, 2010.

\bibitem{bowen-survey}
L.~Bowen.
\newblock Examples in the entropy theory of countable group actions.
\newblock {\em submitted}, 2017.

\bibitem{MR3966832}
L.~P. Bowen.
\newblock A brief introduction of sofic entropy theory.
\newblock In {\em Proceedings of the {I}nternational {C}ongress of
  {M}athematicians---{R}io de {J}aneiro 2018. {V}ol. {III}. {I}nvited
  lectures}, pages 1847--1866. World Sci. Publ., Hackensack, NJ, 2018.

\bibitem{MR3385742}
A.~Coja-Oghlan and C.~Efthymiou.
\newblock On independent sets in random graphs.
\newblock {\em Random Structures Algorithms}, 47(3):436--486, 2015.

\bibitem{MR3689942}
J.~Ding, A.~Sly, and N.~Sun.
\newblock Maximum independent sets on random regular graphs.
\newblock {\em Acta Math.}, 217(2):263--340, 2016.

\bibitem{MR453960}
L.~Q. Eifler.
\newblock Open mapping theorems for probability measures on metric spaces.
\newblock {\em Pacific J. Math.}, 66(1):89--97, 1976.

\bibitem{MR2823074}
G.~Elek and E.~Szab{\'o}.
\newblock Sofic representations of amenable groups.
\newblock {\em Proc. Amer. Math. Soc.}, 139(12):4285--4291, 2011.

\bibitem{foreman-rudolph-weiss2}
M.~Foreman, D.~J. Rudolph, and B.~Weiss.
\newblock The conjugacy problem in ergodic theory.
\newblock {\em Ann. of Math. (2)}, 173(3):1529--1586, 2011.

\bibitem{foreman-weiss}
M.~Foreman and B.~Weiss.
\newblock An anti-classification theorem for ergodic measure preserving
  transformations.
\newblock {\em J. Eur. Math. Soc. (JEMS)}, 6(3):277--292, 2004.

\bibitem{MR1142268}
A.~M. Frieze and T.~{\L}uczak.
\newblock On the independence and chromatic numbers of random regular graphs.
\newblock {\em J. Combin. Theory Ser. B}, 54(1):123--132, 1992.

\bibitem{MR3359490}
D.~Gamarnik and M.~Sudan.
\newblock Limits of local algorithms over sparse random graphs [extended
  abstract].
\newblock In {\em I{TCS}'14---{P}roceedings of the 2014 {C}onference on
  {I}nnovations in {T}heoretical {C}omputer {S}cience}, pages 369--375. ACM,
  New York, 2014.

\bibitem{MR1909503}
C.~Greenhill, S.~Janson, J.~H. Kim, and N.~C. Wormald.
\newblock Permutation pseudographs and contiguity.
\newblock {\em Combin. Probab. Comput.}, 11(3):273--298, 2002.

\bibitem{MR1867354}
A.~Hatcher.
\newblock {\em Algebraic topology}.
\newblock Cambridge University Press, Cambridge, 2002.

\bibitem{MR3635672}
B.~Hayes.
\newblock Mixing and spectral gap relative to {P}insker factors for sofic
  groups.
\newblock In {\em Proceedings of the 2014 {M}aui and 2015 {Q}inhuangdao
  conferences in honour of {V}aughan {F}. {R}. {J}ones' 60th birthday}.

\bibitem{hayes-relative-entropy}
B.~Hayes.
\newblock Relative entropy and the {P}insker product formula for sofic groups.
\newblock {\em submitted}, 2016.

\bibitem{MR1782847}
S.~Janson, T.~\L~uczak, and A.~Rucinski.
\newblock {\em Random graphs}.
\newblock Wiley-Interscience Series in Discrete Mathematics and Optimization.
  Wiley-Interscience, New York, 2000.

\bibitem{kerr-partitions}
D.~Kerr.
\newblock Sofic measure entropy via finite partitions.
\newblock {\em Groups Geom. Dyn.}, 7(3):617--632, 2013.

\bibitem{kerr-li-variational}
D.~Kerr and H.~Li.
\newblock Entropy and the variational principle for actions of sofic groups.
\newblock {\em Invent. Math.}, 186(3):501--558, 2011.

\bibitem{MR1849347}
M.~Ledoux.
\newblock {\em The concentration of measure phenomenon}, volume~89 of {\em
  Mathematical Surveys and Monographs}.
\newblock American Mathematical Society, Providence, RI, 2001.

\bibitem{MR3993930}
H.~Li and B.~Liang.
\newblock Sofic mean length.
\newblock {\em Adv. Math.}, 353:802--858, 2019.

\bibitem{MR0330416}
D.~S. Ornstein.
\newblock A {$K$} automorphism with no square root and {P}insker's conjecture.
\newblock {\em Advances in Math.}, 10:89--102, 1973.

\bibitem{MR0399416}
D.~S. Ornstein.
\newblock A mixing transformation for which {P}insker's conjecture fails.
\newblock {\em Advances in Math.}, 10:103--123, 1973.

\bibitem{MR0152628}
M.~S. Pinsker.
\newblock Dynamical systems with completely positive or zero entropy.
\newblock {\em Soviet Math. Dokl.}, 1:937--938, 1960.

\bibitem{MR3650409}
M.~Rahman and B.~Vir\'{a}g.
\newblock Local algorithms for independent sets are half-optimal.
\newblock {\em Ann. Probab.}, 45(3):1543--1577, 2017.

\bibitem{seward-weak-containment}
B.~Seward.
\newblock Weak containment and {R}okhlin entropy.
\newblock {\em arXiv:1602.06680}, 2016.

\bibitem{seward-kreiger-1}
B.~Seward.
\newblock Krieger's finite generator theorem for actions of countable groups
  {I}.
\newblock {\em Invent. Math.}, 215(1):265--310, 2019.

\bibitem{MR3959054}
B.~Seward.
\newblock Krieger's finite generator theorem for actions of countable groups
  {II}.
\newblock {\em J. Mod. Dyn.}, 15:1--39, 2019.

\bibitem{MR4066472}
B.~Seward.
\newblock Positive entropy actions of countable groups factor onto {B}ernoulli
  shifts.
\newblock {\em J. Amer. Math. Soc.}, 33(1):57--101, 2020.

\bibitem{MR0453982}
J.-P. Thouvenot.
\newblock On the stability of the weak {P}insker property.
\newblock {\em Israel J. Math.}, 27(2):150--162, 1977.

\end{thebibliography}
\bibliographystyle{abbrv.bst}

\Addresses

\end{document}